\newtheorem{theorem}{Theorem}[section]
\newtheorem*{theorem*}{Theorem}
\newcounter{mt}
\newtheorem{maintheorem}[mt]{Theorem}
\theoremstyle{plain}
\newtheorem{corollary}[theorem]{Corollary}
\newtheorem{lemma}[theorem]{Lemma}
\newtheorem{proposition}[theorem]{Proposition}
\newtheorem*{lemma*}{lemma}
\newtheorem*{question*}{Question}
\theoremstyle{definition}
\newtheorem{definition}[theorem]{Definition}
\newtheorem{remark}[theorem]{Remark}
\newcommand{\CC}{\mathbb{C}}
\newcommand{\C}{\mathbb{C}}
\newcommand{\RR}{\mathbb{R}}
\newcommand{\calC}{\mathcal{C}}
\newcommand{\calK}{\mathcal{K}}
\newcommand{\calL}{\mathcal{L}}
\newcommand{\calR}{\mathcal{R}}
\newcommand{\calV}{\mathcal{V}}
\newcommand{\KLK}{\calK \calL\calK}
\newcommand{\dg}{d_\nabla}
\newcommand{\bw}{{\bigwedge}}
\newcommand{\largewedge}{\mbox{\Large $\wedge$}}
\newcommand \restrict[2]{\left. #1\right|_{#2}}
\newcommand \contr[1]{\iota_{#1}}
\newcommand{\inv}{^{-1}}
\newcommand{\spann}{\mathrm{span}}
\DeclareMathOperator{\dblR}{\mathsf R}
\DeclareMathOperator{\dblg}{\mathsf g}
\DeclareMathOperator{\dblG}{\mathsf G}
\DeclareMathOperator{\dblC}{\mathsf C}
\DeclareMathOperator{\CP}{\C P}
\DeclareMathOperator{\Val}{Val}
\DeclareMathOperator{\pd}{pd}
\DeclareMathOperator{\vol}{vol}
\DeclareMathOperator{\id}{id}
\DeclareMathOperator{\glob}{glob}
\DeclareMathOperator{\Grass}{Gr}
\DeclareMathOperator{\Sym}{Sym}
\DeclareMathOperator{\SL}{SL}
\DeclareMathOperator{\SO}{SO}
\DeclareMathOperator{\U}{U}
\DeclareMathOperator{\nc}{nc}
\DeclareMathOperator{\Curv}{Curv}
\DeclareRobustCommand{\pder}[1]{%
  \@ifnextchar\bgroup{\@pder{#1}}{\@pder{}{#1}}}
\newcommand{\@pder}[2]{\frac{\partial#1}{\partial#2}}
\begin{document}

\title{The Weyl tube theorem for K\"ahler manifolds}
\author{Andreas Bernig}
\email{bernig@math.uni-frankfurt.de}
\address{Institut f\"ur Mathematik, Goethe-Universit\"at Frankfurt,
Robert-Mayer-Str. 10, 60054 Frankfurt, Germany}

\author{Joseph H.G. Fu}
\email{joefu@uga.edu}
\address{Department of Mathematics, University of Georgia, Athens, GA 30602, USA}

\author{Gil Solanes}
\email{solanes@mat.uab.cat}
\address{Departament de Matem\`atiques, Universitat Aut\`onoma de Barcelona, 08193 Bellaterra, Spain, and Centre de Recerca Matem\`atica, Campus de Bellaterra, 08193 Bellaterra, Spain}

\author{Thomas Wannerer}
\email{thomas.wannerer@uni-jena.de}
\address{Friedrich-Schiller-Universit\"at Jena, Fakult\"at f\"ur Mathematik \& Informatik, Institut für Mathematik, Ernst-Abbe-Platz 2, 07743 Jena, Germany}

\thanks{A.B. was supported by DFG grant BE 2484/5-2. 
	J.H.G.F. was supported by a Simons Collaboration Grant.
	G.S. was supported by the Serra Húnter Programme,  MICIU/AEI/FEDER grant PGC2018-095998-B-I00, and  MICINN/AEI María de Maeztu grant CEX2020-001084-M.
T.W. was supported by DFG grant WA 3510/3-1.}

\begin{abstract} 
As sharpened in terms of Alesker's  theory of valuations on manifolds, a classic theorem of Weyl asserts that the coefficients of the tube polynomial of an isometrically embedded riemannian manifold $M \hookrightarrow \mathbb R^n$ constitute a canonical finite dimensional  subalgebra  $\mathcal {L K}(M)$ of the algebra $\mathcal{V} (M)$ of all smooth valuations on $M$, isomorphic to the algebra of  valuations on Euclidean space that are invariant under rigid motions. We construct an analogous, larger, canonical subalgebra 
$\mathcal{KLK}(M)\subset \mathcal{V}(M)$ for K\"ahler manifolds $M$: i) if $\dim M = n $,  then $\mathcal{KLK}(M)\simeq \mathrm{Val}^{\mathrm{U}(n)}$,  the algebra of valuations on $\mathbb{C}^n$ invariant under the holomorphic isometry group, and ii) if $M\hookrightarrow \tilde M$ is a K\"ahler embedding, then the  restriction map $\mathcal{V}(\tilde M) \to \mathcal{V}(M)$ induces a surjection $\mathcal{KLK}(\tilde M)\to \mathcal{KLK}(M)$. This answers a question posed by Alesker in 2010 and gives a structural explanation for some previously known, but mysterious phenomena in hermitian integral geometry.
\end{abstract}

\keywords{}
\subjclass[2020]{32Q15 % Kähler manifolds 
	53A07 %Higher-dimensional and -codimensional surfaces in Euclidean and related n-spaces 
	 53A55 % Differential invariants (local theory), geometric objects 
	 53C65%Integral geometry 
 }
%\date{\today}

\maketitle

\tableofcontents

\section{Introduction} 

\subsection{Lipschitz-Killing curvatures} 

Given a smooth compact submanifold $M\subset \RR^n$,  Weyl  \cite{weyl_tubes} expressed the volume of a tubular neighborhood about $M$ of small radius $r$ as a polynomial of degree $n$ in $r$. Weyl's deeper point, however, was that the coefficients are given by integrals of invariants of the curvature tensor of $M$ with 
respect to the induced metric.   These integrals, now known as {\it total Lipschitz-Killing curvatures}, 
are extensions of the {\it intrinsic volumes} of convex geometry, 
 the same quantities that arise in the kinematic formulas of Blaschke, Chern, and Santal\'o \cite{santalo76}. Examples of total Lipschitz-Killing curvatures are the Euler characteristic, the total scalar curvature and the volume  of $M$.

The (total) Lipschitz-Killing curvatures of riemannian manifolds appear in various contexts. Let us mention just some of them. Cheeger-M\"uller-Schrader proved that the Lipschitz-Killing curvatures of a riemannian manifold can be obtained as limits of the Lipschitz-Killing curvatures of a sequence of piecewise linear manifolds converging to the manifold in an appropriate sense. The Lipschitz-Killing curvatures of piecewise linear manifolds are of a more combinatorial nature, and have been used by physicists in {\it Regge calculus}, an attempt to discretize important quantities in general relativity such as the total scalar curvature. 

Weyl's tube formula was the main ingredient in the proof by Allendoerfer and Weil of the Chern-Gauss-Bonnet theorem. The heat equation approach to the Chern-Gauss-Bonnet theorem by Atiyah, Patodi, and Singer is also related to Lipschitz-Killing curvatures. Recall that the short time asymptotics of the heat kernel associated to the Laplacian acting on differential forms is given by complicated polynomials in the curvature tensor and its derivatives. McKean and Singer conjectured that some particular linear combination of these polynomials equals the integrand in the Chern-Gauss-Bonnet theorem. In particular, this would imply a ``fantastic cancellation" of all terms involving derivatives. The conjecture was shown by Patodi and led to a heat equation proof of the Chern-Gauss-Bonnet theorem and of the more general Atiyah-Singer index theorem \cite{atiyah_bott_patodi}. Some years later, Donnelly \cite{donnelly} observed that there are other linear combinations of the heat kernel coefficients that do not contain derivatives of the curvature tensor; and these combinations give precisely the Lipschitz-Killing curvatures. In particular, the total Lipschitz-Killing curvatures are spectral invariants.

More recently, the theory of valuations, introduced by Alesker in the early 2000s, has enriched the context for these classical facts: the functionals $\mu_i^M$ that yield the tube coefficients may be viewed as {\it smooth
valuations}---finitely additive measures---canonically associated to the riemannian structure of $M$. They  are called the {\em intrinsic volumes} on $M$. Remarkably, Chern's most general version of the Gauss-Bonnet theorem \cite{chern45} shows that $\mu_0^M=\chi$ is a smooth valuation (without using this terminology of course).

A new and fascinating development is the extension of Lipschitz-Killing curvatures to Alexandrov spaces. Alesker \cite{alesker_alexandrov_spaces} stated precise and far-reaching conjectures about the limits of the Lipschitz-Killing curvatures of a sequence of manifolds with a uniform lower bound on the sectional curvature converging to an Alexandrov space. Some special cases of the conjecture are known (roughly for volume and total scalar curvature). For recent results in this direction we refer to \cites{alesker_riemannian_submersions, alesker_katz_prosanov}.

The main feature of  the space $\calV(M)$ of smooth valuations on $M$ is the existence of  a natural commutative product. From the perspective of Alesker's theory  the intrinsic volumes are distinguished by two properties: invariance under (i) isometric embeddings and (ii) the formation of Alesker products. The resulting  subalgebra $\mathcal{LK}(M)\subset \calV(M)$ is canonically isomorphic to $\RR[t]/(t^{n +1})$, where $n = \dim M$, and  is called the {\it Lipschitz-Killing algebra.} Up to scale, the powers $t^i$ coincide with the intrinsic volumes $\mu_i^M$. In the case $M = \RR^n$, this subalgebra coincides with the algebra $\Val^{\mathrm O(n)}$ of valuations
invariant under the euclidean group. In this context a natural grading exists, with $\deg t =1$. 

The existence of the product structure is related to the existence of kinematic formulas on isotropic spaces. The algebras of isometry invariant valuations on euclidean space, the euclidean sphere and hyperbolic space coincide with the Lipschitz-Killing algebra and are thus mutually isomorphic. This can be translated into the fact that the kinematic formulas on the real space forms are isomorphic. 

Bernig, Faifman, and Solanes \cites{bernig_faifman_solanes, bernig_faifman_solanes_part2,bernig_faifman_solanes_part3} have recently constructed intrinsic volumes with analogous properties for pseudo-riemannian manifolds.

\subsection{The K\"ahler case}

A natural question is to what extent the above can be generalized in the complex setting. To be more precise, at Oberwolfach in 2010, Alesker asked whether the classical picture admits a K\"ahler analogue: is there a canonical subalgebra of valuations for K\"ahler manifolds $M$, analogous to the Lipschitz-Killing algebra for the riemannian case?

Although tubes around complex submanifolds of flat hermitian space $\C^n$ or of complex projective space $\CP^n$ do contain information about Chern classes \cite{shifrin81,bernig_fu_solanes}, it is no surprise that they do not reflect the complex geometry. We are therefore forced to look at this problem from the valuations point of view. 

In the flat hermitian case $\C^n$, we have a complete understanding of the space of invariant smooth valuations. The entry point was the determination \cite{alesker03_un, fu06} of the structure of the algebra $\Val^{\U(n)}$ of smooth valuations on $\CC^n$ invariant under the group of holomorphic isometries: in this case 
\begin{equation}\label{eq:unitaryAlg} \Val^{\U(n)}\simeq \RR[s,t]/(f_{n+1},f_{n+2}),
\end{equation}	
 where $\deg s =2$ and the relations are 
homogeneous of degrees $n+1, n+2$. Thus the monomials $s^it^j$ furnish a natural spanning set for $\Val^{\U(n)}$. Up to a factor, $t$ corresponds to the first intrinsic volume, which can be defined by averaging the  Euler characteristic of intersections with real hyperplanes. The other generator, $s$, is defined by averaging intersections with complex hyperplanes. 

Another useful basis,  more natural from a geometric viewpoint, are the {\it hermitian intrinsic volumes} $\mu_{k,p}$ from \cite{bernig_fu_hig}. The isomorphism \eqref{eq:unitaryAlg} can be translated into explicit kinematic formulas for the pair $(\C^n,\U(n) \ltimes \CC^n)$, see \cite{bernig_fu_hig}.

On complex projective space $\CP^n$, $t$ may be defined as the first intrinsic volume (with respect to the riemannian structure provided by the Fubini-Study metric), while $s$ can be defined using intersections with complex hyperplanes inside $\CP^n$. The algebra generated by these two valuations equals the algebra of isometry invariant valuations on $\CP^n$. Surprisingly, and opposed to what happens in the real case, the map $t \mapsto t, s \mapsto s$ from  $\Val^{\U(n)}$  to $\mathcal V(\CP^n)^{\mathrm{Isom}}$ is not an isomorphism. Even more surprisingly, as observed in \cite{bernig_fu_solanes}, there are nevertheless some isomorphisms between  $\Val^{\U(n)}$ and $\mathcal V(\CP^n)^{\mathrm{Isom}}$. In particular, the kinematic formulas are isomorphic. 

Previously, there was no satisfactory explanation of this phenomenon. Inspired by Weyl's tube approach and Alesker's refinement in the riemannian setting, we clarify the situation by providing a vast generalization of the notion of hermitian intrinsic volumes to K\"ahler manifolds.

Our main result gives a positive answer to Alesker's question. 
 
\begin{maintheorem}\label{mainthm_global}
For any K\"ahler manifold $M$ of complex dimension $n$, there is a canonical subalgebra $\KLK(M)\subset \calV(M)$, isomorphic
to $\Val^{\U(n)}$, with the property that if $M' \hookrightarrow M$ is a K\"ahler embedding then the natural 
restriction map $\calV(M) \to \calV(M')$ restricts to a natural surjection $\KLK(M) \to \KLK(M')$ such that the following diagram commutes:
	\begin{center}
		\begin{tikzpicture}
			\matrix (m) [matrix of math nodes,row sep=3em,column sep=4em,minimum width=2em]
			{
			\Val^{\U(n)}  &  \Val^{\U(n')}\\
			\KLK(M)   &  \KLK(M')\\};
			\path[-stealth]
			(m-1-1) edge node [right] {$\cong$}  (m-2-1)
			(m-1-1) edge (m-1-2)
			(m-1-2) edge node [right] {$\cong$}  (m-2-2) 
			(m-2-1) edge (m-2-2);
		\end{tikzpicture}
	\end{center}
where the upper horizontal map is the restriction map induced by the inclusion $\C^{n'} \hookrightarrow \C^n$.
\end{maintheorem}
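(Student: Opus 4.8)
\emph{Overview.} The plan is to realise the isomorphism $P\colon\Val^{\U(n)}\to\calV(M)$ through universal curvature expressions on $M$, to establish its key properties first for Kähler submanifolds of $\C^N$ --- the ``Weyl tube'' step --- and then to bootstrap to an arbitrary Kähler manifold, using that the curvature tensors of such submanifolds fill out an open set. Since the holonomy of a Kähler manifold lies in $\U(n)\subset\SO(2n)$, the Levi-Civita connection lets one transplant any $\U(n)$-invariant differential form on the fibre $S^{2n-1}$ to a global form on the sphere bundle $SM$; applied to the forms representing the $\U(n)$-invariant curvature measures on flat $\C^n$, this attaches to each $\mu_0\in\Val^{\U(n)}$ a candidate pair of representing forms on $M$ and its sphere bundle. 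These will in general violate the closedness/Rumin condition characterising smooth valuations, the obstruction being built $\U(n)$-equivariantly from the curvature tensor $R$ and the complex structure $J$; exactly as the Euler form acquires the Lipschitz--Killing corrections in the riemannian case, one repairs the pair by adding a universal $\U(n)$-equivariant polynomial in $R$ and $J$ that restores closedness. Existence of such a correction is a cohomological statement which I would reduce to the complex space forms $\C^n$, $\CP^n$, $\C H^n$, where the invariant valuations and curvature measures are completely described in \cite{bernig_fu_solanes}. The map $P$ so obtained is injective --- the corrections only affect terms of lower Lipschitz--Killing filtration, hence leave leading symbols unchanged --- and canonical, being built from invariant universal data; carrying the construction out for curvature measures and applying $\glob$ transfers everything to valuations, and one sets $\KLK(M):=\operatorname{im}P$.

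\emph{The Weyl tube principle.} The heart of the matter --- and what I expect to be the main obstacle --- is a ``Weyl principle'' for a Kähler submanifold $M^n\hookrightarrow\C^N$: the restriction map $\calV(\C^N)\to\calV(M)$ should carry $\Val^{\U(N)}$ into $\KLK(M)$, and the induced map $\Val^{\U(N)}\to\KLK(M)$ should factor as the model restriction $\Val^{\U(N)}\twoheadrightarrow\Val^{\U(n)}$ followed by $P$. Concretely this requires expressing the restriction of each hermitian intrinsic volume \cite{bernig_fu_hig} through the second fundamental form $\mathrm{II}$ of $M$ in $\C^N$ --- which for a Kähler submanifold is symmetric, complex bilinear and trace-free, and whose Gauss equation expresses $R^M$ as minus a quadratic expression in $\mathrm{II}$ --- and then showing that the terms of odd degree in $\mathrm{II}$ average to zero while those of even degree recombine, via Gauss, into precisely the intrinsic curvature expressions defining $P$. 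This is the Kähler counterpart of Weyl's classical ``fantastic cancellation'', now genuinely requiring the $\U(n)$-invariant theory; I would organise it over the bundle of admissible second fundamental forms and cut the resulting identities down to a finite list by invariant theory.

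\emph{The algebra structure.} Granting the Weyl principle, the algebra structure follows by bootstrapping. The restriction map of valuations is always a ring homomorphism, so for every Kähler submanifold $M\subset\C^N$ the composite $\Val^{\U(N)}\to\calV(M)$ is one; together with surjectivity of the model restriction $\Val^{\U(N)}\twoheadrightarrow\Val^{\U(n)}$ --- readily checked on the generators $s$ and $t$ --- this forces $P$ itself to be a ring homomorphism for every such $M$. The algebraic Kähler curvature tensors realised by Kähler submanifolds of $\C^N$ with $N$ large are precisely the negative semidefinite ones, via the Gauss identity $R^M=-\langle\mathrm{II},\mathrm{II}\rangle$, and these form a full-dimensional cone; since the Alesker product is computed from representing forms by a natural operation involving only the connection, the difference $P(\mu_0\cdot\nu_0)-P(\mu_0)\cdot P(\nu_0)$ is represented by a universal polynomial in $R$, and its vanishing on that cone forces it to vanish for every Kähler $M$. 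Hence $\KLK(M)$ is a subalgebra of $\calV(M)$ and $P\colon\Val^{\U(n)}\xrightarrow{\ \sim\ }\KLK(M)$ is an algebra isomorphism, canonical because built from invariant universal data.

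\emph{Functoriality.} For a general Kähler embedding $M'\hookrightarrow M$, the assertion that the restriction $\calV(M)\to\calV(M')$ carries $\KLK(M)$ onto $\KLK(M')$ compatibly with the two copies of $P$ and the model restriction $\Val^{\U(n)}\twoheadrightarrow\Val^{\U(n')}$ is again a universal polynomial identity, this time in $R^M$ and the second fundamental form of $M'$ in $M$, with $R^{M'}$ determined by the Gauss equation. It is verified on the full-dimensional family of configurations realised with $M'\subset M\subset\C^N$, where it follows from transitivity of restriction together with the Weyl principle applied to $M$ and to $M'$; density then yields the general case, including the commutativity of the square. Finally, surjectivity of $\KLK(M)\to\KLK(M')$ reduces to that of $\Val^{\U(n)}\to\Val^{\U(n')}$, which one again reads off from the generators $s$ and $t$.
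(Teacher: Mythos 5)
Your proposal follows, in outline, the same route as the paper: the hermitian intrinsic volumes are defined by universal curvature expressions on $SM$ carrying Lipschitz--Killing--type corrections; the Weyl principle is established for K\"ahler submanifolds of $\C^N$ by fibre integration over the normal sphere, parity in the second fundamental form, and the Gauss equation; and the algebra structure is bootstrapped from the embedded case to arbitrary K\"ahler manifolds by observing that the relevant identities are polynomial in the curvature tensor while the embedded algebraic K\"ahler curvature tensors form a full-dimensional cone. Only the functoriality step differs in emphasis: the paper proves the restriction identity directly for an arbitrary K\"ahler inclusion $M'\subset M$ (Theorem \ref{thm_restrict_gamma}), whereas you verify it for nested configurations $M'\subset M\subset\C^N$ and extend by polynomiality in $(R^M,\mathrm{II})$; both are viable.

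The genuine gap is in the bootstrap. You assert that $P(\mu_0\cdot\nu_0)-P(\mu_0)\cdot P(\nu_0)$ is ``represented by a universal polynomial in $R$'' because the product ``involves only the connection.'' This is not automatic. The Alesker--Bernig product formula requires the Rumin differential $D\omega$ of the boundary form, and $D\omega=d(\omega+\alpha\wedge\xi)$ involves solving for a correction $\xi$; one must show that $\xi$ can be chosen inside the canonically constructed algebra of double forms and depends polynomially on $R$. This is the technical heart of the paper (Proposition \ref{prop_rumin_in_cartan_calculus}), where $\bm\xi$ is exhibited explicitly in terms of $\dblG_0^\vee$ together with the differential identities for $\dblG$; without it, the polynomiality claim --- and hence the entire transfer from the embedded cone to general $M$ --- is unsupported. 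Two smaller points: the full-dimensionality of the embedded cone is itself a nontrivial assertion (the paper proves it by a dimension count against $\dim_\RR\calK(\C^m)=\binom{m+1}{2}^2$); and your motivation for the curvature corrections is off, since any pair $(\phi,\omega)$ already defines a smooth valuation --- there is no closedness condition to repair --- the corrections being forced instead by the restriction (Weyl) property.
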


In geometric terms, the isomorphism with $\Val^{\U(n)}$ maps the hermitian intrinsic volume $\mu_{k,p}$ to the hermitian intrinsic volume $\mu_{k,p}^M$ that we will construct (cf. Theorem \ref{thm:alg_isom} below).

Since the isometries of complex projective (or complex hyperbolic) space are K\"ahler embeddings, the elements of $\mathcal{KLK}(\CP^n)$ are isometry invariant. Our main theorem thus provides a new perspective on invariant valuations and integral geometry of $\CP^n$. 

\begin{corollary} \label{cor_canonical_isomorphism}
The algebras of isometry invariant smooth valuations on $n$-dimensional complex projective and on $n$-dimensional complex hyperbolic spaces are canonically isomorphic to $\Val^{\U(n)}$. 
\end{corollary}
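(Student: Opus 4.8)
The plan is to derive the corollary from Theorem~\ref{mainthm_global} together with the known dimension of the algebra of invariant valuations on a complex space form. I would begin by observing that complex projective space $\CP^n$, with its Fubini--Study metric, and complex hyperbolic space $\CC\HH^n$, with its standard K\"ahler metric, are K\"ahler manifolds of complex dimension $n$; hence Theorem~\ref{mainthm_global} supplies canonical isomorphisms $\KLK(\CP^n)\cong\Val^{\U(n)}$ and $\KLK(\CC\HH^n)\cong\Val^{\U(n)}$. It then suffices to prove that $\KLK(\CP^n)$ coincides with the full algebra $\calV(\CP^n)^{\mathrm{Isom}}$ of isometry-invariant smooth valuations, and similarly for $\CC\HH^n$: the canonical isomorphisms asserted in the corollary are then precisely the ones furnished by the Main Theorem, and in particular the two space forms become canonically identified with one another --- the structural explanation, promised in the introduction, for the ``mysterious'' isomorphisms observed in \cite{bernig_fu_solanes}.

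To establish the inclusion $\KLK(\CP^n)\subseteq\calV(\CP^n)^{\mathrm{Isom}}$ I would use the functoriality half of Theorem~\ref{mainthm_global}. Each element of the connected isometry group $\mathrm{PU}(n+1)$ acts on $\CP^n$ as a holomorphic isometry, hence as a K\"ahler automorphism $g\colon\CP^n\hookrightarrow\CP^n$, and therefore induces an automorphism of $\KLK(\CP^n)$; by the commutative square this automorphism corresponds to the restriction map $\Val^{\U(n)}\to\Val^{\U(n)}$ along an isometric K\"ahler self-embedding of $\C^n$ (an element of $\U(n)\ltimes\C^n$), which is the identity since $\Val^{\U(n)}$ consists, by definition, of valuations invariant under all such embeddings. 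Thus every $\varphi\in\KLK(\CP^n)$ is $\mathrm{PU}(n+1)$-invariant. Invariance under the remaining, anti-holomorphic, component of the full isometry group would then follow because it is already enjoyed by the generators $s$ and $t$ of the algebra $\calV(\CP^n)^{\mathrm{PU}(n+1)}$ that they generate: both are obtained by averaging Euler characteristics of intersections with complex, resp.\ real, hyperplanes, and the anti-holomorphic involution merely permutes these hyperplanes. The same reasoning applies to $\CC\HH^n$ with $\mathrm{PU}(n,1)$ in place of $\mathrm{PU}(n+1)$.

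For the reverse inclusion I would invoke the computations of Bernig--Fu--Solanes~\cite{bernig_fu_solanes}: the algebra of isometry-invariant smooth valuations on an $n$-dimensional complex space form has dimension $\binom{n+2}{2}=\dim\Val^{\U(n)}$, independently of the sign of the holomorphic sectional curvature. Since $\KLK(\CP^n)\cong\Val^{\U(n)}$ is a finite-dimensional subalgebra of $\calV(\CP^n)^{\mathrm{Isom}}$ of that same dimension, the two must coincide, and likewise for $\CC\HH^n$; this finishes the proof. I do not anticipate a substantially new obstacle here --- all the real content sits in Theorem~\ref{mainthm_global} --- but two points warrant attention: the treatment of the anti-holomorphic component of the isometry group sketched above, and locating in the literature the dimension count for the invariant valuation algebra on the complex space forms of nonzero curvature (one could alternatively bypass the dimension count by checking directly that the standard generators $s,t$ lie in $\KLK(\CP^n)$, using for instance that $t$ is a Lipschitz--Killing valuation).
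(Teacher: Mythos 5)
Your proposal is correct and follows essentially the same route as the paper: the inclusion $\KLK(\CP^n_\lambda)\subseteq\calV(\CP^n_\lambda)^{G_\lambda}$ comes from the naturality of the construction under K\"ahler (self-)embeddings via Theorem~\ref{mainthm_global}, and equality then follows from the dimension count $\dim\calV_\lambda^n=\dim\Val^{\U(n)}$ known from \cite{bernig_fu_solanes}. Your additional care with the anti-holomorphic component of the isometry group is a legitimate refinement of a point the paper leaves implicit, and your argument for it (the generators $t$ and $s$ are averages over real, resp.\ complex, hyperplanes, which conjugation permutes) is sound.
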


Several isomorphisms between these algebras, algebraically simple but geometrically unmotivated, were previously discovered in \cite{bernig_fu_solanes}. The isomorphism constructed in the present paper, however, is canonical in the sense that it arises from the K\"ahler structures on the underlying spaces. In terms of the valuations $t$ and $s$ described above, the canonical isomorphism is given by $t \mapsto t$ and $s \mapsto s/(1-\lambda s)$ (see Prop. \ref{prop:sigma}). This particular isomorphism was a key ingredient in the recent paper \cite{bernig_fu_conjecture}.

\subsection{Local setting }

The kinematic formulas due to Chern, Blaschke, and Santal\'o were generalized by Federer in two directions. First, instead of taking compact convex sets, he considered sets of positive reach. Note that compact convex bodies as well as compact submanifolds, possibly with boundary, are sets of positive reach. Secondly, he localized the kinematic formulas by looking at so called {\it curvature measures}, which are localizations of the intrinsic volumes. In the euclidean case, there is a one-one correspondence between intrinsic volumes and curvature measures. Consequently, Federer's local kinematic formulas look formally the same as the previously known global kinematic formulas. The same happens on other real space forms, where each invariant valuation admits a unique localization. We thus find that on all real space forms of the same dimension, the local kinematic formulas are isomorphic. 

In \cite{fu_wannerer}, a sequence of curvature measures naturally associated to riemannian manifolds was constructed. The main observation was that the space of such curvature measures is a {\it universal module} over the Lipschitz-Killing algebra, i.e. the structure coefficients do not depend on the riemannian manifold. In the case of real space forms, the module structure is related to a version of kinematic formulas called semi-local kinematic formulas. Hence such formulas on all real space forms of the same dimension are  isomorphic.

In the complex case, it follows easily from Howard's transfer principle (which applies more generally to homogeneous manifolds \cite{howard93}) that local kinematic formulas on complex space forms of the same dimension are formally identical. To write them down explicitly is highly non trivial. The reason is that, in contrast to the real case, the globalization map is not injective, so the local kinematic formulas cannot be derived from the global ones. The problem was solved in \cite{bernig_fu_solanes}, see also \cite{bernig_fu_solanes_proceedings} for a shorter way of describing such formulas.

In view of our first main theorem \ref{mainthm_global} and of the results concerning riemannian curvature measures mentioned above it is natural to ask for a K\"ahler version. This is our second main theorem. 
\begin{maintheorem} \label{mainthm_local}
For any K\"ahler manifold $M$ of complex dimension $n$, there is a canonical $\KLK(M)$-module $\widetilde{\KLK}(M) \subset \calC(M)$, isomorphic
to the $\Val^{\U(n)}$-module $\Curv^{\U(n)}$, with the property that if $M' \hookrightarrow M$ is a K\"ahler embedding then the natural 
restriction map $\calC(M) \to \calC(M')$  restricts to a natural surjection $\widetilde{\KLK}(M) \to \widetilde{\KLK}(M')$ such that the following diagram commutes:
	\begin{center}
		\begin{tikzpicture}
			\matrix (m) [matrix of math nodes,row sep=3em,column sep=4em,minimum width=2em]
			{
				\Curv^{\U(n)}  &  \Curv^{\U(n')}\\
				\widetilde{KLK}(M)   &  \widetilde{KLK}(M')\\};
			\path[-stealth]
			(m-1-1) edge node [right] {$\cong$}  (m-2-1)
			(m-1-1) edge (m-1-2)
			(m-1-2) edge node [right] {$\cong$}  (m-2-2) 
			(m-2-1) edge (m-2-2);
		\end{tikzpicture}
	\end{center}
	where the upper horizontal map is the restriction map induced by the inclusion $\C^{n'} \hookrightarrow \C^n$.
	
%For any K\"ahler manifold $M$ of complex dimension $n$, there is a canonical injection 
%$$
%				\Curv^{\U(n)} \rightarrowtail \calC(M).
%$$
%Denoting its image by $\widetilde{\KLK}(M)$, 
% the following diagram commutes:
%	\begin{center}
%		\begin{tikzpicture}
%			\matrix (m) [matrix of math nodes,row sep=3em,column sep=4em,minimum width=2em]
%			{
%				\Curv^{\U(n)}  &  \Curv^{\U(n')}\\
%				\widetilde{KLK}(M)   &  \widetilde{KLK}(M')\\};
%			\path[-stealth]
%			(m-1-1) edge node [right] {$\cong$}  (m-2-1)
%			(m-1-1) edge (m-1-2)
%			(m-1-2) edge node [right] {$\cong$}  (m-2-2) 
%			(m-2-1) edge (m-2-2);
%		\end{tikzpicture}
%	\end{center}
%	where the upper horizontal map is the restriction map induced by the inclusion $\C^{n'} \hookrightarrow \C^n$.
%
\end{maintheorem}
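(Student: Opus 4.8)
The plan is to run the construction underlying Theorem~\ref{mainthm_global} one level lower, at the level of curvature measures, and then to promote the algebraic data of $\Curv^{\U(n)}$ to arbitrary $M$ by a universality argument. Recall that the hermitian intrinsic volumes $\mu_{k,p}^M$ produced in the course of proving Theorem~\ref{mainthm_global} are naturally realized as globalizations $\mu_{k,p}^M = \glob \Phi_{k,p}^M$ of curvature measures $\Phi_{k,p}^M \in \calC(M)$, represented by differential forms on the sphere bundle $SM$ assembled canonically from the K\"ahler data of $M$: the tautological contact form $\alpha$ and $d\alpha$, the pullback $\pi^*\omega$ of the K\"ahler form, the almost complex structure $J$, and the curvature two-forms of the Levi-Civita connection. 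I would define $\widetilde{\KLK}(M) \subset \calC(M)$ to be the span of these $\Phi_{k,p}^M$ together with the "interior" curvature measures built from powers of $\omega$ on $M$ (needed to account for the part of $\Curv^{\U(n)}$ invisible to the normal cycle, e.g.\ the localized volume). Since each generator is manufactured solely from $(g,J)$ and its curvature, with the frame dependence cancelling exactly as in the global case, each $\Phi_{k,p}^M$ is a well-defined, canonical curvature measure.

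For $M = \C^n$ the generating forms are by construction the standard $\U(n)$-invariant forms on $S\C^n = \C^n \times S^{2n-1}$, so comparison with the explicit description of hermitian curvature measures in \cite{bernig_fu_hig, bernig_fu_solanes} gives $\widetilde{\KLK}(\C^n) = \Curv^{\U(n)}$ as a $\Val^{\U(n)}$-module. The heart of the argument is to show that for general $M$ the Alesker module action $\KLK(M) \otimes \widetilde{\KLK}(M) \to \calC(M)$ stays inside $\widetilde{\KLK}(M)$ with the \emph{same} structure coefficients, so that $\widetilde{\KLK}(M)$ is a universal module over $\KLK(M)$ in the sense of \cite{fu_wannerer}. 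This should follow from a transfer argument: the action of a valuation on a curvature measure is computed by a fixed, purely local recipe (Rumin differential of the defining forms, followed by integration along the fibers of $SM \to M$), so the output is again a form built canonically from the K\"ahler data, hence lies in $\widetilde{\KLK}(M)$; and the coefficient of each generator in the result is a universal polynomial in the curvature of $M$. Evaluating that polynomial on the complex space forms $\CP^n$ and $\C H^n$ — where, by Corollary~\ref{cor_canonical_isomorphism} and \cite{bernig_fu_solanes}, both the module and its structure constants are known and match $\C^n$ after the substitution $s \mapsto s/(1-\lambda s)$ — and at $\lambda = 0$ pins the polynomial down to its flat value. This simultaneously yields closure, the $M$-independence of the structure constants, and the $\KLK(M)$-module isomorphism $\widetilde{\KLK}(M) \cong \Curv^{\U(n)}$ compatible with the isomorphism $\KLK(M) \cong \Val^{\U(n)}$ of Theorem~\ref{mainthm_global}.

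It remains to handle K\"ahler embeddings $M' \hookrightarrow M$. The restriction map $\calC(M) \to \calC(M')$ is implemented through the geometry of tubes about $M'$, i.e.\ by relating $SM'$ to the relevant portion of $SM$ via the normal exponential map and the second fundamental form $\mathrm{II}$ of $M'$ in $M$. Since $M'$ is a complex submanifold, $\mathrm{II}$ is complex bilinear ($\mathrm{II}(JX,Y) = J\,\mathrm{II}(X,Y) = \mathrm{II}(X,JY)$; in particular $M'$ is minimal), so the pulled-back K\"ahler data on $SM$ restricts precisely to the K\"ahler data of $M'$, and each $\Phi_{k,p}^M$ restricts to $\Phi_{k,p}^{M'}$ up to exactly the combinatorial coefficients already appearing in Theorem~\ref{mainthm_global}. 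Hence the restriction map carries $\widetilde{\KLK}(M)$ onto $\widetilde{\KLK}(M')$; the lower square commutes by construction, the upper square and the vertical isomorphisms are those just established, and commutativity of the whole diagram then follows from the compatibility of $\glob$ with restriction together with the diagram of Theorem~\ref{mainthm_global}.

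I expect the main obstacle to be precisely the universality of the module action: a priori the Rumin-operator computation on $SM$ could involve the full Riemann and Ricci tensors of $M$ in combinations with no counterpart in flat $\C^n$, and one must exhibit a "fantastic cancellation" — reminiscent of the one behind the heat-kernel proof of Gauss--Bonnet discussed in the introduction — leaving only the universal combinations. Making this rigorous requires (a) controlling the Rumin differential of all generating forms on $SM$ to sufficient order in K\"ahler normal coordinates, and (b) verifying that the resulting curvature-dependent corrections are entirely captured by the single parameter $\lambda$ (holomorphic sectional curvature), so that testing on $\CP^n$ and $\C H^n$ genuinely determines them. Once this is in place, the remaining verifications — well-definedness, surjectivity onto $\widetilde{\KLK}(M')$, and commutativity — are formal.
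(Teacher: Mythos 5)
Your overall architecture (canonical generators built from the K\"ahler data, identification with $\Curv^{\U(n)}$ in the flat case, universality of the module structure constants, restriction via the tube geometry of $SM'$ inside $SM$) matches the paper's, but the step you yourself flag as the main obstacle is where the argument genuinely breaks, and your proposed fix for it cannot work. You want to show that the coefficient of each generator in the product $\mu\cdot\Phi$ is a universal polynomial in the curvature tensor $\dblR^M_x$ and then to ``pin the polynomial down'' by evaluating on $\CP^n$ and $\C H^n$. But the curvature tensors of the complex space forms sweep out only the line $\lambda\dblG$ inside the space $\calK(\C^n)$ of algebraic K\"ahler curvature tensors, which has dimension $\binom{n+1}{2}^2$. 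A polynomial identity between two polynomial maps on $\calK(\C^n)$ is not forced by its validity on a one-parameter family; in particular the curvature-dependent corrections are \emph{not} captured by the single parameter $\lambda$, so testing on space forms determines nothing beyond the restriction of the polynomials to that line. This is precisely why the paper introduces the pointwise embedding lemma (Theorem~\ref{thm_pointwise_lemma}): the curvature tensors realized at points of complex submanifolds $M\subset\C^N$ form a \emph{full-dimensional} convex cone in $\calK(\C^n)$, and for such embedded $M$ the module identity $R_M(\mu\cdot\Phi)=r_M(\mu)\cdot R_M(\Phi)$ follows from restriction of the known flat structure on $\C^N$ (Propositions~\ref{prop_B_eq_B}, \ref{thm_weyl} and compatibility of the module product with restriction). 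Polynomiality (Propositions~\ref{prop_rumin_in_cartan_calculus} and \ref{prop:polynomiality prime}) then propagates the identity from that full-dimensional set to all of $\calK(\C^n)$, hence to arbitrary K\"ahler $M$. Without the embedding lemma, or some equally rich test class, your universality step has no proof.

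A secondary, repairable weakness: the claim that each generator restricts to its counterpart on $M'$ ``up to exactly the combinatorial coefficients'' is not automatic from the complex bilinearity of the second fundamental form. It requires the fiber integration over the normal sphere bundle together with the Weyl lemma (Lemma~\ref{lemma_weyl}) and the Gauss equation, and it is exactly the specific coefficients $d_{n,k,l}$ in the $\bm\psi_j,\tilde{\bm\psi}_j$ that make the curvature corrections $(\dblR^{M'}-\dblR^{M}|_{M'})^e$ recombine into the intrinsic $\dblR^{M'}$. This is a computation, not a formality, but it is the computation the paper carries out and your outline points in the right direction.
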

 
Applied to complex projective and complex hyperbolic space the theorem yields
\begin{corollary}
There is a canonical module isomorphism between the modules of invariant curvature measures on complex space forms of the same dimension, compatible with the canonical isomorphism from Corollary \ref{cor_canonical_isomorphism}. 
\end{corollary}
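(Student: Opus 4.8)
The plan is to deduce this from Theorem~\ref{mainthm_local} in exactly the way Corollary~\ref{cor_canonical_isomorphism} is deduced from Theorem~\ref{mainthm_global}. Recall that up to isometry there are three complex space forms of complex dimension $n$: flat hermitian space $\C^n$, complex projective space $\CP^n$ with a suitably normalized Fubini--Study metric, and complex hyperbolic space; each is a K\"ahler manifold and (as recalled above in the projective and hyperbolic cases) its isometries are K\"ahler embeddings of it into itself. First I would observe that for such a space form $M$ and any isometry $g\colon M\to M$, Theorem~\ref{mainthm_local} applied with $M'=M$ shows that the induced map $g^*\colon\calC(M)\to\calC(M)$ preserves $\widetilde{\KLK}(M)$, and that under the canonical identification $\widetilde{\KLK}(M)\cong\Curv^{\U(n)}$ it corresponds to the restriction map $\Curv^{\U(n)}\to\Curv^{\U(n)}$ induced by $dg$, which is an element of $\U(n)$ and therefore acts as the identity on $\Curv^{\U(n)}$. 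Hence $g^*$ fixes $\widetilde{\KLK}(M)$ pointwise, so $\widetilde{\KLK}(M)\subseteq\calC(M)^{\mathrm{Isom}}$; the same argument on the level of valuations (using Theorem~\ref{mainthm_global}) gives $\KLK(M)\subseteq\calV(M)^{\mathrm{Isom}}$, which is the inclusion behind Corollary~\ref{cor_canonical_isomorphism}.

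The next step is to upgrade these inclusions to equalities. For $M=\C^n$ this is immediate from the construction: $\widetilde{\KLK}(\C^n)=\Curv^{\U(n)}$ and $\KLK(\C^n)=\Val^{\U(n)}$. For the projective and hyperbolic cases all spaces involved are finite dimensional, and Theorem~\ref{mainthm_local} gives $\dim\widetilde{\KLK}(M)=\dim\Curv^{\U(n)}$. On the other hand the dimension of $\calC(M)^{\mathrm{Isom}}$ is the same for every complex space form of complex dimension $n$: this is an immediate consequence of the transfer principle invoked in the introduction, and it can also be seen directly, since invariant curvature measures on an isotropic space are parametrized by the isotropy-invariant forms on the cosphere bundle of a tangent space, which is the same $\U(n)$-module in all three cases. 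Therefore $\widetilde{\KLK}(M)=\calC(M)^{\mathrm{Isom}}$ and likewise $\KLK(M)=\calV(M)^{\mathrm{Isom}}$, the latter being precisely the identification underlying Corollary~\ref{cor_canonical_isomorphism}.

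Finally, composing the canonical module isomorphisms $\widetilde{\KLK}(M)\cong\Curv^{\U(n)}$ of Theorem~\ref{mainthm_local} for the various complex space forms $M$, and inserting the equalities $\widetilde{\KLK}(M)=\calC(M)^{\mathrm{Isom}}$ just established, yields the asserted canonical isomorphism between the modules of invariant curvature measures on complex space forms of the same dimension. Since in Theorem~\ref{mainthm_local} each of these is an isomorphism of modules over the corresponding ring isomorphism $\KLK(M)\cong\Val^{\U(n)}$, and this ring isomorphism is exactly the one of Corollary~\ref{cor_canonical_isomorphism}, the resulting module isomorphism automatically intertwines the ring isomorphisms between the algebras of invariant valuations; this is the asserted compatibility.

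The only point that is not a formal diagram chase is the equality $\widetilde{\KLK}(M)=\calC(M)^{\mathrm{Isom}}$, i.e. the fact that the canonical submodule furnished by Theorem~\ref{mainthm_local} already exhausts all invariant curvature measures on complex projective and complex hyperbolic space. I expect this to be the main (though still mild) obstacle; it is a dimension count resting on the transfer principle, and once it is available the corollary follows by composing the natural isomorphisms supplied by Theorem~\ref{mainthm_local}.
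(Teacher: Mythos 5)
Your proposal is correct and follows essentially the same route the paper takes: the corollary is read off from Theorem~\ref{mainthm_local} by noting that isometries of the complex space forms are K\"ahler embeddings (so $\widetilde{\KLK}(M)$ sits inside the invariant curvature measures), upgrading this inclusion to an equality by the dimension count furnished by the transfer principle, and then composing the canonical isomorphisms $\widetilde{\KLK}(M)\cong\Curv^{\U(n)}$, whose compatibility over the algebra isomorphisms of Corollary~\ref{cor_canonical_isomorphism} is automatic. This is exactly the argument recorded at the start of Section~\ref{sec_space forms}.
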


This is a new insight in hermitian integral geometry and a nice extra property of the canonical valuation isomorphism. It follows from these results that, quite unexpectedly, there are isomorphisms for all integral geometric structures on complex space forms: global, semi-local and local kinematic formulas; algebra structure on valuations, module structure on curvature measures.

\subsection{Our approach}

Our strategy for proving Theorems \ref{mainthm_global} and \ref{mainthm_local} is  a hybrid of two different approaches to the construction of the Lipschitz-Killing algebra in the riemannian case.  In the first,
the Nash theorem is used to embed a given riemannian manifold $M$ in some $\RR^N$. The valuations 
giving the tube coefficients are then restrictions of the intrinsic volumes $\mu_i$ of $\RR^N$, which by Weyl's 
theorem depend only on the riemannian structure of $M$. The resulting restrictions $\mu_i^M \in \calV(M)$ then span $\mathcal {LK}(M)$: fundamental results
of Alesker show that this span is closed under Alesker multiplication. 

That approach has the advantage of being simple and direct. On the other hand, it fails to show how the $\mu_i^M $ and their algebraic properties arise from the riemannian structure of $M$. A second approach, remedying this defect, was given in \cite{fu_wannerer}, in which the $\mu_i^M $ 
are constructed from canonical differential forms associated to the connection and the curvature tensor of $M$.
Certain combinations of these forms were shown to be subject to an algebraic model of the fundamental formula of \cite{alesker_bernig} for the Alesker product. Using invariant theory, the possible outcomes
were shown to be highly constrained, so that the actual values could be computed from a few simple templates.

Attempting to extend these approaches to  the K\"ahler case we consider here, we meet two obstacles: i) there can be 
no K\"ahler analogue of the Nash theorem, so the first approach is impossible; and ii) the relevant space of invariant differential forms is large and unwieldy, to the point where we were unable to mimic the second 
approach above in the new setting. Happily, however, we are able to combine weak versions of these two devices to bring the proof
to fruition. First, via the formalism of double forms we construct algebraic models for the differential forms 
underlying the $\mu_{k,p}^M$. Although our understanding of the invariant theory is insufficient to conclude
that the Alesker product of two such valuations lies in their span, we are nonetheless able to deduce that 
the underlying differential form of such a product depends polynomially on the curvature tensor $\dblR$ of $M$ (Proposition
\ref{prop:polynomiality prime}). Second, we point out (Theorem \ref{thm_pointwise_lemma}) that we do have 
an extremely weak K\"ahler isometric embedding theorem: for $\dblR$ lying in some open subset of the space of algebraic curvature 
tensors at a single point $x \in M$, there is a local embedding of $M$ into some $\CC^N$ for which the curvature 
tensor at $x$ of the K\"ahler metric induced on $M$ by that of $\CC^N$ is equal to $\dblR$. Since the  $\mu_{k,p}^M$
behave well under restrictions (this will be shown in Subsection \ref{subsec_restrictions}), the polynomial dependence on $\dblR$
of the Alesker product of hermitian intrinsic volumes implies that the multiplicative structure is as claimed.

To conclude this introduction, let us point out the importance of the formalism of double forms--- introduced originally by de Rham \cite{deRham} and subsequently used to brilliant effect in Gray's work on integral geometry \cite{gray69}, \cite{gray_book}---  in this paper. Section~\ref{sec_double_forma_main} clarifies the construction of the canonical curvature measures of \cite {fu_wannerer} in these terms. In fact they are polynomials in the 
canonical $(1,1)$ forms $\dblg$ (the metric) and $\bm \omega$ (the connection), as well as the 
$(2,2)$-form $\dblR$ representing curvature. Section~\ref{sec_hermitian_double}
 then  constructs similar objects in the K\"ahler case, where the resulting algebraic structure is much richer, due
 to the appearance of the  canonical (2,2)-form $\dblG$ and the additional symmetries enjoyed by K\"ahler
 curvature tensors $\dblR$. As in \cite{fu_wannerer}, the fact (Proposition \ref{prop_rumin_in_cartan_calculus}) that the Rumin differential of
 these elements is again expressible in this language is key.

%\subsection{Notational index}
%\printindex

%------------------------------------------------------------------------------

\section{Background on valuations and curvature measures}\label{section_background}

In this section we give a brief account of the
 theory of \emph{valuations on smooth manifolds} $M$, introduced by Alesker \cites{alesker_val_man1, alesker_val_man2, alesker_val_man4, alesker_val_man3}, in the case (sufficient to the purposes of the present paper) where $M$ is an oriented riemannian manifold. 
Such valuations are  functions defined for sufficiently smooth subsets $A \subset M$, determined by (pairs of) differential forms on the tangent sphere bundle $SM$ in two different ways. The first way is more direct and intuitive,
as integrals over the normal cycle of $A$; the second way is more obscure but has the advantage of being unique. The two 
are formally related via the Rumin differential, which will play a key role.  The
main technical point is Proposition \ref{prop_double_fibration}, which gives a universal formal model for the Alesker
product of two valuations in the latter terms. After that we recap some of the main features of  invariant valuations 
on riemannian manifolds in general and  complex space forms in particular.

\subsection{Valuations on manifolds}

Given an oriented riemannian manifold $M$ of dimension $n$, we let $SM$ be its sphere bundle. { The contact form on $SM$ is denoted by $\alpha$.} A sufficiently regular set  $X \subset M$ (e.g. a compact differentiable polyhedron \cite{alesker_intgeo}, or a compact subset with positive reach \cite{federer59}, or a compact subanalytic set \cite{fu94}, { or a compact WDC set \cites{pokorny_rataj,fu_pokorny_rataj}}) determines a {\it normal cycle} $\nc(X)$, which is a current in $SM$. The normal cycle is finitely additive for such subsets $X,Y\subset M$  in  general position:
\begin{equation}\label{eq:nc additive}
\nc(X\cup Y ) = \nc(X ) + \nc(Y) -\nc(X\cap Y).
\end{equation}

A \emph{smooth valuation on $M$} is a functional of the form 
\begin{equation} \label{eq_def_smooth_val}
\mu(X)=\int_X \phi+\int_{\nc(X)} \omega, 
\end{equation}
where $\phi \in \Omega^n(M)$ { is the {\it inner term} and $\omega \in \Omega^{n-1}(SM)$ is the {\it boundary term}}. We denote it  by $\mu=[[\phi,\omega]]$. 
A \emph{smooth curvature measure on $M$} is a functional of the form
\begin{equation}\label{eq_def_smooth_cm}
\Phi(X,B)=\int_{X \cap B} \phi+\int_{\nc(X) \cap \pi^{-1}B} \omega,
\end{equation}
for $B \subset M$ a Borel subset. Such a curvature measure is written as $[\phi,\omega]$. These objects
inherit finite additivity from \eqref{eq:nc additive}.

The space of smooth valuations is denoted by $\mathcal{V}(M)$, the space of smooth curvature measures by $\mathcal{C}(M)$. The \emph{globalization map} $\glob\colon\mathcal{C}(M) \to \mathcal V(M)$ is the map given by $\glob(\Phi)= [\Phi]:=\Phi(\bullet,M)$.  The space $\mathcal{V}(M)$ admits the structure of an algebra \cites{alesker_bernig,alesker_val_man3}, while $\mathcal{C}(M)$ is a module over $\mathcal{V}(M)$ \cite{bernig_fu_solanes}.  If $e: M \subset N$ is an  embedding of riemannian manifolds, there are obvious restriction maps $e_{\calV}^*: \mathcal{V}(N) \to \mathcal{V}(M), e_{\calC}^*:\mathcal{C}(N) \to \mathcal{C}(M)$ (cf. \cite{alesker_intgeo,bernig_fu_solanes}). These maps respect the multiplicative structure:
$$
e_{\calV}^*(\mu \cdot \nu) = e_{\calV}^*\mu \cdot e_{\calV}^*\nu, \quad e_{\calV}^*\mu \cdot e_{\calC}^*\Phi = e_{\calC}^*(\mu \cdot \Phi).
$$

The kernel of the map $(\phi,\omega) \mapsto [[\phi,\omega]]$ was described in \cite{bernig_broecker07} as the space of all pairs $(\phi,\omega)$ such that $D\omega+\pi^*\phi=0,\pi_*\omega=0$. Here $D$ is the Rumin operator \cite{rumin94} and $\pi^*,\pi_*$ are pull-back and push-forward of differential forms with respect to the projection $\pi:SM \to M$. The kernel of the map $(\phi,\omega) \mapsto [\phi,\omega]$ is given by the  pairs $(0,\omega)$ where $\omega$ belongs to the ideal generated by $\alpha$ and $d\alpha$, see \cite[Proposition 2.4]{bernig_faifman_solanes_part2}.
Set 
\begin{equation}
\label{eq:zeta_tau}
\zeta:=\pi_*\omega \in C^\infty(M),\quad  \tau:=D\omega+\pi^*\phi \in \Omega^n(SM).
\end{equation}
It is easy to check that $\alpha \wedge \tau=0, d\tau=0, \pi_*\tau=d\zeta$. Conversely, if $(\zeta,\tau)$ is a pair with $\zeta \in C^\infty(M), \tau \in \Omega^n(SM)$ such that $\alpha \wedge \tau=0, d\tau=0, \pi_*\tau=d\zeta$, then there exist $\phi,\omega$ { satisfying \eqref{eq:zeta_tau}}. We may therefore describe smooth valuations on $M$ uniquely by such pairs $(\zeta,\tau)$. In this case we write 
\begin{equation}\label{eq:def_tau_zeta}
\mu=\{\{\zeta,\tau\}\},\quad \mbox{and}\quad \zeta=\zeta(\mu),\quad \tau=\tau(\mu).
\end{equation}

The product of smooth valuations admits a description in these terms \cite{alesker_bernig}*{Eqs. (48),(49)}, which
for our purposes may be distilled as follows. Fix $x \in M$. Then for
each $\xi \in S_xM$, the Levi-Civita connection induces a decomposition 
\begin{equation}\label{eq:decompHV}
T_\xi SM =  H_\xi \oplus T_\xi (S_xM) 
\end{equation}
into horizontal and  vertical subspaces, where the { derivative of the } projection map $\pi:SM \to M$ { restricts to an} isomorphism
$ H_\xi\simeq T_xM$.  These decompositions in turn induce an isomorphism
\begin{equation}\label{eq:transfer_forms}
\restrict{\Omega^\bullet (SM)}{S_xM} \simeq  \bw^\bullet T_x^*M  \otimes\Omega^\bullet(S_xM). 
\end{equation}

Similarly,  for a fixed $\xi \in S_xM$,
\begin{equation}
\bw^\bullet T_\xi^* SM \simeq \bw^\bullet (T_xM \oplus  \xi^\perp )^*,
\end{equation}
where $T_\xi (S_xM )\simeq \xi^\perp  \subset T_xM$. Thus,  distinguishing a unit vector $e_0 \in \RR^n$, and an isometry $\sigma:\RR^n \to T_xM$ with 
$\sigma e_0 = \xi$ yields 
 isomorphisms
\begin{align}
\bar\sigma^*&\colon\restrict{\Omega^\bullet (SM)}{S_xM} \to  \bw^\bullet (\RR^n)^*  \otimes  \Omega^\bullet(S^{n-1})\label{eq:sigma_x}\\
\bar\sigma_\xi^*&\colon \bw^\bullet T_\xi^* SM \to \bw^\bullet (\RR^n \oplus e_0^\perp  )^*.\label{eq:sigma_xi}
\end{align}

\begin{proposition}\label{prop_double_fibration}
 For any isometry $\sigma\colon \RR^n\to T_xM$ with $\sigma e_0 = \xi$  as above, there exist bilinear maps
\begin{displaymath}
\Gamma: \left[ \bw^\bullet (\RR^n)^*  \otimes \Omega^\bullet (S^{n-1})\right]^2 \to \bw^{\bullet} (\RR^n\oplus e_0^\perp )^*
\end{displaymath}
and 
\begin{displaymath}
\mathrm{GT} \colon \Omega^\bullet (SM) ^2 \to \Omega^\bullet (SM),
\end{displaymath}
making the diagram 
\begin{center}\begin{tikzpicture}
	\matrix (m) [matrix of math nodes,row sep=3em,column sep=3em,minimum width=2em]
	{
		\left[\restrict{\Omega^\bullet (SM)}{S_xM}\right]^2    & \restrict{\Omega^\bullet (SM)}{S_xM}   & \bw^\bullet  T^*_\xi SM \\
	      \left[ \bw^\bullet (\RR^n)^*  \otimes \Omega^\bullet (S^{n-1})\right]^2  &  & \bw^\bullet (\RR^n \oplus e_0^\perp )^*  \\ };
	\path[-stealth]
	(m-1-1) edge node [right] {$\bar \sigma^* \oplus \bar \sigma^*$}  (m-2-1)
	(m-1-1) edge node [above] {$\mathrm{GT}$} (m-1-2)
	(m-1-2) edge node [above] {$|_\xi$} (m-1-3)
	(m-2-1) edge node [above] {$\Gamma$} (m-2-3)
	(m-1-3) edge node [right] {$\bar \sigma_\xi^*$}  (m-2-3);
	\end{tikzpicture}
\end{center}
commutative and such  that the following hold:
\begin{enumerate}
	\item  If $\mu_i=\{\{\zeta_i,\tau_i\}\}, i=1,2$ are smooth valuations, then $\mu_1 \cdot \mu_2=\{\{\zeta,\tau\}\}$ with  
	\begin{align}
	\zeta & = \zeta_1 \cdot \zeta_2, \label{eq_ab_formula2}\\
	\tau & = \mathrm{GT}(\tau_1 , \tau_2)+\pi^*\zeta_1 \cdot \tau_2+\pi^*\zeta_2 \cdot \tau_1. \label{eq_ab_formula1}
	\end{align}
	\item 
If $\mu=\{\{ \zeta, \tau\}\}$ is a smooth valuation and $[\phi,\omega]$ is a curvature measure, 
then $\mu\cdot [\phi,\omega]= [\phi',\omega']$ with
\begin{align*}
\phi' & =\pi_*(\omega \wedge s^*\tau) +  \zeta \cdot \phi, \\
\omega' & = \mathrm{GT}(\omega , \tau)+\pi^*\zeta \cdot \omega,
\end{align*}
where $s\colon SM\to SM$ denotes the antipodal map.
\end{enumerate}
\end{proposition}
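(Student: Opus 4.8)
The plan is to obtain the stated formulas by unwinding the Alesker--Bernig description of the product, recalled just above from \cite{alesker_bernig}*{Eqs.~(48),(49)}, and then to recognize the genuinely bilinear part of it as a fiberwise operation governed by a manifold-independent algebraic rule.

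First I would write the Alesker--Bernig formula in the $(\phi,\omega)$ presentation: for $\mu_i=[[\phi_i,\omega_i]]$ it expresses $\mu_1\cdot\mu_2=[[\phi,\omega]]$ where, up to terms linear in the $\zeta_i=\pi_*\omega_i$, the boundary term $\omega$ is a canonical pushforward $p_*\bigl(q^*(\tau_1\boxtimes\tau_2)\wedge\kappa\bigr)$ along a double fibration $SM\xleftarrow{p}E\xrightarrow{q}SM\times_M SM$, with $E\to M$ a bundle with compact fibers and $\kappa$ a universal form built from the contact form $\alpha$ and the metric, and $\phi$ is the corresponding top-degree contribution on the base. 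Computing $\pi_*\omega$ and $D\omega+\pi^*\phi$ from this and using $\alpha\wedge\tau_i=0$, $d\tau_i=0$, $\pi_*\tau_i=d\zeta_i$ together with the compatibility of the Rumin operator with fiber integration, one extracts $\zeta=\zeta_1\zeta_2$ and $\tau=\mathrm{GT}(\tau_1,\tau_2)+\pi^*\zeta_1\cdot\tau_2+\pi^*\zeta_2\cdot\tau_1$, where by definition $\mathrm{GT}(\tau_1,\tau_2):=p_*\bigl(q^*(\tau_1\boxtimes\tau_2)\wedge\kappa\bigr)$; this proves (1).

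Next I would restrict $\mathrm{GT}$ to the fiber $S_xM$ over a fixed $x\in M$ and read the result at a point $\xi\in S_xM$ through the isomorphisms \eqref{eq:transfer_forms}, \eqref{eq:sigma_x}, \eqref{eq:sigma_xi} attached to an isometry $\sigma\colon\RR^n\to T_xM$ with $\sigma e_0=\xi$. The restricted fibration $S_xM\leftarrow E_x\to S_xM\times S_xM$ and the restriction of $\kappa$ depend only on the Euclidean structure of $(T_xM,g_x)$; moreover the Levi-Civita decomposition \eqref{eq:decompHV} underlying $\bar\sigma^*$ is precisely what gauges away the first-order part of the metric, so that the restriction to $S_xM$ of $\mathrm{GT}(\tau_1,\tau_2)$, evaluated at $\xi$ and transported by $\bar\sigma^*\oplus\bar\sigma^*$ and $\bar\sigma_\xi^*$, agrees with the corresponding operation for the flat metric on $\RR^n$ and on $S^{n-1}$. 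This pins down the bilinear map $\Gamma$; it does not depend on the choice of $\sigma$, since any two admissible choices differ by an element of the stabilizer of $e_0$ in $\mathrm O(n)$, under which the model fibration and $\kappa$ are equivariant, and by construction the diagram commutes.

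For (2) I would run the same computation with the $\calV(M)$-module structure on $\calC(M)$ of \cite{bernig_fu_solanes}, whose definition via restriction to the diagonal of the exterior product of a valuation by a curvature measure yields, after the same bookkeeping, $\omega'=\mathrm{GT}(\omega,\tau)+\pi^*\zeta\cdot\omega$, while $\phi'=\pi_*(\omega\wedge s^*\tau)+\zeta\cdot\phi$, the first summand being the contribution that the pushforward deposits in top degree on the base and the antipodal map $s$ appearing because the normal cycle of the diagonal factor enters with the opposite coorientation. I expect the fiberwise step to be the main obstacle: one must verify that the Alesker--Bernig diagonal-restriction operation is genuinely first order in the metric, i.e. that beyond the Levi-Civita connection at $x$ no curvature terms enter $\mathrm{GT}$, so that after transfer it is faithfully reproduced by the manifold-independent $\Gamma$. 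This universality of $\Gamma$ is exactly what the later polynomial dependence on $\dblR$ of Alesker products rests on.
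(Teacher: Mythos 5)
Your proposal is correct and follows essentially the same route as the paper: the paper likewise unwinds the Alesker--Bernig product via the concrete double fibration $SM\times_M SM \leftarrow \Sigma \rightarrow SM$ of \cite{fu_alesker_product}*{Theorem 5.2}, and transports the fiberwise pushforward to the flat model over $\RR^n$ using the Levi-Civita splittings and the isometry $\sigma$. The point you flag as the main obstacle is resolved exactly as you anticipate: the fiber $\Sigma_x$, its horizontal/vertical splittings, and the fiber integration over the compact model fiber $\Sigma_{0,e_0}$ depend only on $(T_xM,g_x,\xi)$ and the connection, so no curvature terms can enter $\mathrm{GT}$ at the point.
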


 The notation refers to the fact that  $\mathrm{GT}$ is a Gelfand transform with respect to a certain double fibration, described in the proof below.

\begin{proof}
We use the description of the Alesker-Bernig formula given in \cite{fu_alesker_product}*{Theorem 5.2} together with the notation introduced there.
In particular, as in  \cite{fu_alesker_product}*{Section 5} consider the fiber bundle   $\Sigma \to M$ with the projections $\xi,\eta,\zeta\colon \Sigma\to SM$, given by
\begin{displaymath}
\Sigma:= \{(\xi,\eta,\zeta)\in SM^3: \pi\xi =\pi\eta =\pi\zeta, \xi\neq \pm \eta, \zeta \text{ lies in the open geodesic segment } \overline{\xi\eta}\}.
\end{displaymath}
At each point $(\xi,\eta,\zeta)$ of $\Sigma$ the Levi-Civita connection of $M$ defines a splitting 
\begin{equation} \label{eq:splittingSigma} 
T_{\xi,\eta,\zeta} \Sigma = H^\Sigma \oplus V^\Sigma
\end{equation}
into horizontal and vertical tangent vectors,  where $H^\Sigma \simeq T_xM$ and $x\in M$ is the common projection of $\xi,\eta,\zeta$. The vertical subspace splits  further   as follows. First note  that the riemannian metric of $M$ induces an inner product on $V^\Sigma$.   Let  $\Sigma_\zeta\subset \Sigma_x$ denote a fiber of $\zeta\colon \Sigma\to SM$. 
Thus $V^\Sigma=   V' \oplus V''$, where $V''$ is the tangent space to the fiber $\Sigma_\zeta$ and $V'\simeq T_\zeta (S_xM)$ is its orthogonal complement inside $V^\Sigma$. 

Recall that also the tangent spaces to $SM$ split into horizontal and vertical vectors, 
\begin{displaymath}
T_\zeta SM = H\oplus V.
\end{displaymath} 
 Since the differential $\zeta_*\colon V^\Sigma \to V$ is onto with kernel $V''$, $\zeta_*:V'\to V$ is an isomorphism.

Consider  the model fiber 
\begin{displaymath}
\Sigma_0 = \{ (\xi_0,\eta_0,\zeta_0)\in (S^{n-1})^3 \colon \xi_0\neq \pm \eta_0 \text{ and } \zeta_0 \text{ lies in the open geodesic segment } \overline{\xi_0\eta_0}\}
\end{displaymath}
with the projections $\xi_0,\eta_0\colon \Sigma_0\to S^{n-1}$. Fixing $\zeta_0=e_0 \in \RR^n$, the tangent space to  $\Sigma_{0}$ splits 
as $ T_{\xi_0,\eta_0,e_0} \Sigma_{0} \simeq V'_0 \oplus V''_0$, where $V'_0 \simeq e_0^\perp= T_{e_0}S^{n-1}$ and $V_0'' \simeq T_{\xi_0,\eta_0} \Sigma_{0,e_0}$, mirroring the splitting above. For any isometry $\sigma\colon \RR^n\to T_xM$ with $\sigma e_0 = \zeta$ , we obtain from \eqref{eq:splittingSigma} induced isomorphisms
 \begin{gather*}   T_{\xi,\eta,\zeta} \Sigma \simeq \RR^n \oplus T_{\sigma^{-1} \xi, \sigma^{-1} \eta ,e_0} \Sigma_0 \simeq  \RR^n\oplus e_0^\perp\oplus V''_0\\
T_{\xi} SM \simeq \RR^n  \oplus T_{\sigma^{-1} \xi} S^{n-1},
\end{gather*}
both of which we denote by $\sigma_*$. Consequently, the following diagram commutes
\begin{center}\begin{tikzpicture}
  \matrix (m) [matrix of math nodes,row sep=3em,column sep=4em,minimum width=2em]
{
	T_{\xi,\eta,\zeta} \Sigma  &  \RR^n \oplus T_{\sigma^{-1} \xi, \sigma^{-1} \eta ,e_0} \Sigma_0 \\
	T_{\xi} SM   &\RR^n \otimes T_{\sigma^{-1} \xi} S^{n-1}\\};
\path[-stealth]
(m-1-1) edge node [right] {$\xi_*$}  (m-2-1)
edge node [above] {$\sigma_*$} (m-1-2)
(m-1-2) edge node [right] {$\id_{\RR^n}  \oplus \xi_{0*}$}  (m-2-2) 
(m-2-1) edge node [above] {$\sigma_*$}(m-2-2);
\end{tikzpicture}
\end{center}
 yielding a simple description of the differential $\xi_*$ in terms of the model space.
Likewise, the  projection to top-degree forms on the fiber $\Sigma_\zeta$ admits a description in terms of the model space
\begin{center}\begin{tikzpicture}
	\matrix (m) [matrix of math nodes,row sep=3em,column sep=4em,minimum width=2em]
	{
		\bw^\bullet T^*_{\xi,\eta,\zeta} \Sigma  &  \bw^\bullet (\RR^n \oplus T_{\sigma^{-1} \xi, \sigma^{-1} \eta ,e_0} \Sigma_0)^* \\
		\bw^\bullet (H\oplus V)^* \otimes \bigwedge^n (V'')^*    &\bw^\bullet (\RR^n \oplus e_0^\perp )^* \otimes\bigwedge^n   T^*_{\sigma^{-1} \xi,\sigma^{-1}\eta} \Sigma_{0,e_0}\\};
	\path[-stealth]
	(m-1-1) edge   (m-2-1)
	edge node [above] {$\sigma_*$} (m-1-2)
	(m-1-2) edge  (m-2-2) 
	(m-2-1) edge node [above] {$\sigma_*$}(m-2-2);
	\end{tikzpicture}
\end{center}

Combining these two operations with integration over the fiber we obtain the commutative diagram 
\begin{center}\begin{tikzpicture}
	\matrix (m) [matrix of math nodes,row sep=3em,column sep=4em,minimum width=2em]
	{
		(\restrict{\Omega^\bullet (SM)}{S_xM} )^{\otimes 2}  &  ( \bw^\bullet (\RR^n)^* \otimes  \Omega^\bullet(S^{n-1}) )^{\otimes 2}  \\
	  \restrict{\Omega^\bullet(\Sigma)}{\Sigma_\zeta}   & \bw^\bullet (\RR^n)^*  \otimes \restrict{\Omega^\bullet ( \Sigma_0)}{\Sigma_{0,e_0}}\\
	  \bw^\bullet  T^*_\zeta SM   \otimes \Omega^n(\Sigma_\zeta) &   \bw^\bullet (\RR^n \oplus e_0^\perp )^*  \otimes \Omega^n(\Sigma_{0,e_0}) \\
	   \bw^\bullet  T^*_\zeta SM  &  \bw^\bullet (\RR^n \oplus e_0^\perp )^*  \\ };
	\path[-stealth]
	(m-1-1) edge node [right] {$\xi^* \wedge \eta^*$}  (m-2-1)
	edge node [above] {} (m-1-2)
	(m-1-2) edge node [right] {$ (\id_{\RR^n}\times \xi_0)^* \wedge  (\id_{\RR^n}\times\eta_0)^*$}  (m-2-2) 
	(m-2-1) edge (m-2-2)
	(m-2-2) edge (m-3-2)
	(m-3-2) edge node [right] {$\int_{\Sigma_{0,e_0}}$} (m-4-2)
	(m-2-1) edge (m-3-1)
	(m-3-1) edge node [right] {$\int_{\Sigma_\zeta}$} (m-4-1)
	(m-3-1) edge (m-3-2)
	(m-4-1) edge (m-4-2);
	\end{tikzpicture}
\end{center}
where the horizontal arrows are isomorphisms induced by $\sigma$. Composed with $\id \otimes s^*$, where $s$ is the antipodal map, the left column defines $\mathrm{GT}$ and the  right column defines  $\Gamma$.  
\end{proof}

\subsubsection{Translation invariant valuations}
If $M$ is a finite dimensional real vector space $V$, then any finite union of 
 compact convex bodies admits a normal cycle, and any two such finite unions $X,Y$ satisfy \eqref{eq:nc additive}. 
 We denote the space of {\it translation invariant} smooth valuations on $V$ by $\Val=\Val(V)\subset \calV(V)$. In fact it is a graded 
 algebra, with multiplication inherited from $\calV(V)$ and  grading 
\begin{displaymath}
\Val=\bigoplus_{\substack{k=0,\ldots,n\\\epsilon=\pm}} \Val_k^\epsilon,
\end{displaymath}
where $n=\dim V$ and where $\Val_k^\epsilon$ is the subspace of $k$-homogeneous valuations of parity $\epsilon$. (Note that there exists a more primitive, and historically prior, notion of valuation in this context, in which the smoothness condition is relaxed to continuity.)

By $\Curv\subset \calC(V)$ we denote the space of smooth and translation invariant curvature measures (meaning that $\Phi(K+v,B+v)=\Phi(K,B)$ for all $v \in V$). It admits a grading by degree 
\begin{displaymath}
\Curv=\bigoplus_{k=0,\ldots,n} \Curv_k,
\end{displaymath}
where $\deg \Phi = k$ iff $\Phi(tK,tB)=t^k \Phi(K,B), t>0$. $\Curv$ is a module over $\Val$.

A famous theorem of Hadwiger implies that the vector space $\Val^{\mathrm{SO}(n)}$ of $\mathrm{SO}(n)$-invariant elements in $\Val$ is spanned by the intrinsic volumes $\mu_0,\ldots,\mu_n$ (in fact Hadwiger's theorem yields the same conclusion even in the broader context mentioned above, where smoothness is replaced by continuity). More generally, if $G \subset \mathrm{O}(n)$ is a subgroup that acts transitively on the unit sphere $S^{n-1}$, then the subspace of $G$-invariant elements $\Val^G$ is a finite dimensional graded subalgebra of $\Val$.

In the 
 present paper, we are particularly interested in the case $G=\U(m) \subset \mathrm{O}(2m)$. 
 In this case \cite{alesker03_un,fu06} 
 \begin{displaymath}
\Val^{\U(m)} \cong \mathbb{R}[t,s]/(f_{m+1},f_{m+2}),
\end{displaymath}  
where $\log(1+tx+sx^2)=\sum_{k=1}^\infty f_k(t,s)x^k$. The valuation $t$ is  (up to scale) the mean width, or the first intrinsic volume, while $s\in \Val_2$ is (up to scale) the average area of the projections to complex lines. 
In particular,
\begin{displaymath}
\dim \Val_k^{\mathrm{U}(m)}=1+ \min\left(\left\lfloor\frac{k}{2}\right\rfloor,\left\lfloor\frac{2m-k}{2}\right\rfloor\right).
\end{displaymath}

The space of $\U(m)$-invariant and translation invariant curvature measures was studied in \cite{park02}, \cite{bernig_fu_hig} and \cite{bernig_fu_solanes}. It admits a natural basis $\Delta_{k,q},N_{k,q}$ such that $[\Delta_{k,q}]=\mu_{k,q}, [N_{k,q}]=0$; and another natural basis $B_{k,q},\Gamma_{k,q}$ with $[B_{k,q}]=[\Gamma_{k,q}]=\mu_{k,q}$. The dimensions are given by
\begin{displaymath}
\dim \Curv_k^{\mathrm U(m)}=\min(k,2m-k{ -1})+1.
\end{displaymath}

\subsection {Complex space forms}
 Let $(\CP^n_\lambda,G_\lambda)$ denote the complex space form of complex dimension $n$ and holomorphic curvature $4\lambda$ (depending on the sign of $\lambda$ this will be a rescaling of complex hyperbolic space, hermitian space, or complex projective space); the group $G_\lambda$ is the isometry group if $\lambda \neq 0$, or the unitary isometry group $\mathrm{U}(n) \ltimes \mathbb C^n$ if $\lambda=0$. 
The paper \cite{bernig_fu_solanes} displayed several non-canonical isomorphisms
\begin{displaymath}
\mathcal{V}(\CP^n_\lambda)^{G_\lambda} \cong \Val^{\U(n)}
\end{displaymath}
for all $\lambda$. The present paper gives a canonical one.

Relevant bases for $\mathcal{V}(\CP^n_\lambda)^{G_\lambda}$ are the monomials in  certain canonical generators $ t,s$; the valuations $\mu_{k,q}^\lambda$; and the valuations $\tau_{k,q}^\lambda$. We refer to \cite{bernig_fu_solanes} for the definitions.

{ Given a point $x$ in a smooth manifold $M$, we can view \eqref{eq:transfer_forms} as an isomorphism
\begin{equation}\label{eq:bar_tau}
 \bar\tau_x\colon \restrict{\Omega^\bullet(SM)}{S_xM}\to  \bw^\bullet T_x^*M\otimes \Omega^\bullet(ST_xM)\simeq\Omega^\bullet(ST_xM)^{T_xM}
\end{equation}into the space of translation invariant forms of the sphere bundle $ST_xM$. This induces the {\em  transfer map} (cf. \cite{bernig_fu_solanes}*{Prop. 2.5})
\begin{equation}\label{eq_transfer_map}
 \tau_x\colon \mathcal C(M)\to \Curv(T_xM),
\end{equation}
given by $\tau_x([\phi,\omega])=[\phi_x,\bar\tau_x(\omega|_{S_xM})]$.

It follows \cite{bernig_fu_solanes}*{Prop. 2.22}} that there is a canonical identification of $G_\lambda$-invariant smooth curvature measures on $\CP^n_\lambda$ and $\mathrm{Curv}^{\U(n)}$. In particular, the curvature measures $\Delta_{k,q},N_{k,q},B_{k,q},\Gamma_{k,q}$ from the flat case induce curvature measures in the curved case, denoted by the same letters. 
For future reference we  note from \cite{bernig_fu_solanes}
\begin{equation}
[B_{k,p}]=\mu_{k,p}^\lambda,\qquad 
[\Delta_{k,q}]  =\mu_{k,q}^\lambda-\lambda \frac{q+1}{\pi} \mu_{k+2,q+1}^\lambda \label{eq_glob_delta}\\
\end{equation}
as well as
\begin{align}
\tau_{k,q}^\lambda & = \sum_{i=q}^{\left\lfloor \frac{k}{2}\right\rfloor} \binom{i}{q} \mu_{k,i}^\lambda \label{eq_def_tau} \\
\tau_{k,q}^\lambda & = (1-\lambda s) \frac{\pi^k}{\omega_k (k-2q)! (2q)!}v^{\frac{k}{2}-q}u^q,  \label{eq_tau_in_uv}
\end{align}where $v:=t^2(1-\lambda s), u:=4s-v$, and 
\begin{displaymath}
 \omega_n:=\frac{\pi^{n/2}}{\Gamma\left(\frac{n}{2}+1\right)}
\end{displaymath}
is the volume of the $n$-dimensional unit ball. We will also use the notation $s_n=(n+1)\omega_{n+1}$ for the volume of the $n$-dimensional unit sphere and the formulas 
\begin{align}
\omega_{2n}  = \frac{\pi^n}{n!}, \qquad
 \label{eq:volume_odd}\omega_{2n+1} = \frac{2^{2n+1}\pi^n n!}{(2n+1)!}.
\end{align}
Recall also the  recursion relation
\begin{equation}\label{eq:volume_ball_rec}
\omega_{n+2l} = \frac{(2\pi)^l}{(n+2l)(n+2l-2)\cdots (n+2)} \omega_n.
\end{equation}

\subsection{Riemannian valuations and curvature measures}  
Let us now summarize some results from \cite{fu_wannerer}. 

On a riemannian manifold $M$ of dimension $n$, certain natural smooth curvature measures $C_{k,p}, 0 \leq 2p \leq k \leq n$ are constructed within Cartan's calculus, i.e. using solder, connection and curvature forms on the frame bundle. The \emph{Lipschitz-Killing curvature measures} $\Lambda_k,0\leq k\leq n$ are given as special linear combinations of the $C_{k,p}$. We revisit these expressions below, using
	Gray's notion of {\it double forms}. The globalizations of the Lipschitz-Killing curvature measures yield the \emph{intrinsic volumes} $\mu_k=\frac{\pi^k}{\omega_k k!}[\Lambda_k]$, { whose linear span constitutes} an algebra under the Alesker product, called the \emph{Lipschitz-Killing algebra} $\mathcal{LK}(M) \cong \mathbb R[t]/(t^{n+1})$. The main point of this construction is invariance: the action of the Lipschitz-Killing algebra on the curvature measures $C_{k,p}$ is the same for every riemannian manifold. Thus this action may be computed from easy manifolds (like round spheres).

The Lipschitz-Killing curvature measures are characterized among all linear combinations of the $C_{k,p}$ by the property that  for an isometric immersion $e:M \to N$ of riemannian manifolds we have $e_{\mathcal C}^* \Lambda_k^N=\Lambda_k^M$.

%..............................................

\section{Double forms}\label{sec_double_forma_main}

In this section we describe the algebra of double forms, whose utility in integral geometry was first highlighted by Gray \cite{gray_book} (in fact they furnish a convenient formal language for the tensorial aspects of riemannian geometry more generally).
A novel element of our treatment  is the inclusion of double forms corresponding to the boundary
terms  of curvature measures and valuations, which were not considered by Gray. 

\subsection{Double forms associated with a smooth map}
Several of the constructions in this section are taken from \cite{gray69}. 

\subsubsection{Double exterior algebras}
Given finite dimensional real vector spaces $V,W$, we consider the bigraded {\it double exterior algebra}
\begin{displaymath}
\bigwedge\nolimits^{\bullet,\bullet}(W,V):=\bigwedge\nolimits^\bullet W^* \otimes \bigwedge\nolimits^\bullet V^* =\bigoplus_{p,q} \left( \bigwedge\nolimits ^p W^* \right)\otimes \left(\bigwedge\nolimits ^q V^*\right)
\end{displaymath}
with respect to the product 
\begin{displaymath}
(\theta_1 \otimes \psi_1)\wedge (\theta_2 \otimes \psi_2) := (\theta_1 \wedge \theta_2) \otimes(\psi_1\wedge \psi_2).
\end{displaymath}

The elements (double forms) of the summands are said to be of {\it type} $(p,q)$. If $ \omega_1$ is of type $(p_1,q_1)$ and $\bm\omega_2$  of type $(p_2,q_2)$, then  clearly
\begin{displaymath}
 \bm \omega_1 \wedge \bm\omega_2=(-1)^{p_1p_2+q_1q_2}\bm\omega_2 \wedge \bm\omega_1.
 \end{displaymath}
If $\bm\omega$ is of type $(p,q)$ and $X_1,\dots,X_p \in W, \ Y_1,\dots Y_q \in V$ then we denote the evaluation
\begin{displaymath}
\bm\omega( X_1,\dots,X_p ;\ Y_1,\dots Y_q):=\bm \omega( X_1,\dots,X_p )( Y_1,\dots Y_q) \in \RR.
\end{displaymath}

Given linear maps $V\to V', W \to W'$, there is a pullback map
\begin{displaymath}
\bw^{\bullet,\bullet}(W',V')\to \bw^{\bullet,\bullet}(W,V).
\end{displaymath}
{ Given linear maps $l\colon V\to W$ and $m\colon W\to V$}  there is an operation $\bm\omega \mapsto \bm\omega'$ 
{ of degree $(1,-1)$}
\begin{align*}
\bw^{p,q}(V,W) &\to \bw^{p+1,q-1}(V,W) ,\\
\bm\omega'(X_1,\ldots,X_{p+1};Y_2,\ldots,Y_q)&:=\sum_{j=1}^{p+1} (-1)^{j+1}\bm \omega(X_1,\ldots,\widehat{X_j},\ldots,X_{p+1};{ l}X_j,Y_2,\ldots,Y_q)
\end{align*}
and  an operation $\bm\omega \mapsto \bm\omega^\vee$ from forms of type $(p,q)$ to forms of type $(q,p)$:
\begin{align*}
\bw^{p,q}(V,W) &\to \bw^{q,p}(V,W) ,\\
\bm\omega^\vee(X_1,\dots,X_q;Y_1,\dots,Y_p)&: =\bm\omega({ m}Y_1,\dots,{ m}Y_p;{ l}X_1,\dots,{ l}X_q).
\end{align*}
Clearly
\begin{displaymath}
 (\bm\omega \wedge \bm\phi)'=\bm\omega' \wedge \bm\phi+(-1)^{p+q}\bm\omega \wedge \bm\phi'
\end{displaymath}
and 
\begin{displaymath}
 (\bm \omega\wedge \bm \phi)^\vee = \bm\omega^\vee \wedge \bm\phi^\vee.
\end{displaymath}
\subsubsection{Double forms} { Let $M_1, M_2$ be smooth manifolds, and $f: M_1\to M_2$ a smooth 
map. Define the algebra $\Omega^{\bullet,\bullet}(f)$ to be the space of smooth sections  of the bundle $\bigwedge\nolimits^\bullet T^*M_1 \otimes f^*(\bigwedge\nolimits^\bullet T^*M_2) $ over
$M_1$, i.e. with fibers 
\begin{displaymath}
\bigwedge\nolimits^{\bullet,\bullet}(T_xM_1, T_{f(x)}M_2), \quad x \in M_1.
\end{displaymath}
Given another smooth manifold $M_3$ and a smooth map $g:M_3\to M_1$, there is a natural pullback map
\begin{displaymath}
g^*: \Omega^{\bullet,\bullet}(f)\to \Omega^{\bullet,\bullet}(f\circ g)
\end{displaymath}
given explicitly by
\begin{displaymath}
(g^*\bm \omega)_z(Z_1,\dots,Z_k;Y_1,\dots,Y_l) := \omega_{g(z)} (dg(Z_1),\dots,dg(Z_k);Y_1,\dots,Y_l) 
\end{displaymath}
for $\bm \omega \in  \Omega^{k,l}(f), \ Z_1,\dots, Z_k \in T_z M_3, \ Y_1,\dots ,Y_l \in T_{f\circ g(z)} M_2$.

Suppose now that the bundle $TM_2$ is endowed with a connection $\nabla$. Consider the pullback {bundle}  $f^*(TM_2)$ over $M_1$, together with the pullback
connection $f^*\nabla$, characterized by
 \begin{equation} \label{eq_charactezitation_pull_back_connection}
(f^*\nabla)_X f^*Y=f^*(\nabla_{df (X)} Y) ,\qquad X\in T_xM_1                                                                                                                           
\end{equation}
for any vector field $Y$, considered as a section of $TM_2$. Consider also the canonical extension of $f^* \nabla$ to $f^*(\bw^\bullet T^*M_2)$, given by  
\begin{equation}\label{eq_f*nabla}
(( f^*\nabla)_X s) (Z_1,\dots,Z_k)= X(s(Z_1,\dots,Z_k)) -\sum_{j=1}^k s(Z_1,\dots,(f^*\nabla)_{X} Z_j,\dots,Z_k),
\end{equation}
where $s\in \Omega^{0,k}(f)$ (i.e. $s$ is a section of $f^*\bw^k T^*M_2$), and the  $Z_j$ are smooth sections of $f^*TM_2$.

Thus we may consider 
$$\dg s:= (f^*\nabla)s,\qquad s\in \Omega^{0,k}(f)
$$ to be an element of $\Omega^{1,k}(f)$.
Now define the exterior covariant derivative $d_\nabla$ as the linear operator $\Omega^{\bullet,\bullet}(f) \to \Omega^{\bullet,\bullet}(f) $ of degree $(1,0)$ given by
\begin{equation}\label{eq:DG_wedge}
d_\nabla (\omega\otimes s)=d\omega\otimes s +(-1)^p \omega\wedge (f^*\nabla) s,
\end{equation}
where $\omega\in\Omega^p(M_1)$ and $s$ is a section of $f^*{\bigwedge}^\bullet T^*M_2$. 
Explicitly, given $\bm\eta \in \Omega^{p,q}(f)$, and smooth vector fields $X_0,\dots,X_p$ on $M_1$,
\begin{align*}(\dg \bm\eta)(X_0,\ldots, X_p)= \sum_{i=0}^p & (-1)^i (f^*\nabla)_{X_i}( \bm\eta(X_0,\ldots, \widehat{X_i}, \ldots, X_p)) \\
& + \sum_{i<j} (-1)^{i+j} \bm\eta([X_i,X_j], X_0,\ldots, \widehat{X_i},\ldots,\widehat{X_j},\ldots,   X_p),
\end{align*}
where we regard the various $\bm\eta(X_{i_1},\dots,X_{i_p})$ 
as smooth sections of $f^*{\bigwedge}^q T^*M_2$, and the terms of the first sum
may be unpacked as in \eqref{eq_f*nabla}.

\begin{lemma}\label{lem_dnabla}
\begin{enumerate}
\item For double forms $\bm\varphi\in \Omega^{p,q}(f), \bm\theta\in \Omega^{r,s}(f)$:
\begin{equation*}
\dg (\bm\varphi\wedge \bm\theta)= \dg\bm \varphi\wedge \bm\theta +(-1)^p \bm\varphi\wedge \dg\bm \theta.
\end{equation*}
\item If $g:M_3\to M_1$ as above, then  $\dg g^*\bm \eta= g^*\dg \bm \eta$ with respect to the pullback $g^*:\Omega^{\bullet,\bullet}(f)\to  \Omega^{\bullet,\bullet}(g\circ f) $.
\item { If $M_2$ is oriented riemannian and $\nabla$ is the Levi-Civita connection, then 
\begin{equation*}
\dg (\eta \otimes \vol)=d\eta \otimes \vol
\end{equation*}
for any $\eta \in \Omega^\bullet(M_1)$.}
\end{enumerate}
\end{lemma}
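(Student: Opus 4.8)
The plan is to verify all three identities by $\RR$-bilinearity (resp. linearity) reduction to decomposable double forms of the shape $\bm\omega=\omega\otimes s$, with $\omega\in\Omega^p(M_1)$ and $s$ a section of $f^*\bigwedge^q T^*M_2$. On such elements $\dg$ is given by the explicit formula \eqref{eq:DG_wedge}, so in each case everything reduces to standard properties of the exterior derivative on $M_1$ and of the pullback connection $f^*\nabla$ on the bundle $f^*\bigwedge^\bullet T^*M_2$.

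For (1), write $\bm\varphi=\omega\otimes s$ of type $(p,q)$ and $\bm\theta=\eta\otimes t$ of type $(r,s)$, so that $\bm\varphi\wedge\bm\theta=(\omega\wedge\eta)\otimes(s\wedge t)$. Applying \eqref{eq:DG_wedge} to the right-hand side, one needs two inputs: the Leibniz rule $d(\omega\wedge\eta)=d\omega\wedge\eta+(-1)^p\omega\wedge d\eta$ on $M_1$; and the fact that the pullback connection, extended to $f^*\bigwedge^\bullet T^*M_2$ via \eqref{eq_f*nabla}, is a derivation of the wedge product, $(f^*\nabla)(s\wedge t)=((f^*\nabla)s)\wedge t+s\wedge((f^*\nabla)t)$ — a pointwise identity that one reads off directly from \eqref{eq_f*nabla} (equivalently, the standard fact that a linear connection extends to the exterior algebra bundle as a derivation, a property preserved under pullback). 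Collecting the resulting terms and tracking the Koszul signs — crucially using the double-form convention $(\theta_1\otimes\psi_1)\wedge(\theta_2\otimes\psi_2)=(\theta_1\wedge\theta_2)\otimes(\psi_1\wedge\psi_2)$, in which no sign intervenes between $\psi_1$ and $\theta_2$ — produces exactly $\dg\bm\varphi\wedge\bm\theta+(-1)^p\bm\varphi\wedge\dg\bm\theta$. Alternatively one may evaluate both sides on vector fields $X_0,\dots,X_{p+r}$ on $M_1$ and compare term by term using the Koszul-type formula for $\dg$ displayed right after \eqref{eq:DG_wedge}; this route is more laborious but sign-transparent.

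For (2), take $\bm\eta=\omega\otimes s$; then $g^*\bm\eta=g^*\omega\otimes g^*s$, where the second $g^*$ is the tautological identification of sections of $f^*\bigwedge^\bullet T^*M_2$ over $M_1$ with sections of $(f\circ g)^*\bigwedge^\bullet T^*M_2=g^*(f^*\bigwedge^\bullet T^*M_2)$ over $M_3$. The identity $\dg g^*\bm\eta=g^*\dg\bm\eta$ then follows, with no signs to track, from naturality of the exterior derivative, $d(g^*\omega)=g^*(d\omega)$, and functoriality of the pullback connection, $(f\circ g)^*\nabla=g^*(f^*\nabla)$ — whence $((f\circ g)^*\nabla)(g^*s)=g^*((f^*\nabla)s)$ on the exterior algebra bundle — both of which are immediate from the characterization \eqref{eq_charactezitation_pull_back_connection} and \eqref{eq_f*nabla}; one then uses that $g^*$ commutes with wedge products and assembles.

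For (3), interpreting $\vol$ as the section $f^*\vol$ of $f^*\bigwedge^n T^*M_2$ with $n=\dim M_2$ (the general case of $\eta$ following by linearity from the homogeneous case $\eta\in\Omega^p(M_1)$), formula \eqref{eq:DG_wedge} gives $\dg(\eta\otimes\vol)=d\eta\otimes\vol+(-1)^p\eta\wedge(f^*\nabla)(f^*\vol)$, so it suffices to show $(f^*\nabla)(f^*\vol)=0$. By the defining property of the pullback connection extended to the exterior algebra bundle, $(f^*\nabla)(f^*\vol)=f^*(\nabla\vol)$, and $\nabla\vol=0$ because the Levi-Civita connection is metric and orientation-compatible, so the Riemannian volume form is parallel. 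The only genuinely delicate step in the whole lemma is the sign accounting in (1); parts (2) and (3) are formal consequences of the functoriality of pullback connections and the parallelism of the volume form, respectively.
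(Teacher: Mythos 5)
Your proof is correct and follows essentially the same route as the paper's, which disposes of (1) by the derivation property of $f^*\nabla$ on the exterior algebra bundle, of (2) by naturality of $d$ and of the pullback connection, and of (3) by parallelism of the riemannian volume form; you have merely supplied the sign-bookkeeping and the reduction to decomposables that the paper leaves implicit. No gaps.
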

\proof
(1)  is straightforward using
\begin{displaymath}
 (f^*\nabla)(s_1\wedge s_2)=(f^*\nabla s_1)\wedge s_2 + s_1\wedge (f^*\nabla s_2).
\end{displaymath}

(2) follows at once from the definitions, and the corresponding property of the usual
exterior derivative.

(3) is immediate
since the 
volume form $\vol = \vol_{M_2} $ is parallel.
\endproof

{\bf Remark.} The derivative $\dg$ is denoted by $D$ in \cite{gray69}, a symbol we reserve for the Rumin differential.

\subsection{Double forms on a riemannian manifold}

We specialize to the case where  $M_2 = M$ is a smooth oriented riemannian manifold of dimension $n$ with Levi-Civita connection $\nabla$, $M_1:= SM$ is its unit tangent  bundle, and $f$ is the projection $\pi:SM\to M$. A {\it spherical double form } on $M$ is an element of
\begin{displaymath}
 \Omega^{\bullet,\bullet}(SM,M):=  \Omega^{\bullet,\bullet}(\pi).
\end{displaymath}
Define similarly the space 
\begin{displaymath}
	\Omega^{\bullet,\bullet}(M) := \Omega^{\bullet,\bullet}(\id_M)
\end{displaymath}
of {\em double forms} on $M$. The pullback $\pi^*$ embeds the latter
space into the former. {  Using the maps $l= d\pi\colon T_\xi SM\to T_xM$ and the horizontal lift $m=(d\pi|_{H_\xi})^{-1}\colon T_xM{\longrightarrow} T_\xi SM$, we define the operations $'$ and ${}^\vee$ on $\Omega^{\bullet,\bullet}(SM,M)$.

The riemannian metric of $M$ is a double form $\dblg\in \Omega^{1,1}(M)$, with $\dblg'=0$. The curvature tensor 
 $\dblR=\dblR^M \in  \Omega^{2,2}(M)$, where as usual
\begin{displaymath}
 \dblR(W,X;Y,Z)=\dblg(\nabla_{[W,X]}Y-\nabla_W\nabla_X Y+\nabla_X\nabla_W Y;Z).
\end{displaymath} 
The first Bianchi identity may be stated as
\begin{displaymath}
\dblR' =0. 
\end{displaymath}

Given $X \in T_\xi SM$, let 
$$X = X^H + X^V
$$
 be its decomposition into horizontal 
and vertical components with respect to the Levi-Civita connection. Identifying $X^V \in T_\xi(S_{\pi \xi} M)$ with the corresponding vector in $T_{\pi \xi }M$, we define the {\em connection double form} $\bm\omega \in
\Omega^{1,1}(SM,M)$  by 
\begin{displaymath}
 \bm\omega(X;Y)=\dblg( X^V;Y)= \dblg( (\pi^*\nabla)_X \xi; Y),
\end{displaymath}
where in the last expression $\xi$ denotes the tautological section of the bundle $\pi^* TM$.

Similarly,  the canonical contact form 
\begin{displaymath}
 \bm\alpha(X)=\dblg(X^H;\xi)
\end{displaymath}
defines an element in $\Omega^{1,0}(SM,M)$.

We also define  spherical double forms $\dblR_0,\widehat\dblR$ of respective types $(2,1),(2,2)$ by
\begin{equation}\label{eq_def_R0}
(\dblR_0)_\xi(X_1,X_2;Y)=\dblR_x({d\pi}X_1,{d\pi}X_2;Y,\xi),\qquad\widehat\dblR=\dblR+ \bm\alpha^\vee \wedge\dblR_0.
\end{equation}
 In particular $\widehat\dblR_\xi(\cdot,\cdot;\cdot,\xi)=0$.
 It is easily checked that 
\begin{equation}\label{eq_primes}
\quad (\bm\alpha^\vee)' =\bm \alpha, \quad  \dblR_0'  =0, \quad\widehat  \dblR' {= \bm \alpha \wedge  \dblR_0},
\end{equation}
and
\begin{equation} \label{eq_rvee_prime}
( \dblR_0^\vee)'= \dblR_0.
\end{equation}

\medskip We  will make frequent use of the natural identification 
\begin{displaymath}
\Omega^\bullet(SM)\simeq\Omega^{\bullet,0}(SM,M)\quad{\rm via}\quad
\phi\leftrightarrow\phi \otimes 1.
\end{displaymath}
Prominently, $\bm \alpha$ may be thought of as an element of either space.

It will be useful also to identify $\Omega^{\bullet,n}(SM,M)$ with the space of differential forms on $ SM$ with values in the orientation line bundle $\Omega^n(M)$.  In the main thrust  of the present paper, $M$ will be a K\"ahler manifold,
hence endowed with a canonical volume form, so that 
 \begin{equation}\label{eq_omegan}
 \Omega^{\bullet}(SM)\simeq \Omega^{\bullet,n}(SM) \quad {\rm via} \quad \phi\leftrightarrow\phi \otimes \vol_M.
 \end{equation}
 Typically we will make this identification only after the formal calculations in the algebra of
double forms have taken place, and  we will alert the reader on these occasions in order to minimize confusion.
We remark, however, that integration over the normal cycle $\nc(A)$ of a subset  $A\subset M$, which will be a central construction for us, has the following property:
if $M$  is riemannian, but not oriented (or even orientable), one may  pair $\nc(A)$ with elements of $\Omega^{n-1,n}(SM)$ but not with elements of $\Omega^{n-1}(SM)$.

Of course all of the above canonical double forms   may be expressed in terms of the classical language of moving frames. For $\eta $ lying in an open set $U\subset SM$, let $e_0= \eta,\dots,e_{n-1}$ be a orthonormal frame for $T_{\pi \eta} M$.  In particular, each $e_j$ defines a section of $\pi^* TM$, which we suppose smooth.  Define $\theta_i, \omega_{i,j} \in\Omega^1(U)$, and $\Omega_{i,j}\in \Omega^2(U)$, by 
\begin{align}\label{eq_def_solder}
 \theta_i(X)&:= \dblg(e_i;d \pi X),\\
 \omega_{i,j}(X)&:= \dblg(e_i; (\pi^*\nabla)_Xe_j),\label{eq_def_connection}\\
  \Omega_{i,j}(X,Y)&:= \dblR(e_i,e_j;d\pi X,d\pi Y),
\end{align}
for $X,Y \in T_{\eta}SM$. The corresponding structure equations are then 
\begin{align}
\label{eq:structure_equation1}
d\theta_i&=-\sum_{j=0}^{n-1} \omega_{i,j} \wedge \theta_j, \quad \omega_{j,i}=-\omega_{i,j},\\
 d\omega_{i,j} & =-\sum_{k=0}^{n-1} \omega_{i,k} \wedge \omega_{k,j}+ \Omega_{i,j}, \quad \Omega_{j,i}=-\Omega_{i,j} \label{eq:structure_equation2}\\
 d\Omega_{i,j} & =\sum_{k=0}^{n-1} (\Omega_{i,k} \wedge \omega_{k,j}-\omega_{i,k} \wedge \Omega_{k,j}). \label{eq:structure_equation3}
\end{align}

 Note that the $\theta_i$ may also be viewed as local sections of $\pi^* T^*M$. In these terms,
\begin{align*}
  \dblg & = \sum_{i=0}^{n-1} \theta_i  \otimes \theta_i,    &&   \dblR = \frac12 \sum_{i,j=0}^{n-1} \Omega_{i,j}  \otimes (\theta_i \wedge \theta_j),   \\
  \bm\omega & = \sum_{i=0}^{n-1} \omega_{i,0} \otimes  \theta_i,     &&  \dblR_0 = \sum_{i=0}^{n-1} \Omega_{i,0}  \otimes  \theta_i,  \\
\bm\alpha & = \theta_0\otimes 1, &&   \widehat  \dblR = \frac12 \sum_{i,j=1}^{n-1} \Omega_{i,j} \otimes( \theta_i \wedge \theta_j).
\end{align*}

\subsection{Contractions}

Gray \cite{gray_book} introduced  a {\it contraction operator} $C$ on the space of double forms, 
\begin{displaymath} 
C(\bm \eta) (X_1,\ldots, X_{p-1};Y_1,\ldots, Y_{q-1})= \sum_{i=1}^n \bm \eta( X_1,\ldots, X_{p- 1}, e_i;Y_1,\ldots, Y_{q- 1}, e_i)
\end{displaymath}
for $\bm \eta $ of type $(p,q)$ on a euclidean space  $V$,
where $e_1,\ldots,e_n$  is any orthonormal basis of $V$. We need the following simple lemma. 

\begin{lemma}\label{lemma:contraction}
	If $\bm \eta$ is a double form of type $(p,p)$ on a euclidean space 
	$V$ of dimension $n$, then 
	\begin{displaymath}
	\frac{(n-p)!}{p!} C^p(\bm\eta) = (\bm\eta \wedge \dblg^{n-p}) (e_1,\ldots, e_n;e_1,\ldots, e_n)
	\end{displaymath}
	for any orthonormal basis  $e_1,\ldots, e_n$ of $V$.
\end{lemma}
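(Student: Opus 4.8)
The plan is to expand the wedge product $\bm\eta \wedge \dblg^{n-p}$ directly using the definition of the wedge product of double forms, evaluating on an orthonormal basis. Since $\dblg = \sum_i \theta_i \otimes \theta_i$, we have $\dblg^{n-p} = \sum \theta_{j_1}\wedge\cdots\wedge\theta_{j_{n-p}} \otimes \theta_{j_1}\wedge\cdots\wedge\theta_{j_{n-p}}$ summed over all ordered tuples, which up to the factor $(n-p)!$ counting reorderings equals $(n-p)!$ times the sum over increasing tuples of $\theta_J \otimes \theta_J$, where $\theta_J$ denotes the wedge of the corresponding basis covectors. The wedge product in $\Omega^{\bullet,\bullet}$ separates the two slots, so evaluating $(\bm\eta \wedge \dblg^{n-p})(e_1,\dots,e_n;e_1,\dots,e_n)$ amounts to a double Laplace-type expansion: one chooses which $p$ of the $e$'s go into the $\bm\eta$ factor in the first slot and which $p$ go into $\bm\eta$ in the second slot, the complementary $n-p$ indices being forced into the $\dblg^{n-p}$ factor with matching sign.

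First I would record that for an increasing multi-index $I \subset \{1,\dots,n\}$ of size $p$ with complement $I^c$, the factor $\theta_{I^c}\otimes\theta_{I^c}$ from $\dblg^{n-p}$, when evaluated on $(e_{i} : i\in I^c)$ in each slot, contributes $1$, and crucially the antisymmetrization in the wedge product forces the same index set $I$ to be used in both slots of $\bm\eta$ — a term with index set $I$ in the first slot and $I' \neq I$ in the second slot vanishes because then $\dblg^{n-p}$ would have to absorb $I^c$ in the first slot and $(I')^c$ in the second, but $\dblg^{n-p}$ only pairs equal index sets. Next I would track the sign: reordering $(e_1,\dots,e_n)$ into $(e_i : i\in I)$ followed by $(e_i : i \in I^c)$ produces the sign $\sgn(I, I^c)$ in each of the two slots, and since the sign appears squared it is $+1$. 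Hence $(\bm\eta\wedge\dblg^{n-p})(e_1,\dots,e_n;e_1,\dots,e_n) = (n-p)! \sum_{|I|=p} \bm\eta(e_I; e_I)$, the sum over increasing $I$.

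It then remains to identify $\sum_{|I|=p}\bm\eta(e_I;e_I)$ with $\frac{1}{p!}C^p(\bm\eta)$. Unwinding the definition of the contraction operator, $C^p(\bm\eta) = \sum_{i_1,\dots,i_p} \bm\eta(e_{i_1},\dots,e_{i_p}; e_{i_1},\dots,e_{i_p})$, where the sum is over all (not necessarily distinct, not necessarily ordered) tuples; terms with a repeated index vanish by antisymmetry of $\bm\eta$ in either slot, and each set $I$ of $p$ distinct indices is hit by exactly $p!$ ordered tuples, each contributing the same value $\bm\eta(e_I;e_I)$ because permuting the arguments in the first and second slot by the same permutation $\sigma$ multiplies by $\sgn(\sigma)^2 = 1$. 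Therefore $C^p(\bm\eta) = p!\sum_{|I|=p}\bm\eta(e_I;e_I)$, and combining with the previous paragraph gives $\frac{(n-p)!}{p!}C^p(\bm\eta) = (\bm\eta\wedge\dblg^{n-p})(e_1,\dots,e_n;e_1,\dots,e_n)$. The only mildly delicate point — the main obstacle, such as it is — is the bookkeeping of signs in the Laplace expansion and the verification that cross terms (mismatched index sets in the two slots) genuinely vanish; everything else is routine combinatorics of the wedge product of double forms, and independence of the orthonormal basis is automatic since both sides are basis-independent.
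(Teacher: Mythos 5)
Your proof is correct and follows essentially the same route as the paper's: expand $(\bm\eta\wedge\dblg^{n-p})(e_1,\ldots,e_n;e_1,\ldots,e_n)$ via a shuffle (Laplace) expansion, observe that the $\dblg^{n-p}$ factor vanishes unless the complementary index sets in the two slots agree (so cross terms drop and the shuffle signs square to $+1$), and then identify the resulting sum over increasing index sets with $\frac{1}{p!}C^p(\bm\eta)$ by the same repeated-index/overcounting argument. No gaps.
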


\begin{proof}
From the definition of the wedge product of double forms it follows that 
\begin{align*}
	(\bm\eta  & \wedge \dblg^{n-p}) (e_1,\ldots, e_n;e_1,\ldots, e_n)\\
	& =\sum_{\rho,\sigma\in \mathcal{S}_{p,n-p}}  \varepsilon_\rho \varepsilon_\sigma\bm \eta(e_{\rho_1}, \ldots, e_{\rho_p};e_{\sigma_1}, \ldots, e_{\sigma_p}) \dblg^{n-p}(e_{\rho_{p+1}}, \ldots, e_{\rho_{n}};e_{\sigma_{p+1}}, \ldots, e_{\sigma_n}), 
\end{align*} 
where $\epsilon_\rho$ is the sign of the permutation $\rho$ and $\mathcal S_{p,n-p}$ denotes the set of $(p,n-p)$ shuffles, i.e. permutations of $\{1,\dots,n\}$ that are monotonically increasing on $\{1,\dots,p\}$ and  on $\{p+1,\dots,n\}$.
Now 
	\begin{displaymath}
	\dblg^{n-p}(e_{\rho_{p+1}}, \ldots, e_{\rho_{n}};e_{\sigma_{p+1}}, \ldots, e_{\sigma_n}) 
	= \begin{cases}
	(n-p)! & \text{if }  \{\rho_{p+1},\ldots,\rho_n\}=\{\sigma_{p+1},\ldots,\sigma_n\};\\
	0 & \text{otherwise}.
	\end{cases} 
	\end{displaymath}
	
Hence 
	\begin{align*}
	(\bm\eta \wedge \dblg^{n-p})  (e_1,\ldots, e_n;e_1,\ldots, e_n)   & =(n-p)!\sum_{1\leq i_1<\cdots< i_p\leq n } \bm\eta(e_{i_1}, \ldots, e_{i_p};e_{i_1}, \ldots, e_{i_p})\\
	& =\frac{(n-p)!}{p!}\sum_{i_1,\ldots, i_p=1 }^n \bm\eta(e_{i_1}, \ldots, e_{i_p};e_{i_1}, \ldots, e_{i_p})\\
	& = \frac{(n-p)!}{p!} C^p(\bm\eta).
	\end{align*}
\end{proof}

\subsection{Exterior derivatives of riemannian double forms}

\begin{lemma}  Let $\theta_i,\omega_{i,j}$ be defined  by \eqref{eq_def_solder} and \eqref{eq_def_connection}.
Then 
\begin{equation}\label{eq:d_solder}
d_\nabla (1\otimes \theta_i) = - \sum_{j=0}^{n-1}\omega_{i,j} \otimes \theta_j.
 \end{equation}
\end{lemma}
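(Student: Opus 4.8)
The plan is to unwind the definition of $d_\nabla$ and reduce the identity to the classical fact that the connection induced on a dual bundle is described by minus the transpose of the connection matrix of the original frame.

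First I would observe that, in the notation $1\otimes\theta_i$, the symbol $\theta_i$ denotes the section of $\pi^*T^*M$ given by $\theta_i|_\eta=\dblg(e_i;\cdot)$, so that $1\otimes\theta_i$ is the corresponding element of $\Omega^{0,1}(SM,M)=\Gamma(\pi^*T^*M)$. Applying the definition \eqref{eq:DG_wedge} with $p=0$ and $\omega=1$ gives $d_\nabla(1\otimes\theta_i)=(\pi^*\nabla)\theta_i=\dg\theta_i$. Hence the claim is equivalent to the pointwise identity $(\pi^*\nabla)_X\theta_i=-\sum_{j=0}^{n-1}\omega_{i,j}(X)\theta_j$ of sections of $\pi^*T^*M$, for every $X\in T_\eta SM$.

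Next I would note that $\{e_0,\dots,e_{n-1}\}$ is a local frame of $\pi^*TM$ over $U$ which is $\dblg$-orthonormal, so $\{\theta_0,\dots,\theta_{n-1}\}$ is precisely the dual coframe: $\theta_i(e_j)=\dblg(e_i;e_j)=\delta_{ij}$. Writing $(\pi^*\nabla)_X e_j=\sum_k \Gamma^k_j(X)\,e_k$, orthonormality together with the definition \eqref{eq_def_connection} identifies $\Gamma^i_j(X)=\dblg(e_i;(\pi^*\nabla)_X e_j)=\omega_{i,j}(X)$. Differentiating the pairing $\theta_i(e_j)=\delta_{ij}$ and using the Leibniz rule \eqref{eq_f*nabla} for the extension of $\pi^*\nabla$ to $\pi^*T^*M$ yields $((\pi^*\nabla)_X\theta_i)(e_j)=-\theta_i((\pi^*\nabla)_X e_j)=-\Gamma^i_j(X)=-\omega_{i,j}(X)$, and therefore $(\pi^*\nabla)_X\theta_i=-\sum_j\omega_{i,j}(X)\theta_j$. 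Reassembling over all $X$ gives $\dg\theta_i=-\sum_j\omega_{i,j}\otimes\theta_j$ in $\Omega^{1,1}(SM,M)$, which is the assertion.

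There is no serious obstacle here: this is just the $SM$-version of the structure equation for the dual coframe, and what little care is required is purely bookkeeping—distinguishing $\theta_i$ as a $1$-form on $SM$ from $\theta_i$ as a section of $\pi^*T^*M$ (only the latter is meant), tracking which of the two exterior degrees each factor occupies, and checking that the sign attaches to $\omega_{i,j}$ rather than to $\omega_{j,i}$, which is fixed by the index placement in \eqref{eq_def_connection}. Note that torsion-freeness of $\nabla$ is not used for this identity; only the orthonormality of the frame enters.
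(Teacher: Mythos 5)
Your proof is correct and is essentially the paper's argument: both reduce to the Leibniz rule \eqref{eq_f*nabla} for $\pi^*\nabla$ together with $\theta_i(e_j)=\delta_{ij}$ and the definition \eqref{eq_def_connection} of $\omega_{i,j}$. The only cosmetic difference is that you evaluate $(\pi^*\nabla)_X\theta_i$ on the frame vectors $e_j$ and invoke duality, whereas the paper evaluates on an arbitrary section $Y$ and expands $Y=\sum_j\theta_j(Y)e_j$; these are the same computation.
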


\begin{proof}

From the defining relations \eqref{eq:DG_wedge}, \eqref{eq_f*nabla}, \eqref{eq_def_connection}
\begin{align*}d_\nabla (1\otimes\theta_i) (X;Y)&= (\pi^*\nabla)_X \theta_i(Y)\\
&=X(\theta_i(Y))-\theta_i((\pi^*\nabla)_X Y)\\
&=X(\theta_i(Y))-\sum_j \theta_i\big(X(\theta_j(Y))e_j +\theta_j(Y)(\pi^*\nabla)_X e_j\big)\\
&=-\sum_j\theta_j(Y)\omega_{i,j}(X),
\end{align*}
as stated.\end{proof}

\begin{proposition} \label{prop_diff_basic_forms}  The exterior derivatives of $\dblg,\dblR\in \Omega^{\bullet,\bullet}(M)$ and $\bm\alpha^\vee,\bm\omega\in \Omega^{\bullet,\bullet}(SM,M)$ are
	\begin{equation} \label{eq_gray_diff_basic_forms}
	\dg \dblg=0, \quad \dg \dblR=0, \quad \dg \bm\alpha = \bm \omega', 
	\quad \dg \bm\alpha^\vee=\bm\omega, \quad \dg \bm\omega=\dblR_0.
\end{equation}
\end{proposition}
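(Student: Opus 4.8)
The plan is to verify each of the five identities in \eqref{eq_gray_diff_basic_forms} by unwinding the definitions of $\dg$, the operations $'$ and ${}^\vee$, and the canonical double forms in terms of the moving-frame data $\theta_i,\omega_{i,j},\Omega_{i,j}$, then invoking the structure equations \eqref{eq:structure_equation1}--\eqref{eq:structure_equation3}. All five statements are local, so I may fix an open set $U\subset SM$ and an adapted orthonormal frame $e_0=\eta,e_1,\dots,e_{n-1}$ as set up before the structure equations. Throughout I will use the Leibniz rule of Lemma~\ref{lem_dnabla}(1), the naturality of $\dg$ under pullback in Lemma~\ref{lem_dnabla}(2) to reduce the two identities on $M$ to computations there, the earlier lemma $\dg(1\otimes\theta_i)=-\sum_j\omega_{i,j}\otimes\theta_j$, and the frame expressions $\dblg=\sum_i\theta_i\otimes\theta_i$, $\dblR=\tfrac12\sum_{i,j}\Omega_{i,j}\otimes(\theta_i\wedge\theta_j)$, $\bm\omega=\sum_i\omega_{i,0}\otimes\theta_i$, $\bm\alpha=\theta_0\otimes 1$, $\dblR_0=\sum_i\Omega_{i,0}\otimes\theta_i$.

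Concretely: for $\dg\dblg=0$, apply Leibniz to $\sum_i\theta_i\otimes\theta_i$, use the frame lemma, and observe that the two resulting sums $\sum_{i,j}\omega_{i,j}\otimes(\theta_j\otimes\theta_i+\theta_i\otimes\theta_j)$ cancel by the antisymmetry $\omega_{j,i}=-\omega_{i,j}$ from \eqref{eq:structure_equation1}. (Alternatively one can quote that the Levi-Civita connection is metric.) For $\dg\dblR=0$, apply Leibniz to $\tfrac12\sum_{i,j}\Omega_{i,j}\otimes(\theta_i\wedge\theta_j)$: the $d\Omega_{i,j}$ term is handled by \eqref{eq:structure_equation3}, the $\dg(\theta_i\wedge\theta_j)$ term by the frame lemma and \eqref{eq:structure_equation1}, and the curvature terms $\Omega\wedge\omega$ cancel against the connection terms after relabeling indices and using the antisymmetries; this is exactly the second Bianchi identity repackaged in double-form language, so the bookkeeping is the only content. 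For $\dg\bm\alpha=\bm\omega'$: since $\bm\alpha=1\otimes\theta_0$ (identifying $\theta_0$ with the section it defines), the frame lemma gives $\dg\bm\alpha=-\sum_j\omega_{0,j}\otimes\theta_j=\sum_j\omega_{j,0}\otimes\theta_j$, and one checks directly from the definition of $'$ applied to $\bm\omega=\sum_i\omega_{i,0}\otimes\theta_i$, using $l=d\pi$ and $\theta_i(d\pi\,X)=\dblg(e_i;d\pi X)$, that $\bm\omega'$ equals the same thing — here $\bm\omega'$ has type $(2,0)$ so it is a genuine $2$-form on $SM$, namely $\sum_i\omega_{i,0}\wedge\theta_i$ viewed appropriately.

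For $\dg\bm\alpha^\vee=\bm\omega$: by definition $\bm\alpha^\vee$ is the type-$(1,1)$ double form obtained from $\bm\alpha=\theta_0\otimes 1$ via $m=$ horizontal lift and $l=d\pi$; unwinding, $\bm\alpha^\vee(X;Y)=\dblg(\text{(horizontal lift of }Y);\ \xi)$-type expression, which in the frame reads $\bm\alpha^\vee=\sum_i\theta_i\otimes\theta_i$ restricted suitably — more usefully, one checks $\bm\alpha^\vee$ has frame expression making $\dg\bm\alpha^\vee$ reduce, via $\dg(1\otimes\theta_i)=-\sum_j\omega_{i,j}\otimes\theta_j$ and $d\theta_i=-\sum_j\omega_{i,j}\wedge\theta_j$, to $\sum_i\omega_{i,0}\otimes\theta_i=\bm\omega$. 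Finally $\dg\bm\omega=\dblR_0$: apply Leibniz to $\bm\omega=\sum_i\omega_{i,0}\otimes\theta_i$, use $d\omega_{i,0}=-\sum_k\omega_{i,k}\wedge\omega_{k,0}+\Omega_{i,0}$ from \eqref{eq:structure_equation2} and the frame lemma for $\dg(1\otimes\theta_i)$; the two double sums $\sum_{i,k}\omega_{i,k}\wedge\omega_{k,0}\otimes\theta_i$ (from $d\omega_{i,0}$) and $\sum_{i,k}\omega_{i,0}\wedge\omega_{i,k}\otimes\theta_k$ (from $\dg\theta_i$) cancel after swapping the names $i\leftrightarrow k$ and using $\omega_{k,i}=-\omega_{i,k}$ together with sign from reordering the wedge, leaving exactly $\sum_i\Omega_{i,0}\otimes\theta_i=\dblR_0$.

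The main obstacle is purely organizational: keeping the signs straight when commuting wedge products of double forms (where the Koszul sign depends on the \emph{sum} of both degrees, $(-1)^{p_1p_2+q_1q_2}$) against the type-shifting operation $'$ and the transposition ${}^\vee$, and matching the summation indices so that the structure-equation cancellations appear. I expect the $\dg\dblR=0$ and $\dg\bm\omega=\dblR_0$ identities to require the most careful index chasing, since both rely on a precise cancellation between a ``$\Omega\wedge\omega$'' contribution and a ``$\omega\wedge\Omega$'' or ``$\omega\wedge\omega\otimes\theta$'' contribution; the $\dblg$ and $\bm\alpha$ identities are nearly immediate. There is no conceptual difficulty once the frame dictionary above is in place — every step is an application of Lemma~\ref{lem_dnabla}(1)--(2), the frame lemma \eqref{eq:d_solder}, and one of the three structure equations — so the proof in the paper is presumably short, essentially just displaying these reductions.
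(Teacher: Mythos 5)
Your proposal is correct and follows the same route as the paper, which simply declares the proposition ``straightforward'' and displays only the computation of $\dg\bm\omega=\dblR_0$ via the frame expressions, the Leibniz rule, \eqref{eq:d_solder}, and the second structure equation --- a computation you reproduce exactly, and your sketches of the remaining four identities are likewise sound. The only blemish is a notational slip in your treatment of $\dg\bm\alpha=\bm\omega'$, where you write $\bm\alpha=1\otimes\theta_0$ (that is $\bm\alpha^\vee$, of type $(0,1)$; the contact form is $\theta_0\otimes 1$, of type $(1,0)$, so that $\dg\bm\alpha=d\theta_0\otimes 1=\sum_j(\omega_{j,0}\wedge\theta_j)\otimes 1=\bm\omega'$), but since you state the correct dictionary $\bm\alpha=\theta_0\otimes 1$ earlier, this is immediately repaired and does not affect the argument.
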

\begin{proof}
	Straightforward.
	Let us prove for instance the last one:
	\begin{align*}
	\dg\bm\omega
	& =\sum_{i=0}^{n-1}d\omega_{i,0}\otimes  \theta_i -\omega_{i,0}\wedge d_\nabla(1\otimes \theta_i)\\
	&=-\sum_{i,j=0}^{n-1}\omega_{i,j}\wedge \omega_{j,0}\otimes  \theta_i +\sum_{i=0}^{n-1}\Omega_{i,0}  \otimes \theta_i +\sum_{i,j=0}^{n-1}(\omega_{i,0}\wedge \omega_{i,j})  \otimes \theta_i\\
	&=\sum_{i=0}^{n-1}\Omega_{i,0}\otimes \theta_i\\
	&  =	\dblR_0.
	\end{align*}
\end{proof}

\subsection{Lipschitz-Killing curvature measures}  
Let us now express in terms of double forms the Lipschitz-Killing curvature measures $\Lambda_k$  defined in \cite{fu_wannerer} and \cite{bernig_fu_solanes}.
Recall that these objects associate elements $\Lambda_k^M \in \calC(M)$ to any smooth riemannian manifold $M$,
with the property that if $N \subset M$ is a smooth submanifold of dimension $k$ then $\Lambda_k^M(N,\cdot)=$ the restriction of $k$-dimensional volume to $N$.

Given  a riemannian manifold $M^n$,  define elements of $\Omega^{ n-k-1,n-k-1}(SM,M)$ by
\begin{align*}
 \bm \psi_{-1}:=&0=:\tilde{ \bm \psi}_{-1},\\
\bm \psi_{n-k-1}& = \sum_{l=0}^{\lfloor \frac{n-k-1}{2}\rfloor}  d_{n,k,l} \dblR^l \wedge\bm  \omega^{n-2l-k-1} \\
\tilde {\bm \psi}_{n-k-1} & =  \sum_{l=0}^{\lfloor \frac{n-k-1}{2}\rfloor} (n-2l-k)d_{n,k,l} \dblR^l \wedge \bm  \omega^{n-2l-k-1}, \quad  k=0,\ldots,n-1,
\end{align*}
where 
\begin{align} \label{eq:def_c_tilde}
 d_{n,k,l} &= \frac{\pi^k}{(k+1)!\omega_k} \frac{1}{2^l} \binom{\frac{k}{2}+l}{l} \frac{\omega_{k+2l}}{\pi^{k+2l}(n-2l-k)! \omega_{n-2l-k}}
 \\
 \notag &= \left[(2\pi)^l l! (k+1)! (n-2l-k)! \omega_{n-2l-k}\right]^{-1}.
\end{align}
These constants will be taken to be zero whenever any index $n,k,l$ is a half-integer.

The spherical double form $\tilde{\bm\psi}_{n-k-1}$ will be used later in Section \ref{sec_hermitian_double}.
For later use we record the relation 
\begin{equation} \label{eq_recursion_tilde_c}
 (n-2l-k-1)d_{n,k,l}=(l+1)d_{n,k,l+1}
\end{equation}
and its immediate consequence
\begin{displaymath}
 d_{n,k,l}=\frac{(n-k-1)!!}{ l!(n-2l-k-1)!!} d_{n,k,0}, \quad  d_{n,k,0} =( (k+1)! (n-k)!\omega_{n-k})\inv,
\end{displaymath}
where as usual $n!!=n(n-2)!!$, ${ (-1)}!!=1$, and $0!!=1$. 

Also for $m<n$
\begin{equation}\label{eq_d_relation_1}
d_{2n,k,l} \frac 1{(2\pi)^e e!} \frac {\omega_{2n-2l-k} (2n-2l-k)!}{\omega_{2m-2l-2e-k} (2m-2l-2e-k)!}
= d_{2m,k,l+e} \binom {l+e} e
\end{equation}
and
\begin{equation}\label{eq_d_relation_2}
d_{2n,k,l} \frac{(2n-2l-k)!}{(2m-2l-k+1)!} \frac{2^{m-l-\frac{k}{2}+1} \omega_{2n-2l-k}}{\omega_{2m-2l-k+1}}  
=  \frac{\pi^k}{(k+1)!\omega_k} \frac{1}{\left(\frac{k}{2}\right)!\left(m-\frac{k}{2}\right)!} \frac{1}{2^{m-\frac{k}{2}} \pi^m } \binom{m-\frac k 2} l.
\end{equation}

Let 
\begin{displaymath}
 \bm \kappa_k=\begin{cases}
\frac{1}{k!\left(\frac{n-k} 2\right)!(2\pi)^{\frac{n-k} 2}} \dblR^{\frac{n-k} 2} \wedge \dblg^{k}, & n-k {\rm \  even}\\
0 &{\rm otherwise.}
\end{cases}
\end{displaymath} 

 \begin{lemma}\label{lem:LKF CMs}
The  Lipschitz-Killing curvature measures 
 are given by
\begin{displaymath}
\Lambda_k^M=\left[\bm \kappa_k , (k+1)\bm  \alpha^\vee\wedge  \dblg^k\wedge \bm \psi_{n-k-1}\right]
{ =(k+1) \left[d_{n,k,\frac{n-k}2} \dblR^{\frac{n-k} 2} \wedge \dblg^{k} , \bm  \alpha^\vee\wedge  \dblg^k\wedge \bm \psi_{n-k-1}\right],}
\end{displaymath}
$k=0,\dots,n$,  under the identifications 
$\Omega^{n,n}(M,M) \simeq \Omega^n(M),  \ \Omega^{n-1,n}(SM,M) \simeq \Omega^{n-1}(SM)$ of \eqref{eq_omegan}.
\end{lemma}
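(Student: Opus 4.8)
The plan is to recall from \cite{fu_wannerer} (see also \cite{bernig_fu_solanes}) the description of the Lipschitz--Killing curvature measure $\Lambda_k^M$ in Cartan's calculus---its inner term a universal polynomial in the curvature forms $\Omega_{i,j}$, and its boundary term a universal polynomial in the solder forms $\theta_i$, the connection forms $\omega_{i,j}$ and the $\Omega_{i,j}$ relative to an adapted local frame $e_0=\eta,e_1,\dots,e_{n-1}$ of $SM$---and then to check that the two displayed double--form expressions reproduce these after the identifications $\Omega^{n,n}(M,M)\simeq\Omega^n(M)$ and $\Omega^{n-1,n}(SM,M)\simeq\Omega^{n-1}(SM)$ of \eqref{eq_omegan}. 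The equality of the two displayed expressions for $\Lambda_k^M$ is immediate: putting $l=\tfrac{n-k}2$ in \eqref{eq:def_c_tilde} one has $n-2l-k=0$, hence $\omega_0=0!=1$ and $(k+1)\,d_{n,k,\frac{n-k}2}=\big[(2\pi)^{\frac{n-k}2}(\tfrac{n-k}2)!\,k!\big]^{-1}$, which is precisely the scalar in the definition of $\bm\kappa_k$; thus $\bm\kappa_k=(k+1)\,d_{n,k,\frac{n-k}2}\,\dblR^{\frac{n-k}2}\wedge\dblg^k$ when $n-k$ is even, and both sides vanish otherwise. So it remains to prove the first equality.

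For the inner term, the identification $\Omega^{n,n}(M,M)\simeq\Omega^n(M)$ sends a type--$(n,n)$ double form $\bm\eta$ to the $n$--form $(X_1,\dots,X_n)\mapsto\bm\eta(X_1,\dots,X_n;e_1,\dots,e_n)$ for any positively oriented orthonormal frame. Applying this to $\bm\kappa_k$ and using Lemma~\ref{lemma:contraction} to rewrite $(\dblR^{\frac{n-k}2}\wedge\dblg^k)(e_1,\dots,e_n;e_1,\dots,e_n)$ as $\tfrac{k!}{(n-k)!}C^{n-k}(\dblR^{\frac{n-k}2})$, one recognizes the standard Lipschitz--Killing curvature integrand; for $k=0$ this is the Chern--Gauss--Bonnet density $\tfrac1{(2\pi)^{n/2}}\mathrm{Pf}(\dblR)$ of \cite{chern45}, consistent with $[\Lambda_0]=\chi$. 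This matches the inner term of $\Lambda_k^M$ in \cite{fu_wannerer}.

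For the boundary term, fix the adapted frame and expand
\begin{displaymath}
(k+1)\,\bm\alpha^\vee\wedge\dblg^k\wedge\bm\psi_{n-k-1}=(k+1)\sum_{l} d_{n,k,l}\,\bm\alpha^\vee\wedge\dblg^k\wedge\dblR^{l}\wedge\bm\omega^{n-2l-k-1}
\end{displaymath}
using the dictionary $\bm\alpha^\vee=1\otimes\theta_0$, $\dblg=\sum_i\theta_i\otimes\theta_i$, $\dblR=\tfrac12\sum_{i,j}\Omega_{i,j}\otimes\theta_i\wedge\theta_j$, $\bm\omega=\sum_i\omega_{i,0}\otimes\theta_i$, which give $\dblg^k=k!\sum_{|J|=k}\Theta_J\otimes\Theta_J$ and $\bm\omega^m=m!\sum(\omega_{\bullet,0})\otimes\Theta_\bullet$ after collecting repeated indices (terms with a repeated index vanish). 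The identification $\Omega^{n-1,n}(SM,M)\simeq\Omega^{n-1}(SM)$ retains only the component whose $\bigwedge^\bullet T^*M$--part is $\theta_0\wedge\dots\wedge\theta_{n-1}$; since $\bm\alpha^\vee$ already contributes the index $0$, this forces the remaining indices $1,\dots,n-1$ to be split into a $k$--subset feeding $\dblg^k$, a $2l$--subset feeding $\dblR^l$, and an $(n-2l-k-1)$--subset feeding $\bm\omega^{n-2l-k-1}$. Summing over all such splittings---this is where a shuffle count (equivalently Lemma~\ref{lemma:contraction}) assembles the sign, the factors $k!$, $l!$, $(n-2l-k-1)!$ and $2^{-l}$---produces, for each $l$, the scalar $d_{n,k,l}$ times the corresponding Cartan monomial in the $\theta_i$, $\Omega_{i,j}$ and $\omega_{i,0}$; comparing with \cite{fu_wannerer} shows the total equals the boundary term of $\Lambda_k^M$.

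The main obstacle is exactly this normalization bookkeeping: tracking the permutation signs incurred in reordering the solder forms into $\theta_0\wedge\dots\wedge\theta_{n-1}$, and checking that the product of multiplicities from the expansions of $\dblg^k$, $\dblR^l$, $\bm\omega^{n-2l-k-1}$ together with the combinatorial choices collapses to the closed form of $d_{n,k,l}$ in \eqref{eq:def_c_tilde}; the binomial $\binom{k/2+l}{l}$ is the least transparent factor and should arise from the number of ways of interleaving the $\dblg$-- and $\dblR$--indices in a given monomial. As a cross--check (or an alternative route avoiding the explicit constants) one may instead verify directly that the right--hand side is a linear combination of the $C_{k,p}$ whose globalization is $\tfrac{\omega_k k!}{\pi^k}\mu_k$ and which pulls back correctly under isometric immersions---using the Gauss equation together with the identities \eqref{eq_d_relation_1}--\eqref{eq_d_relation_2}---and then invoke the characterization of the $\Lambda_k$ recalled above.
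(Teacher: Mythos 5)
Your proposal is correct and takes essentially the same route as the paper, whose entire proof is the single sentence that the lemma is a direct translation of Definitions 3.1 and 3.10 of \cite{fu_wannerer} under the dictionary between double forms and Cartan's calculus. Your explicit verification of the second equality via $(k+1)\,d_{n,k,\frac{n-k}{2}}=\bigl[(2\pi)^{\frac{n-k}{2}}(\tfrac{n-k}{2})!\,k!\bigr]^{-1}$ and your unpacking of the frame expansion are just the details the paper leaves implicit.
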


\begin{proof}
This is a direct translation of the Definitions~3.1 and 3.10 from \cite {fu_wannerer}.
\end{proof}

The following computation will be useful in Section \ref{sec_hermitian_double}.
\begin{lemma}\label{lem_d_nabla_psi}
\begin{equation*}
  \dg (\bm\alpha^\vee \wedge \bm\psi_{n-k-1})
  ={ d_{n,k,\frac{n-k-2}{2}}\bm\alpha^\vee \wedge \dblR^{\frac{n-k-2}{2}} \wedge \dblR_0}+
  \sum_l  d_{ n,k,l}  \hat \dblR^l \wedge \bm\omega^{n-2l-k}.
\end{equation*}
\end{lemma}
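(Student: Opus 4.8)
The plan is to expand everything via the Leibniz rule of Lemma~\ref{lem_dnabla}(1) and the basic derivatives of Proposition~\ref{prop_diff_basic_forms}, and then reconcile the two sides by a reindexing governed by the recursion \eqref{eq_recursion_tilde_c}. First, since $\bm\alpha^\vee$ has type $(0,1)$ and $\dg\bm\alpha^\vee=\bm\omega$, the Leibniz rule gives
\begin{equation*}
\dg(\bm\alpha^\vee\wedge\bm\psi_{n-k-1})=\bm\omega\wedge\bm\psi_{n-k-1}+\bm\alpha^\vee\wedge\dg\bm\psi_{n-k-1}.
\end{equation*}
To compute $\dg\bm\psi_{n-k-1}$ I would use $\dg\dblR=0$ (so $\dg$ of each summand only hits the factor $\bm\omega^{n-2l-k-1}$) together with the elementary identity $\dg(\bm\omega^m)=m\,\dblR_0\wedge\bm\omega^{m-1}$, which follows from $\dg\bm\omega=\dblR_0$, the Leibniz rule, and the sign rule $\dblR_0\wedge\bm\omega=-\bm\omega\wedge\dblR_0$ for the types $(2,1)$ and $(1,1)$ (the two sign contributions cancel at each step). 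This yields
\begin{equation*}
\dg\bm\psi_{n-k-1}=\sum_l d_{n,k,l}\,(n-2l-k-1)\,\dblR^l\wedge\dblR_0\wedge\bm\omega^{n-2l-k-2},
\end{equation*}
using that $\dblR$ commutes with $\dblR_0$.

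Next I would analyze the right-hand side of the claimed identity. Because $\bm\alpha^\vee$ has odd fibre degree, $(\bm\alpha^\vee)^2=0$, hence $(\bm\alpha^\vee\wedge\dblR_0)^2=0$, and since $\dblR$ and $\bm\alpha^\vee\wedge\dblR_0$ are both of type $(2,2)$ and commute, the binomial expansion collapses to
\begin{equation*}
\hat\dblR^l=(\dblR+\bm\alpha^\vee\wedge\dblR_0)^l=\dblR^l+l\,\dblR^{l-1}\wedge\bm\alpha^\vee\wedge\dblR_0.
\end{equation*}
Therefore $\sum_l d_{n,k,l}\hat\dblR^l\wedge\bm\omega^{n-2l-k}$ splits as $\bm\omega\wedge\bm\psi_{n-k-1}$ (the $\dblR^l$ part, matching the first term above exactly) plus $\bm\alpha^\vee\wedge\sum_l l\,d_{n,k,l}\,\dblR^{l-1}\wedge\dblR_0\wedge\bm\omega^{n-2l-k}$, after moving $\bm\alpha^\vee$ past the even-type factor $\dblR^{l-1}$.

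It then remains to identify $\sum_l l\,d_{n,k,l}\,\dblR^{l-1}\wedge\dblR_0\wedge\bm\omega^{n-2l-k}$ with $\dg\bm\psi_{n-k-1}+d_{n,k,\frac{n-k-2}{2}}\dblR^{\frac{n-k-2}{2}}\wedge\dblR_0$. Substituting $l=m+1$ in the left sum and applying \eqref{eq_recursion_tilde_c} in the form $(m+1)d_{n,k,m+1}=(n-2m-k-1)d_{n,k,m}$ turns it into $\sum_m d_{n,k,m}(n-2m-k-1)\dblR^m\wedge\dblR_0\wedge\bm\omega^{n-2m-k-2}$, which is precisely $\dg\bm\psi_{n-k-1}$ except that the summation range is shorter by one. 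The discrepancy is exactly the term $m=\tfrac{n-k-2}{2}$, which occurs only when $n-k$ is even, in which case $n-2m-k-2=0$ and $n-2m-k-1=1$, so that this term equals $d_{n,k,\frac{n-k-2}{2}}\dblR^{\frac{n-k-2}{2}}\wedge\dblR_0$; when $n-k$ is odd this coefficient is a half-integer index and vanishes by convention, consistent with the range bookkeeping. Collecting the pieces gives the stated formula.

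The only delicate point is the sign and index bookkeeping: one must track the graded-commutativity signs of the double-form algebra carefully enough to be sure that $\dg(\bm\omega^m)=m\,\dblR_0\wedge\bm\omega^{m-1}$ (rather than with an alternating sign), and one must pin down the extremal index in the two reindexed sums so that precisely the isolated $\bm\alpha^\vee\wedge\dblR^{(n-k-2)/2}\wedge\dblR_0$ term is left over. Everything else is a routine application of Lemma~\ref{lem_dnabla} and Proposition~\ref{prop_diff_basic_forms}.
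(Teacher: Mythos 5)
Your argument is correct and essentially coincides with the paper's own proof, just run in the opposite direction: the paper applies $\dg$ to each summand $\bm\alpha^\vee\wedge\dblR^l\wedge\bm\omega^{n-2l-k-1}$ and then substitutes $\dblR=\hat\dblR-\bm\alpha^\vee\wedge\dblR_0$, while you expand $\hat\dblR^l$ on the right-hand side; both reduce to the same telescoping via \eqref{eq_recursion_tilde_c} and the same endpoint analysis. The only blemish is a sign in your framing sentence: the identity to be checked is $\dg\bm\psi_{n-k-1}=\sum_l l\,d_{n,k,l}\,\dblR^{l-1}\wedge\dblR_0\wedge\bm\omega^{n-2l-k}+d_{n,k,\frac{n-k-2}{2}}\dblR^{\frac{n-k-2}{2}}\wedge\dblR_0$ (the isolated term belongs on this side, not added to the reindexed sum), and this is precisely what your subsequent reindexing establishes.
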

As usual, constants with half-integer indices are taken to be zero.
\proof

Applying Proposition \ref{prop_diff_basic_forms}, we compute
\begin{align*}
\dg ( \bm\alpha^\vee \wedge \dblR^l \wedge \bm\omega^{n-2l-k-1}) & =(\dg \bm\alpha^\vee) \wedge \dblR^l \wedge \bm\omega^{n-2l-k-1} \\
 & \quad + (n-2l-k-1)\bm\alpha^\vee \wedge \dblR^l \wedge \dg\bm \omega \wedge \bm\omega^{n-2l-k-2}\\
 & =\dblR^l \wedge \bm\omega^{n-2l-k} + (n-2l-k-1)\bm\alpha^\vee \wedge \dblR^l \wedge \dblR_0 \wedge \bm\omega^{n-2l-k-2}.
 \end{align*}
By \eqref{eq_def_R0}, the first term may be expressed as   
\begin{align*}
\dblR^l \wedge \bm\omega^{n-2l-k}&= (\hat \dblR-\bm\alpha^\vee \wedge \dblR_0)^l \wedge\bm \omega^{n-2l-k} 
 \\
 &= \hat \dblR^l \wedge\bm \omega^{n-2l-k} - l \bm\alpha^\vee \wedge \dblR_0 \wedge \dblR^{l-1} \wedge \bm\omega^{n-2l-k}.
\end{align*}

Thus
\begin{align*}
  \dg (\bm\alpha^\vee \wedge \bm\psi_{n-k-1})&=\dg \sum_{l=0}^{\left\lfloor\frac{n-k-1}{2}\right\rfloor} d_{ n,k,l}\bm\alpha^\vee \wedge \dblR^l \wedge \bm\omega^{n-2l-k-1} \\
& =\sum_{l=0}^{\left\lfloor\frac{n-k-1}{2}\right\rfloor} d_{ n,k,l}  \hat \dblR^l \wedge \bm\omega^{n-2l-k}\\
 & \quad + \sum_{l=0}^{\left\lfloor\frac{n-k-1}{2}\right\rfloor-1} {\left[(n-2l-k-1)d_{n,k,l}-(l+1)d_{n,k,l+1}\right]}\bm\alpha^\vee \wedge \dblR_0 \wedge \dblR^l \wedge\bm \omega^{n-2l-k-2} \\
 & \quad + { d_{n,k,\frac{n-k-2}{2}}\bm\alpha^\vee \wedge \dblR^{\frac{n-k-2}{2}} \wedge \dblR_0}.
\end{align*}
Now \eqref{eq_recursion_tilde_c} yields the result.
\endproof

%....................................

\section{Hermitian double forms}\label{sec_hermitian_double}  The present section 
forms the algebraic nucleus of the paper. We study the rich structure of double forms on a K\"ahler manifold, extending
the more rudimentary apparatus on riemannian manifolds examined in  the last section.
 One decisive 
feature is an additional generator, the {\it Gray double form} $\dblG$, modeled on the curvature tensor of the standard
metric on complex projective space. 

The primary valuation-theoretic objects of study are based on the canonical K\"ahler double forms 
$ {\bm\gamma}_{k,p}, \tilde {\bm\gamma}_{k,p}$.  In Propositions \ref{prop_rumin_in_cartan_calculus} and \ref{prop:polynomiality prime} we show that 
the valuation-theoretic  operations on these objects may be expressed invariantly in an algebraic model. We then show
in Theorem \ref{thm_restrict_gamma} that they are invariant under K\"ahler embeddings. These results will be crucial in the proofs of Theorems 
 A and B in Section \ref{sec_alg structure}.
Finally, in preparation for an argument by genericity
 we show in Theorem \ref{thm_pointwise_lemma} that a full dimensional cone of such tensors actually appear as curvature tensors of complex 
analytic subvarieties of $\CC^n$.
 
\subsection{Algebraic curvature tensors}
An {\it algebraic curvature tensor} on a real vector space $V$ is a symmetric double form
$\dblR= \dblR^\vee \in  \bw^{2,2}(V,V)$
that satisfies the Bianchi identity
\begin{displaymath}
\dblR' =0.
\end{displaymath}
Denote the space of all algebraic curvature tensors by $\calR = \calR(V)$. 
Denote by $\calK=\calK(\CC^n)$ the subspace of $\calR(\CC^n)\subset \bw^{2,2}(\CC^n, \CC^n)$ consisting of all algebraic curvature tensors that satisfy the additional condition
\begin{equation}\label{eq:kahler_symmetry}
\dblR(JX,JY; Z,W)= \dblR(X,Y;JZ,JW)= \dblR(X,Y;Z,W),
\end{equation} where $JX=\sqrt{-1}\cdot X.$
We refer to $\calK$ as
 \emph{the space of algebraic K\"ahler curvature tensors}, as the curvature tensor of a Kähler manifold satisfies \eqref{eq:kahler_symmetry} (see e.g. \cite[p.\ 165]{kobayashi_nomizu_vol2}).

\subsection{Double forms on Kähler manifolds}\label{sect_blocks} 
 Let $M$ be an $n$-dimensional Kähler manifold, with riemannian metric $\dblg\in\Omega^{1,1}(M)$, complex structure $J$, and Kähler form $F\in \Omega^{2}(M)$ related by
 \[
  F(X,Y)= \dblg(JX;Y).
 \]

 Let $\bm\beta \in \Omega^{1,0}(SM,M)$ be given by $$\bm\beta_\xi(X)=\dblg(J\xi;d\pi X).$$  Note that $(\bm\beta^\vee)'=\bm \beta.$

Define the {\it Gray double form} $\dblG\in\Omega^{2,2}(M)$ by 
 \begin{align}\notag
  \dblG(W,X;Y,Z)& :=\dblg(W;Y)\dblg(X;Z)-\dblg(W;Z)g(X;Y)+\dblg(JW;Y)\dblg(JX;Z)\\
  & \quad -\dblg(JW;Z)\dblg(JX;Y)+2\dblg(JW;X)\dblg(JY;Z).\label{eq:def_G}
 \end{align}
 Note that if $M$ is a space of constant holomorphic sectional curvature $4\lambda$, then  $\dblR=\lambda \dblG$ (see e.g.  \cite{gray_book}*{Lemma 6.9}).

\begin{definition}\label{def_GAMMA} Define  for $k<2n$ and  $ 0\leq 2p\leq k$ the elements
\begin{align}
\bm\gamma_{k,p} &=\bm\alpha^\vee \wedge  \dblG^p \wedge \dblg^{k-2p}\wedge \bm\psi_{2n-k-1}\label{eq_def_gamma_kp}\\
\tilde {\bm\gamma}_{k,p} & = \bm \alpha^\vee \wedge \bm \beta\wedge  \bm\beta^\vee
 \wedge  \dblG^p \wedge \dblg^{k-2p-1} \wedge  \tilde {\bm\psi}_{2n-k-1}
\end{align}
of $\Omega^{2n-1,2n} (SM,M)\simeq  \Omega^{2n-1} (SM)$, and put $\bm\gamma_{2n,p}=\widetilde{\bm \gamma}_{2n,p}=0$.  Let also 
\begin{displaymath}
 \bm\varphi_{k,p}= d_{2n,k, n-\frac k 2}
\dblG^p\wedge \dblg^{k-2p}\wedge \dblR^{n-\frac{k}{2}}
\end{displaymath} 
in $\Omega^{2n,2n}(M)\simeq\Omega^{2n}(M)$.

Define the curvature measures
\begin{align}
\label{eq_def_Gamma_interior} \Gamma_{k,p}
&= \left[ \bm \varphi_{k,p},\bm\gamma_{k,p}\right]\\
\notag \tilde \Gamma_{k,p}&:= [0,\tilde{ \bm \gamma}_{k,p}],
\end{align}
where we identify these double forms with ordinary forms via
\eqref{eq_omegan}.
\end{definition}

\remark Note that the $\Gamma_{k,p}$ are {\it not} the curvature measures denoted by this symbol in \cite{bernig_fu_hig,bernig_fu_solanes}.

As we will see below, the $\Gamma_{k,p}, \tilde \Gamma_{k,p}$ represent K\"ahler 
curvature measures that behave well with respect to K\"ahler inclusions, and (in the case
of $\Gamma_{k,p}$) with respect to Alesker multiplication. This is essentially guaranteed by the 
particular linear combinations of canonical double forms that constitute the $\bm \psi_j,
\tilde{\bm \psi}_j$. Our ultimate objects of interest, however, will be the curvature measures
$\Delta_{k,p}^M, B_{k, p}^M$---linear combinations of the $\Gamma_{k,p}, \tilde \Gamma_{k,p}$, respectively, that carry special geometric significance---and the valuations $\mu_{k, p}$ that are their common 
globalizations.

\subsection{More relations among hermitian double forms}
 
 As in \eqref{eq_def_R0}, let $\dblG_{0}  \in \Omega^{2,1}(SM,M)$ be given by 
 \[
  (\dblG_0)_\xi(X_1,X_2;Y)=\dblG_x(d\pi X_1, d\pi X_2;Y,\xi).
 \] Since $\dblG_0'=0$ and $(\dblG_0^\vee)'=\dblG_0$ on complex space forms by  \eqref{eq_primes} and \eqref{eq_rvee_prime}, the same holds on any Kähler manifold $M$.

We will need another operator on double forms on  a K\"ahler manifold.

\begin{proposition}
 Let $\bm\varphi$ be a  spherical double form of type $(p,q)$ on a Kähler manifold $M$. Define a new  spherical double form of the same type by 
 \begin{displaymath}
  J\bm\varphi(X_1,\ldots,X_p;Y_1,\ldots,Y_q):=\left.\frac{d}{dt}\right|_{t=0}\bm \varphi(X_1,\ldots,X_p;e^{it}Y_1,\ldots,e^{it}Y_q), 
 \end{displaymath}
for $X_1,\ldots,X_p \in T_{\xi} SM$ and $Y_1,\ldots,Y_q \in T_{\pi\xi} M$,
 where on the right hand side we use the given complex structure on $TM$.
Then $J$ is a derivation, i.e. for double forms $\bm\varphi,\bm\theta$ we have 
\begin{displaymath}
 J (\bm\varphi \wedge \bm\theta)=J\bm\varphi \wedge\bm \theta+\bm\varphi \wedge J\bm\theta
\end{displaymath}
and $J$ commutes with the exterior derivative,
\begin{displaymath}
 \dg \circ J = J \circ  \dg.
\end{displaymath}
Moreover, we have {for $\dblR \in \calK$}
\begin{equation}\label{eq:J_basic_forms}
 J\bm\alpha^\vee=-\bm\beta^\vee,\quad J\bm\beta^\vee = \bm\alpha^\vee,\quad J \dblR=0,
\end{equation}
and 
\begin{equation} \label{eq:dblG} 
\dblG  = \frac12(  \dblg \wedge \dblg +  J\dblg \wedge J\dblg + 4 F \wedge F^\vee).
\end{equation} 
\end{proposition}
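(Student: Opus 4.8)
The plan is to realize $J$ as the infinitesimal generator of the one-parameter family $\rho_t := \mathrm{id}\otimes(e^{tJ})^*$ of operators on $\Omega^{\bullet,\bullet}(SM,M)$, where $e^{tJ} = \cos t\cdot\mathrm{id}+\sin t\cdot J$ acts on each $T_{\pi\xi}M$ and $(e^{tJ})^*$ on the corresponding factor $\bw^\bullet T^*_{\pi\xi}M$, so that $J\bm\varphi = \dt\rho_t\bm\varphi$. Since $(e^{tJ})^*$ is an algebra automorphism of $\bw^\bullet T^*_{\pi\xi}M$ and $\rho_t$ acts as the identity on $\bw^\bullet T^*_\xi SM$, each $\rho_t$ is an automorphism of the double exterior algebra with $\rho_0 = \mathrm{id}$; differentiating the identity $\rho_t(\bm\varphi\wedge\bm\theta) = \rho_t\bm\varphi\wedge\rho_t\bm\theta$ at $t=0$ yields the Leibniz rule for $J$.

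For the commutation $\dg\circ J = J\circ\dg$ I would first prove $\dg\circ\rho_t = \rho_t\circ\dg$ and then differentiate at $t=0$. This is the one place where the Kähler hypothesis $\nabla J = 0$ intervenes: since $e^{tJ}$ is a polynomial with constant coefficients in the parallel tensor $J$, it is itself $\nabla$-parallel, hence the pulled-back bundle map $(e^{tJ})^*$ of $\pi^*\bw^\bullet T^*M$ is parallel for the pullback connection $\pi^*\nabla$ (cf.\ \eqref{eq_charactezitation_pull_back_connection}), i.e.\ $(\pi^*\nabla)(\rho_t s) = \rho_t((\pi^*\nabla)s)$ for sections $s$ of $\pi^*\bw^\bullet T^*M$. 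Feeding this, together with the fact that $\rho_t$ fixes pure $(p,0)$-forms, into the defining formula \eqref{eq:DG_wedge} for $\dg$ evaluated on a generator $\omega\otimes s$ gives $\dg(\rho_t(\omega\otimes s)) = \rho_t(\dg(\omega\otimes s))$; since $\dg$ (Lemma~\ref{lem_dnabla}(1)) and $\rho_t$ are both derivations and agree on generators, they commute in general.

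The explicit formulas \eqref{eq:J_basic_forms} follow by unwinding the definitions pointwise. Writing $\bm\alpha^\vee_\xi = \dblg(\xi;\,\cdot\,)$ and $\bm\beta^\vee_\xi = \dblg(J\xi;\,\cdot\,)$, and using $\dt e^{tJ}Y = JY$ together with the skew-symmetry $\dblg(JX;Y) = -\dblg(X;JY)$ and orthogonality $\dblg(JX;JY)=\dblg(X;Y)$ of $J$ with respect to $\dblg$, one reads off $J\bm\alpha^\vee = -\bm\beta^\vee$ and $J\bm\beta^\vee = \bm\alpha^\vee$. For $J\dblR = 0$ when $\dblR\in\calK$, expanding $\dblR(W,X;e^{tJ}Y,e^{tJ}Z)$ and invoking \eqref{eq:kahler_symmetry}---which gives $\dblR(W,X;JY,JZ)=\dblR(W,X;Y,Z)$ and, after replacing $Z$ by $JZ$, $\dblR(W,X;JY,Z)+\dblR(W,X;Y,JZ)=0$---shows this expression is independent of $t$, so its derivative at $t=0$ vanishes.

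Finally I would verify \eqref{eq:dblG} by evaluating both sides on arguments $X_1,X_2;Y_1,Y_2$. From the definition of $\wedge$ on double forms one has $(a\wedge a)(X_1,X_2;Y_1,Y_2) = 2\bigl(a(X_1;Y_1)a(X_2;Y_2)-a(X_1;Y_2)a(X_2;Y_1)\bigr)$ for any double form $a$ of type $(1,1)$; applying this to $a=\dblg$ and to $a=J\dblg$, using $J\dblg(X;Y)=\dblg(X;JY)=-F(X,Y)$, and noting $F\wedge F^\vee = F\otimes F$ so that $(F\wedge F^\vee)(X_1,X_2;Y_1,Y_2)=F(X_1,X_2)F(Y_1,Y_2)$, the right-hand side of \eqref{eq:dblG} reproduces term by term the five-term expression \eqref{eq:def_G} defining $\dblG$. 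I expect the only genuinely delicate point to be the commutation with $\dg$, where $J$ must be correctly identified with a parallel bundle endomorphism so that $\nabla J = 0$ can be invoked; this is precisely the algebraic shadow of the Kähler condition, and the one ingredient that would fail on a general almost-Hermitian manifold. Everything else is routine bookkeeping in the double-form calculus.
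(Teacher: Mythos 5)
Your proof is correct. The paper's own proof of this proposition is literally the single word ``Straightforward,'' so there is nothing substantive to compare against; your way of organizing the verification --- realizing $J$ as the infinitesimal generator of the algebra automorphisms $\rho_t=\mathrm{id}\otimes(e^{tJ})^*$, deducing the Leibniz rule and $\dg\circ J=J\circ\dg$ by differentiating $\rho_t(\bm\varphi\wedge\bm\theta)=\rho_t\bm\varphi\wedge\rho_t\bm\theta$ and $\dg\circ\rho_t=\rho_t\circ\dg$ at $t=0$, correctly isolating $\nabla J=0$ as the one point where the K\"ahler hypothesis enters, and checking \eqref{eq:J_basic_forms} and \eqref{eq:dblG} by direct evaluation --- is a clean and complete execution of exactly the routine check the authors left to the reader (the only cosmetic slip is calling $\rho_t$ a ``derivation'' rather than an automorphism, which does not affect the argument since equality of $\dg\rho_t$ and $\rho_t\dg$ on decomposables $\omega\otimes s$ already suffices by linearity).
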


\proof
Straightforward.
\endproof

\begin{proposition} \label{prop_diff_basic_forms_kahler} On $\Omega^{\bullet,\bullet}(SM,M)$ we have the following differential identities 
\begin{equation} \label{eq_gray_diff_basic_forms_kahler}
d_\nabla F^\vee=0, \quad \dg  \dblG =0,\quad d_\nabla \bm\beta^\vee=-J\bm\omega,
\end{equation}
and
\begin{equation} \label{eq_dnabla_G0}
\dg (\dblG_0)^\vee=-\bm\omega \wedge \dblg-J\bm\omega \wedge J\dblg-2\dg\bm\beta \wedge F^\vee.
\end{equation}
\end{proposition}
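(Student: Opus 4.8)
The plan is to obtain the four identities by short formal computations in the algebra of double forms, using Proposition~\ref{prop_diff_basic_forms}, the Leibniz rule of Lemma~\ref{lem_dnabla}, and the fact (established in the preceding proposition) that $J$ is a derivation commuting with $\dg$. The first three are immediate. Since $M$ is K\"ahler, the K\"ahler form $F$ is parallel, hence so is $F^\vee$, and $\dg F^\vee=0$ then follows from the definition \eqref{eq:DG_wedge} of $\dg$. For $\dg\dblG=0$ I would use \eqref{eq:dblG}, which writes $\dblG$ as a wedge polynomial in $\dblg$, $J\dblg$, $F$ and $F^\vee$; each of these is $\dg$-closed ($\dg\dblg=0$ by Proposition~\ref{prop_diff_basic_forms}, $\dg(J\dblg)=J(\dg\dblg)=0$ since $J$ commutes with $\dg$, $\dg F=0$ as $F$ is parallel, and $\dg F^\vee=0$ by the first identity), so Leibniz gives $\dg\dblG=0$. (Alternatively one could argue from $\dblR=\lambda\dblG$ on complex space forms, as was done above for $\dblG_0'=0$.) Finally, \eqref{eq:J_basic_forms} gives $\bm\beta^\vee=-J\bm\alpha^\vee$, whence $\dg\bm\beta^\vee=-J(\dg\bm\alpha^\vee)=-J\bm\omega$ by Proposition~\ref{prop_diff_basic_forms}.

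For the last identity the strategy is to produce a closed expression for $(\dblG_0)^\vee$ as a wedge polynomial in the basic double forms and then differentiate term by term. Substituting $\xi$ into the last slot of \eqref{eq:def_G} and using the identifications of the resulting contractions---$\dblg(d\pi\,\cdot\,;\xi)=\bm\alpha$, $\dblg(J\xi;d\pi\,\cdot\,)=\bm\beta$, $\dblg(\,\cdot\,;J\xi)=\bm\beta^\vee$, $\dblg(d\pi\,\cdot\,;J\,\cdot\,)=J\dblg$, $\dblg(Jd\pi\,\cdot\,;d\pi\,\cdot\,)=F$---one finds
\[
 \dblG_0 = -\bm\alpha\wedge\dblg - \bm\beta\wedge J\dblg - 2F\wedge\bm\beta^\vee ,
\]
with $F$ regarded as an element of $\Omega^{2,0}(SM,M)$. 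Applying $(\bm\varphi\wedge\bm\theta)^\vee=\bm\varphi^\vee\wedge\bm\theta^\vee$ together with $\dblg^\vee=\dblg$, $(J\dblg)^\vee=-J\dblg$ and $(\bm\beta^\vee)^\vee=\bm\beta$ gives
\[
 (\dblG_0)^\vee = -\bm\alpha^\vee\wedge\dblg + \bm\beta^\vee\wedge J\dblg - 2F^\vee\wedge\bm\beta .
\]
Differentiating with $\dg$---using $\dg\bm\alpha^\vee=\bm\omega$, $\dg\dblg=0$, $\dg\bm\beta^\vee=-J\bm\omega$, $\dg(J\dblg)=0$ and $\dg F^\vee=0$, and leaving $\dg\bm\beta$ unexpanded---and keeping track of the Koszul signs $(-1)^{p_1p_2+q_1q_2}$ of the double-form product and the Leibniz signs, the three terms collect to the right-hand side of \eqref{eq_dnabla_G0}.

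The only delicate step is this last one: rewriting the five terms of \eqref{eq:def_G} correctly in terms of $\bm\alpha,\bm\beta,\dblg,J\dblg,F$, keeping the bidegrees---and in particular the distinct types of $F$ and $F^\vee$---straight, and bookkeeping the Koszul and Leibniz signs. Everything else is a two-line manipulation, and once the closed form for $(\dblG_0)^\vee$ is in hand the differentiation is immediate.
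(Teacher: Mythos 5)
Your proof is correct and follows essentially the same route as the paper: for the key identity \eqref{eq_dnabla_G0} both arguments reduce to the same closed-form expression $\dblG_0^\vee = -\bm\alpha^\vee\wedge\dblg + \bm\beta^\vee\wedge J\dblg - 2\bm\beta\wedge F^\vee$ (which the paper asserts and you derive from \eqref{eq:def_G}) and then differentiate term by term, and the argument for $\dg\dblG=0$ via \eqref{eq:dblG} and $dF=0$ is identical. The only cosmetic difference is that the paper verifies $d_\nabla F^\vee=0$ and $d_\nabla\bm\beta^\vee=-J\bm\omega$ with a hermitian moving frame and the relations $\omega_{2i,2j}=\omega_{2i+1,2j+1}$, $\omega_{2i,2j+1}=-\omega_{2i+1,2j}$ coming from $\nabla J=0$, whereas you use the parallelism of $F$ and the commutation $\dg\circ J=J\circ\dg$ together with $\bm\beta^\vee=-J\bm\alpha^\vee$; both are valid.
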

\begin{proof}
If the orthonormal frame $e_0,\ldots, e_{2n+1}$ is hermitian (i.e. $e_{2i+1}=Je_{2i}$), we have 
\[J(1\otimes \theta_{2i})=-1\otimes \theta_{2i+1},\quad J(1\otimes \theta_{2i+1})=1\otimes \theta_{2i}.\] Moreover, since $M$ is K\"ahler we have $\nabla J=0$, which translates into
\[
 \omega_{2i,2j}=\omega_{2i+1,2j+1},\qquad \omega_{2i,2j+1}=-\omega_{2i+1,2j}.
\]
Since
\begin{displaymath}
 F = \sum_{i=0}^{n-1} \theta_{2i}\wedge \theta_{2i+1} \quad \text{and} \quad \beta = \theta_1,
\end{displaymath}
using these relations and \eqref{eq:d_solder}, the first and last identity in \eqref {eq_gray_diff_basic_forms_kahler} are straightforward. As for $d_\nabla G=0$, it follows immediately from \eqref{eq:dblG} since $dF=0$ on K\"ahler manifolds.

The identity \eqref{eq_dnabla_G0} then follows from 
\begin{displaymath}
\dblG_0^\vee=-\bm\alpha^\vee \wedge \dblg+\bm\beta^\vee \wedge J\dblg-2\bm\beta \wedge F^\vee.
\end{displaymath}
\end{proof}

Given a complex linear isometry $\sigma\colon \C^n\to T_xM$, the isomorphism \eqref{eq:sigma_x}
\begin{displaymath}
 \bar\sigma^*\colon \left.\Omega^{\bullet}(SM)\right|_{S_xM}\to \bw^\bullet (\C^n)^*\otimes \Omega^\bullet(S^{2n-1})\simeq\left.\Omega^{\bullet}(S\C^n)\right|_{\{0\}\times S^{2n-1}}
\end{displaymath}
extends to an algebra isomorphism 
\begin{align} \bar\sigma^*\colon& \left.\Omega^{\bullet,\bullet}(SM,M)\right|_{S_xM}\to \left.\Omega^{\bullet,\bullet}(S\C^n,\C^n)\right|_{\{0\}\times S^{2n-1}} \label{eq:sigma_double}
 \end{align}
given by
\begin{displaymath}
  \bar\sigma^*(\omega\otimes s)=\bar\sigma^*(\omega)\otimes \sigma^*(s),
\end{displaymath}
where $\omega\in \restrict{\Omega^\bullet(SM)}{S_xM}$,  $s$ is a map  ${S_xM}\to \bw^\bullet T_x^*M$, and $\sigma^*(s)$ is the corresponding map  ${S^{2n-1}}\to \bw^\bullet (\C^n)^*$.

\begin{lemma}\label{lem:constant} The images of $\bm\alpha, d\bm\alpha, \bm\beta, d\bm\beta, \dblg, J\dblg,\bm\omega,J\bm\omega,\dblG,\dblG_0$ under $\bar\sigma^*$ do not depend on $M,x$ or $\sigma$. 
\end{lemma}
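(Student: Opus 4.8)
The plan is to show that each of the listed double forms, when restricted to $S_xM$ and transported via $\bar\sigma^*$ to $\Omega^{\bullet,\bullet}(S\C^n,\C^n)|_{\{0\}\times S^{2n-1}}$, coincides with a universal model form defined directly on flat $\C^n$. The organizing principle is that every one of these objects is built \emph{pointwise} and \emph{algebraically} from data that a complex linear isometry $\sigma$ preserves: the metric $\dblg$, the complex structure $J$, and the tautological section $\xi$ of $\pi^*TM$; the only non-algebraic ingredient is the Levi-Civita connection, which enters through the horizontal/vertical splitting \eqref{eq:decompHV}, and that splitting is exactly what $\bar\sigma^*$ is designed to trivialize.

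First I would treat the ``tensorial'' forms $\dblg$, $J\dblg$, $F$, $\dblG$, which lie in $\Omega^{\bullet,\bullet}(M)$ and are pulled back along $\pi$. For these, $\bar\sigma^*$ reduces to $\sigma^*$ on each factor, and since $\sigma$ is a complex linear isometry it carries the standard inner product and standard complex structure on $\C^n$ to $\dblg_x$ and $J_x$; the formula \eqref{eq:def_G} expresses $\dblG$ purely in terms of $\dblg$ and $J$, so $\bar\sigma^*\dblG$ is the corresponding standard expression on $\C^n$, independent of $M,x,\sigma$. Next, $\bm\alpha$ and $\bm\beta$: by their definitions $\bm\alpha_\xi(X)=\dblg(X^H;\xi)$ and $\bm\beta_\xi(X)=\dblg(J\xi;d\pi X)$, which under the horizontal identification $H_\xi\simeq T_xM$ become $\langle d\pi X,\xi\rangle$ and $\langle J\xi, d\pi X\rangle$; transporting by $\bar\sigma^*$ (which sends $\xi$ to $\sigma^{-1}\xi = e_0$ and intertwines the splittings) gives the fixed model forms on $S\C^n$ at the point $(0,e_0)$, namely the contact form and its $J$-conjugate. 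For $\bm\omega$ and $J\bm\omega$ one uses $\bm\omega(X;Y)=\dblg(X^V;Y)$: the vertical component $X^V\in T_\xi(S_xM)\simeq \xi^\perp$ is again transported by $\bar\sigma^*$ to the model vertical space, so $\bar\sigma^*\bm\omega$ is the standard connection double form on $S\C^n$, and $J$ acts by the fixed derivation from the preceding proposition.

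The remaining cases $d\bm\alpha$ and $d\bm\beta$ follow because $\bar\sigma^*$ commutes with $d_\nabla$ on the fiber (this is essentially Lemma~\ref{lem_dnabla}(2) applied to the inclusion $S_xM\hookrightarrow SM$ together with the chart $\sigma$): we have $d_\nabla\bm\alpha = \bm\omega'$ and $d_\nabla\bm\alpha^\vee=\bm\omega$ from Proposition~\ref{prop_diff_basic_forms}, and $d_\nabla\bm\beta^\vee=-J\bm\omega$ from \eqref{eq_gray_diff_basic_forms_kahler}; since we have already pinned down $\bm\omega$, $J\bm\omega$, and the universal operations $'$, ${}^\vee$, $J$, their images are determined. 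Finally $\dblG_0$ is handled the same way via $\dblG_0^\vee=-\bm\alpha^\vee\wedge\dblg+\bm\beta^\vee\wedge J\dblg-2\bm\beta\wedge F^\vee$ (from the proof of Proposition~\ref{prop_diff_basic_forms_kahler}) and applying ${}^\vee$, since every factor on the right has already been shown to be universal; alternatively one reads it directly off the definition $(\dblG_0)_\xi(X_1,X_2;Y)=\dblG_x(d\pi X_1,d\pi X_2;Y,\xi)$, which only involves $\dblg$, $J$, $\xi$, and $d\pi$.

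The only genuine subtlety — the ``hard part'' — is verifying that $\bar\sigma^*$ really does intertwine the exterior covariant derivative $d_\nabla$ \emph{on the fiber} $S_xM$ with the model $d_\nabla$ on $\{0\}\times S^{2n-1}$, because $d_\nabla$ a priori involves the connection on all of $SM$, not just along the fiber. This is resolved by noting that the restriction-to-fiber operation used throughout (e.g.\ in \eqref{eq:transfer_forms}, \eqref{eq:bar_tau}) only sees the \emph{first-order jet} of the connection at $x$, equivalently the induced affine connection on the fiber together with the horizontal distribution along it, and a complex linear isometry $\sigma$ together with the canonical splitting matches this first-order data with the flat model exactly. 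Once that compatibility is in place — which is the content already implicit in Proposition~\ref{prop_double_fibration} and the transfer-map discussion — each identity above is a routine unwinding of definitions, and the lemma follows.
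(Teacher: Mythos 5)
Your proof is correct and follows essentially the same route as the paper, which simply verifies the case of $\bm\omega$ by the explicit computation $\bar\sigma^*\bm\omega(X;Y)=\dblg(\sigma X_2;\sigma Y)=\langle X_2,Y\rangle$ and declares the other cases similar. Your fuller treatment — pointwise algebraic dependence on $\dblg$, $J$, $\xi$, $d\pi$ and the vertical projection for the undifferentiated forms, and reduction of $d\bm\alpha$, $d\bm\beta$, $\dblG_0$ to already-handled universal forms via the structural identities $d\bm\alpha=\bm\omega'$, $d_\nabla\bm\beta^\vee=-J\bm\omega$, etc. — is exactly the content of that ``similarly.''
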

\begin{proof}
Let us check the statement for $\bm\omega$, the other cases being similar. Given   $X=(X_1,X_2)\in \C^n\oplus v^\bot\simeq T_{(0,v)} S\C^n$ and $Y\in\C^n$ one has
\begin{equation}\label{eq:sigma_omega}
 \bar \sigma^*\bm\omega(X;Y)=\dblg(\sigma X_2;\sigma Y)=\langle X_2,Y\rangle,
\end{equation}
where $\langle\ ,\ \rangle$ is the $\RR$-valued euclidean inner product.
\end{proof}

%............................
 
\subsection{The Rumin differential}\label{sec_rumin}

The product of valuations is given by the formula of \cite{alesker_bernig}, which involves the Rumin differential $D$ of the boundary term.
In this section (Proposition \ref{prop:polynomiality prime}) we 
show that the product of any two valuations $\glob(\Gamma^M_{k, p})$ on a K\"ahler manifold $M$ may be expressed in terms of differential forms canonically constructed from the forms defining the two factors. Thus the key 
point is that the Rumin differential of the boundary term $\bm \gamma_{k, p}$ is canonical in the model algebra.

For convenience we extend the definition of the Rumin differential to double 
forms $\bm \omega = \omega \otimes \vol_M\in \Omega^{2n-1,2n} (SM,M)$ on an $n$-dimensional K\"ahler manifold
$M$ by
\begin{equation}\label{eq_D_double}
D\bm \omega := (D\omega) \otimes \vol_M.
\end{equation}
Recalling Lemma \ref{lem_dnabla}, this operation may be defined directly by
\begin{equation}\label{eq_D_double_def}
D\bm \omega = \dg(\bm \omega + \bm \alpha \wedge \bm \xi),
\end{equation}
where $\bm \xi \in \Omega^{2n-2,2n}(SM,M)$ is uniquely determined, modulo $\bm \alpha$,
by the condition that the expression \eqref{eq_D_double_def} is a multiple of $\bm \alpha$.  Recall also that $\calK(\CC^n)$ denotes the space of algebraic K\"ahler curvature tensors and that \eqref{eq:def_tau_zeta} defines a map $\tau\colon\calV(M)\to  \Omega^{2n}(SM)$.

\begin{proposition} \label{prop_rumin_in_cartan_calculus} 
There exist polynomial maps
 \begin{displaymath}
 P_{k,j},\widetilde P_{k,j}:\calK(\CC^n) \to \bw^\bullet (\CC^n)^*\otimes \Omega^\bullet(S^{2n-1}),
 \end{displaymath}
 such that, for any K\"ahler manifold $M$ of dimension $n$, any point $x\in M$, and any complex linear isometry $\sigma\colon \CC^n\to T_xM$,  we have
\begin{align}\label{eq_D_gamma}
 \restrict{D\bm \gamma_{k, j}^M}{S_xM} &= (\bar\sigma^*)^{-1} \widetilde P_{k, j}( \sigma^*\dblR^M_x)\\
\label{eq_tau_Gamma} \restrict{\tau  (\glob\Gamma_{k, j}^M)}{S_xM} &= (\bar\sigma^*)^{-1}  P_{k, j}( \sigma^*\dblR^M_x).
\end{align}
\end{proposition}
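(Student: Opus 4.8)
\textbf{Proof plan for Proposition \ref{prop_rumin_in_cartan_calculus}.}

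The plan is to compute both $D\bm\gamma_{k,j}$ and $\tau(\glob\Gamma_{k,j})$ entirely within the algebra $\Omega^{\bullet,\bullet}(SM,M)$ of spherical double forms, showing at each step that the operations involved either are independent of $M$, $x$, $\sigma$ (by Lemma \ref{lem:constant}) or depend on $M$ only through the pullback $\sigma^*\dblR^M_x \in \calK(\CC^n)$, and moreover \emph{polynomially}. First I would unwind the definition \eqref{eq_D_double_def} of the Rumin differential: one must produce $\bm\xi\in\Omega^{2n-2,2n}(SM,M)$, determined modulo $\bm\alpha$, so that $\dg(\bm\gamma_{k,j}+\bm\alpha\wedge\bm\xi)$ is a multiple of $\bm\alpha$. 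Using Lemma \ref{lem_d_nabla_psi} (which computes $\dg(\bm\alpha^\vee\wedge\bm\psi_{2n-k-1})$ in terms of $\widehat\dblR$, $\dblR_0$, $\bm\omega$) together with Proposition \ref{prop_diff_basic_forms} and Proposition \ref{prop_diff_basic_forms_kahler} (which give $\dg$ of $\dblg$, $\dblG$, $\bm\beta^\vee$, $\dblG_0^\vee$, $F^\vee$, and the behavior of the derivation $J$), one expands $\dg\bm\gamma_{k,j}$ as a sum of wedge monomials in the canonical forms $\bm\alpha$, $\bm\alpha^\vee$, $\bm\beta$, $\bm\beta^\vee$, $\dblg$, $J\dblg$, $F$, $F^\vee$, $\bm\omega$, $J\bm\omega$, $\dblG$, $\dblG_0$, $\dblR_0$, and $\dblR$. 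The first Bianchi identity $\dblR'=0$, the identity $\widehat\dblR=\dblR+\bm\alpha^\vee\wedge\dblR_0$, and $\dblR\in\calK$ are the only inputs that feed in the curvature tensor; everything else is universal.

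Next I would argue that the $\bm\alpha$-free part of $\dg\bm\gamma_{k,j}$ can be killed by a correction term $\bm\alpha\wedge\bm\xi$, with $\bm\xi$ again a wedge-polynomial in the canonical forms whose coefficients depend polynomially (in fact linearly, since $\dg$ acts linearly on the single factor $\dblR$ appearing through $\bm\psi$, $\dblR_0$, $\widehat\dblR$) on $\dblR$. Concretely: because $\bm\psi_{2n-k-1}=\sum_l d_{2n,k,l}\dblR^l\wedge\bm\omega^{2n-2l-k-1}$ involves powers of $\dblR$, the form $\bm\gamma_{k,j}$ is polynomial of degree $\le n-\tfrac k2$ in $\dblR$; applying $\dg$ and then solving the linear algebraic equation that defines $\bm\xi$ (invertibility of the relevant "$\bm\alpha$-complement" map is standard, cf. \cite{fu_wannerer}, and is $M$-independent after transport by $\bar\sigma^*$) preserves polynomiality. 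Pulling the whole identity back by $\bar\sigma^*$, Lemma \ref{lem:constant} guarantees that every universal factor has an $M$-independent image, so $\bar\sigma^*(D\bm\gamma_{k,j}^M|_{S_xM})$ is a fixed polynomial expression evaluated at $\sigma^*\dblR^M_x$. This yields \eqref{eq_D_gamma} with $\widetilde P_{k,j}$ read off from the computation.

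For \eqref{eq_tau_Gamma} I would combine \eqref{eq_D_gamma} with the definition of $\tau$ in \eqref{eq:zeta_tau}: for $\mu=\glob\Gamma_{k,j}=[[\bm\varphi_{k,j},\bm\gamma_{k,j}]]$ one has $\tau(\mu)=D\bm\gamma_{k,j}+\pi^*\bm\varphi_{k,j}$, and $\bm\varphi_{k,j}=d_{2n,k,n-\frac k2}\dblG^p\wedge\dblg^{k-2p}\wedge\dblR^{n-\frac k2}$ is manifestly a wedge-polynomial in $\dblG$, $\dblg$ (universal) and $\dblR$, so its $\bar\sigma^*$-image is a polynomial in $\sigma^*\dblR^M_x$ by Lemma \ref{lem:constant} again. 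Since $\bar\sigma^*$ intertwines $\pi^*$ on $SM$ with $\pi^*$ on $S\CC^n$ and the identification \eqref{eq_omegan} of $\Omega^{2n-1,2n}$ with $\Omega^{2n-1}$ commutes with $\bar\sigma^*$, we may set $P_{k,j}:=\widetilde P_{k,j}+(\text{the polynomial computing }\bar\sigma^*\pi^*\bm\varphi_{k,j})$ and \eqref{eq_tau_Gamma} follows.

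The main obstacle I anticipate is purely bookkeeping rather than conceptual: verifying that the $\bm\alpha$-free part of $\dg\bm\gamma_{k,j}$ really lies in the image of $\bm\omega\mapsto\bm\alpha\wedge\bm\xi$, i.e.\ that the Rumin correction $\bm\xi$ exists within the model algebra (so that $D$ does not "leak" outside the span of canonical double forms). In the riemannian case this is Proposition \ref{prop_diff_basic_forms} plus Lemma \ref{lem_d_nabla_psi}; in the K\"ahler case the larger generating set (the extra forms $\bm\beta$, $\bm\beta^\vee$, $F$, $F^\vee$, $J\dblg$, $J\bm\omega$, $\dblG$, $\dblG_0$) means more monomials to track and more identities—particularly $\dg\dblG_0^\vee$ in \eqref{eq_dnabla_G0} and the $J$-Leibniz rule—must be invoked to close the computation. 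But since all of these auxiliary identities are already established in Proposition \ref{prop_diff_basic_forms_kahler}, and all the relevant forms appear in Lemma \ref{lem:constant}, the closure is guaranteed; the work is to carry out the expansion carefully and collect the coefficients, which are rational functions of the universal constants $d_{2n,k,l}$.
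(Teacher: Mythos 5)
Your overall strategy is the same as the paper's: expand $\dg\bm\gamma_{k,j}$ in the canonical double forms, exhibit a Rumin correction $\bm\xi$ inside that algebra, and transport everything by $\bar\sigma^*$ using Lemma \ref{lem:constant}; the treatment of \eqref{eq_tau_Gamma} via $\tau=D\bm\gamma_{k,j}+\pi^*\bm\varphi_{k,j}$ is also exactly what the paper does. But there is a genuine gap at the step you yourself flag as the crux and then wave away. The two justifications you offer for the existence of a \emph{canonical} $\bm\xi$ --- pointwise solvability of the linear equation $\dg\bm\gamma_{k,j}\equiv-\dg\bm\alpha\wedge\bm\xi \bmod\bm\alpha$, and the assertion that ``the closure is guaranteed'' because the auxiliary identities exist --- do not prove what is needed. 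The pointwise Lefschetz-type argument only controls $\bm\xi|_{S_xM}$ as a function of $\dblR_x$; but $D\bm\gamma_{k,j}=\dg\bm\gamma_{k,j}+\dg\bm\alpha\wedge\bm\xi-\bm\alpha\wedge\dg\bm\xi$ also involves $\dg\bm\xi$, i.e.\ first derivatives of $\bm\xi$ in horizontal directions. Since $D$ is a second-order operator in this degree, $D\bm\gamma_{k,j}|_{S_xM}$ could a priori depend on $\nabla\dblR_x$ (and worse); the content of the proposition is precisely that these derivative terms cancel, and that cancellation is not ``guaranteed'' by the existence of the structural identities --- it must be verified.

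The paper closes this gap by writing down the correction explicitly, $\bm\xi=-j\,\bm\alpha^\vee\wedge\dblG^{j-1}\wedge\dblG_0^\vee\wedge\dblg^{k-2j}\wedge\widetilde{\bm\psi}_{2n-k-1}$, and proving the congruence $\dg\bm\gamma_{k,j}\equiv-\dg\bm\alpha\wedge\bm\xi\bmod\bm\alpha$. The verification uses Lemma \ref{lem_d_nabla_psi} together with the decompositions $\dblg=\bm\alpha\wedge\bm\alpha^\vee+\hat\dblg$ and $\widehat\dblG=\dblG+\bm\alpha^\vee\wedge\dblG_0$ to isolate the $\bm\alpha$-free part of $\dg\bm\gamma_{k,j}$, and then the trick of applying the operation $(\cdot)'$ to a double form of type $(p,2n+1)$ (which vanishes identically) to identify that part with $-\dg\bm\alpha\wedge\bm\xi$. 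Once $\bm\xi$ lies in the canonical algebra, $\dg\bm\xi$ is computed from Propositions \ref{prop_diff_basic_forms} and \ref{prop_diff_basic_forms_kahler} (in particular \eqref{eq_dnabla_G0} and $\dg\dblR=0$), so no covariant derivatives of curvature appear. Your proof needs this explicit construction, or an equivalent argument showing that the $\bm\alpha$-free part of $\dg\bm\gamma_{k,j}$ lies in $\dg\bm\alpha\wedge(\text{canonical algebra})$; without it the polynomial dependence on $\dblR_x$ alone is not established.
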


\proof
Taking $\bm \omega = \bm \gamma_{k, j}$ in \eqref{eq_D_double_def} above, we will show  that  we may take
\begin{displaymath}
\bm \xi:={ - }j \bm\alpha^\vee \wedge       \dblG^{j-1} \wedge \dblG_0^\vee \wedge \dblg^{k-2j} \wedge \widetilde{\bm \psi}_{2n-k-1}.
\end{displaymath}
Since the $ \widetilde {\bm\psi}_i$ are polynomial in $\dblR$,
in view of Lemma \ref{lem_dnabla}, Propositions \ref{prop_diff_basic_forms} and \ref{prop_diff_basic_forms_kahler}, and Lemma \ref{lem:constant}, we have that ${\bar\sigma^*} (\dg \bm\gamma_{k,j})$ and ${\bar\sigma^*}(\dg\bm\xi)$ depend polynomially on ${\sigma^*}(\dblR_x^M)$, which will yield  \eqref{eq_D_gamma}. Recalling \eqref{eq:zeta_tau}, since ${\bar\sigma^*(\pi^*\bm\varphi_{k,p})}$ is polynomial in $\sigma^* (\dblR_x^M)$, this will give \eqref{eq_tau_Gamma}.

Let us write, similarly as in \eqref{eq_def_R0}
\begin{align*}
\dblg & =  \bm\alpha\wedge\bm \alpha^\vee+\hat \dblg,
\end{align*}
and put $\widehat \dblG  = \dblG + \bm\alpha^\vee \wedge \dblG_0$.
Therefore
\begin{align*}
\dblG^j\wedge\dblg^{k-2j} & = -j\bm\alpha^\vee \wedge \dblG^{j-1} \wedge \dblG_0 \wedge \dblg^{k-2j}\\
& \quad +(k-2j) \bm\alpha \wedge\bm\alpha^\vee \wedge  \dblG^j \wedge \dblg^{k-2j-1}
+ \widehat \dblG^j \wedge \hat \dblg^{k-2j}.
\end{align*}

By Lemma \ref{lem_d_nabla_psi} and \eqref{eq_def_R0},
\begin{align*}
\dg\bm\gamma_{k,j} & = \dg (\bm\alpha^\vee \wedge \dblG^j\wedge\dblg^{k-2j} \wedge \bm\psi_{2n-k-1})\\
& =\dg (\bm\alpha^\vee \wedge\bm \psi_{2n-k-1}) \wedge\dblG^j\wedge\dblg^{k-2j}\\
& = -\bm\alpha^\vee \wedge \sum_{ l\geq 0} j  d_{2n,k,l}   \dblG^{j-1} \wedge \dblG_0 \wedge \dblg^{k-2j} \wedge \dblR^l \wedge \bm\omega^{2n-2l-k}\\
& \quad +\bm\alpha \wedge\bm\alpha^\vee \wedge \sum_{ l \geq 0} (k-2j)d_{2n,k,l}  \dblR^l \wedge \dblG^j \wedge \dblg^{k-2j-1} \wedge \bm\omega^{2n-2l-k}\\
&  \quad +\sum_{l\geq 0} d_{2n,k,l}\widehat \dblR^l\wedge\bm\omega^{2n-2l-k}\wedge\widehat\dblG^j\wedge\widehat\dblg^{k-2j}\\
&\quad+ d_{2n,k,n-\frac{k}2-1}\bm\alpha^\vee  \wedge \dblR^{n-\frac{k}{2}-1} \wedge \dblR_0 \wedge \dblG^j\wedge\dblg^{k-2j}\\ 
& \equiv -\bm\alpha^\vee  \wedge \sum_{ l\geq 0} j  d_{2n,k,l}   \dblG^{j-1} \wedge \dblG_0 \wedge \dblg^{k-2j} \wedge \dblR^l \wedge\bm \omega^{2n-2l-k} \mod \alpha\otimes 1.
\end{align*}
Note that the last term in the next to last line is a multiple of $\vol \otimes \vol$, hence $\equiv 0 \mod \alpha \otimes 1$.  As for the sum in the previous line, it vanishes as
\[
 \widehat\dblR_\xi(\cdot,\cdot\,;\cdot,\xi)=\widehat\dblG_\xi(\cdot,\cdot\,;\cdot,\xi)=0,\qquad \widehat\dblg_\xi(\cdot\,;\,\xi)=\bm\omega_\xi(\cdot\,;\,\xi)=0.
\]

A double form of type $(p,2n+1)$ necessarily vanishes.  Using Proposition \ref{prop_diff_basic_forms} and \eqref{eq_rvee_prime} we thus find 
\begin{align*}
0 & =\left(\bm\alpha^\vee  \wedge    \dblG^{j-1} \wedge \dblG_0^\vee \wedge \dblg^{k-2j} \wedge \sum_{l \geq 0} d_{2n,k,l} \dblR^l \wedge\bm \omega^{2n-2l-k}\right)'\\
& =  \bm\alpha \wedge     \dblG^{j-1} \wedge \dblG_0^\vee \wedge \dblg^{k-2j} \wedge \sum_{l \geq 0} d_{2n,k,l} \dblR^l \wedge \bm\omega^{2n-2l-k} \\
& \quad -\bm\alpha^\vee  \wedge   \dblG^{j-1} \wedge \dblG_0 \wedge \dblg^{k-2j} \wedge \sum_{l \geq 0} d_{2n,k,l} \dblR^l \wedge \bm\omega^{2n-2l-k}\\
& \quad +   \bm\alpha^\vee  \wedge  \dblG^{j-1} \wedge \dblG_0^\vee \wedge \dblg^{k-2j} \wedge \sum_{l \geq 0}  (2n-2l-k) d_{2n,k,l} \dblR^l \wedge \bm \omega^{2n-2l-k-1} \wedge d_\nabla\bm\alpha. 
\end{align*}

It follows that 
\begin{displaymath}
\dg \bm\gamma_{k,j} \equiv - \dg\bm\alpha \wedge \bm\xi \mod \bm \alpha,
\end{displaymath}
as claimed. 
\endproof

The next statement, a direct consequence of 
Proposition~\ref{prop_double_fibration} and Proposition \ref{prop_rumin_in_cartan_calculus},
implies that the multiplication tables for these families of valuations and
curvature measures are universal across  K\"ahler manifolds $M$.
%
%Together with the observation that the contributions of the interior terms of the valuations in question 
%in the multiplication formulas of Proposition~\ref{prop_double_fibration} introduce no complications, this will imply that the multiplication tables for these families of valuations and
%curvature measures are independent of the particular K\"ahler manifold $M$ where
%they live. 
Here $\tau $ refers to the expression
\eqref{eq_ab_formula1} in Proposition ~\ref{prop_double_fibration}.
Put $W_\CC=\CC^n\oplus e_0^\bot$,  where $\perp$ refers to the
$\RR$-valued euclidean inner product.

\begin{proposition} \label{prop:polynomiality prime} 
There exist polynomial maps
\begin{displaymath}
 P_{k,p,l,q}\colon \mathcal K(\CC^n)\to \bigwedge\nolimits^{2n} { W^ *_{\CC}},
\end{displaymath}
such that, for any 
 Kähler manifold $M$ of dimension $n$, any $\xi\in SM, x=\pi(\xi)$, and any complex linear  isometry $\sigma\colon\CC^n\to T_xM$ with $\sigma e_0=\xi$,
\begin{equation}\label{eq_poly_product}
 \tau(\glob(\Gamma_{k,p}^M)\cdot \glob(\Gamma_{l,q}^M))_\xi = (\bar\sigma_\xi^*)\inv P_{k,p,l,q}(\sigma^*\dblR^M_x).
\end{equation}

Moreover, there exist polynomial maps $ R_{k,p,l,q},\widetilde R_{k,p,l,q} \colon \mathcal K(\CC^n)\to \bigwedge\nolimits^{2n-1} { W^*_{\CC}}$ 
	such that, for any 
	Kähler manifold $M$  of dimension $n$, any $\xi\in SM, x=\pi(\xi)$,  and any complex linear isometry  isometry $\sigma\colon\CC^n\to T_xM$ with $\sigma e_0=\xi$,
	$$ (\bar\sigma_\xi^*)\inv R_{k,p,l,q}(\sigma^*\dblR^M_x) \quad \text{and}\quad (\bar\sigma_\xi^*)\inv \widetilde R_{k,p,l,q}(\sigma^*\dblR^M_x)$$
	give the boundary terms of $\glob(\Gamma_{k,p}^M)\cdot \Gamma_{l,q}^M$ and $\glob(\Gamma_{k,p}^M)\cdot \widetilde \Gamma_{l,q}^M$ at 
	$\xi$, respectively.
\end{proposition}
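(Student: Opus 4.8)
The plan is to deduce the statement from the universal description of the Alesker product in Proposition~\ref{prop_double_fibration}, feeding in Proposition~\ref{prop_rumin_in_cartan_calculus} for the polynomial dependence of the factors on the curvature tensor. The essential geometric work has already been done in Proposition~\ref{prop_rumin_in_cartan_calculus}; what remains is a diagram chase plus some bookkeeping on the transfer isomorphisms.

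First I would assemble the ingredients. Write $\mu_1=\glob(\Gamma_{k,p}^M)=\{\{\zeta_1,\tau_1\}\}$ and $\mu_2=\glob(\Gamma_{l,q}^M)=\{\{\zeta_2,\tau_2\}\}$. By \eqref{eq_tau_Gamma}, each $\restrict{\tau_i}{S_xM}$ equals $(\bar\sigma^*)^{-1}$ of a polynomial in $\sigma^*\dblR_x^M$ with coefficients independent of $M$. Since $\mu_i$ is the globalization of a curvature measure $[\bm\varphi,\bm\gamma]$ (with the appropriate index pair), its scalar is $\zeta_i=\pi_*\bm\gamma$, viewed as an ordinary form via \eqref{eq_omegan}; after applying $\bar\sigma^*$ the integrand over the model fiber $S^{2n-1}$ is, by Lemma~\ref{lem:constant}, the fact that $\bar\sigma^*$ intertwines the $\vee$ and $J$ operations with their model versions, and the definition of $\bm\psi_{2n-k-1}$ as a polynomial in $\dblR$ and $\bm\omega$, a polynomial in $\sigma^*\dblR_x^M$ with $M$-independent coefficient forms; as fiber integration is $M$-independent, $\zeta_i(x)$ is such a polynomial too. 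For the same reasons (also using $\bm\beta^\vee=-J\bm\alpha^\vee$ from \eqref{eq:J_basic_forms}), $\bar\sigma^*(\restrict{\bm\gamma_{l,q}}{S_xM})$ and $\bar\sigma^*(\restrict{\widetilde{\bm\gamma}_{l,q}}{S_xM})$ depend polynomially on $\sigma^*\dblR_x^M$; here one also uses $\bar\sigma^*\vol_M=\vol_{\CC^n}$, so that the identification \eqref{eq_omegan} commutes with $\bar\sigma^*$.

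Next I would feed these into the product formulas. For \eqref{eq_poly_product}, apply \eqref{eq_ab_formula1}: $\tau(\mu_1\cdot\mu_2)=\mathrm{GT}(\tau_1,\tau_2)+\pi^*\zeta_1\cdot\tau_2+\pi^*\zeta_2\cdot\tau_1$. Evaluate at $\xi$ and apply $\bar\sigma_\xi^*$. The commutative diagram of Proposition~\ref{prop_double_fibration} identifies $\bar\sigma_\xi^*\bigl(\restrict{\mathrm{GT}(\tau_1,\tau_2)}{\xi}\bigr)$ with $\Gamma\bigl(\bar\sigma^*(\restrict{\tau_1}{S_xM}),\bar\sigma^*(\restrict{\tau_2}{S_xM})\bigr)$; since $\Gamma$ is bilinear and independent of $M$, this is a polynomial in $\sigma^*\dblR_x^M$. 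The correction terms transfer to $\zeta_i(x)\cdot\bar\sigma_\xi^*(\restrict{\tau_j}{\xi})$, and $\bar\sigma_\xi^*(\restrict{\tau_j}{\xi})$ is the evaluation at $e_0\in S^{2n-1}$ of the polynomial $\bar\sigma^*(\restrict{\tau_j}{S_xM})$; hence these are polynomial in $\sigma^*\dblR_x^M$ as well. Summing defines $P_{k,p,l,q}$. For the two curvature-measure products one argues identically using part~(2) of Proposition~\ref{prop_double_fibration}: the boundary term of $\glob(\Gamma_{k,p}^M)\cdot\Gamma_{l,q}^M$ is $\mathrm{GT}(\bm\gamma_{l,q},\tau_1)+\pi^*\zeta_1\cdot\bm\gamma_{l,q}$, and that of $\glob(\Gamma_{k,p}^M)\cdot\widetilde\Gamma_{l,q}^M$ is $\mathrm{GT}(\widetilde{\bm\gamma}_{l,q},\tau_1)+\pi^*\zeta_1\cdot\widetilde{\bm\gamma}_{l,q}$; the same diagram chase, now using the polynomiality of $\bar\sigma^*(\restrict{\bm\gamma_{l,q}}{S_xM})$ and $\bar\sigma^*(\restrict{\widetilde{\bm\gamma}_{l,q}}{S_xM})$ established above, produces $R_{k,p,l,q}$ and $\widetilde R_{k,p,l,q}$.

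I do not expect a genuine obstacle: the hard part was already surmounted in Proposition~\ref{prop_rumin_in_cartan_calculus}. The only point demanding care is the compatibility bookkeeping — that $\bar\sigma^*$ and $\bar\sigma_\xi^*$ carry wedge products, the $\vee$ and $J$ operations, fiber integration $\pi_*$, restriction to $S_xM$, and evaluation at $\xi$ to their model counterparts, and that $\mathrm{GT}$ is, through the diagram of Proposition~\ref{prop_double_fibration}, represented by the $M$-independent bilinear map $\Gamma$. Granting this, every quantity in sight becomes a universal polynomial in $\sigma^*\dblR_x^M\in\calK(\CC^n)$, which is exactly the assertion.
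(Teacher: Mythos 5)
Your proposal is correct and is exactly the argument the paper intends: the paper states Proposition \ref{prop:polynomiality prime} as a direct consequence of Propositions \ref{prop_double_fibration} and \ref{prop_rumin_in_cartan_calculus} (plus the remark that the interior/scalar terms cause no trouble), and you have simply written out the diagram chase and transfer bookkeeping that this entails. No gaps.
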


The following identity is a special case of Proposition \ref{prop_glob_delta_beta} below.
\begin{proposition} \label{prop_glob_b10}
We have for every K\"ahler manifold $M$
\begin{displaymath}
\glob_M (\Gamma_{1,0})=\glob_M (\tilde \Gamma_{1,0}).
\end{displaymath}
\end{proposition}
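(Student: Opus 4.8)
The plan is to prove this identity by direct computation of the two globalizations from Definition \ref{def_GAMMA}, specializing to the case $k=1$, $p=0$. Recall that $\glob_M(\Gamma_{1,0}) = [[\bm\varphi_{1,0}, \bm\gamma_{1,0}]]$ and $\glob_M(\tilde\Gamma_{1,0}) = [[0, \tilde{\bm\gamma}_{1,0}]]$, so the claim is equivalent to the assertion that the pair $(\bm\varphi_{1,0}, \bm\gamma_{1,0} - \tilde{\bm\gamma}_{1,0})$ lies in the kernel of the globalization map, i.e. (using the description recalled after \eqref{eq_def_smooth_val}, in the Rumin form \eqref{eq:zeta_tau}) that $\pi_*(\bm\gamma_{1,0} - \tilde{\bm\gamma}_{1,0}) = 0$ and $D(\bm\gamma_{1,0} - \tilde{\bm\gamma}_{1,0}) + \pi^*\bm\varphi_{1,0} = 0$. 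For $k=1$ the relevant double forms are
\begin{displaymath}
\bm\gamma_{1,0} = \bm\alpha^\vee \wedge \dblg^0 \wedge \bm\psi_{2n-2}, \qquad \tilde{\bm\gamma}_{1,0} = \bm\alpha^\vee \wedge \bm\beta \wedge \bm\beta^\vee \wedge \dblg^{-1}\cdots
\end{displaymath}
— here one must be slightly careful: the exponent $k-2p-1 = 0$ on $\dblg$, so $\tilde{\bm\gamma}_{1,0} = \bm\alpha^\vee \wedge \bm\beta \wedge \bm\beta^\vee \wedge \tilde{\bm\psi}_{2n-2}$, whereas $\bm\gamma_{1,0} = \bm\alpha^\vee \wedge \bm\psi_{2n-2}$. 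So the difference is $\bm\alpha^\vee \wedge (1 - \bm\beta \wedge \bm\beta^\vee) \wedge (\text{something})$, except that $\bm\psi$ and $\tilde{\bm\psi}$ carry different coefficients $d_{2n,1,l}$ versus $(2n-2l-1)d_{2n,1,l}$, so the computation genuinely mixes the two.

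The key steps I would carry out are as follows. First, I would compute $\pi_*\bm\gamma_{1,0}$ and $\pi_*\tilde{\bm\gamma}_{1,0}$: both boundary forms contain the factor $\bm\alpha^\vee$ which is horizontal-type but the $\bm\psi$'s are built out of $\dblR$ (horizontal) and $\bm\omega$ (one vertical leg each), so pushing forward along $\pi\colon SM \to M$ (integration over the $(2n-1)$-sphere fiber) picks out the purely-fiber-degree-$(2n-1)$ part. Counting: $\bm\omega$ contributes one vertical form each, $\bm\alpha^\vee$ and $\bm\beta, \bm\beta^\vee$ also have vertical components, and one checks the fiber degree never reaches $2n-1$ for $\bm\gamma_{1,0}$ unless... — actually the cleaner route is to observe that $\bm\psi_{2n-2}$ involves $\bm\omega^{2n-2l-2}$ which gives $2n-2l-2$ vertical legs, plus $\bm\alpha^\vee$ gives one more, total $2n-2l-1$, and since $l \ge 0$ the only fiber-top contribution is $l=0$ for $\bm\gamma_{1,0}$; I'd carry out the analogous bookkeeping for $\tilde{\bm\gamma}_{1,0}$ and verify both pushforwards vanish (or cancel). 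Second, and this is the main computation, I would compute $D(\bm\gamma_{1,0} - \tilde{\bm\gamma}_{1,0})$ using the description \eqref{eq_D_double_def} together with Lemma \ref{lem_d_nabla_psi}, Proposition \ref{prop_diff_basic_forms}, and Proposition \ref{prop_diff_basic_forms_kahler} (for $\dg\bm\beta^\vee = -J\bm\omega$ and $\dg F^\vee = 0$). In fact, Proposition \ref{prop_rumin_in_cartan_calculus} already tells us $D\bm\gamma_{k,j}$ is expressible in Cartan calculus; I'd reuse that computation, and analogously compute $D\tilde{\bm\gamma}_{1,0}$, and check that the difference equals $-\pi^*\bm\varphi_{1,0} = -\pi^*(d_{2n,1,n-\frac12}\,\dblR^{n-\frac12})$, which by the half-integer convention is actually $-d_{2n,1,n-\frac12}$ times an ill-defined power — so for odd $2n-1$... wait, $n - k/2 = n - 1/2$ is a half-integer, hence $\bm\varphi_{1,0}$ has coefficient zero and $D(\bm\gamma_{1,0} - \tilde{\bm\gamma}_{1,0})$ must simply vanish. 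That simplifies the target: I only need $\pi_*(\bm\gamma_{1,0} - \tilde{\bm\gamma}_{1,0}) = 0$ and $D(\bm\gamma_{1,0} - \tilde{\bm\gamma}_{1,0}) = 0$.

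The main obstacle I anticipate is the bookkeeping in the $D$-computation: the Rumin differential requires first finding the correction term $\bm\xi$ so that $\dg(\bm\omega + \bm\alpha\wedge\bm\xi)$ is a multiple of $\bm\alpha$, and then $D\bm\omega$ is that multiple with the $\bm\alpha$-part stripped; doing this simultaneously for $\bm\gamma_{1,0}$ and $\tilde{\bm\gamma}_{1,0}$ and tracking how the coefficients $d_{2n,1,l}$ versus $(2n-2l)d_{2n,1,l}$ interact through the recursion \eqref{eq_recursion_tilde_c} is where errors creep in. A cleaner alternative, which I would try first, is to invoke the forward reference: the statement is flagged as ``a special case of Proposition \ref{prop_glob_delta_beta}'', so if that proposition is proved independently later, the honest proof here is simply ``Set the relevant parameter to the $(1,0)$ case in Proposition \ref{prop_glob_delta_beta}.'' Absent that, the self-contained argument is the $D$- and $\pi_*$-computation sketched above, with the key simplification that the interior term $\bm\varphi_{1,0}$ vanishes by the half-integer convention, so that $\glob_M(\Gamma_{1,0})$ and $\glob_M(\tilde\Gamma_{1,0})$ are both pure boundary-term valuations whose difference one shows to be null by verifying the two Rumin conditions.
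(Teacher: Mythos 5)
Your framework is sound --- since $\bm\varphi_{1,0}=0$ by the half\mbox{-}integer convention, the claim is indeed equivalent to the pair $(0,\bm\gamma_{1,0}-\tilde{\bm\gamma}_{1,0})$ lying in the kernel of globalization --- but the proposal has three concrete problems. First, the ``cleaner alternative'' of quoting Proposition \ref{prop_glob_delta_beta} is circular: the proof of that proposition itself invokes Proposition \ref{prop_glob_b10} in its last line, so the logical dependency runs the other way; the sentence before the statement records the relationship but does not license a derivation. Second, your formula for $\bm\gamma_{1,0}$ is wrong: the exponent of $\dblg$ in $\bm\gamma_{k,p}$ is $k-2p$, not $k-2p-1$, so $\bm\gamma_{1,0}=\bm\alpha^\vee\wedge\dblg\wedge\bm\psi_{2n-2}$ rather than $\bm\alpha^\vee\wedge\bm\psi_{2n-2}$ (your version does not even have the right bidegree $(2n-1,2n)$). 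Relatedly, your vertical-degree count for $\pi_*$ is off: $\bm\alpha^\vee$ has type $(0,1)$ and contributes no leg to the form degree on $SM$, so both pushforwards vanish outright for degree reasons --- there is no borderline $l=0$ case to worry about.

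Third, and most importantly, the entire content of the proposition is the assertion $D(\bm\gamma_{1,0}-\tilde{\bm\gamma}_{1,0})=0$, and you only promise to compute it; no part of the paper supplies $D\tilde{\bm\gamma}_{1,0}$, and finding the correction term $\bm\xi$ for a form divisible by $\bm\beta\wedge\bm\beta^\vee$ is exactly where the work lies. The paper avoids computing any Rumin differential here: it exhibits an explicit primitive
$\bm\eta=\bm\alpha^\vee\wedge\bm\beta^\vee\wedge\overline{\bm\psi}_{2n-3}\wedge J\dblg$
with $\overline{\bm\psi}_{2n-3}=\sum_l(2n-2l-2)d_{2n,1,l}\dblR^l\wedge\bm\omega^{2n-2l-3}$, and shows via the K\"ahler identities ($d_\nabla\bm\beta^\vee=-J\bm\omega$, the derivation property of $J$, the recursion \eqref{eq_recursion_tilde_c}, and the decompositions $\dblR=\bm\alpha^\vee\wedge\dblR_0+\widehat\dblR$, $J\dblg=\bm\alpha^\vee\wedge\bm\beta+\widehat{J\dblg}$) that
$d_\nabla\bm\eta\equiv\tilde{\bm\gamma}_{1,0}-\bm\gamma_{1,0}\bmod(\bm\alpha,d\bm\alpha)$.
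This disposes of both kernel conditions at once, since exact forms integrate to zero over the closed normal cycle and contact-ideal forms vanish on Legendrian cycles. If you want a self-contained argument, you should either produce such a primitive or actually carry out the two Rumin computations; as written, the proposal identifies the right target but does not close the gap.
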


\proof 
Put
 \begin{align*}
  \overline{\bm\psi}_{2n-3}&=\sum_{l=0}^{n-2} (2n-2l-2)  d_{2n,1,l} \dblR^l\wedge\bm\omega^{2n-2l-3}\\
  \bm\eta&= \bm\alpha^\vee \wedge\bm\beta^\vee\wedge \overline{\bm\psi}_{2n-3}\wedge J\dblg.\label{eq:def_eta}
 \end{align*}
Recalling the Definition \ref{def_GAMMA}, we claim that
\begin{equation}\label{eq:prop_rumin_beta_exact}
 d \bm\eta \equiv \widetilde{\bm\gamma}_{1,0}-\bm\gamma_{1,0} \mod \bm\alpha,d\bm\alpha.
\end{equation}
Since exact forms and multiples of $\bm\alpha, d\bm\alpha$ induce curvature measures that lie in the kernel of the globalization map, this will imply the statement. 

By Lemma \ref{lem_dnabla}, \eqref{eq_gray_diff_basic_forms}, and \eqref{eq_gray_diff_basic_forms_kahler},
\begin{align}\label{eq:d_nabla_eta}\begin{split}
 d_\nabla \bm\eta=&\bm \omega\wedge\bm\beta^\vee\wedge \overline{\bm\psi}_{2n-3}\wedge J\dblg \\
 \quad & -\bm\alpha^\vee\wedge J\bm\omega\wedge \overline{\bm\psi}_{2n-3}\wedge J\dblg +\bm\alpha^\vee\wedge\bm\beta^\vee\wedge d_\nabla\overline{\bm\psi}_{2n-3}\wedge J\dblg.\end{split}
\end{align}

Note that $J\bm\psi_{2n-2}=J\bm\omega\wedge\overline{\bm\psi}_{2n-3}$ and note also that $J(\rho\otimes \vol)=0$ for any form $\rho\in \Omega^\bullet(SM)$. Therefore
\begin{align*}
 0=J(\bm\alpha^\vee\wedge\bm\psi_{2n-2}\wedge J\dblg)=-\bm\beta^\vee\wedge\bm\psi_{2n-2}\wedge J\dblg+\bm\alpha^\vee\wedge J\bm\omega\wedge\overline{\bm\psi}_{2n-3}\wedge J\dblg-\bm\alpha^\vee \wedge\bm\psi_{2n-2}\wedge \dblg,
\end{align*}
whence
\begin{align}
 d_\nabla \bm\eta & = -\bm\beta^\vee\wedge(\bm\omega\wedge\overline{\bm\psi}_{2n-3}+\bm\psi_{2n-2}+\bm\alpha^\vee\wedge d_\nabla \overline{\bm\psi}_{2n-3})\wedge J\dblg-\bm\alpha^\vee\wedge \bm\psi_{2n-2}\wedge \dblg.\label{eq:d_nabla_bis}
\end{align}

By \eqref{eq_gray_diff_basic_forms} and \eqref{eq_recursion_tilde_c} we get
\begin{align*}
 d_\nabla \overline{\bm\psi}_{2n-3}
 &= \sum_l (l+1)(2n-2l-3) d_{2n,1,l+1} \dblR^l\wedge \dblR_0\wedge\bm\omega^{2n-2l-4}\\
 &= \sum_l l(2n-2l-1)  d_{2n,1,l} \dblR^{l-1}\wedge \dblR_0\wedge\bm\omega^{2n-2l-2}.
\end{align*}

Using the decomposition $\dblR=\dblR_0\wedge\bm\alpha^\vee +\widehat \dblR$, we rewrite
\begin{align*}
\bm \omega\wedge\overline{\bm\psi}_{2n-3}=& \sum_l (2n-2l-2) { d}_{2n,1,l} \dblR^l\wedge\bm\omega^{2n-2l-2}\\
 =&\sum_l l(2n-2l-2) {d}_{2n,1,l} \dblR_0\wedge\bm\alpha^\vee\wedge \dblR^{l-1}\wedge\bm\omega^{2n-2l-2}\\
 & +\sum_l (2n-2l-2) d_{2n,1,l} \widehat \dblR^l\wedge\bm\omega^{2n-2l-2},
\end{align*}
and similarly
\begin{align*}
\bm \psi_{2n-2}&= \sum_l  d_{2n,1,l} \dblR^l \wedge \bm\omega^{2n-2l-2}\\
 &=\sum_l l d_{2n,1,l} \dblR_0\wedge \bm\alpha^\vee \wedge \dblR^{l-1}\wedge \bm\omega^{2n-2l-2} +\sum_l d_{2n,1,l} \widehat \dblR^l\wedge\bm\omega^{2n-2l-2}.
\end{align*}
It follows that
\begin{equation}\label{eq:telescopic}
\bm\omega\wedge\overline{\bm\psi}_{2n-3}+\bm\psi_{2n-2}+\bm\alpha^\vee\wedge d_\nabla\overline {\bm\psi}_{2n-3}=\sum_l (2n-2l-1) d_{2n,1,l}\widehat \dblR^l \wedge \bm\omega^{2n-2l-2}.
\end{equation}

Combining \eqref{eq:d_nabla_bis} and \eqref{eq:telescopic}, and using the identity 
$$J\dblg = \bm\alpha^\vee \wedge \bm\beta+ \widehat{J\dblg},$$
where $\widehat {(J\dblg)}_\xi(\,\cdot\, ;\, \xi)=0$,  we conclude
\begin{align*}
 d_\nabla\bm\eta & = -\bm\beta^\vee\wedge \sum_l (2n-2l-1) d_{2n,1,l} \widehat \dblR^l\wedge \bm\omega^{2n-2l-2}\wedge J\dblg-\bm\alpha^\vee\wedge \bm\psi_{2n- 2}\wedge \dblg\\
 &=- \bm\beta^\vee\wedge \sum_l (2n-2l-1) d_{2n,1,l} \widehat \dblR^l\wedge\bm \omega^{2n-2l-2}\wedge (\bm\alpha^\vee \wedge \bm \beta+\widehat{J \dblg})-\bm\alpha^\vee\wedge \bm\psi_{2n-2}\wedge \dblg\\
 &=\bm\alpha^\vee\wedge\bm\beta\wedge\bm\beta^\vee\wedge\widetilde{\bm\psi}_{2n-2} -\bm\alpha^\vee\wedge \bm\psi_{2n- 2}\wedge \dblg,
\end{align*}
which is our claim \eqref{eq:prop_rumin_beta_exact}.
\endproof

%------------------------------------------------------------------------------

\subsection{Restrictions} 
\label{subsec_restrictions}

The main result of this subsection is 
\label{sec_restrict_gamma}
\begin{theorem}\label{thm_restrict_gamma}
 If $ M \subset \tilde M$ is an inclusion of K\"ahler manifolds then
\begin{align}
\label{eq_restrict_Gamma} \restrict{\Gamma_{k,p}^{\tilde M}}M &= \Gamma^M_{k,p}, \\
\label{eq_restrict_tilde_Gamma} \restrict{\tilde \Gamma_{k,p}^{\tilde M}}M &= \tilde \Gamma^M_{k,p}
\end{align}
\end{theorem}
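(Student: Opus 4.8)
The plan is to reduce the assertion to a local statement near a point and then, at the level of the canonical double forms, to verify that all second--fundamental--form contributions produced by restriction cancel.

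Since smooth curvature measures and the restriction map $e^*_\calC\colon\calC(\tilde M)\to\calC(M)$ are local, it suffices to prove \eqref{eq_restrict_Gamma} and \eqref{eq_restrict_tilde_Gamma} near an arbitrary $x\in M$; write $\tilde n=\dim_\CC\tilde M$, $n=\dim_\CC M$ and $c=\tilde n-n$. Here I would use the explicit description of $e^*_\calC$: for a set $A\subset M$ the normal cone of $A$ in $\tilde M$ at a point is the direct sum of its normal cone in $M$ with the normal space $\nu_xM$, so $\nc_{\tilde M}(A)$ is the push--forward of $[0,\tfrac\pi2]\times(\nc_M(A)\times_M\nu^1M)$ under the join map $(\theta,\xi,\zeta)\mapsto\cos\theta\,\xi+\sin\theta\,\zeta$, together with the fibre $S(\nu_xM)$ over each $x$ in the relative interior of $A$, where $\nu^1M$ denotes the unit normal bundle. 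Consequently the boundary term of $e^*_\calC[\phi,\omega]$ is the fibre integral over $[0,\tfrac\pi2]\times\nu^1M$ of the join--pullback of $\omega$, and its inner term is $\phi|_M$ plus the fibre integral of $\omega$ over $S(\nu M)$.

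Everything now comes down to the behaviour of the canonical double forms of $\tilde M$ under this operation. Because $M$ is a K\"ahler submanifold, its tangent and normal bundles are $J$--invariant and the induced metric and complex structure coincide with the restricted ones; the forms $\dblg, F, F^\vee, \dblG, \bm\beta, \bm\beta^\vee, \bm\alpha, \bm\alpha^\vee$ depend only on the K\"ahler linear algebra and hence restrict in a manifold--independent way (cf. Lemma~\ref{lem:constant}). By contrast, the Gauss--Codazzi equations give $\dblR^{\tilde M}|_M=\dblR^M+Q(\mathrm{II})$ and $\bm\omega^{\tilde M}|_M=\bm\omega^M+L(\mathrm{II})$, where $\mathrm{II}$ is the second fundamental form of $M$ in $\tilde M$ and $Q,L$ are explicit algebraic expressions, quadratic resp. linear in $\mathrm{II}$. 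Substituting these into $\bm\gamma_{k,p}^{\tilde M}=\bm\alpha^\vee\wedge\dblG^p\wedge\dblg^{k-2p}\wedge\bm\psi_{2\tilde n-k-1}$, into $\bm\varphi_{k,p}^{\tilde M}$, and into $\widetilde{\bm\gamma}_{k,p}^{\tilde M}$, and carrying out the fibre integration --- which for the $\mathrm{II}$--free part produces precisely the Gaussian factors recorded in \eqref{eq_d_relation_1}--\eqref{eq_d_relation_2} and replaces $2\tilde n$ by $2n$ in the indices --- one must check that every $\mathrm{II}$--dependent term either cancels outright or lies in the ideal generated by $\bm\alpha$ and $d\bm\alpha$, hence dies upon passage to curvature measures. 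This cancellation is forced by the recursion \eqref{eq_recursion_tilde_c} via a telescoping exactly of the kind used in the proof of Lemma~\ref{lem_d_nabla_psi} and in \eqref{eq:telescopic}; the extra K\"ahler factors $\dblG^p\wedge\dblg^{k-2p}$, respectively $\bm\beta\wedge\bm\beta^\vee\wedge\dblG^p\wedge\dblg^{k-2p-1}$, restrict inertly and do not interfere with this mechanism, so the $\tilde\Gamma$ case runs in parallel using $\widetilde{\bm\psi}$ with coefficients $(N-2l-k)d_{N,k,l}$.

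The main obstacle is this final ``fantastic cancellation'' of the second--fundamental--form terms: it is precisely the property that singles out the combinations $\bm\psi_{2n-k-1}$, $\widetilde{\bm\psi}_{2n-k-1}$ among all polynomials in $\dblR$ and $\bm\omega$, and it must be carried out with no bound on $\mathrm{II}$. A convenient organisation of the computation is to treat first the totally geodesic case $M\hookrightarrow M\times N$ with a product K\"ahler metric, where $\mathrm{II}=0$ and \eqref{eq_restrict_Gamma}--\eqref{eq_restrict_tilde_Gamma} collapse to the coefficient identities \eqref{eq_d_relation_1}--\eqref{eq_d_relation_2}, and then to account for the $\mathrm{II}$--linear and $\mathrm{II}$--quadratic corrections in general, where the recursion \eqref{eq_recursion_tilde_c} produces the required vanishing. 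Finally, since $\Gamma_{k,p}$ and $\tilde\Gamma_{k,p}$ are produced from the canonical double forms by operations (wedge product, $d_\nabla$, pullback, fibre integration) all compatible with the geometric constructions entering $e^*_\calC$ --- cf. Lemma~\ref{lem_dnabla} --- the local identities assemble to \eqref{eq_restrict_Gamma} and \eqref{eq_restrict_tilde_Gamma}.
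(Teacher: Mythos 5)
Your reduction of the theorem to a fibre-integration identity over the normal sphere bundle is the same as the paper's (the join-map description of $\nc(A\subset\tilde M)$ is exactly \eqref{eq_nc_of_inclusion}, and the observation that $\dblg,\dblG,\bm\beta,\ldots$ restrict inertly is also used there). But the central mechanism you propose --- that ``every $\mathrm{II}$-dependent term either cancels outright or lies in the ideal $(\bm\alpha,d\bm\alpha)$,'' forced by the recursion \eqref{eq_recursion_tilde_c} via telescoping --- is not merely unproved; it is false, and if it were true the theorem itself would fail. Take $\tilde M=\CC^N$ and $M$ a non-totally-geodesic complex submanifold: then $\dblR^{\tilde M}=0$, so if all second-fundamental-form contributions died one would obtain $\restrict{\Gamma^{\tilde M}_{k,p}}{M}$ with $\bm\psi_{2n-k-1}$ built from $\dblR^{\tilde M}|_M=0$, i.e.\ only the $l=0$ term would survive, whereas $\Gamma^M_{k,p}$ genuinely involves all powers of $\dblR^M$, which in this situation is produced \emph{entirely} by $\mathrm{II}$ through the Gauss equation. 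The recursion \eqref{eq_recursion_tilde_c} plays no role here; in the paper it governs the Rumin-differential computations (Lemma~\ref{lem_d_nabla_psi}, Proposition~\ref{prop_glob_b10}), not the restriction.

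The step you are missing is the paper's Weyl lemma (Lemma~\ref{lemma_weyl}): the fibre integral over the normal sphere $S^{2d-1}$ of an even power $\bm H^{2e}$ of the second-fundamental-form double form equals, up to an explicit constant, $(\dblR^M-\dblR^{\tilde M}|_M)^e$, by the Gauss equation \eqref{eq_gauss} together with the Gaussian-type integral of \cite{bernig_curv_tensors}. The $\mathrm{II}$-terms therefore do not cancel; they are converted into curvature corrections, and the coefficient identities \eqref{eq_d_relation_1}--\eqref{eq_d_relation_2} (which you assign only to the ``$\mathrm{II}$-free part'') are precisely what is needed so that the binomial recombination $\sum_e\binom{f}{e}(\dblR^{\tilde M}|_M)^{f-e}\wedge(\dblR^M-\dblR^{\tilde M}|_M)^e=(\dblR^M)^f$ reassembles the restricted form into $\bm\psi^M_{2m-k-1}$ built from the \emph{intrinsic} curvature of $M$. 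This is the actual sense in which the combinations $\bm\psi$, $\tilde{\bm\psi}$ are singled out, and without it your argument cannot close.
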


We will prove this theorem by relating the double forms $\bm \gamma^M_{k, p},\tilde {\bm\gamma}^M_{k, p}, \bm \varphi_{k, p}^M$ defining the curvature measures on $M$ to
the corresponding forms 
 $\bm \gamma^{\tilde M}_{k, p},\tilde {\bm\gamma}^{\tilde M}_{k, p}$ for $\tilde M$. Denote the complex dimensions of $M,\tilde M$ by $m,n$ respectively. The case $m=n$ is trivial so we assume $d:= n-m>0$. Since 
 a subset $A \subset M$ has empty interior as a subset of $\tilde M$, the interior term
 $\bm \varphi^{\tilde M}_{k, p}$ is irrelevant.

 This task reduces to expressing the normal
 cycle $\nc(A,\tilde M)$ of $A$, considered as a subset of $ \tilde M$, in terms of its normal cycle $\nc(A, M)$ as a
 subset of $M$. 
Since the statement is local, 
 we may fix a hermitian orthonormal local frame 
$$b_{2m},\ldots,b_{2n-1}, \quad { Jb_{2r} = b_{2r+1}}
$$ 
of the normal bundle  $NM$ of $M$ in $\tilde M$. Denote the normal space to $M$ at a point $x$ by $N_xM \subset T_x\tilde M$. 

For $(x,y) \in M\times S^{2d-1}$, $y = (y_{2m},\ldots,y_{2n-1})$, put 
$$
yb:= \sum_{r=2m}^{2n-1} y_r b_r(x)
$$
lying in the unit sphere in the normal fiber  $N_xM$ at $x$,
and define similarly $Yb$ for $Y \in T_yS^{2d-1}\subset \CC^d$.

Consider the maps 
 \begin{align*}
 F\colon & SM \times \left(0,\frac{\pi}{2}\right) \times S^{2d-1} \to S\tilde M,\\
  (x,v,\theta,y) & \mapsto \left(x,\cos \theta\, v + \sin \theta\, yb \right)\\
F_0\colon & M \times S^{2d-1}  \to NM \subset S\tilde M\\
(x,y) & \mapsto (x,yb),
\end{align*}
together with the projections
\begin{align*}
P:   SM \times \left(0,\frac{\pi}{2}\right) \times S^{2d-1}&\to SM, \\
P_0:M \times S^{2d-1} &\to M.
\end{align*}
It then holds for every $A \in \mathcal P(M)$ that 
  
\begin{equation}\label{eq_nc_of_inclusion}
\nc(A \subset \tilde M)=(F_0)_*\left( [[A]] \times [[S^{2d-1}]]\right)+F_*\left(\nc(A) \times \left[\left[0,\frac{\pi}{2}\right]\right] \times [[S^{2d-1}]]\right),
\end{equation}
where the double brackets mean that we consider the corresponding oriented manifold as a current. In other words, if $[\phi,\omega] \in \calC(\tilde M)$ then
\begin{equation}\label{eq_restricted_cm}
\calC(M) \owns\restrict {[\phi,\omega] }M= \left[P_{0*}  F_0^*\omega,P_* F^*\omega\right],
\end{equation}
where $P_{0*}, P_*$ denote fiber integration with respect to $P_0,P$.
(Observe that since the codimension $d>0$, any restriction $\restrict{ [\phi,0]}M = 0$.)

We will apply \eqref{eq_restricted_cm} to the forms $\omega$ representing the
boundary terms of $\Gamma^{\tilde M}_{k,p},\tilde \Gamma^{\tilde M}_{k,p}$. Formally,
these are double forms $\bm \gamma^{\tilde M}_{k, p}, \tilde{\bm \gamma}^{\tilde M}_{k, p} \in \Omega^{2n-1,2n}(S\tilde M, \tilde M) $, which we identify with elements of $\Omega^{2n-1}(S\tilde M)$ under the convention \eqref{eq_omegan}. 

Following the
usual formalism of double forms, the pullback and pushforward operations above are
applied to the first factor. This will result in elements of $\Omega^{2m-1,2n}(i\circ \pi)$ and $
\Omega^{2m,2n}(i)$, where $i:M\to \tilde M$ denotes the inclusion map and $\pi:SM\to M$
is the projection. Denote by
$\vol_{NM} \in \Omega^{0,d}(i)$ the section consisting of the volume forms of the normal fibers $N_xM$. Pulling it back in the obvious way, we obtain injections
$$
\Omega^{\bullet,\bullet}(SM,M )\hookrightarrow \Omega^{\bullet,\bullet +d}(i \circ \pi ), \quad 
\Omega^{\bullet,\bullet}(M )\hookrightarrow \Omega^{\bullet,\bullet +d}(i )
$$
by $\bm \gamma \mapsto \bm \gamma \wedge \vol_{NM}$.
In these terms, Theorem 
\ref{thm_restrict_gamma} will follow from the identities
\begin{align}
\label{eq_PF1} P_*F^* \bm \gamma_{k,p}^{\tilde M} &= \bm \gamma_{k,p}^{ M} \wedge\vol_{NM},\\
\label{eq_PF2}P_{0*}F_0^* \bm \gamma_{k,p}^{\tilde M} &= \bm\varphi_{k,p}^M \wedge \vol_{NM},\\
\label{eq_PF3}P_*F^*{ \tilde{\bm \gamma}_{k,p}}^{\tilde M} &= \tilde{\bm \gamma}_{k,p}^M \wedge\vol_{NM},\\
\label{eq_PF4}P_{0*}F_0^*{\tilde{ \bm \gamma}_{k,p}}^{\tilde M} &= 0.
\end{align}

\subsection{Some identities for pullbacks of double forms} 
 Let us first compute
the pullbacks of $\bm \alpha,\bm \alpha^\vee, \bm \beta^\vee, \bm \omega$ under $F,F_0$, which live in the spaces $\Omega^{\bullet,\bullet}(\pi\circ F),
\Omega^{\bullet,\bullet}(\pi\circ F_0)$, respectively.
We may consider $\Omega^{\bullet,\bullet}(\pi\circ F_0)$ to be a subspace of $\Omega^{\bullet,\bullet}(\pi\circ F)$ via pullback under the projection map $SM \times \left[0,\frac \pi 2\right]\times S^{2d-1} \to M \times S^{2d-1} $.  Denote by $K$ the subspace of $\Omega^{1,1}(\pi\circ F_0)$ 
(or of $\Omega^{1,1}(\pi\circ F)$)
consisting of sections of the bundle with fibers $T_x^*M \otimes N_x^*M$.

Define
\begin{displaymath}
\bm \tau:= F_0^* \bm\alpha^\vee \in \Omega^{0,1}(\pi\circ F_0),  
\end{displaymath}
and observe that
\begin{align}\label{eq_F*tau}
F^* \bm\alpha&=\cos \theta\, \bm \alpha, &&
F^* \bm\alpha^\vee =\cos \theta\, \bm \alpha^\vee+\sin \theta\, \bm \tau,\\
\label{eq_F_beta}
 F^*\bm\beta
&=\cos \theta\, \bm \beta, && F^*\bm \beta^\vee=\cos \theta \bm\beta^\vee - \sin\theta\, J\bm\tau. 
\end{align}

Define also
\begin{displaymath}
\bm H, \bm \sigma \in \Omega^{1,1}(\pi\circ F_0) 
\end{displaymath}
by taking for $(x,y) \in M \times S^{2d-1}$ and for $ X \in T_xM, \ Y \in T_yS^{2d-1}, \ Z \in T_x \tilde M$
\begin{displaymath}
 \bm H(x,y) ((X,Y);Z):= \dblg_x(\nabla_X (yb); Z^\top),
\end{displaymath}
where $Z= Z^\top + Z^\perp \in T_xM \oplus N_xM$, a pullback of the second fundamental form of $M$ in $\tilde M$ at the normal vector $yb \in N_xM$, and 
\begin{displaymath}
 \bm \sigma ((X,Y);Z) := \dblg(Yb;Z).
\end{displaymath}

We claim that
\begin{align}\label{eq_F0_pullback_omega}
F_0^*\bm\omega^{\tilde M} & \equiv \bm H + \bm \sigma \quad \mod K.
\end{align}
Indeed, let $X=x'(0)\in T_xM$ and $Y=y'(0) \in T_yS^{2d-1}$ for some smooth curve in $M \times S^{2d-1}$. The tautological section $\xi$ along the curve $F_0(x(t),y(t))$ is given by $\sum y_r(t) b_r(x(t))$. Putting $V:=dF_0(X,Y)$, and using \eqref{eq_charactezitation_pull_back_connection} one gets
\begin{displaymath}
(\pi^*\nabla)_V \xi=\sum_r y_r'(0) b_r + y_r(0) (\pi^*\nabla)_V \pi^*b_r= Yb + \nabla_X yb.
\end{displaymath}	
Equation \eqref{eq_F0_pullback_omega} then follows easily. 

Define $\vol_S\in \Omega^{2d-1,0}(\pi\circ F_0)$ to be the volume form of the $S^{2d-1}$ factor, and observe that
\begin{equation} \label{eq_tau_wedge_sigma}
\bm\tau \wedge \bm \sigma^{2d-1}=(2d-1)! \vol_S \wedge \vol_{NM}.
\end{equation}

Finally, we observe that similarly to \eqref{eq_F0_pullback_omega} we have
\begin{align} \label{eq_pullback_omega} \begin{split}
 F^*\bm\omega^{\tilde M} & \equiv\cos \theta\, \bm\omega^M + \sin \theta \,(\bm H + \bm \sigma  )
  +(d\theta \otimes 1)\wedge(\cos \theta \,\bm \tau - \sin \theta \, \bm\alpha^\vee) 
\quad \mod K
  \end{split}.
\end{align}

\subsubsection{The Weyl lemma} 
Consider the  second fundamental forms associated to the $b_r$ as the double forms 
\begin{displaymath} 
\bm l_r \in \Omega^{1,1}(M), \quad r=2m,\ldots,2n-1, 
\end{displaymath}

\begin{displaymath}
\bm l_r(X ;Y):=\dblg\left(\nabla_X b_r,Y\right),\qquad X,Y\in T_xM.
\end{displaymath}
Thus
\begin{displaymath}
  \bm H(x,y) =  y\cdot (\bm l_1,\ldots,\bm l_r)_x^t=: y \, \bm l_x. 
\end{displaymath}

Define also the complex-valued version
\begin{align*}
\bm \lambda_{s}(X ;Y)&:=   \bm l_{2s}(X;Y) +  \sqrt{-1} \bm l_{2s+1}(X;Y), \quad s = m, \dots ,n-1,
\end{align*}
and observe that at each point $x \in M$ the value $(\bm \lambda_s)_x$ is a symmetric bilinear form over the complex vector space $T_xM$.

Given a double form $\dblC \in \Omega^{\bullet,\bullet}( \tilde M)$, the pullback $i^*\dblC$ by the inclusion map $i$ consists of sections of the bundle over $M$ with fibers $\bw^{\bullet,\bullet} (T_xM, T_x \tilde M), \ x\in M$.
Denote by
\begin{displaymath}
\restrict {\dblC} M \in \Omega^{\bullet,\bullet}(M) 
\end{displaymath}
the image of $i^*\dblC$ under the  pullback $\bw^{\bullet,\bullet} (T_xM, T_x \tilde M)\to \bw^{\bullet,\bullet} (T_xM, T_x  M)$ induced by $i$ on the second factor.

\begin{lemma}[Weyl lemma] \label{lemma_weyl}
For even exponents $2e$ we have for each $x \in M$
\begin{equation}
  \int_{S^{2d-1}} \bm H(x,y)^{2e} dy=
  \frac{2^{e+1}s_{2d+2e-1}}{s_{2e}}(\dblR^M_x-\dblR_x^{\tilde M}|_M)^e,
 \end{equation}
while the integral vanishes for odd exponents.
\end{lemma}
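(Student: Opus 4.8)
The plan is to expand the power $\bm H(x,y)^{2e}=\bigl(\sum_{r=2m}^{2n-1}y_r\,\bm l_r\bigr)^{2e}$ and integrate it monomial by monomial over the normal sphere $S^{2d-1}$. The observation that makes this routine is that the second fundamental forms $\bm l_r\in\Omega^{1,1}(M)$ are \emph{symmetric} double forms, $\bm l_r^\vee=\bm l_r$, and that any two double forms of type $(1,1)$ commute under $\wedge$, since the sign $(-1)^{p_1p_2+q_1q_2}$ in the commutation rule equals $(-1)^{1\cdot 1+1\cdot 1}=1$. Hence the $\bm l_r$ behave like commuting scalars and the multinomial theorem applies: $\bm H^k=\sum_{|\alpha|=k}\binom{k}{\alpha}\,y^\alpha\,\bm l^{\alpha}$, where $\bm l^{\alpha}:=\bigwedge_r\bm l_r^{\wedge\alpha_r}$.

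First I would dispose of the odd exponents: reflecting a single coordinate $y_r\mapsto-y_r$ shows that $\int_{S^{2d-1}}y^{\alpha}\,dy=0$ whenever some $\alpha_r$ is odd, in particular whenever $|\alpha|$ is odd, so $\int_{S^{2d-1}}\bm H^{2e+1}\,dy=0$. In the even power only the multi-indices $\alpha=2\beta$ with $|\beta|=e$ survive. Inserting the classical moment formula
\[
\int_{S^{2d-1}}y^{2\beta}\,dy=\frac{s_{2d-1}\prod_r(2\beta_r-1)!!}{2d(2d+2)\cdots(2d+2e-2)},
\]
using the elementary identity $\binom{2e}{2\beta}\prod_r(2\beta_r-1)!!=(2e-1)!!\binom{e}{\beta}$, and reapplying the multinomial theorem to the commuting forms $\bm l_r\wedge\bm l_r$ of type $(2,2)$, the sum collapses and we obtain
\[
\int_{S^{2d-1}}\bm H^{2e}\,dy=\frac{(2e-1)!!\,s_{2d-1}}{2d(2d+2)\cdots(2d+2e-2)}\Bigl(\sum_r\bm l_r\wedge\bm l_r\Bigr)^{e}.
\]

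The only genuinely geometric input is the Gauss equation, which I would use to identify the inner sum. Expanding directly and using symmetry of $\bm l_r$ gives $(\bm l_r\wedge\bm l_r)(W,X;Y,Z)=2\bigl[\bm l_r(W;Y)\bm l_r(X;Z)-\bm l_r(W;Z)\bm l_r(X;Y)\bigr]$, while the Gauss equation, rewritten in the paper's curvature convention $\dblR(W,X;Y,Z)=\dblg(\nabla_{[W,X]}Y-\nabla_W\nabla_X Y+\nabla_X\nabla_W Y;Z)$ and using $\langle\mathrm{II}(A,B),b_r\rangle=-\bm l_r(A;B)$, reads $(\dblR^M_x-\dblR_x^{\tilde M}|_M)(W,X;Y,Z)=\sum_r\bigl[\bm l_r(W;Y)\bm l_r(X;Z)-\bm l_r(W;Z)\bm l_r(X;Y)\bigr]$. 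Therefore $\sum_r\bm l_r\wedge\bm l_r=2(\dblR^M_x-\dblR_x^{\tilde M}|_M)$, and the $e$-th power contributes a factor $2^e(\dblR^M_x-\dblR_x^{\tilde M}|_M)^e$.

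It remains only to match the two expressions for the normalising constant. Using $s_n=(n+1)\omega_{n+1}$, the formulas \eqref{eq:volume_odd} and \eqref{eq:volume_ball_rec}, and $(2e-1)!!=(2e)!/(2^e e!)$, both $\frac{2^e(2e-1)!!\,s_{2d-1}}{2d(2d+2)\cdots(2d+2e-2)}$ and $\frac{2^{e+1}s_{2d+2e-1}}{s_{2e}}$ reduce to $\frac{2^{1-e}\pi^d(2e)!}{(d+e-1)!\,e!}$, which completes the proof. I expect the main obstacle to be purely a matter of bookkeeping: fixing the sign in the Gauss equation under the paper's conventions and verifying that the two closed forms of the constant agree; there is no analytic difficulty once the commutativity of $(1,1)$-double forms is exploited.
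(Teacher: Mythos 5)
Your proof is correct and follows the same route as the paper's: oddness in $y$ disposes of odd exponents, spherical integration handles the even powers, and the Gauss equation $\sum_r\bm l_r^2=2(\dblR^M-\dblR^{\tilde M}|_M)$ identifies the result. The only difference is that the paper outsources the identity $\int_{S^{2d-1}}\bigl(\sum_r y_r\bm l_r\bigr)^{2e}\,dy=c_{d,e}\bigl(\sum_r\bm l_r^2\bigr)^{e}$ to Proposition A1 of \cite{bernig_curv_tensors}, whereas you prove it directly via the multinomial theorem (legitimate, since $(1,1)$-double forms commute) and the standard moment formula; your closed form for the constant agrees with the stated $\frac{2^{e+1}s_{2d+2e-1}}{s_{2e}}$.
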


\proof If the exponent is odd then the integrand is odd in $y$. If the exponent is even,
apply \cite[Proposition A1]{bernig_curv_tensors} and use the Gauss equation
\begin{equation}\label{eq_gauss}
 \sum_{s=m}^{n-1} \bm \lambda_s \wedge \overline {\bm\lambda_s} =  \sum_{r=2m}^{2n-1} \bm l_r^2=2(\dblR^M - \dblR^{\tilde M}|_M).
\end{equation}
\endproof

\subsubsection{Proof of \eqref{eq_restrict_Gamma}} This amounts to proving 
\eqref{eq_PF1} and \eqref{eq_PF2}. Since $\dblg, \dblG$, are horizontal,
with $\restrict{\dblg^{\tilde M}}M=  \dblg^M, \restrict{\dblG^{\tilde M}}M=  \dblG^M$, referring to \eqref{eq_def_Gamma_interior} it is enough to show that
\begin{align}\label{eq_pullback_psi}
	\begin{split}
P_* F^*(\bm\alpha^\vee\wedge\bm \psi^{\tilde M}_{2n-k-1})& = \bm\alpha^\vee\wedge\bm \psi^{ M}_{2m-k-1}\wedge \vol_{NM}, \\
P_{0*}F_0^*( \bm\alpha^\vee\wedge \bm \psi^{\tilde M}_{2n-k-1}) &= d_{2m,k, m-\frac k 2} (\dblR^M)^{m-\frac k 2}\wedge \vol_{NM}.
\end{split}
\end{align}

Since the curvature double form $\dblR^{\tilde M}$ is horizontal, 
we observe that 
\begin{displaymath}
P_*F^*(\bm\alpha^\vee \wedge (\bm\omega^{\tilde M})^a\wedge (\dblR^{\tilde M})^b)
= (i^* \dblR^{\tilde M})^b\wedge P_*F^*(\bm\alpha^\vee \wedge (\bm\omega^{\tilde M})^a). 
\end{displaymath}

Since the double forms appearing in \eqref{eq_pullback_psi} are sums of terms of this form,
we first evaluate the expressions $P_*F^*(\bm\alpha^\vee \wedge (\bm\omega^{\tilde M})^a)$. 
Expanding this using \eqref{eq_pullback_omega}, it is clear from \eqref{eq_tau_wedge_sigma} that any 
terms that are {\it not } multiples of 
\begin{displaymath}
F^*\bm \alpha^\vee \wedge (d\theta\otimes 1)\wedge(\cos \theta \,\bm \tau - \sin \theta \, \bm\alpha^\vee)  · \wedge \bm\sigma^{2d-1}
=(d\theta\otimes 1)\wedge \bm\alpha^\vee  \wedge\bm \tau \wedge \bm\sigma^{2d-1} 
\end{displaymath}
map to $0$ under $P_*$. Modulo these irrelevant terms we compute  
\begin{align*}
F^*(\bm\alpha^\vee \wedge (\bm\omega^{\tilde M})^a) & \equiv \bm\alpha^\vee \wedge \sum_ k \frac{a!}{(a- k-2d)! k!(2d-1)!} \\
& \quad (\cos \theta)^{a-k-2d}\bm \omega^{a- k-2d} (\sin \theta)^{ k+2d-1} \bm H^{ k}  \\
& \quad \wedge  ( d\theta  \otimes 1) \wedge\bm \tau \wedge \bm\sigma^{2d-1}\\
& = \bm\alpha^\vee \wedge  \sum_{ k} \frac{a!}{(a- k-2d)!k!}(\cos \theta)^{a-k-2d}  (\sin \theta)^{ k+2d-1} \\
& \quad \bm \omega^{a-k-2d} \wedge \bm H^{ k} \wedge  (d\theta \otimes 1) \wedge  \vol_{S}\wedge \vol_{NM}.
\end{align*}

 Recalling the identity 
\begin{equation} \label{eq_integration_cos}
 \int_0^{\frac{\pi}{2}} \cos^a \theta \sin^b \theta d\theta=    \frac{s_{a+b+1}}{s_as_b}=  \frac{(a+b+2)\omega_{a+b+2}}{(a+1)(b+1) \omega_{a+1}\omega_{b+1}}
\end{equation}
and invoking \eqref{eq:volume_odd} and Lemma \ref{lemma_weyl},  integration yields
\begin{align*}
P_*  F^* (\bm\alpha^\vee \wedge \bm\omega^a) & = \bm\alpha^\vee \wedge \sum_e \frac{1}{2^{e}\pi^e e!} \frac{\omega_{a+1} (a+1)!}{\omega_{a-2e-2d+1}(a-2e-2d+1)!} \cdot \\
& \quad  \cdot \bm\omega^{a-2e-2d} \wedge   (\dblR^M-\dblR^{\tilde M}|_M)^e  \wedge \vol_{NM}.
\end{align*}

Setting $a= 2n-2l-k-1$, recalling that $n-d=m$, and applying \eqref{eq_d_relation_1},
\begin{align*}
P_*  F^* (d_{2n,k,l} \bm\alpha^\vee \wedge \bm\omega^{2n-2l-k-1}) & = \bm\alpha^\vee \wedge \sum_e \frac{d_{2n,k,l}}{2^{e}\pi^e e!} \frac{\omega_{2n-2l-k} (2n-2l-k)!}{\omega_{2m-2l-2e-k}(2m-2l-2e-k)!} \cdot \\
& \quad  \cdot \bm\omega^{2m-2l-2e-k-1} \wedge   (\dblR^M-\dblR^{\tilde M}|_M)^e
\wedge  \vol_{NM}\\
& = \bm\alpha^\vee \wedge \sum_e {d_{2m,k,l+e}}
\binom{l+e} e 
  \cdot \bm\omega^{2m-2l-2e-k-1} \wedge   (\dblR^M-\dblR^{\tilde M}|_M)^e\\
&\quad \wedge  \vol_{NM}.
\end{align*}

Thus
\begin{align} \notag
P_* F^*(\bm\alpha^\vee\wedge\bm \psi^{\tilde M}_{2n-k-1})& = \sum_l d_{ 2n,k,l} (\dblR^{\tilde M}|_M)^l \wedge  P_*F^*(\bm\alpha^\vee \wedge \bm\omega^{2n-2l-k-1})\\
 \label{eq:computation_res_Delta}
 &= \bm\alpha^\vee \wedge \vol_{NM}\\
\notag &\quad \wedge 
\sum_{f} d_{2m,k, f}    \bm \omega^{2m-2f-k-1}\wedge \sum_e \binom  f e
 \wedge (\dblR^{\tilde M}|_M)^{ f-e} \wedge (\dblR^M-\dblR^{\tilde M}|_M)^e\\
\notag& =  \bm\alpha^\vee \wedge \sum_{f} d_{2m,k,f} \bm\omega^{2m-2f-k-1} \wedge (\dblR^M)^f\wedge\vol_{NM}\\
\notag & = \bm\alpha^\vee\wedge \bm\psi^M_{2m-k-1} \wedge\vol_{NM},
\end{align}
which is the first relation of \eqref{eq_pullback_psi}.

We next consider the interior terms for $\Gamma_{k,p}$. 

Since $F^*_0 \bm\alpha^\vee = \bm\tau$, we conclude that, modulo the kernel of the fiber integral map,
\begin{align*}
F_0^*(\bm\alpha^\vee \wedge \bm\omega^a) & \equiv  \binom{a}{2d-1} \bm\tau \wedge \bm H^{a-2d+1} \wedge \bm\sigma^{2d-1} \\
& = (-1)^{a-2d+1} \binom{a}{2d-1} \bm H^{a-2d+1} (2d-1)! \vol_{S}|_y  \wedge \vol_{NM}.
\end{align*}
By Lemma \ref{lemma_weyl}, the fiber  integral vanishes if $a$ is even, hence $P_{0*} F_0^*\bm\psi_{k,p}=0$ if $k$ is odd. If $a$ is odd, we use Lemma \ref{lemma_weyl} to compute 
\begin{align*}
P_{0*}  F_0^*(\bm\alpha^\vee \wedge \bm\omega^a) & = \frac{a!}{(a-2d+1)!} \frac{2^{\frac{a+1}{2}-d+1} s_a}{s_{a-2d+1}} (\dblR^M-\dblR^{\tilde M}|_M)^\frac{a-2d+1}{2} \wedge \vol_{NM}.
\end{align*}

We thus have for even $k$, using \eqref{eq_d_relation_2},
\begin{align*}
&P_{0*} F_0^*(\bm\alpha^\vee \wedge \bm\psi^{\tilde M}_{2n-k-1}) \\ &  =  \sum_l d_{2n,k,l} \frac{(2n-2l-k-1)!}{(2m-2l-k)!} \frac{2^{m-l-\frac{k}{2}+1} s_{2n-2l-k-1}}{s_{2m-2l-k}}  (\dblR^{\tilde M}|_M)^l \wedge (\dblR^M-\dblR^{\tilde M}|_M)^{m-\frac k2 -l}\wedge \vol_{NM} \\
& =   \frac{\pi^k}{(k+1)!\omega_k} \frac{1}{\left(\frac{k}{2}\right)!\left(m-\frac{k}{2}\right)!} \frac{1}{2^{m-\frac{k}{2}} \pi^m }  \dblR^{m-\frac{k}{2}}\wedge \vol_{NM},
\end{align*}
which is the second relation of \eqref{eq_pullback_psi}.

\subsubsection{Proof of \eqref{eq_restrict_tilde_Gamma}} 
Recalling \eqref{eq_F_beta}, and the identity
\begin{displaymath}
\frac{s_{a+2}}{s_{a-2e-2d+2}}= \frac{a-2e-2d+1}{a+1} \frac{s_a}{s_{a-2e-2d}},
\end{displaymath} 
an argument  as above shows that 
\begin{align*}
P_*F^* (\bm\beta\wedge\bm\alpha^\vee \wedge\bm \beta^{ \vee} \wedge \bm\omega^a)  & \equiv \sum_e \frac{1}{2^{e}\pi^e e!} \frac{a-2e-2d+1}{a+1}\frac{s_{a} a!}{s_{a-2e-2d}(a-2e-2d)!} \\
& \quad  \bm\alpha^\vee \wedge\bm \omega^{a-2e-2d} \wedge  \bm \beta \wedge \bm \beta^\vee \wedge (\dblR^M-\dblR^{\tilde M}|_M)^e\wedge  \vol_{NM}
\end{align*}
and therefore, using \eqref{eq_d_relation_1},
\begin{align*}
P_* F^* ( (2n-2l-k) &d_{2n,k,l}\bm\beta\wedge\bm\alpha^\vee \wedge\bm \beta^{ \vee} \wedge \bm \omega^{2n-2l-k-1} )=\\
&= \sum_e \binom{l+e} e (2m-2l-2e-k)d_{2m,k,l+e}\\
&\quad \bm\beta\wedge\bm\alpha^\vee \wedge\bm \beta^{\vee} \wedge \bm \omega^{2m-2l-2e-k-1}
 \wedge (\dblR^M-\dblR^{\tilde M}|_M)^e\wedge  \vol_{NM}. 
\end{align*}
As above, we conclude that 
\begin{displaymath}
 P_* F^* (\bm\beta \wedge \bm\alpha^\vee \wedge \bm \beta^\vee \wedge \tilde \psi^{\tilde M}_{2n-k-1})
= \bm\beta \wedge \bm\alpha^\vee \wedge \bm \beta^\vee \wedge \tilde \psi^{ M}_{2m-k-1}\wedge \vol_{NM},
\end{displaymath}
which yields \eqref{eq_PF3}.

To prove \eqref{eq_PF4}, observe that  $\tilde {\bm\gamma}_{k,p}$ is divisible by $\bm \beta $, and note that $F_0^*\bm\beta =0$.
\endproof

\subsection{Embedded K\"ahler curvature tensors}

It is convenient to record here the following fundamental fact.
\begin{definition}
 An  algebraic K\"ahler curvature tensor $\dblR\in \calK(\CC^m)$ is called \emph{embedded} if there exists a complex submanifold $M^m \subset \mathbb C^{n+m}$, a point $p \in M$, and a $J$-linear isometry $f\colon \CC^m\to T_pM$ such that $\dblR=f^*\dblR_p^M$.
\end{definition}

\begin{theorem}[Pointwise embedding lemma] \label{thm_pointwise_lemma}
The set of embedded algebraic K\"ahler curvature tensors  
is a full-dimensional convex cone in $\calK(\CC^m)$.   
\end{theorem}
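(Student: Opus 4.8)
The plan is to pin down the set $E\subseteq\calK(\CC^m)$ of embedded algebraic K\"ahler curvature tensors completely, and then read off the three required properties. Applying the Gauss equation \eqref{eq_gauss} to a complex submanifold $M^m\subset\CC^{m+d}$ of \emph{flat} hermitian space (so the ambient term drops out) gives $\dblR^M_p=\tfrac12\sum_s\bm\lambda_s\wedge\overline{\bm\lambda_s}$, where the $\bm\lambda_s$ are the complexified second fundamental forms of $M$ at $p$, each a symmetric $\CC$-bilinear form on $T_pM$; since a $J$-linear isometry $f\colon\CC^m\to T_pM$ carries such forms to forms of the same kind and commutes with conjugation, every embedded tensor has the shape $\tfrac12\sum_s\bm\lambda_s\wedge\overline{\bm\lambda_s}$ with $\bm\lambda_s\in\Sym^2_\CC((\CC^m)^*)$. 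Conversely, for any finite family $\bm\lambda_1,\dots,\bm\lambda_d$ of such forms I would take the graph $M=\{(z,\tfrac12(\bm\lambda_1(z,z),\dots,\bm\lambda_d(z,z))):z\in\CC^m\}\subset\CC^{m+d}$; a direct computation with the flat ambient connection shows its second fundamental form at the origin has complexified components exactly $\bm\lambda_1,\dots,\bm\lambda_d$ (the holomorphic Hessians of the defining quadratics), so \eqref{eq_gauss} places $\tfrac12\sum_s\bm\lambda_s\wedge\overline{\bm\lambda_s}$ in $E$. Hence $E=\{\tfrac12\sum_{s=1}^d\bm\lambda_s\wedge\overline{\bm\lambda_s}:d\ge 0,\ \bm\lambda_s\in\Sym^2_\CC((\CC^m)^*)\}$ (with $d=0$ giving $0\in E$). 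From this description $E$ is manifestly a convex cone: it is closed under addition (concatenate the defining families) and under multiplication by $t>0$ (replace each $\bm\lambda_s$ by $\sqrt t\,\bm\lambda_s$).

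It remains to show $E$ is full-dimensional, i.e.\ $\linspan_\RR E=\calK(\CC^m)$. Since $E$ is a convex cone, $\linspan_\RR E=E-E$, and polarizing $\bm\lambda\wedge\overline{\bm\lambda}$ (using $\bm\lambda\pm\bm\mu$ and $\bm\lambda\pm i\bm\mu$) shows that $E-E$ contains $\bm\lambda\wedge\overline{\bm\mu}+\bm\mu\wedge\overline{\bm\lambda}$ and $i(\bm\lambda\wedge\overline{\bm\mu}-\bm\mu\wedge\overline{\bm\lambda})$ for all symmetric $\CC$-bilinear $\bm\lambda,\bm\mu$, so $(\linspan_\RR E)\otimes_\RR\CC$ contains every product $\bm\lambda\wedge\overline{\bm\mu}$. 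It therefore suffices to prove that the $\CC$-linear map
\[
\Phi\colon\Sym^2_\CC((\CC^m)^*)\otimes_\CC\overline{\Sym^2_\CC((\CC^m)^*)}\longrightarrow\calK(\CC^m)\otimes_\RR\CC,\qquad\bm\lambda\otimes\overline{\bm\mu}\mapsto\bm\lambda\wedge\overline{\bm\mu},
\]
is surjective. Both sides have complex dimension $\binom{m+1}{2}^2$ — for the target this is the classical count, an algebraic K\"ahler curvature tensor being a Hermitian form on $\Sym^2_\CC\CC^m$ — so it is enough to show $\Phi$ is injective. I would check this in complex coordinates $z^1,\dots,z^m$: the double form $\bm\lambda\wedge\overline{\bm\mu}$ has coefficient $\lambda_{ab}\,\overline{\mu_{cd}}$ on the linearly independent vectors $(dz^a\wedge d\bar z^c)\otimes(dz^b\wedge d\bar z^d)$, so if $\sum_i\bm\lambda_i\wedge\overline{\bm\mu_i}=0$ with the $\bm\mu_i$ linearly independent, then for each fixed $a,b$ the identity $\sum_i(\lambda_i)_{ab}\overline{\bm\mu_i}=0$ (conjugated) forces $(\lambda_i)_{ab}=0$ for all $i$, whence $\bm\lambda_i=0$. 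Thus $\Phi$ is an isomorphism, pure tensors span $\calK(\CC^m)\otimes_\RR\CC$, and $\linspan_\RR E=\calK(\CC^m)$.

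The main obstacle is this last step: identifying $\linspan_\RR E$ with all of $\calK(\CC^m)$ rests on the isomorphism $\Phi$, which requires both the exact dimension of $\calK(\CC^m)$ and keeping the double-form bookkeeping straight through the polarization; by contrast, the reduction to the graph model and the verification that $E$ is a convex cone are comparatively routine.
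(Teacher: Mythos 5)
Your proposal is correct and follows the paper's proof in all essentials: the Gauss equation with flat ambient space to identify $E$ with the set of sums $\tfrac12\sum_s\bm\lambda_s\wedge\overline{\bm\lambda_s}$, the graph model to realize arbitrary second fundamental forms, polarization, and Sitaramayya's count $\dim_\RR\calK(\CC^m)=\binom{m+1}{2}^2$. The only divergence is the final linear-algebra step: the paper bounds the dimension of the image from below by exhibiting an explicit family of $\binom{m+1}{2}^2$ elements whose images are linearly independent over $\RR$, whereas you complexify, identify the domain with $\Sym^2_\CC((\CC^m)^*)\otimes_\CC\overline{\Sym^2_\CC((\CC^m)^*)}$ of the same complex dimension as the target, and prove injectivity of $\Phi$ by reading off the coefficients of $\bm\lambda\wedge\overline{\bm\mu}$ on the independent basis elements $(dz^a\wedge d\bar z^c)\otimes(dz^b\wedge d\bar z^d)$. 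Your variant is a clean alternative (and yields the extra information that $\Phi$ is an isomorphism), while the paper's avoids any discussion of conjugate tensor products; both hinge equally on the dimension formula for $\calK(\CC^m)$.
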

\begin{proof}
 Setting $\dblR^{\tilde M} = 0$ since $\tilde M = \CC^n$, the relation \eqref{eq_gauss} characterizes this set explicitly. Let $B$ be the complex vector space of symmetric complex-bilinear forms on $\CC^m$. Note that any finite collection $\bm \lambda_m,\dots,\bm\lambda_{n-1}\in B$ occur as second fundamental forms as above, e.g. for $M = \{\left(\vec z, \frac 1 2 \bm \lambda_{m}(\vec z, \vec z),\dots,\frac 1 2 \bm \lambda_{n-1}(\vec z, \vec z)\right): \vec z = (z_0,\dots,z_{m-1}) \in \CC^m\}\subset \CC^n$. 
 
 Polarizing, in order to prove the Theorem it is enough to show that the map
 $$
\bm \lambda \otimes_\RR \bm \mu \mapsto \bm \lambda \wedge \overline{\bm \mu }+ \bm \mu \wedge \overline{\bm \lambda }
 $$
 constitutes a surjective map  $B\otimes_\RR B \to\calK(\CC^m)$.
 
 By \cite[Theorem~3.6]{sitaramayya}, 
\begin{displaymath}
 \dim_{\RR}\calK(\CC^m)=\binom{m+1}{2}^2,
\end{displaymath}
so it is enough to show that the dimension of the image is at least this large. Selecting a basis $e_1,\dots,e_m$ of $\CC^m$, let $\theta_1,\dots,\theta_m$ be the corresponding dual basis, and observe that the images of 
\begin{displaymath}
 (\theta_i\odot \theta_j)\otimes (\theta_i\odot \theta_j),\quad
(\theta_i\odot \theta_j)\otimes (\theta_k\odot \theta_l),\quad 
(\sqrt{-1} \theta_i\odot \theta_j)\otimes (\theta_k\odot \theta_l),
\end{displaymath}
with $i\leq j$, $k\leq l$, and $(i,j)<(k,l)$ with respect to the lexicographic order,
are linearly independent, as the reader can readily confirm by evaluating them on 4-tuples of vectors selected from
$e_i,e_j,e_k,e_l$ and their images under $J$.
 Therefore the dimension of the image of $B\otimes_\RR B$ under the map above is 
$\ge \binom{m+1}{2}+2 \binom{\binom{m+1}{2}}{2}=\binom{m+1}{2}^2$, as claimed.
\end{proof}

\subsection{Angularity of the $\Gamma_{k,p}^M$}

Recall that among all curvature measures the ones called  angular are distinguished by enjoying the  simplest possible behavior on polytopes. More precisely, in $\RR^n$ a translation invariant curvature measure $\Psi$ is called angular (see \cites{bernig_fu_solanes,wannerer_angular}), if there exist functions $f_k$ on the Grassmannian $\Grass_k(\RR^n)$ of $k$-dimensional linear subspaces of $\RR^n$ such that 
$$\Psi(P,U) = \sum_{k=0}^n \sum_{\dim F = k}  f_k(\overline F) \gamma(F,P) \vol_k(F\cap U)$$
for every polytope $P\subset \RR^n$ and Borel set $U\subset \RR^n$. Here the inner sum is over all $k$-dimensional faces of $P$, $\overline F$ is the linear subspace parallel to the affine hull of $F$, and  $\gamma(F,P)$ denotes the external angle of $P$ at $F$. On  a riemannian manifold $M$ a smooth curvature measure  $\Psi$ is called angular, if $\tau_x \Psi$ is angular for all $x\in M$ {where $\tau_x$ is the transfer map \eqref{eq_transfer_map}.}

\begin{proposition} \label{prop:angular} 
  The curvature measures $\Gamma_{k,p}^M$ are angular.
\end{proposition}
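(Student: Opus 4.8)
The plan is to verify angularity directly, by transferring $\Gamma^M_{k,p}$ to a tangent space and evaluating the resulting translation invariant curvature measure on the normal cycle of a polytope; a degree count will localize the curvature contribution to individual faces and reduce the statement to a Weyl-tube–type computation parallel to the one underlying the angularity of the Lipschitz--Killing curvature measures. By definition $\Gamma^M_{k,p}$ is angular iff $\tau_x\Gamma^M_{k,p}\in\Curv(T_xM)$ is angular for every $x$, with $\tau_x$ the transfer map \eqref{eq_transfer_map}. Fix $x$ and a complex linear isometry $\sigma\colon\CC^n\to T_xM$. By Lemma \ref{lem:constant} the forms $\bm\alpha,\bm\omega,\dblg,J\dblg,F,F^\vee$ --- hence $\dblG$ --- are carried by $\bar\sigma^*$ to fixed translation invariant forms on $S\CC^n$, while $\bar\sigma^*\dblR$ becomes the \emph{constant} tensor $\dblR_0:=\sigma^*\dblR^M_x\in\calK(\CC^n)$. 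Writing $\bm\gamma_{k,p}(\dblR_0)$, $\bm\varphi_{k,p}(\dblR_0)$ for the double forms of Definition \ref{def_GAMMA} with $\dblR$ replaced throughout by this constant, it follows that $\tau_x\Gamma^M_{k,p}$ corresponds under $\sigma$ to the translation invariant curvature measure on $\CC^n$ with boundary term $\bm\gamma_{k,p}(\dblR_0)$ and interior term $\bm\varphi_{k,p}(\dblR_0)$, the latter being a constant multiple of $\vol$ since $\bm\varphi_{k,p}$ is a double form of top bidegree. As angularity is preserved under the isometry $\sigma$, it suffices to show this model curvature measure on $\CC^n$ is angular.

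Evaluate it on a polytope $P\subset\CC^n$ and Borel set $U$ using the standard decomposition of $\nc(P)$ into the sum, over the proper faces $G$ of $P$, of the currents carried by $G\times N(G,P)$, where $N(G,P)\subset S^{2n-1}$ is the outer spherical normal cone, of dimension $2n-\dim G-1$. Over a $j$-face $G$ the tangent space of $\nc(P)$ is the direct sum of a horizontal part $E:=\overline G$ of dimension $j$ and a vertical part $T_\nu N(G,P)$ of dimension $2n-j-1$. In
\begin{displaymath}
\bm\gamma_{k,p}(\dblR_0)=\bm\alpha^\vee\wedge\dblG^p\wedge\dblg^{k-2p}\wedge\sum_{l}d_{2n,k,l}\,\dblR_0^{\,l}\wedge\bm\omega^{\,2n-2l-k-1}
\end{displaymath}
the factors $\dblG,\dblg,\dblR_0$ are horizontal, so their $2p+(k-2p)+2l=k+2l$ form-slots can only be filled from $E$, forcing $k+2l\le j$; and $\bm\omega$ annihilates horizontal vectors, so the $2n-2l-k-1$ form-slots of $\bm\omega^{2n-2l-k-1}$ must be filled from $T_\nu N(G,P)$, forcing $j\le k+2l$. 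Hence the $l$-th summand contributes to $\int_{\nc(P)\cap\pi^{-1}U}\bm\gamma_{k,p}(\dblR_0)$ only through faces of dimension exactly $j=k+2l$, on which the (now constant) curvature enters solely through the fixed power $\dblR_0^{(j-k)/2}$; the interior term contributes only through $P$ itself. It then remains to check that over a $j$-face $G$ the contribution equals $f_j(\overline G)\,\gamma(G,P)\,\vol_j(G\cap U)$ for a function $f_j$ of $\overline G$ alone, and that the interior term gives $f_{2n}(\CC^n)\,\vol_{2n}(P\cap U)$; summing over faces then exhibits $\tau_x\Gamma^M_{k,p}$, and hence $\Gamma^M_{k,p}$, as angular.

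That last point is the Weyl-tube computation familiar from the Lipschitz--Killing case --- here $\bm\psi_{2n-k-1}$ is the same combination of $\dblR^l\wedge\bm\omega^{\,\cdot}$ occurring in $\Lambda_j$ (Lemma \ref{lem:LKF CMs}) --- with the single extra horizontal factor $\dblG^p$: the $2n$ covector slots of $\bm\gamma_{k,p}(\dblR_0)$ wedge to a multiple of $\vol$, and, using $\dblg=\sum\theta_i\otimes\theta_i$ together with \eqref{eq:dblG}, one sees that the $\bm\alpha^\vee\wedge\bm\omega^{2n-j-1}$ part reduces, after integration over $N(G,P)$, to $s_{2n-j-1}\,\gamma(G,P)$ times the volume density along $G$ --- exactly as for $\Lambda_j$ --- while $\dblG^p\wedge\dblg^{k-2p}\wedge\dblR_0^{(j-k)/2}$ contributes the scalar $f_j(\overline G)$ determined by the restriction of this double form to $E$ (an element of the one-dimensional $\bigwedge^{j}E^*\otimes\bigwedge^{j}E^*$), manifestly a function of $E$. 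I expect the main difficulty to lie precisely in this bookkeeping: tracking the combinatorial factors and signs that confirm the $N(G,P)$-integral collapses to the external angle and that the residual coefficient depends on nothing but $\overline G$.

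Alternatively, one may partly sidestep this computation. The curvature measure $\tau_x\Gamma^M_{k,p}$ depends polynomially on $\dblR^M_x\in\calK(\CC^n)$, and the image of this map spans a finite-dimensional subspace $W\subset\Curv(\CC^n)$; since the angular curvature measures form a linear subspace, the set of $\dblR$ for which $\tau_x\Gamma^M_{k,p}$ is angular is the preimage of a linear subspace of $W$ under a polynomial map, hence Zariski closed. By Theorem \ref{thm_pointwise_lemma} it is therefore enough to treat embedded tensors $\dblR=\dblR^M_p$ with $M$ a complex submanifold of some $\CC^N$. For these, Theorem \ref{thm_restrict_gamma} gives $\Gamma^M_{k,p}=\restrict{\Gamma^{\CC^N}_{k,p}}{M}$, so the claim reduces to the curvature-free flat case on $\CC^N$ --- where the boundary form is $\propto\bm\alpha^\vee\wedge\dblG^p\wedge\dblg^{k-2p}\wedge\bm\omega^{2N-k-1}$ and is angular by \cites{bernig_fu_solanes,wannerer_angular} --- together with the fact that restriction to a K\"ahler submanifold followed by transfer sends angular curvature measures to angular ones; this route trades the explicit contraction above for the latter fact.
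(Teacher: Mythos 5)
Your primary argument is essentially the paper's proof. The paper splits $\bm\gamma_{k,p}$ into the homogeneous pieces $\bm\gamma_{k,p,l}=\bm\alpha^\vee\wedge\dblG^p\wedge\dblg^{k-2l-2p}\wedge\dblR^l\wedge\bm\omega^{2n-k-1}$ (your degree count forcing $j=k+2l$ is the same bookkeeping in different indexing) and then verifies exactly what you describe: for a $k$-plane $E\subset T_xM$ and $\xi\perp E$, the restriction of $\bar\tau_x\bm\gamma_{k,p,l}$ to $E\oplus(\xi^\perp\cap E^\perp)$ factors as $c(E)\wedge(\vol_{S(E^\perp)}|_\xi\otimes 1)$ with $c(E)\in\bigwedge^kE^*\otimes\bigwedge^{2n}T_x^*M$ independent of $\xi$, which is the form-level criterion for angularity. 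The ``bookkeeping'' you defer is genuinely short: one does not need to track the combinatorial constants, only that the vertical factor collapses to the spherical volume form of $S(E^\perp)$ and that the horizontal coefficient sees $\xi$ nowhere. One small imprecision: the coefficient lives in $\bigwedge^jE^*\otimes\bigwedge^{2n}T_x^*M$ rather than $\bigwedge^jE^*\otimes\bigwedge^jE^*$, since only the first slots of $\dblG^p\wedge\dblg^{k-2p}\wedge\dblR_0^{(j-k)/2}$ are restricted to $E$; this does not affect the conclusion.

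Your alternative route, however, has a gap you should not paper over. The ``fact'' that restriction to a K\"ahler submanifold $M\subset\CC^N$ followed by transfer sends angular curvature measures to angular ones is not available in the literature and is not obviously true; indeed, combined with Theorem \ref{thm_restrict_gamma} it is essentially equivalent to the proposition itself for embedded curvature tensors, so invoking it here is circular. (It is also distinct from the angularity conjecture settled in \cite{wannerer_angular}, which concerns the module product, not restriction.) The Zariski-density reduction via Theorem \ref{thm_pointwise_lemma} is fine, but it only relocates the problem to the embedded case rather than solving it. Stick with your first argument.
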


\proof 
It suffices to show that  the curvature measures defined by  the $(2n-1)$-forms 
\begin{equation}\label{eq:homogeneous_terms} 
\bm\gamma_{k,p,l}:= \bm\alpha^\vee \wedge \dblG^p \wedge \dblg^{k-2l-2p}\wedge  \dblR^l \wedge \bm \omega^{2n-k-1}, \quad  l,p\geq 0,\quad 2p+2l\leq k
\end{equation}
are angular.

 Fix a point $x\in M$ and a $k$-dimensional linear subspace $E\subset T_xM$. Let $\xi \in S_xM$ be perpendicular to $E$. {Let
 \[
  \bar\tau_x\colon \restrict{\Omega^{\bullet,\bullet}(SM,M)}{S_xM}\to \Omega^{\bullet,\bullet}(ST_xM,T_xM)^{T_xM}
 \]
be the natural extension of \eqref{eq:bar_tau} to double forms.} At a point $(v,\xi)\in S T_xM$, the restriction of $\bar\tau_x  \bm \gamma_{k,p,l}$ to $E\oplus (\xi^\perp\cap E^\perp)\subset T_{(v,\xi)} ST_xM$  is given by
\begin{displaymath}
 { c_{n,k}}\bar\tau_x(\dblG^p \wedge \dblg^{k-2l-2p} \wedge \dblR^l)|_{E}  \wedge (1\otimes \vol_{E^\perp} )
 \wedge  (\vol_{S(E^\perp)}|_\xi \otimes 1) = c(E)\wedge  (\vol_{S(E^\perp)}|_\xi \otimes 1),
\end{displaymath}
where $c_{n,k}$ is a constant and $c(E)\in \largewedge^k E^*\otimes \largewedge^{2n} T_x^*M$ does not depend on $\xi \in E^\perp$. This shows that $\bar\tau_x \bm\gamma_{k,p,l}$ defines an angular translation invariant curvature measure on $T_xM$ and concludes the proof.
\endproof

\section{The Gray algebra}\label{sect_gray}
In Theorem \ref{thm:StructureGray} below we determine the structure  of the {\it Gray algebra} defined below. The 
principal consequences that we will carry forward to the concluding discussion in Section \ref{sec_alg structure} are
Propositions \ref{prop:g_kp_zero} and \ref{prop_poincare_gray}.
% which will help us to reduce a natural family of canonical K\"ahler curvature measures to 
%a basis (cf. Proposition \ref{prop:triviality}).
%The $\Gamma_{k, p}, \tilde \Gamma_{k, p}$ are not linearly independent when $k >n$.
%In this section we identify certain canonical linear combinations that are linearly independent, and which moreover
%carry a geometric meaning (Proposition \ref{prop_B_eq_B}). 

\subsection{Canonical elements of $\mathcal G(n)$}
\begin{definition}  The \emph{Gray algebra} {of degree $n$} is the complex subalgebra
 $\mathcal G(n)\subset \bw^{\bullet,\bullet}(\CC^n,\CC^n)\otimes_{\RR}\CC$ generated by $\dblg, \dblG$. { Clearly $\mathcal G(n)$ is a graded commutative algebra over $\C$:}
 \begin{displaymath}
\mathcal G(n)=\bigoplus_{k=0}^{2n} \mathcal G_k(n),  
 \end{displaymath}
{ where $\mathcal G_k(n)$ denotes the subspace of  elements of type $(k,k)$.}
 \end{definition}

The  elements 
\begin{equation}\label{eq_def_phikp}
\bm\phi_{k,p} := \sum_{j \geq 0} c_{k,p,j}\dblG^j \wedge \dblg^{k-2j} =g_{k,p}(\dblG,\dblg) \in \mathcal G_k(n),\qquad  2p\leq k
\end{equation}
will play an important role,
where 
\begin{align*}
 g_{k,p}(s,t) & : =\sum_{j\ge 0} c_{k,p,j} s^j t^{k-2j} \in \CC[s,t]\\
 c_{k,p,j} & :=(-1)^{p+j}2^j \sum_{l\geq 0} \binom{k+1}{2p+2l+1}   \binom{p+l}{l}\binom{p+l}{j}.
\end{align*}

An alternative description of the constants in terms of generating functions is given by the following lemma.

\begin{lemma} \label{lemma:GenFunc2}
$$ G(x,y,z):= \sum_{k,p,j\geq 0} c_{k,p,j} x^j y^p z^k = \bigg[(1-z)^{2}-   (1-2x)(1-y)z^{2}\bigg]^{-1}.$$
\end{lemma}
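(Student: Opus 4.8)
The plan is to prove the generating function identity by reducing it to an elementary manipulation of binomial sums, treating $x$, $y$, $z$ as formal variables. First I would substitute the definition of $c_{k,p,j}$ into the triple sum and reorganize the order of summation, putting the sum over $l$ (the auxiliary index in the definition of $c_{k,p,j}$) on the outside. Concretely, using $c_{k,p,j} = (-1)^{p+j} 2^j \sum_{l \ge 0} \binom{k+1}{2p+2l+1}\binom{p+l}{l}\binom{p+l}{j}$, the sum over $j$ factors off cleanly: $\sum_{j \ge 0} (-1)^j 2^j \binom{p+l}{j} x^j = (1-2x)^{p+l}$. That collapses one of the three summations immediately.

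Next I would handle the sum over $p$. After the previous step we are left with $G(x,y,z) = \sum_{k,p,l \ge 0} (-1)^p \binom{k+1}{2p+2l+1}\binom{p+l}{l}(1-2x)^{p+l} y^p z^k$. Writing $m = p+l$ as a new index, so that $\binom{p+l}{l} = \binom{m}{p}$ and the constraint becomes $0 \le p \le m$, the sum over $p$ becomes $\sum_{p=0}^m \binom{m}{p}(-y)^p = (1-y)^m$ — wait, that is not quite right because $(1-2x)^{p+l} = (1-2x)^m$ does not depend on $p$, so indeed the $p$-sum is $\sum_{p} \binom{m}{p}(-y)^p (1-2x)^m$; but the exponent $2p+2l+1 = 2m+1$ in the remaining binomial $\binom{k+1}{2m+1}$ also depends only on $m$. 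Hence everything depending on $p$ alone is $\sum_{p=0}^{m}\binom{m}{p}(-y)^p = (1-y)^m$, and we get $G(x,y,z) = \sum_{k,m \ge 0}\binom{k+1}{2m+1}\big[(1-2x)(1-y)\big]^m z^k$.

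Finally I would evaluate the remaining double sum over $k$ and $m$ using the standard identity $\sum_{k \ge 0}\binom{k+1}{2m+1} z^k = \frac{z^{2m}}{(1-z)^{2m+2}}$ (which follows from differentiating the geometric series, or from the generating function for $\binom{k+1}{r}$). Setting $w := (1-2x)(1-y)$, this gives
\begin{displaymath}
G(x,y,z) = \sum_{m \ge 0} w^m \frac{z^{2m}}{(1-z)^{2m+2}} = \frac{1}{(1-z)^2}\sum_{m \ge 0}\left(\frac{w z^2}{(1-z)^2}\right)^m = \frac{1}{(1-z)^2}\cdot\frac{1}{1 - \frac{w z^2}{(1-z)^2}} = \frac{1}{(1-z)^2 - w z^2},
\end{displaymath}
which is exactly $\big[(1-z)^2 - (1-2x)(1-y)z^2\big]^{-1}$, as claimed.

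I do not anticipate a genuine obstacle here; the only points requiring care are bookkeeping ones. The main thing to watch is the interchange of summations: since we are working with formal power series in $x, y, z$ and for fixed $k$ only finitely many $(p, j, l)$ contribute (the binomial coefficients force $2p+2l+1 \le k+1$ and $j \le p+l$), all rearrangements are legitimate coefficientwise, so no convergence argument is needed. A secondary point is to confirm the auxiliary identity $\sum_{k \ge 0}\binom{k+1}{2m+1}z^k = z^{2m}(1-z)^{-2m-2}$ with the correct shift; this is where an off-by-one error is most likely to creep in, so I would verify it by substituting $k+1 = j$ and using $\sum_{j \ge 1}\binom{j}{r}z^{j-1} = z^{r-1}(1-z)^{-r-1}$ with $r = 2m+1$.
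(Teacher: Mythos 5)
Your proof is correct and follows essentially the same route as the paper's: both collapse the triple sum using the binomial theorem in $x$ (the $j$-sum), the substitution $m=p+l$ followed by the binomial theorem in $y$, the negative binomial series $\sum_k \binom{k+1}{2m+1}z^k = z^{2m}(1-z)^{-2m-2}$, and a final geometric series; the only difference is the order in which these steps are performed, which is immaterial. All the identities you invoke, including the shifted negative binomial series you flag as the likely source of an off-by-one error, check out.
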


\begin{proof}
Using the binomial series
\begin{displaymath}
\sum_{n\geq k} \binom{n}{k} x^n = \frac{x^k}{(1-x)^{k+1}}
\end{displaymath}
we compute 
\begin{align*}
\sum_{k,p,j\geq 0} c_{k,p,j} x^j y^p z^k & =  \sum_{k,p,j\geq 0}  (-1)^{p+j} 2^j  \sum_{l\geq 0} \binom{k+1}{2p+2l+1} \binom{p+l}{l}\binom{p+l}{j} x^j y^p z^k \\
 & =  \sum_{l,p,j\geq 0}(-1)^{p+j} 2^j   \binom{p+l}{l}\binom{p+l}{j} x^j y^p \frac{z^{2p+2l}}{(1-z)^{2p+2l+2}}\\
 & = \sum_{l,p\geq 0}(-1)^{p}    \binom{p+l}{l} (1-2x)^{p+l} y^p \frac{z^{2p+2l}}{(1-z)^{2p+2l+2}}\\
& =(1-z)^{-2} \sum_{r,l\geq 0} (-1)^{r-l} \binom{r}{l} (1-2x)^{r} y^{r-l} \frac{z^{2r}}{(1-z)^{2r}}\\
& = (1-z)^{-2} \sum_{r\geq 0}   \bigg[  \frac{(1-2x)(1-y)z^{2}}{(1-z)^{2}}\bigg]^r\\
& =\bigg[(1-z)^{2}-   (1-2x)(1-y)z^{2}\bigg]^{-1}.
\end{align*} 
\end{proof}

The generating function $G$ yields the following relations among the $g_{k,p}$:

\begin{lemma}
 \begin{equation}\label{eq:recRelation}  
     (k+2) t g_{k,p}(s,t) = 2(p+1) g_{k+1,p+1}(s,t)+ (k-2p+1) g_{k+1,p}(s,t),
   \end{equation} 
  \begin{equation} \label{eq:recRelation2}  
  (j+1)c_{k,p,j+1}+(k-2j)c_{k,p,j}=(k+1)c_{k-1,p,j}.
 \end{equation}
 \end{lemma}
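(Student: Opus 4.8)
The plan is to deduce both recursion relations from the closed form of the generating function $G$ in Lemma~\ref{lemma:GenFunc2} by extracting coefficients. Writing $G(x,y,z) = \sum_{k,p,j\ge 0} c_{k,p,j} x^j y^p z^k$ and recalling that $g_{k,p}(s,t)=\sum_{j\ge0} c_{k,p,j}s^j t^{k-2j}$, the key observation is that the two-variable generating function of the $g_{k,p}$ is obtained from $G$ by the substitution $x\mapsto s/t^2$, $z\mapsto tz$; more precisely, $\sum_{k,p} g_{k,p}(s,t) y^p z^k = G(s/t^2, y, tz)$ after clearing denominators, which from the formula in Lemma~\ref{lemma:GenFunc2} equals
\begin{displaymath}
\bigl[(1-tz)^2 - (1 - 2s/t^2)(1-y)(tz)^2\bigr]^{-1} = \bigl[(1-tz)^2 - (t^2-2s)(1-y)z^2\bigr]^{-1}.
\end{displaymath}
Call this $H(s,t,y,z)$.

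For \eqref{eq:recRelation}, I would verify the first-order linear PDE that $H$ satisfies with respect to $z$ and $y$. Differentiating $H^{-1} = (1-tz)^2 - (t^2-2s)(1-y)z^2$ in $z$ gives $-H^{-2}\partial_z H = -2t(1-tz) - 2(t^2-2s)(1-y)z$, and in $y$ gives $-H^{-2}\partial_y H = (t^2-2s)z^2$. Combining these to eliminate the explicit $(t^2-2s)(1-y)z^2$ term via $H^{-1}$ itself yields an identity of the schematic form $t\,(\text{weighted }z\partial_z\text{-shift of }H) = 2\,(\text{$y$-shift})H + (\text{another }z\partial_z\text{-shift})H$. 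Reading off the coefficient of $y^p z^{k+1}$ in that identity, and tracking how the operator $z\partial_z$ contributes the factor $(k+2)$ on the left and $(k-2p+1)$, resp.\ the shift $p\mapsto p+1$ with factor $2(p+1)$, on the right, produces exactly \eqref{eq:recRelation}. This is the routine but slightly fiddly bookkeeping step; I expect it to be the main place where signs and index shifts must be handled carefully.

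For \eqref{eq:recRelation2}, I would work directly with $G(x,y,z)$ rather than $H$. From $G^{-1} = (1-z)^2 - (1-2x)(1-y)z^2$ one gets $\partial_z G = G^2\bigl[2(1-z) + 2(1-2x)(1-y)z\bigr]$ and $\partial_x G = 2(1-y)z^2 G^2$. Eliminating $(1-2x)(1-y)z^2$ using $G^{-1}$, one obtains a relation among $G$, $\partial_z G$, $\partial_x G$, and $z\partial_z G$ of the form $(1-z)\,z\partial_z G + (\text{something})\,\partial_x G = (\ldots)\,G$, whose coefficient of $x^j y^p z^{k-1}$ gives a relation between $c_{k,p,j}$, $c_{k-1,p,j}$, $c_{k,p,j+1}$, and $c_{k,p,j}$ again (the $x\partial_x$ term shifting $j$). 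Matching coefficients yields \eqref{eq:recRelation2}. Alternatively, \eqref{eq:recRelation2} can be seen as the ``coefficient form'' of \eqref{eq:recRelation}: substituting $g_{k,p}(s,t)=\sum_j c_{k,p,j}s^j t^{k-2j}$ into \eqref{eq:recRelation}, the power $t^{k+1-2j}$ appears on the left from $c_{k,p,j}$ (via the factor $t$), and on the right from $c_{k+1,p+1,j}$ and $c_{k+1,p,j}$; but matching powers of $s$ forces a reindexing $j\mapsto j$ vs.\ $j\mapsto j$, and comparing with the degree shift $k\mapsto k-1$ in \eqref{eq:recRelation2} requires relabeling. I would present whichever of these two derivations is cleaner once the computation in the previous paragraph is pinned down; most likely I would derive \eqref{eq:recRelation} from the PDE and then obtain \eqref{eq:recRelation2} purely algebraically by equating coefficients of $s^j t^{k-1-2j}$ in \eqref{eq:recRelation} after the substitution $k\mapsto k-1$.

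The only genuine obstacle is the coefficient-extraction bookkeeping — keeping the four index shifts ($k\to k+1$, $p\to p+1$, $j\to j+1$, and the multiplicative weights from $z\partial_z$ and $x\partial_x$) consistent. There is no conceptual difficulty: everything follows mechanically from Lemma~\ref{lemma:GenFunc2} once the correct first-order differential identity for $G$ (equivalently $H$) is written down.
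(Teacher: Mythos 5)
Your main line of attack coincides with the paper's: both recursions are read off from first-order differential identities satisfied by the closed form of $G$ from Lemma~\ref{lemma:GenFunc2} --- for \eqref{eq:recRelation} a PDE in $z$ and $y$ for $G(s/t^2,y,tz)$ (the paper checks $(2t+tz\partial_z)G(s/t^2,y,tz)=(\tfrac{2(1-y)}{z}\partial_y+\partial_z)G(s/t^2,y,tz)$), and for \eqref{eq:recRelation2} a PDE in $x$ and $z$ for $G$ itself (the paper uses $(1-2x)\partial_x G = z(z-1)\partial_z G + 2zG$). That part of your plan is sound and is essentially the paper's proof.

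However, the fallback you say you would ``most likely'' use for \eqref{eq:recRelation2} --- obtaining it by equating coefficients in \eqref{eq:recRelation} --- does not work. Substituting $g_{k,p}=\sum_j c_{k,p,j}s^jt^{k-2j}$ into \eqref{eq:recRelation} and comparing coefficients of $s^jt^{k+1-2j}$ yields
\begin{displaymath}
(k+2)\,c_{k,p,j}=2(p+1)\,c_{k+1,p+1,j}+(k-2p+1)\,c_{k+1,p,j},
\end{displaymath}
a relation in which $j$ is fixed and $p$ is shifted. By contrast, \eqref{eq:recRelation2} fixes $p$ and shifts $j$ and $k$; it is not the coefficient form of \eqref{eq:recRelation}, and it genuinely requires the $\partial_x$-identity for $G$ (i.e.\ information about the $x$-dependence, which \eqref{eq:recRelation} never probes). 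So you must commit to the second derivation you describe for \eqref{eq:recRelation2}, not the ``purely algebraic'' one. A minor slip in that derivation: since $\partial_x(G^{-1})=+2(1-y)z^2$, one has $\partial_x G=-2(1-y)z^2G^2$; the sign you wrote is off, though this only affects bookkeeping, not the viability of the argument.
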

 
 \begin{proof}
 Note that
 \begin{displaymath}
 \sum_{k,p\geq 0} g_{k,p}(s,t) y^pz^k = G\left(\frac s{t^2}, y, tz\right),\quad g_{0,p} = 0, \ p>0. 
 \end{displaymath}
Thus the relations \eqref{eq:recRelation} are equivalent to the functional identity
$$
\left(2t + t z\frac\partial{\partial z}\right)G(s/t^2, y, tz)  = \left( \frac {2(1-y)} z\frac\partial{\partial y}+   \frac\partial{\partial z} \right)G(s/t^2, y, tz) ,
$$ 
which is easily checked.
 
Relations \eqref{eq:recRelation2} follow by equating  coefficients in the identity
\begin{displaymath}
 (1-2x) \frac{\partial G}{\partial x}  =z(z-1) \frac{\partial G} {\partial z}  +2z G.
\end{displaymath}
\end{proof}

The Gray algebra interacts nicely with K\"ahler curvature tensors:

\begin{theorem}\label{thm_int_term} Suppose $\dblR \in \calK(\CC^n)$. 
Then, for $p+q\leq n$,
\begin{displaymath}
 \dblG^p \wedge  \dblR^q \wedge \dblg^{2(n-p-q)} =  2^{p} \frac{ C_{n-p-q}}{ C_{n-q}} \dblR^{q} \wedge \dblg^{2(n-q)},
\end{displaymath}
 where 
 \begin{displaymath}
   C_n = \frac{(2n)!}{n!(n+1)!}
 \end{displaymath}
 denotes the $n$-th Catalan number.
\end{theorem}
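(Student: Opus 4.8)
The first move is to observe that both sides of the claimed identity are double forms of type $(2n,2n)$ on $\CC^n$, hence live in the one-dimensional space $\bw^{2n}(\CC^n)^*\otimes\bw^{2n}(\CC^n)^*$. So the whole statement is an equality of two scalars, and by Lemma~\ref{lemma:contraction} these scalars are, up to explicit factorials, the total contractions $C^{2(p+q)}(\dblG^p\wedge\dblR^q)$ and $C^{2q}(\dblR^q)$. It therefore suffices to prove: (a) the ratio in the Theorem is independent of the tensor $\dblR\in\calK(\CC^n)$, and (b) that ratio equals $2^p C_{n-p-q}/C_{n-q}$; by (a) the latter may be checked on any single convenient $\dblR$.

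For both (a) and (b) I would complexify. On $\CC^n\otimes\CC=V^{1,0}\oplus V^{0,1}$ the Kähler symmetry \eqref{eq:kahler_symmetry} forces $\dblR$ to be of complex bidegree $(1,1)$ in each of its two slots; equivalently $\dblR$ is exactly a Hermitian form on $\Sym^2 V^{1,0}$, which is the content of the Sitaramayya dimension count already quoted in the paper. A direct inspection of \eqref{eq:def_G} shows that $\dblG$ has the same purity and corresponds, up to a universal scalar, to the induced metric on $\Sym^2 V^{1,0}$, whereas the metric splits as $\dblg=\eta+\bar\eta$ with $\eta$ of bidegree $(1,0)$ in its form slot and $(0,1)$ in its value slot. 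Consequently, when $\dblG^p\wedge\dblR^q$ (which is of pure bidegree $(p{+}q,p{+}q;p{+}q,p{+}q)$) is wedged with a power $\dblg^{2k}$ and projected onto the top bidegree $(n,n;n,n)$, only the ``balanced'' summand $\binom{2k}{k}(\eta\wedge\bar\eta)^k$ of $\dblg^{2k}$ can contribute; after this reduction each side of the identity becomes a finite combinatorial sum contracting the entries of the Hermitian form $\dblR$, of the $\Sym^2$-metric, and of the ordinary metric against $\vol\otimes\vol$.

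In these terms statement (a) says that, wedged against $\dblR^q\wedge(\eta\wedge\bar\eta)^{\,n-p-q}$, the operator ``$\wedge\,\dblG^p$'' has the same effect as ``$\wedge\,(2\,\eta\wedge\bar\eta)^p$'' up to a scalar: because $\dblR$ is a Hermitian form on $\Sym^2 V^{1,0}$ and $\dblG$ is precisely its metric, the ``extra'' index contractions introduced by the $\Sym^2$-metric reassemble into the very trace pairing that is already present in $\dblR^q\wedge(\eta\wedge\bar\eta)^{\,n-q}$, so the $\dblR$-dependence cancels in the ratio. For (b) I would then take $M$ a complex space form, where $\dblR=\lambda\dblG$; the identity collapses to the purely ``Gray-algebraic'' relation $\dblG^{j}\wedge\dblg^{2(n-j)}=2^{j}\,\tfrac{C_{n-j}}{C_{n}}\,\dblg^{2n}$ for $0\le j\le n$. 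This can be proved either by the same complexification (the coefficient of $\vol\otimes\vol$ in $(\eta\wedge\bar\eta)^{n-i}\wedge(F\wedge F^\vee)^i$ is an elementary matching count) or by inducting on $j$ using $C(\dblg)=2n$, $C(\dblG)=2(n+1)\dblg$ — both immediate from \eqref{eq:def_G} — together with the contraction rule $C(\dblg\wedge\bm\eta)=\dblg\wedge C\bm\eta+(2n-2k)\bm\eta$ for $\bm\eta$ of type $(k,k)$. Either way the numbers are forced into the Catalan shape by \eqref{eq:recRelation2} (equivalently $C_{m-1}/C_m=(m+1)/(2(2m-1))$): the central binomials $\binom{2m}{m}$ come from the balanced components of the $\dblg$-powers, and the extra ``diagonal'' term in $\dblG=2(\eta\wedge\bar\eta+F\wedge F^\vee)$ supplies the symmetrization that converts $\binom{2m}{m}$ into $C_m=\tfrac1{m+1}\binom{2m}{m}$.

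The main obstacle I anticipate is exactly step (a) — more precisely, controlling the contributions of the ``non-diagonal'' block $F\wedge F^\vee$ inside $\dblG^p=2^p(\eta\wedge\bar\eta+F\wedge F^\vee)^p$. Taken individually these terms see only the Ricci part of $\dblR$, not all of $\dblR$, so $\dblR$-independence of the ratio is genuinely a feature of the \emph{full} binomial sum wedged with $\dblR^q$, and proving it uses essentially that $\dblR$ is a Hermitian form on $\Sym^2 V^{1,0}$ (the analogous Riemannian statement is false). This, together with the sign and normalization bookkeeping in moving between the real and complexified double-form algebras, is where the work lies; once (a) is established, (b) is a routine computation.
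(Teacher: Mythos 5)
Your overall strategy—reduce both sides to scalars via Lemma~\ref{lemma:contraction}, then split the problem into (a) $\dblR$-independence of the ratio and (b) evaluation on a model tensor—is reasonable, and your part (b) (reduction to the $q=0$ identity $\dblG^{j}\wedge\dblg^{2(n-j)}=2^{j}C_{n-j}C_n^{-1}\dblg^{2n}$ via $\dblR=\lambda\dblG$) is sound and could be completed. But there is a genuine gap at step (a), and you have correctly located it yourself: the assertion that $C^{2(p+q)}(\dblG^p\wedge\dblR^q)$ is a \emph{universal} constant multiple of $C^{2q}(\dblR^q)$, as a polynomial function of $\dblR\in\calK(\CC^n)$, is precisely the nontrivial content of the theorem, and the sentence about the $\Sym^2 V^{1,0}$-metric contractions ``reassembling into the trace pairing already present'' is a restatement of the claim, not an argument for it. Invariance alone cannot give it: the space of $\U(n)$-invariant degree-$q$ polynomials on $\calK(\CC^n)$ has dimension greater than one for $q\geq 2$ (e.g.\ $\|\dblR\|^2$, $\|\mathrm{Ric}\|^2$, $\mathrm{scal}^2$ are independent quadratic invariants), and—as you note—the individual summands of $\dblG^p=2^p(\eta\wedge\bar\eta+F\wedge F^\vee)^p$ contract $\dblR^q$ into genuinely different invariants involving Ricci traces. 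So the cancellation across the full binomial sum must be exhibited explicitly, and your proposal does not do this.

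The paper closes exactly this gap by a different device: it writes $\dblG^p\wedge\dblR^q$ as $(-1)^p\frac{q!}{(p+q)!}\frac{d^p}{d\lambda^p}\big|_{\lambda=0}(\dblR-\lambda\dblG)^{p+q}$ and then invokes Gray's Chern--Weil identities (Lemmas 7.6, 7.11 and 7.15 of \cite{gray_book}), which express the full contraction $C^{2m}((\dblR-\lambda\dblG)^{m})$ as an explicit combination of Chern forms of $\dblR$ wedged with powers of the K\"ahler form $F$; differentiating in $\lambda$ isolates $\gamma_q\wedge F^{n-q}$ with an explicit constant, and the same lemma converts $C^{2q}(\dblR^q)$ into the same quantity, forcing the Catalan ratio. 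To rescue your approach you would either have to reprove these identities by the complexified index-matching you sketch (carrying out the combinatorics of how the $F\wedge F^\vee$ blocks of $\dblG^p$ recombine the $(1,1)$-slots of $\dblR^q$), or cite them directly—at which point you would essentially have reproduced the paper's proof.
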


\begin{proof}
Note that  by Lemma~\ref{lemma:contraction}
\begin{align*} 
\dblG^p \wedge  \dblR^q \wedge \dblg^{2(n-p-q)} & =(-1)^p \frac{q!}{(p+q)!}\left.\frac{d^p}{d\lambda^p}\right|_{\lambda=0} (\dblR-  \lambda \dblG)^{p+q} \wedge \dblg^{2(n-p-q)} \\
 & = (-1)^p \frac{q!(2n-2p-2q)!}{(p+q)!(2p+2q)!}\left.\frac{d^p}{d\lambda^p}\right|_{\lambda=0} C^{2(p+q)}((\dblR-  \lambda \dblG)^{p+q} ).
\end{align*}

Given $\dblR \in \calK(\CC^n)$, the {\it total Chern form} $\gamma\in \bw^{\bullet}(\CC^n)^*$ of $\dblR$ is defined by
\begin{displaymath}
\gamma:= \det \left( I -\frac{1}{2\pi\sqrt{-1}} \Xi\right),
\end{displaymath}
where 
\begin{displaymath}
 \bw^2 (\CC^n)^* \owns \Xi_{ij}:=\dblR( \cdot,\cdot\,;e_{2i},e_{2j})-\sqrt{-1} \dblR(\cdot ,\cdot\, ;e_{2i},e_{2j+1}), \quad i,j=0,\ldots,n-1,
\end{displaymath}
see \cite{gray_book}*{p. 88}. Denote the  degree $2k$ component of $\gamma$ by $\gamma_k$. 
According to \cite[Lemma~7.11]{gray_book} we have 
\begin{displaymath}
\frac{(n-p-q)!}{(p+q)!(2p+2q)!} C^{2(p+q)}((\dblR-  \lambda \dblG)^{p+q} )=  (2\pi)^{p+q} ( \widetilde \gamma_{p+q} \wedge F^{n-p-q} )(e_1,\ldots, e_{2n}),
\end{displaymath}
where 
\begin{displaymath}
\widetilde \gamma_{p+q}= 
\sum_{k=0}^{n} \binom{n -k+1}{p+q-k} \left( - \frac{\lambda}{\pi} F\right)^{p+q-k} \wedge \gamma_k
\end{displaymath}
by \cite[Lemma~7.15]{gray_book}.

Therefore
\begin{align*}\dblG^p &\wedge  \dblR^q \wedge \dblg^{2(n-p-q)}\\ & = (-1)^p (2\pi)^{p+q} \frac{q!(2n-2p-2q)!}{(n-p-q)!}\left.\frac{d^p}{d\lambda^p}\right|_{\lambda=0} ( \widetilde \gamma_{p+q} \wedge F^{n-p-q} )(e_1,\ldots, e_{2n})  \\ 
 & = 2^{p+q}  \pi^q \frac{q!(2n-2p-2q)!(n-q+1)!}{(n-p-q)!(n-p-q+1)!}(\gamma_q\wedge  F^{n-q})(e_1,\ldots, e_{2n}).
\end{align*}
Since
\begin{align*}
(2\pi)^{q} (\gamma_q\wedge  F^{n-q})(e_1,\ldots, e_{2n}) & =  \frac{(n-q)!}{q! (2q)!} C^{2q}(\dblR^q)\\
 & =  \frac{(n-q)!}{q! (2n-2q)!} \dblR^{q} \wedge \dblg^{2(n-q)} 
\end{align*}
by \cite[Lemma~7.6]{gray_book} and Lemma~\ref{lemma:contraction}, the claim follows.
\end{proof}

%------------------------------------------------------------------------------

\subsection{Structure of the algebra $\mathcal G(n)$}\label{sub:gray_algebra}

 We will need to determine the structure of the Gray algebra. 
  For every   finite-dimensional real vector space $U$,	denote by 
  $K^p(U)$ the kernel of the linear map 
 \begin{displaymath}
 \ \Sym^2(\largewedge^p U^* ) \to \largewedge^{p+1} U^* \otimes \largewedge^{p-1} U^*, \quad A\mapsto A'.
 \end{displaymath}
 Thus $K^2(U) = \mathcal R(U)$. Put $K_\CC^p(U) := K^p(U) \otimes_\RR \CC$ 

Put 
\begin{align}\label{def:g}
g_{k}(s,t):= g_{k,0}(s,t) &= \sum_{i \geq 0} (-1)^i 2^{k-i} \binom{k-i}{i}s^it^{k-2i},\\
\label{eq:GenFunc1}\sum_{k\geq 0} g_k(s,t)x^k  &= \left(1- 2 tx +2s x^2 \right)\inv.
\end{align}
 These identities follow by setting $x=s/t^2$, $y=0$, and  $z=tx$  in Lemma~\ref{lemma:GenFunc2}. Note that the polynomials $g_k$ satisfy
 $$ g_0=1, \quad g_1= 2t,$$ 
 and the relation
 \begin{equation}
  \label{eq:relation_gk}  2s  g_k -2 t g_{k+1} + g_{k+2}=0
 \end{equation}
 for $k\geq 0$.

 \begin{theorem} \label{thm:StructureGray} Let $p=0,1,\ldots, 2n$. 
  \begin{enumerate}
   \item[a)]  The  { component $\mathcal G_p(n)$} of the Gray algebra coincides with the subspace of $\U(n)$-invariant elements of $K_\CC^p(\CC^n)$, { with}
 \begin{equation}\label{eq:dimG}\dim \mathcal G_p(n) =  1 + 
 \left\lfloor \frac{ \min(p, 2n-p)}{2}\right\rfloor.
 \end{equation}
   \item[b)]  The multiplication pairing $P\colon \mathcal G_p(n)\times \mathcal G_{2n-p}(n)\to \mathcal G_{2n}(n)
  \simeq \CC$ is perfect.
  \item[c)]  Consider the graded polynomial algebra $\C[s,t]$, where $\deg t=1$ and $\deg s=2$. Then the map $\varphi\colon \C[s,t]\to \mathcal G(n)$ of graded algebras given by 
 \begin{displaymath}
 \varphi(h(s,t) )= h(\dblG,\dblg)
 \end{displaymath}
 covers an isomorphism
 \begin{displaymath}
  \C[s,t]/(g_{n+1}, g_{n+2}) \cong \mathcal G(n).
 \end{displaymath}
 \end{enumerate}
 \end{theorem}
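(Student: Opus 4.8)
The plan is to prove the three parts together: Theorem~\ref{thm_int_term} will make the top-degree multiplication in $\mathcal G(n)$ completely explicit, while invariant theory supplies the dimensions. Since $\dblg'=0=\dblG'$ and ${}'$ obeys $(\bm\omega\wedge\bm\phi)'=\bm\omega'\wedge\bm\phi+(-1)^{p+q}\bm\omega\wedge\bm\phi'$, every monomial in $\dblg,\dblG$ is a symmetric double form killed by ${}'$; as $\dblg,\dblG$ are $\U(n)$-invariant, this gives the easy inclusion $\mathcal G_p(n)\subseteq(K^p_\CC(\CC^n))^{\U(n)}$ for all $p$. The one substantial external input is the dimension
\[
\dim\bigl(K^p_\CC(\CC^n)\bigr)^{\U(n)}=1+\Bigl\lfloor\tfrac{\min(p,2n-p)}{2}\Bigr\rfloor\qquad(0\le p\le 2n),
\]
and $0$ for $p>2n$. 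I would obtain this by recognizing $K^p(V)$ as the irreducible $\mathrm{GL}(V)$-module $\mathbb{S}_{(2^p)}(V^*)$ attached to the two-column Young diagram of height $p$ (the condition $A'=0$ cuts $\Sym^2\bigwedge^pV^*$ down to precisely this Schur component), and then branching to $\U(n)\subset\mathrm{GL}_{2n}(\CC)$ through the splitting $\CC\otimes_\RR\CC^n=W\oplus\overline W$ of the complexified tangent space: by the Littlewood--Richardson rule the $\U(n)$-invariants are indexed by the subdiagrams $\mu=(2^a,1^{p-2a})\subseteq(2^p)$ with $\ell(\mu)=p-a\le n$, i.e. $\max(0,p-n)\le a\le\lfloor p/2\rfloor$, which counts out to the displayed number.

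Next I would feed in Theorem~\ref{thm_int_term}. Since $\mathcal G_{2n}(n)\cong\CC$ is spanned by $\dblg^{2n}$, the case $q=0$ of that theorem reads $\dblG^{j}\wedge\dblg^{2(n-j)}=2^{j}\tfrac{C_{n-j}}{C_n}\dblg^{2n}$ for $0\le j\le n$, so that the product pairing of the monomial spanning sets $\{\dblG^j\wedge\dblg^{p-2j}\}$ of $\mathcal G_p(n)$ and $\{\dblG^a\wedge\dblg^{2n-p-2a}\}$ of $\mathcal G_{2n-p}(n)$ is encoded by the matrix $M_{j,a}=2^{j+a}C_{n-j-a}/C_n$ (one always has $j+a\le n$ here). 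For $p\le n$ the square block $0\le j,a\le\lfloor p/2\rfloor$ is an invertible Hankel matrix of Catalan numbers, so two things follow at once: the monomials $\dblG^j\wedge\dblg^{p-2j}$, $0\le j\le\lfloor p/2\rfloor$, are linearly independent, whence $\dim\mathcal G_p(n)=1+\lfloor p/2\rfloor$; and the map $\mathcal G_p(n)\to\mathcal G_{2n-p}(n)^*$ induced by $P$ is injective, so $\dim\mathcal G_{2n-p}(n)\ge 1+\lfloor p/2\rfloor$. Combined with $\dim\mathcal G_{2n-p}(n)\le\dim(K^{2n-p}_\CC(\CC^n))^{\U(n)}=1+\lfloor p/2\rfloor$ from the first paragraph, this forces $\dim\mathcal G_{2n-p}(n)=\dim\mathcal G_p(n)$; hence $P$ is perfect (part b)) and, the dimension count now being complete, $\mathcal G_p(n)=(K^p_\CC(\CC^n))^{\U(n)}$ for every $p$ (part a)).

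For part c), $\varphi$ is onto by construction, so $\ker\varphi$ is a homogeneous ideal with $\dim(\ker\varphi)_p=(1+\lfloor p/2\rfloor)-\dim\mathcal G_p(n)$, which vanishes for $p\le n$. I would then check the two relations hold in $\mathcal G(n)$: using the explicit shape of $g_{n+1}$ and again Theorem~\ref{thm_int_term}, the element $g_{n+1}(\dblG,\dblg)\in\mathcal G_{n+1}(n)$ pairs with each $\dblG^a\wedge\dblg^{n-1-2a}\in\mathcal G_{n-1}(n)$ to $\tfrac{2^{n+1+a}}{C_n}\bigl(\textstyle\sum_i(-1)^i\binom{n+1-i}{i}C_{n-i-a}\bigr)\dblg^{2n}$, and the Catalan-number identity $\sum_i(-1)^i\binom{n+1-i}{i}C_{n-i-a}=0$ makes all these zero; since $P$ is perfect, $g_{n+1}(\dblG,\dblg)=0$, and likewise $g_{n+2}(\dblG,\dblg)=0$ (pair against $\mathcal G_{n-2}(n)$; the case $n=1$ is automatic since $\bigwedge^{n+2}(\CC^n)^*=0$). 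So $\varphi$ descends to a surjection $\bar\varphi\colon\CC[s,t]/(g_{n+1},g_{n+2})\twoheadrightarrow\mathcal G(n)$. To conclude, write $1-2tx+2sx^2=(1-ax)(1-bx)$, so $g_k=(a^{k+1}-b^{k+1})/(a-b)$; a common zero of $g_{n+1}$ and $g_{n+2}$ forces $a=b=0$, i.e. $s=t=0$, so $g_{n+1},g_{n+2}$ is a regular sequence and $\CC[s,t]/(g_{n+1},g_{n+2})$ has Hilbert function $p\mapsto 1+\lfloor\min(p,2n-p)/2\rfloor$ (the same computation as for $\Val^{\U(n)}$, whose relations have the same degrees $n+1,n+2$, see \cites{alesker03_un,fu06}). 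Thus $\bar\varphi$ is a degreewise surjection of spaces of equal finite dimension, hence an isomorphism, and in particular $\ker\varphi=(g_{n+1},g_{n+2})$.

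I expect the representation-theoretic dimension of $(K^p_\CC(\CC^n))^{\U(n)}$ --- identifying $K^p$ with a single Schur functor and running the $\mathrm{GL}_{2n}(\CC)\downarrow\U(n)$ branching --- to be the main obstacle, closely followed by the verification of the Catalan-number identities that annihilate $g_{n+1}$ and $g_{n+2}$; with these in hand, the remainder is formal once Theorem~\ref{thm_int_term} is available.
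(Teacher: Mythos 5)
Your argument is correct and shares the paper's overall skeleton: an invariant-theoretic upper bound on $\dim \mathcal G_p(n)$, a matching lower bound together with perfectness extracted from the Catalan-number pairing of Theorem~\ref{thm_int_term}, and the identity of Lemma~\ref{lem:sum_catalan} to annihilate $g_{n+1}(\dblG,\dblg)$ and $g_{n+2}(\dblG,\dblg)$. The genuine divergence is in the invariant-theoretic step: the paper identifies $K^p_\CC$ as the irreducible $\SL$-module $\Gamma_{2\varpi_{n-p}}$, restricts to $\SO(2n,\CC)$, and then invokes the Cartan--Helgason theorem for the symmetric pair $(\SO(2n,\RR),\U(n))$ to decide which constituents carry a $\U(n)$-fixed vector; you instead branch the two-column Schur functor $\mathbb{S}_{(2^p)}$ directly from $\mathrm{GL}_{2n}(\CC)$ to $\mathrm{GL}_n(\CC)$ through $W\oplus \overline W\cong W\oplus W^*$ and count the diagonal Littlewood--Richardson coefficients $c^{(2^p)}_{\mu\mu}$ over $\mu=(2^a,1^{p-2a})$ with $\ell(\mu)\le n$. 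Both routes are legitimate; yours trades the Cartan--Helgason input for the (standard, multiplicity-one) Littlewood--Richardson combinatorics of two-column shapes, though you should still justify the identification of $\ker({}')\cap\Sym^2\bw^p V^*$ with a single Schur component, which is the content of the paper's Lemma preceding the theorem. Two smaller deviations: for perfectness you pair the monomial spanning sets directly and therefore need the nonvanishing of the Catalan--Hankel determinants $\det(C_{k+i+j})$ for arbitrary offset $k=n-2\lfloor p/2\rfloor\ge 0$ --- true, via the product formula or non-intersecting lattice paths, but the paper sidesteps this by pairing against the elements $\dblG^{n-m-i}\wedge\dblg^{2i}$, so that only the offset-$0$ and offset-$1$ determinants (both equal to $1$) are needed; and in part (c) you replace the paper's appeal to the argument of \cite{fu06} with an explicit regular-sequence and Gorenstein-symmetry computation of the Hilbert function of $\CC[s,t]/(g_{n+1},g_{n+2})$, which is self-contained and arguably cleaner.
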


The rest of this section is devoted to the proof.
 
 \begin{remark} 
 	It follows from c) above that the  Gray algebra $\mathcal G(n)$ is isomorphic to the  cohomology algebra of the Grassmann manifold $\mathrm{Gr}_2(\C^{n+2})$:
by \cite[p.\ 152]{fulton97}, \cite{fulton98} the cohomology algebra of $\mathrm{Gr}_2(\C^{n+2})$ is isomorphic to $\C[\sigma_2,\sigma_1]/(\tilde g_{n+1},\tilde g_{n+2})$, where 
\begin{displaymath}
\sum_{k\geq 0} \tilde g_k(\sigma_2,\sigma_1)x^k =\frac{1}{1-x\sigma_1+x^2\sigma_2}. 
\end{displaymath}
Hence $t \mapsto \frac12 \sigma_1, s \mapsto \frac12 \sigma_2$ covers an isomorphism $\mathcal G(n) \to H^*(\mathrm{Gr}_2(\C^{n+2}))$.
\end{remark}

Let $\SO(n,F)$ and $\SL(n,F)$ denote the orthogonal and special linear groups over a field $F$.  Recall that the finite dimensional representations of  $\SO(n,\RR)$ and $\SL(n,\RR)$ are in bijection with the finite dimensional analytic representations of  $\SO(n,\CC)$ and $\SL(n,\CC)$, since the  latter groups are the complexifications of the former (see, e.g., \cite[Chapter 24]{bump13}).
In the following we denote by $\varpi_p$, $p=0,1,\ldots,n-1$ the fundamental weights of $\SL(n,\C)$, i.e., the highest weights of the irreducible representations $\largewedge^p \C^n$.  

\begin{lemma} 
For $p=1,\ldots, n-1$, $K^p_\CC(\RR^n)$ is irreducible as a representation of $\SL(n,\C)$ and has highest weight $2 \varpi_{ n-p}$.
\end{lemma}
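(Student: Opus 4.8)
The plan is to recognise $K^p_\CC(\RR^n)$ as the kernel of a Garnir-type exchange map between exterior powers, hence as an irreducible Schur module, and then to read off its highest weight. Since the operation $A\mapsto A'$ is a $\GL(n,\C)$-equivariant rearrangement of tensor indices and is defined over $\RR$, the complexification $K^p_\CC(\RR^n)$ is the kernel of the complexified, $\GL(n,\C)$-equivariant map $\Sym^2(\largewedge^p(\CC^n)^*)\to\largewedge^{p+1}(\CC^n)^*\otimes\largewedge^{p-1}(\CC^n)^*$, and in particular an $\SL(n,\C)$-subrepresentation. The same formula extends the prime to a $\GL(n,\C)$-equivariant map $e\colon\largewedge^p(\CC^n)^*\otimes\largewedge^p(\CC^n)^*\to\largewedge^{p+1}(\CC^n)^*\otimes\largewedge^{p-1}(\CC^n)^*$, which up to a nonzero scalar is precisely the exchange map appearing in the presentation of Weyl modules by straightening relations. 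By the classical description of these modules in characteristic zero (see e.g.\ \cite{fulton97}), $\ker e$ is the irreducible $\GL(n,\C)$-module $\mathbb S_{(2^p)}\big((\CC^n)^*\big)$ attached to the partition $(2,\dots,2)$ with $p$ twos, whose conjugate is $(p,p)$.

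The next step is to note that $\ker e$ already lies in the symmetric summand $\Sym^2(\largewedge^p(\CC^n)^*)$: the module $\mathbb S_{(2^p)}\big((\CC^n)^*\big)$ is irreducible and its highest-weight line is spanned by $w\otimes w$, where $w$ is a highest-weight vector of $\largewedge^p(\CC^n)^*$, and $w\otimes w$ is invariant under the transposition of the two tensor factors. Hence $\ker e=\ker\big(e|_{\Sym^2(\largewedge^p(\CC^n)^*)}\big)=K^p_\CC(\RR^n)$, so $K^p_\CC(\RR^n)$ is irreducible and isomorphic to $\mathbb S_{(2^p)}\big((\CC^n)^*\big)\cong\mathbb S_{(2^p)}(\CC^n)^*$. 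Finally, $\mathbb S_{(2^p)}(\CC^n)$ has $\SL(n,\C)$-highest weight $2\varpi_p$, and since the longest element of the Weyl group of $A_{n-1}$ sends $\varpi_p$ to $-\varpi_{n-p}$, its dual has highest weight $2\varpi_{n-p}$, as claimed.

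The one substantive input, and hence the main obstacle, is the identification $\ker e=\mathbb S_{(2^p)}\big((\CC^n)^*\big)$. Should one wish to avoid citing the Weyl-module presentation, I would argue instead in two steps. First, the multiplicity-one summand $V_{2\varpi_{n-p}}$ of highest weight $2\varpi_{n-p}$ in $\Sym^2(\largewedge^p(\CC^n)^*)$ lies in $K^p_\CC(\RR^n)$: the target $\largewedge^{p+1}(\CC^n)^*\otimes\largewedge^{p-1}(\CC^n)^*$ has highest weight $\varpi_{n-p-1}+\varpi_{n-p+1}$ (with the convention $\varpi_0=\varpi_n=0$), and $2\varpi_{n-p}-(\varpi_{n-p-1}+\varpi_{n-p+1})$ equals the simple root $\alpha_{n-p}$, so $2\varpi_{n-p}$ is not a weight of the target and must be annihilated by the equivariant map $A\mapsto A'$. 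Second, there are no other summands: the classical plethysm gives $\Sym^2(\largewedge^{n-p}\CC^n)\cong\mathbb S_{(2^{n-p})}(\CC^n)\oplus\bigoplus_{j\ge 1}\mathbb S_{(2^{\,n-p-2j},\,1^{4j})}(\CC^n)$, each non-Cartan summand occurs with multiplicity one in the target by Pieri's rule, and one checks that $A\mapsto A'$ is nonzero on each of them — by evaluation on an explicit highest-weight vector, or via exactness of the associated Schur complex — which leaves $K^p_\CC(\RR^n)=V_{2\varpi_{n-p}}$. This non-vanishing verification is the real work along that route.
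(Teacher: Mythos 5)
Your argument is correct, and it reaches the conclusion by a genuinely different route from the paper. The paper transports the map $A\mapsto A'$ across the isomorphism $\largewedge^{k}W^*\cong\largewedge^{n-k}W$, where it becomes wedging with the canonical element $\sum_i\alpha_i\otimes e_i$; surjectivity then comes from symplectic linear algebra (a hard-Lefschetz-type statement), and the kernel is pinned down by the fact that source and target differ by exactly the single irreducible $\Gamma_{2\varpi_{n-p}}$ (quoted from Lemma~2.9 of the angularity paper). You instead recognize $A\mapsto A'$ as the exchange map in the two-column Pieri/Schur complex and identify its kernel with the Schur module $\mathbb{S}_{(2^p)}\big((\CC^n)^*\big)$; this is the same surjectivity statement in different clothing (it is exactly the assertion that the exchange map is onto, equivalently skew Howe duality for $\mathrm{GL}(n)\times\mathrm{GL}(2)$ on $\largewedge^\bullet(\CC^n\otimes\CC^2)$), so your appeal to the classical char-$0$ presentation of Schur modules is carrying the same weight that ``symplectic linear algebra'' carries in the paper, and your fallback route honestly isolates the residual non-vanishing check. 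Two remarks in your favour: your observation that the Cartan component is spanned by $w\otimes w$ and hence lies in $\Sym^2$ disposes of the passage from the kernel on $\largewedge^p\otimes\largewedge^p$ to the kernel $K^p_\CC$ on $\Sym^2$ more cleanly than the paper, which relegates this to a separate remark; and your identification of the kernel as $\mathbb{S}_{(2^p)}$ yields the full plethysm picture, which would in fact shortcut the subsequent Lemma on $K^p_\CC\cong K^{n-p}_\CC$ as $\SO(n,\CC)$-modules. The trade-off is that the paper's computation is elementary and self-contained, whereas yours imports more representation-theoretic machinery.
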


\begin{proof} Fix a basis $e_1,\ldots, e_n$ of $W=\RR^n\otimes_\RR \C$ and let $\alpha_1,\ldots \alpha_n$ denote the dual basis.
Consider the diagram 
\begin{center}
\begin{tikzpicture}
  \matrix (m) [matrix of math nodes,row sep=3em,column sep=4em,minimum width=2em]
  {
    \largewedge^p W^* \otimes \largewedge^p W^*   &  \largewedge^{p+1} W^* \otimes \largewedge^{p-1} W^* \\
    \largewedge^p W^* \otimes \largewedge^{n-p} W  & \largewedge^{p+1} W^* \otimes \largewedge^{n-p+1} W\\};
  \path[-stealth]
    (m-1-1) edge node [right] {$f_0$}  (m-2-1)
            edge node [above] {${}'$} (m-1-2)
    (m-1-2) edge node [right] {$f_1$}  (m-2-2) 
     (m-2-1) edge node [above] {$(-1)^{p+1} L$}(m-2-2);
\end{tikzpicture}
\end{center}
where the vertical arrows are induced by the $\SL(n,\C)$-equivariant isomorphisms $\largewedge^k W^* \cong \largewedge^{n-k} W$ and 
\begin{displaymath}
 L(\xi \otimes x)=   \sum_{i=1}^n (\alpha_i\wedge \xi) \otimes (e_i\wedge x),
\end{displaymath}
{analogous to multiplication by the symplectic form.}
We claim that the  above diagram is commutative.

Let $\phi\otimes \psi \in \largewedge^p W^* \otimes \largewedge^p W^*$. A direct computation shows that
\begin{equation}\label{eq:contr_basis}
 (\phi \otimes \psi)'= \sum_{i=1}^n  (\alpha_i \wedge \phi) \otimes (\iota_{e_i} \psi).
\end{equation}

Let $I=(i_1,\ldots, i_p)$ and $J=(j_1,\ldots, j_p)$ be $p$-tuples with $1\leq i_1<\cdots < i_p\leq n$ and 
$1\leq j_1<\cdots < j_p\leq n$ and let us write $\alpha_I$ for $\alpha_{i_1}\wedge \cdots \wedge  \alpha_{i_p}$. 
We denote  by  $J^c$ the complement of $J$, ordered in such a way that $(J,J^c)$ is an even permutation. Then 
\begin{align*} f_1( (\alpha_I \otimes \alpha_J)') &  = \sum_{i=1}^n  f_1( \alpha_i \wedge \alpha_I \otimes \iota_{e_i} \alpha_J)\\
& = (-1)^{p+1} \sum_{i=1}^n  \alpha_i \wedge \alpha_I \otimes {e_i}\wedge e_{J^c} \\
& = (-1)^{p+1} L(  \alpha_I \otimes  e_{J^c}) \\
& = (-1)^{p+1}  L(f_0(  \alpha_I \otimes  \alpha_{J})).
\end{align*}
This proves that the diagram is commutative.

By  symplectic linear algebra, the map $L$ is surjective. As $\SL(n,\C)$-representations, the spaces
$\largewedge^p W^* \otimes \largewedge^{n-p} W$ and $\largewedge^{p+1} W^* \otimes \largewedge^{n-p+1} W$ 
differ by precisely one irreducible representation, namely the one with highest weight $2 \varpi_{n- p}$, see, e.g., \cite{wannerer_angular}*{Lemma 2.9} for a proof. It follows that the kernel of $L$  coincides with this irreducible subrepresentation. The commutativity of the diagram now implies the lemma.
\end{proof}

\begin{remark}
The kernel of the map ${}': \largewedge^p W^* \otimes \largewedge^p W^*  \to  \largewedge^{p+1} W^* \otimes \largewedge^{p-1} W^*$ is irreducible and contains the kernel of the restriction 
 ${}': \Sym^2 \largewedge^p W^*   \to  \largewedge^{p+1} W^* \otimes \largewedge^{p-1} W^*$, hence both kernels must coincide. This 
shows that if a double form $\omega\in \largewedge{}^{p,p}(\RR^n,\RR^n)$ satisfies $\omega'=0$, then it is also symmetric, i.e.\ $\omega^\vee =\omega$. For a different proof of this fact see \cite{gray69}*{Proposition (2.3)}.
\end{remark}

\begin{lemma} \label{lemma:Kp_iso} For $p=1,\ldots, n-1$,  
$K^{p}_\CC(\RR^n)$ and $K^{n-p}_\CC(\RR^n)$ are isomorphic  as representations of $\SO(n,\CC)$. 
\end{lemma}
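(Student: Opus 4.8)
The plan is to deduce this from the preceding lemma together with the fact that $\SO(n,\C)$ is, in a suitable sense, self-dual inside $\GL(n,\C)$. Write $W:=\RR^n\otimes_\RR\C\cong\CC^n$.

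First I would note that the contraction operator $'\colon\Sym^2\bw^pW^*\to\bw^{p+1}W^*\otimes\bw^{p-1}W^*$ is natural in $W$, hence $\GL(n,\C)$-equivariant, so $K^p_\CC(\RR^n)=\ker({}')$ is canonically a representation of $\GL(n,\C)$, not merely of $\SL(n,\C)$ or $\SO(n,\C)$. By the preceding lemma it is $\SL(n,\C)$-irreducible with highest weight $2\varpi_{n-p}$. Since for $\SL(n,\C)$ the dual of the irreducible of highest weight $\lambda$ is the irreducible of highest weight $-w_0\lambda$, and $-w_0\varpi_i=\varpi_{n-i}$, the contragredient $(K^p_\CC(\RR^n))^*$ is the $\SL(n,\C)$-irreducible of highest weight $2\varpi_p$; by the preceding lemma applied with $n-p$ in place of $p$ (legitimate since $1\le p\le n-1$ forces $1\le n-p\le n-1$) this is exactly $K^{n-p}_\CC(\RR^n)$. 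Hence $(K^p_\CC(\RR^n))^*\cong K^{n-p}_\CC(\RR^n)$ as $\SL(n,\C)$-representations, and in particular after restriction to $\SO(n,\C)\subset\SL(n,\C)$.

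Second I would invoke the general fact that for every representation $V$ of $\GL(n,\C)$ one has $V^*|_{\SO(n,\C)}\cong V|_{\SO(n,\C)}$. The reason is that the contragredient functor $V\mapsto V^*$ on $\mathrm{Rep}(\GL(n,\C))$ is naturally isomorphic to the functor twisting the action by the automorphism $\theta(g)=(g^{-1})^{T}$ — both are exact and monoidal and agree on the defining representation $W$, hence agree on the whole category — whereas $\SO(n,\C)=\{g:g^{T}g=I\}$ is precisely the fixed-point subgroup of $\theta$, so $V^\theta|_{\SO(n,\C)}=V|_{\SO(n,\C)}$. Applying this with $V=K^p_\CC(\RR^n)$ and combining with the first step yields $K^{n-p}_\CC(\RR^n)|_{\SO(n,\C)}\cong (K^p_\CC(\RR^n))^*|_{\SO(n,\C)}\cong K^p_\CC(\RR^n)|_{\SO(n,\C)}$, which is the assertion; the case $n$ even, $p=n/2$ is trivial.

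The point requiring care is the bookkeeping across the chain $\GL(n,\C)\supset\SL(n,\C)\supset\SO(n,\C)$. The identification $(K^p_\CC(\RR^n))^*\cong K^{n-p}_\CC(\RR^n)$ holds only over $\SL(n,\C)$ — over $\GL(n,\C)$ the two sides differ by the character $\det^{\,2(n-p)}$, as one sees by letting scalar matrices act — so one must pass to $\SO(n,\C)$ before combining it with the self-duality statement; conversely, that self-duality statement genuinely uses the $\GL(n,\C)$-structure on $K^p_\CC(\RR^n)$ established in the first step. Once this is organized, the remaining verifications (the $\SL(n,\C)$-highest-weight computation and the identity $\theta|_{\SO(n,\C)}=\mathrm{id}$) are routine.
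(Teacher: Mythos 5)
Your argument is correct, but it is genuinely different from the one in the paper. The paper proves the lemma by writing down an explicit intertwiner, namely $(*\otimes *)\circ I_{p,p}$ with $*$ the Hodge star and $I_{p,p}$ the swap of tensor factors, and checking by a diagram chase that it conjugates the contraction $'$ in degree $p$ into the contraction in degree $n-p$; in particular the paper's proof is independent of the irreducibility statement of the preceding lemma. You instead combine that irreducibility with two pieces of general representation theory: the highest weight of the $\SL(n,\C)$-dual is $-w_0\lambda$ with $-w_0\varpi_i=\varpi_{n-i}$, giving $(K^p_\CC)^*\cong K^{n-p}_\CC$ over $\SL(n,\C)$, and the restriction of any $\GL(n,\C)$-representation to the fixed-point subgroup $\SO(n,\C)$ of $\theta(g)=(g^{-1})^T$ is self-dual. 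Your bookkeeping about where each isomorphism lives ($\SL$ versus $\GL$ versus $\SO$) is exactly right, and the remark that the two sides differ by $\det^{2(n-p)}$ over $\GL(n,\C)$ is a good sanity check. The one point I would tighten: duality is contravariant on morphisms, so ``$V\mapsto V^*$ is naturally isomorphic to $V\mapsto V^\theta$ as monoidal functors'' is a slight abuse; the clean justification that $V^*\cong V^\theta$ for every finite-dimensional algebraic representation is the character identity $\chi_{V^*}(g)=\chi_V(g^{-1})=\chi_V\bigl((g^{-1})^T\bigr)=\chi_{V^\theta}(g)$, using that every complex matrix is conjugate to its transpose, together with semisimplicity. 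The trade-off between the two proofs: the paper's Hodge-star map is constructive and self-contained, while yours is shorter modulo standard theory and exhibits the isomorphism as an instance of a general self-duality principle for symmetric subgroups, at the price of being non-explicit and of depending on the preceding irreducibility lemma.
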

\begin{proof}
Fix a basis $e_1,\ldots, e_n$ of $W=\RR^n \otimes_\RR \C$ and let $\alpha_1,\ldots \alpha_n$ denote the dual basis.
Let $*\colon \largewedge^{k} W^*\to \largewedge^{n-k} W^*$ denote the Hodge star operator, normalized so that $\phi \wedge {*}\psi = \langle \phi,\psi\rangle \vol$. Moreover, let $I_{a,b} \colon \largewedge^a W^* \otimes \largewedge^b W^*\to 
\largewedge^b W^* \otimes \largewedge^a W^*$ be the map that interchanges the first and second factor. 
Since 
\begin{align*}* (\phi\wedge  \alpha_i ) & = \iota_{e_i} {*}\phi,\\ 
* (\iota_{e_i} \psi ) & = (-1)^{n-1} {*}\psi\wedge \alpha_i,
\end{align*}
a comparison with \eqref{eq:contr_basis} shows that  
the diagram
\begin{center}
\begin{tikzpicture}
  \matrix (m) [matrix of math nodes,row sep=3em,column sep=4em,minimum width=2em]
  {
    \largewedge^p W^* \otimes \largewedge^p W^*   &  \largewedge^{p+1} W^* \otimes \largewedge^{p-1} W^* \\
    \largewedge^{n-p} W^* \otimes \largewedge^{n-p} W^*  & \largewedge^{n-p+1} W^* \otimes \largewedge^{n-p-1} W^*\\};
  \path[-stealth]
    (m-1-1) edge node [right] {$(*\otimes *) \circ I_{p,p}$}  (m-2-1)
            edge node [above] {${}'$} (m-1-2)
    (m-1-2) edge node [right] {$(-I_{n-p-1,n-p+1})\circ (*\otimes *)$}  (m-2-2) 
     (m-2-1) edge node [above] {$'$}(m-2-2);
\end{tikzpicture}
\end{center}
commutes. Hence $(*\otimes *) \circ I_{p,p} \colon K^p_\CC(\RR^n)\to K^{n-p}_\CC(\RR^n)$ is an isomorphism intertwining the action of $\SO(n,\CC)$.
\end{proof}

\subsection{Proof of  Theorem  \ref{thm:StructureGray}}

\begin{proof}[Proof of Theorem~\ref{thm:StructureGray}, items (a) and (b)]
Let  $\operatorname{Res}^{\SL(n,\C)}_{\SO(n,\C)}$ denote the restriction of representations functor.  We denote by $\Gamma_\nu$ the irreducible representation with highest weight $\nu$. It is well-known, see, e.g., \cite{wannerer_angular}*{Lemma 2.10}, that, whenever $p\leq n/2$,
\begin{displaymath}
 \operatorname{Res}^{\SL(n,\C)}_{\SO(n,\C)} \Gamma_{2\varpi_p}  = \bigoplus_\lambda \Gamma_\mu,
\end{displaymath}
where the sum is over all highest weights of $\SO(n,\C)$ of the form
\begin{equation}\label{eq:weights}
 \mu= (\underbrace{2,\ldots, 2}_{k \text{ times}},0,\ldots ,0)
\end{equation}
with $0\leq k\leq p$. Using Lemma~\ref{lemma:Kp_iso}, we conclude that $K^p_\CC(\RR^n)$ decomposes as a representation of $\SO(n,\CC)$ into irreducible representations of weight \eqref{eq:weights} with $0\leq k\leq \min(p, n-p)$.

Next we recall the fact that the $\SO(2n,\RR)$-representation $\Gamma_{\mu}$ contains a non-zero vector that is fixed under the action of the unitary group $\U(n)\subset \SO(2n,\RR)$ if and only if $k$ is even. This can be deduced from the Cartan-Helgason theorem for the symmetric pair $(\SO(2n,\RR),\U(n))$  or from the branching rule for this pair. We refer the reader to \cite[p. 269]{alesker_mcullenconj01} or \cite[Section 2]{boroczky_domokos_solanes} for further discussion and earlier applications of this fact to valuation theory.

Combining the above two facts   we can thus  conclude that
\begin{displaymath}
  \dim K^p_\CC({\C^{n}}) ^{\U(n)}= 1+ \left\lfloor \frac{ \min(p, 2n-p)}{2}\right\rfloor.
\end{displaymath}

Since  $\mathcal G_p(n) \subset K^p_\CC(\C^{n}) ^{\U(n)}$ as $\dblG'=0,g'=0$, we obtain that the right-hand side of \eqref{eq:dimG} is an upper bound on the dimension of $\mathcal G_p(n)$. 

Assume first that $p=2m\leq n$, and consider the pairing matrix for the elements
{ 
\begin{displaymath}
2^{-m}\dblG ^m, \ 2^{1-m} \dblG^{m-1} \dblg^{2}, \ldots,\ \dblg^{2m}\in\mathcal G_{2m}(n),
\end{displaymath}
\begin{displaymath}
 2^{m-n}\dblG ^{n-m},\ 2^{m+1-n}  \dblG^{n-m-1} \dblg^2, \ldots,\  \dblG^{n-2m}\dblg^{2m}\in\mathcal G_{2n-2m}(n),
\end{displaymath}
}
viz. 
\begin{displaymath} 
 C_n\inv \begin{pmatrix} C_0 & C_1 & \cdots & C_m\\
          C_1 & C_2 & \cdots & C_{m+1}\\
          \vdots  &  & \cdots& \vdots \\
          C_{m} & C_{m+1} & \cdots & C_{2m} 
         \end{pmatrix}
 \end{displaymath}
 by the case $q=0$ of Theorem~\ref{thm_int_term}. This matrix is nonsingular; in fact, dropping the factor $C_n\inv$,
 it is well known to have determinant 1 \cite{aigner99}. The perfectness assertion (b) and the linear independence of the considered elements of $\mathcal G_{p}(n)$ follows.

The case of odd $p$ is treated in the same way using the identity
  \begin{displaymath}
   \det \begin{pmatrix} C_1 & C_2 & \cdots & C_m\\
          C_2 & C_3 & \cdots & C_{m+1}\\
          \vdots  &  & \cdots& \vdots \\
          C_{m} & C_{m+1} & \cdots & C_{2m-1} 
         \end{pmatrix}=1.
  \end{displaymath}

\end{proof}

To complete the proof of Theorem~\ref{thm:StructureGray}, item (c), we will need the following combinatorial identity.
 
\begin{lemma}\label{lem:sum_catalan}
For all  non-negative integers $k,n$,
\begin{displaymath}
  \sum_{i\geq 0} (-1)^{i} \binom{n+1-i}{i} C_{n-k-i} =(-1)^{n-k}\binom{k}{n-k}.
\end{displaymath}
In particular, the left-hand side is zero if $n>2k$. 
 \end{lemma}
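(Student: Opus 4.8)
The plan is to reduce the stated identity to a cleaner Catalan--binomial identity by a change of summation variable, and then to evaluate the latter by a generating-function computation based on the Catalan generating function $c(y)=\sum_{m\ge 0}C_m y^m=\tfrac{1}{2y}\bigl(1-\sqrt{1-4y}\bigr)$ together with the factorization $1-4\bigl(-x(1+x)\bigr)=(1+2x)^2$.

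First I would reindex. In the sum $\sum_{i\ge 0}(-1)^i\binom{n+1-i}{i}C_{n-k-i}$ only the terms with $0\le i\le n-k$ survive (as $C_{n-k-i}=0$ otherwise; and if $n<k$ both sides vanish, so we may assume $j:=n-k\ge 0$). Substituting $m=n-k-i$ and using $(-1)^{-m}=(-1)^m$ turns the left-hand side into $(-1)^{n-k}\sum_{m\ge 0}(-1)^m\binom{m+k+1}{j-m}C_m$, where the range may be taken to be all $m\ge 0$ since $\binom{m+k+1}{j-m}=0$ once $m>j$. Hence it suffices to prove
\[
\sum_{m\ge 0}(-1)^m\binom{m+k+1}{j-m}C_m=\binom{k}{j}.
\]

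For this I would use coefficient extraction: writing $\binom{m+k+1}{j-m}=[x^{j-m}](1+x)^{m+k+1}$ (valid for all integers $m$ with the usual convention) and summing,
\[
\sum_{m\ge 0}(-1)^m\binom{m+k+1}{j-m}C_m=[x^j]\,(1+x)^{k+1}\sum_{m\ge 0}C_m\bigl(-x(1+x)\bigr)^m=[x^j]\,(1+x)^{k+1}\,c\bigl(-x(1+x)\bigr),
\]
the series being a legitimate formal substitution since $-x(1+x)$ has no constant term. Now $1-4\bigl(-x(1+x)\bigr)=1+4x+4x^2=(1+2x)^2$, whose formal square root with constant term $1$ is $1+2x$; therefore $c\bigl(-x(1+x)\bigr)=\dfrac{1-(1+2x)}{-2x(1+x)}=\dfrac{1}{1+x}$. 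Consequently the sum equals $[x^j]\,(1+x)^{k+1}(1+x)^{-1}=[x^j](1+x)^k=\binom{k}{j}$, as required; and tracing back the global sign gives the claimed value $(-1)^{n-k}\binom{k}{n-k}$. For the final assertion, if $n>2k$ then $j=n-k>k\ge 0$, so $\binom{k}{j}=0$.

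There is no serious obstacle here; the points demanding a little care are purely bookkeeping: checking that the reindexing correctly produces the global sign $(-1)^{n-k}$ and that the ranges of the binomial and Catalan factors are handled by the standard conventions, and---most importantly---selecting the correct branch of the square root after substitution, i.e.\ that $\sqrt{1+4x+4x^2}$ must be the formal power series $1+2x$ (the one with constant term $1$, matching the value of $\sqrt{1-4y}$ at $y=0$) rather than $-(1+2x)$.
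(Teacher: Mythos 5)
Your proof is correct and takes essentially the same route as the paper's: both reindex via $m=n-k-i$, feed the resulting sum into the Catalan generating function, and exploit that the discriminant becomes a perfect square (the paper substitutes $x(1-x)$ to get $\sqrt{1-4x+4x^2}=1-2x$ in a bivariate generating function in $x^ny^k$, while you substitute $-x(1+x)$ and extract a single coefficient $[x^j]$, which is the same computation up to the sign change $x\mapsto -x$). The bookkeeping points you flag (the global sign, the branch of the square root, the vanishing conventions) are all handled correctly.
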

 
 \begin{proof}
  In terms of generating functions, 
  we wish to show that 
  \begin{displaymath}
   \sum_{n,k,i\geq 0}(-1)^{i} \binom{n+1-i}{i} C_{n-k-i}  x^n y^k= \frac{ 1}{1-xy(1-x)}.
  \end{displaymath}
  Indeed, using that  
   \begin{displaymath}
     \sum_{n\geq0} C_n x^n = \frac{1-\sqrt{1-4x}}{2x},
   \end{displaymath}
   the left-hand side may be expressed as 
  \begin{align*}
   \sum_{n,k,i\geq 0}(-1)^{i} \binom{n-i+1}{i} C_{n-k-i}  x^n y^k & =\sum_{m,k,i\geq 0}(-1)^{i} \binom{m+k+1}{i} C_{m}  x^{m+k+i} y^k \\
   & =(1-x)\sum_{m,k\geq 0} C_{m}  (1-x)^{m+k} x^{m+k} y^k \\
   & = \frac{1-\sqrt{1-4x(1-x)}}{2x(1-xy(1-x))}\\
    & = \frac{1}{1-xy(1-x)}.
  \end{align*}
 \end{proof}

\begin{proof}[Proof of Theorem~\ref{thm:StructureGray}, item (c)] Let us first show that $g_{n+1}(\dblG,\dblg)=g_{n+2}(\dblG,\dblg)=0$ in $\mathcal G(n)$. 
By \eqref{def:g}, Theorem \ref{thm_int_term},  and Lemma \ref{lem:sum_catalan} we have
 \begin{align*} P( \dblG^k  \dblg^{n-2k-1}, g_{n+1}( \dblG, \dblg)) & = 2^{n+1} \sum_{i\geq 0} (-2)^{-i} \binom{n+1-i}{i} P(
 \dblG^k \dblg^{n-2k-1},  \dblG^i  \dblg^{n+1-2i}) \\
 & = \frac{2^{n+k+1}}{C_n} \left( \sum_{i\geq 0} (-1)^{i} \binom{n+1-i}{i} C_{n-k-i}\right) \dblg^{2n} \\
 &= 0
 \end{align*}for $n>2k$. 
 Since the pairing is perfect and the double forms $ \dblG^k \dblg^{n -2k-1}$, $0\leq 2k<n$, span $\mathcal G_{ n-1}(n)$, the relation $g_{n+1}( \dblG, \dblg)=0$ follows. The same argument shows $g_{n+2}( \dblG, \dblg)=0$. 
 
 Since $t\cdot g_{n+1}\neq g_{n+2}$, repeating the argument of \cite{fu06}*{Lemma 3.7} shows that the ideal of relations in $\mathcal G (n)$ is generated by $g_{n+1}( \dblG, \dblg)$ and $g_{n+2}( \dblG, \dblg)$. 
  \end{proof}

 Theorem \ref{thm:StructureGray} has the following crucial consequences.  
 
   \begin{proposition}\label{prop:g_kp_zero} If $k-p> n$ then, as an element of $\mathcal G(n)$, 
  $$\bm \phi_{k,p} =0.$$ 

 \end{proposition}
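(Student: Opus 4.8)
The plan is to run a short induction on $p$, using the recursion \eqref{eq:recRelation} together with the already-established structure of $\mathcal G(n)$.

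First I would dispose of the base case $p=0$. Here $\bm\phi_{k,0}=g_{k,0}(\dblG,\dblg)=g_k(\dblG,\dblg)$, and Theorem~\ref{thm:StructureGray}(c) identifies $\mathcal G(n)$ with $\C[s,t]/(g_{n+1},g_{n+2})$, so $g_{n+1}(\dblG,\dblg)=g_{n+2}(\dblG,\dblg)=0$. The three-term relation \eqref{eq:relation_gk}, read as $g_{k+2}=2\dblg\,g_{k+1}-2\dblG\,g_k$, then propagates the vanishing: $g_k(\dblG,\dblg)=0$ for every $k\ge n+1$. This is exactly the assertion $\bm\phi_{k,0}=0$ for $k>n$.

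For the inductive step, suppose that for some $p\ge 0$ we have $\bm\phi_{k,p}=0$ whenever $k-p>n$, and take $k$ with $k-(p+1)>n$, i.e.\ $k-p\ge n+2$. Replacing $k$ by $k-1$ in \eqref{eq:recRelation} and evaluating the resulting polynomial identity at $(s,t)=(\dblG,\dblg)$ gives, in $\mathcal G(n)$,
\begin{displaymath}
(k+1)\,\dblg\wedge\bm\phi_{k-1,p}=2(p+1)\,\bm\phi_{k,p+1}+(k-2p)\,\bm\phi_{k,p}.
\end{displaymath}
Since $k-p\ge n+2>n$, the term $\bm\phi_{k,p}$ vanishes by the inductive hypothesis, and since $(k-1)-p=(k-p)-1\ge n+1>n$, so does $\bm\phi_{k-1,p}$. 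Hence $2(p+1)\,\bm\phi_{k,p+1}=0$, and because $\mathcal G(n)$ is a $\C$-algebra we conclude $\bm\phi_{k,p+1}=0$, completing the induction.

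I do not anticipate a genuine obstacle: once \eqref{eq:recRelation} is available the argument is purely formal, and the only other ingredients—the presentation of $\mathcal G(n)$ and the propagation of the vanishing of the $g_k$ via \eqref{eq:relation_gk}—are already in place. The only point requiring a little care is the index bookkeeping, which is why one works with the (equivalent) hypothesis $k-p\ge n+2$ rather than $k-p\ge n+1$: this is precisely what guarantees that the inductive hypothesis applies simultaneously to $\bm\phi_{k,p}$ and to $\bm\phi_{k-1,p}$.
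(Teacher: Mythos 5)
Your argument is correct and is essentially the paper's own proof: the base case $p=0$ via \eqref{eq:relation_gk} and Theorem~\ref{thm:StructureGray}(c), followed by induction on $p$ using the shifted form of \eqref{eq:recRelation}, with exactly the same index bookkeeping (the paper phrases the step as passing from $p-1$ to $p$ rather than $p$ to $p+1$, which is the same computation). No gaps.
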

 \begin{proof}
 Relation \eqref{eq:relation_gk}  and part c) of Theorem~\ref{thm:StructureGray}  imply  the statement for $p=0$. The case $p>0$ is proved by induction. Assume $p>0$ and $k-p>n$  and that the proposition has been proved for $p-1$. Relation \eqref{eq:recRelation} and the inductive assumption imply
  \begin{displaymath}
  \bm \phi_{k,p}=g_{k,p}(\dblG,\dblg) = \frac{k+1}{2p} \dblg g_{k-1,p-1}( \dblG, \dblg) -\frac{k-2p+2}{2p} g_{k,p-1}( \dblG, \dblg) =0.
  \end{displaymath} 
 \end{proof}

\begin{proposition} \label{prop_poincare_gray} Let $\dblR \in \calK(\C^n)$. Then, for even $k$,
\begin{displaymath}
  \bm\phi_{k,p}\wedge \dblR^{n-\frac{k}2} = 
  \begin{cases}
                      (2p+1) \dblR^{n-p}\wedge \dblg^{2p}, &  k=2p,\\
                      0, &  k >2p.
                     \end{cases}
 \end{displaymath}
\end{proposition}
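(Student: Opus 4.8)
The plan is to reduce the identity to the computation of a single universal scalar by means of Theorem~\ref{thm_int_term}, and then to evaluate that scalar separately in the two cases.

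First I would expand, using the definition \eqref{eq_def_phikp},
$\bm\phi_{k,p}\wedge\dblR^{n-\frac k2}=\sum_{j\ge0}c_{k,p,j}\,\dblG^{j}\wedge\dblg^{k-2j}\wedge\dblR^{n-\frac k2}$, and observe that only the terms with $0\le j\le\frac k2$ contribute: for $j>\frac k2$ every summand in the formula for $c_{k,p,j}$ carries a factor $\binom{p+l}{j}$ with $p+l\le\frac k2<j$ (the factor $\binom{k+1}{2p+2l+1}$ forces $p+l\le\frac k2$), so $c_{k,p,j}=0$. For $0\le j\le\frac k2$ one has $j+(n-\frac k2)\le n$, so Theorem~\ref{thm_int_term}, applied with its $(p,q)$ equal to $(j, n-\frac k2)$ and using that $\dblg$ and $\dblR$ are of diagonal type and hence commute with everything, gives $\dblG^{j}\wedge\dblg^{k-2j}\wedge\dblR^{n-\frac k2}=2^{j}\tfrac{C_{k/2-j}}{C_{k/2}}\,\dblR^{n-\frac k2}\wedge\dblg^{k}$. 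Summing over $j$ yields $\bm\phi_{k,p}\wedge\dblR^{n-\frac k2}=c^{*}_{k,p}\,\dblR^{n-\frac k2}\wedge\dblg^{k}$, where $c^{*}_{k,p}:=C_{k/2}^{-1}\sum_{j\ge0}c_{k,p,j}2^{j}C_{k/2-j}$, a number depending on neither $M$, nor $n$, nor $\dblR$. Since $2p\le k$, it now suffices to prove $c^{*}_{2p,p}=2p+1$ and $c^{*}_{k,p}=0$ for even $k>2p$.

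For even $k>2p$ I would evaluate $c^{*}_{k,p}$ by the special choice $\dblR=\dblG$, $n=\tfrac k2$. Then $\dblR^{n-k/2}=1$, so the left-hand side is $\bm\phi_{k,p}=g_{k,p}(\dblG,\dblg)$, which vanishes in $\mathcal G(\tfrac k2)$ by Proposition~\ref{prop:g_kp_zero}, the hypothesis $k-p>\tfrac k2$ being equivalent to $k>2p$. The right-hand side is $c^{*}_{k,p}\,\dblg^{k}$, and $\dblg^{k}\ne0$ in the one-dimensional space $\mathcal G_{k}(\tfrac k2)$ — it evaluates to $k!$ on an orthonormal frame by the case $\bm\eta=1$ of Lemma~\ref{lemma:contraction}, or one may invoke the perfectness of the pairing in Theorem~\ref{thm:StructureGray}(b). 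Hence $c^{*}_{k,p}=0$, which by the previous paragraph gives the vanishing for every $M$, $n$, $\dblR$. For $k=2p$ the binomial $\binom{2p+1}{2p+2l+1}$ vanishes unless $l=0$, so $c_{2p,p,j}=(-1)^{p+j}2^{j}\binom pj$, whence $\bm\phi_{2p,p}=g_{2p,p}(\dblG,\dblg)=(2\dblG-\dblg^{2})^{p}$ and $c^{*}_{2p,p}=\tfrac{(-1)^{p}}{C_{p}}\,U$ with $U:=\sum_{j\ge0}(-4)^{j}\binom pj C_{p-j}$. I would then compute $U=[x^{p}]\big((1-4x)^{p}C(x)\big)$ with $C(x)=\sum_{m}C_{m}x^{m}=\tfrac{1-\sqrt{1-4x}}{2x}$; writing $(1-4x)^{p}C(x)=\tfrac{(1-4x)^{p}-(1-4x)^{p+1/2}}{2x}$ and noting $(1-4x)^{p}$ is a polynomial of degree $p$, extracting the coefficient of $x^{p}$ gives $U=2(-4)^{p}\binom{p+1/2}{p+1}$, and the elementary identity $\binom{p+1/2}{p+1}=\tfrac{(2p+1)!!}{2^{p+1}(p+1)!}=\tfrac{(2p+1)!}{2^{2p+1}p!(p+1)!}=\tfrac{(2p+1)C_{p}}{2\cdot4^{p}}$ yields $U=(-1)^{p}(2p+1)C_{p}$, so $c^{*}_{2p,p}=2p+1$.

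The routine verifications aside, the point that needs care — the "hard part" — is the bookkeeping in the first step: one must check that $c^{*}_{k,p}$ is genuinely independent of $n$ and of $\dblR$, so that the specialization $n=\tfrac k2$, $\dblR=\dblG$ in the case $k>2p$ is legitimate and so that Theorem~\ref{thm_int_term} may be applied term by term. Once this is established, everything else is either a direct appeal to the already-proved structure theory of $\mathcal G(n)$ (Proposition~\ref{prop:g_kp_zero}, Theorem~\ref{thm:StructureGray}) or a short generating-function computation with Catalan numbers.
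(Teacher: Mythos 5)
Your proof is correct and follows the paper's strategy almost exactly: reduce via Theorem~\ref{thm_int_term} to a universal scalar $A_{k,p}=c^*_{k,p}$ independent of $n$ and $\dblR$, specialize to $n=\tfrac k2$, and invoke Proposition~\ref{prop:g_kp_zero} to kill the case $k>2p$. The only divergence is in evaluating $c^*_{2p,p}$: the paper avoids any computation by noting $\bm\phi_{2p,p}=\sum_{q\le p}\bm\phi_{2p,q}=(2p+1)\dblg^{2p}$ via the identity \eqref{eq:sum_p} together with the vanishing of $\bm\phi_{2p,q}$ for $q<p$, whereas you compute the Catalan sum $\sum_j(-4)^j\binom pjC_{p-j}$ directly from the generating function of $C(x)$; both are valid, and your route has the side benefit of the closed form $\bm\phi_{2p,p}=(2\dblG-\dblg^2)^p$.
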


\begin{proof}
By Theorem \ref{thm_int_term},
 \begin{align}
    \bm\phi_{k,p}\wedge \dblR^{n-\frac{k}2} &= \sum_{j=0}^{ \frac{k}2} c_{k,p,j} \dblG^j\wedge \dblg^{k-2j}\wedge \dblR^{n-\frac{k}2}\\
  &=\left(\sum_{j=0}^{ \frac{k}2} c_{k,p,j} 2^j \frac{C_{\frac{k}2-j}}{C_{\frac{k}2}} \right)\dblg^{k}\wedge \dblR^{n-\frac{k}2}.\label{eq:phi_R}
 \end{align}
The numerical factor in the last line  is independent of $n$. Denote it by $A_{k,p}$. We claim that $A_{k,p}= 0 $ if $k>2p$ and $= 2p+1$ if $k=2p$. To prove this it is enough to consider the case $n= \frac k 2$, so that the final expression above is $A_{k,p} \dblg^{2n}$, where $\dblg^{2n} \ne 0$.

If $k>2p$ then Proposition \ref{prop:g_kp_zero} implies that then $\bm \phi_{k,p}=0$ as an element of $\mathcal G(n)$, so $A_{k , p} = 0$.

For the case $k=2p=2n$ we recall that  $\bm\phi_{2p,q}=0$  for $q<p$ by Proposition \ref{prop:g_kp_zero}. Since
\begin{equation} \label{eq:sum_p}
\sum_p c_{k,p,j}= \begin{cases}
                      0, &  j>0,\\
                      k+1, & j=0
                  \end{cases}
\end{equation}
by Lemma~\ref{lemma:GenFunc2}, we conclude
\begin{align*}
  \bm \phi_{2p,p}&=\sum_{q=0}^p   \bm\phi_{2p,q}\notag  \\
 &= \sum_{q,j=0}^p c_{2p,q,j} \dblG^j \wedge \dblg^{2(p-j)}\\
 &= (2p+1) \dblg^{2p}.
 \end{align*} 
\end{proof}

\section{The K\"ahler-Lipschitz-Killing algebra}\label{sec_alg structure}

We assemble the ingredients above to prove Theorems A and B by giving the claimed isomorphisms in explicit terms, using
the general hermitian intrinsic volumes  $\mu^M_{k, p}$ defined in \eqref{eq_def_mu_kq} below.  
Theorem A is a direct consequence of Proposition \ref{thm_weyl} and Theorem \ref{thm:alg_isom}.  Theorem B follows from Proposition \ref{thm_weyl} and the results of Section \ref {sec_module}.

\subsection{ The Kähler-Lipschitz-Killing curvature measures and the hermitian intrinsic volumes}

\begin{definition}\label{def:hivs}
Given a Kähler manifold $M$ we define the {\em  { Kähler-Lipschitz-Killing} curvature measures} on $M$ by
\begin{align*}
\Delta^M_{k,p}  & := \sum_j c_{k,p,j}\Gamma_{k,j}= [\bm \kappa_{k,p},{\bm\omega}_{k,p}], \quad 0 \leq p \leq \frac{k}{2} \leq n\\
B^M_{k,p} & :=\sum_j c_{k-1,p,j} \frac{k+1}{k-2p}\tilde \Gamma_{k,j}= [0,\tilde{\bm \omega}_{k,p}], \quad 0 \leq p <\frac{k}{2}  < n
\end{align*}
where
\begin{align}
\bm\omega_{k,p} &:=\bm\alpha^\vee \wedge  \bm\phi_{k,p} \wedge  \bm\psi_{2n-k-1}\label{eq_def_omega_kp}\\
\tilde {\bm\omega}_{k,p} &: =\frac{k+1}{k-2p}\bm\alpha^\vee \wedge \bm\beta \wedge\bm\beta^\vee 
\wedge  \bm\phi_{k-1,p} \wedge  \tilde {\bm\psi}_{2n-k-1}\\\label{eq_def_kappa_k}
\bm \kappa_{k,p}&:=d_{2n,k,n-\frac{k}2}\bm\phi_{k,p}\wedge\dblR^{n-\frac{k}2}=\delta_{k, 2p}\cdot \bm \kappa_k
\end{align}
 by Proposition \ref{prop_poincare_gray}.

Define also the \emph{hermitian intrinsic volumes} on $M$ to be the valuations
\begin{equation}\label{eq_def_mu_kq}
\mu_{k,p}^M:=[\Delta_{k,p}^M] \in \mathcal V(M).
\end{equation}
We denote by $\mathcal{KLK}(M) \subset \mathcal V(M)$ the vector space spanned by the $\mu_{k,p}^M$,  and by $\widetilde{\mathcal{KLK}}(M)\subset \calC(M)$ the space spanned by the $\Delta_{k,p}^M$ and the $B_{k,p}^M$.
\end{definition}

Theorem \ref{thm_restrict_gamma} implies that the $\Delta_{k, p}$ and $B_{k, p}$ are invariant under restriction.

\begin{proposition} \label{thm_weyl}
 Let $M \subset \tilde M$ be an isometric holomorphic embedding of K\"ahler manifolds. Then 
 \begin{align*}
  \Delta_{k,p}^{\tilde M}|_M & =\Delta_{k,p}^M\\
  B_{k,p}^{\tilde M}|_M & =B_{k,p}^M.
 \end{align*}
\end{proposition}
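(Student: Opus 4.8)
The plan is to derive this directly from Theorem~\ref{thm_restrict_gamma}, exploiting the fact that the defining linear combinations in Definition~\ref{def:hivs} use coefficients $c_{k,p,j}$ and $\tfrac{k+1}{k-2p}$ that depend only on the indices $k,p,j$ and not on the dimension of the ambient Kähler manifold. Concretely, I would first recall that the restriction map $e_{\calC}^*\colon \calC(\tilde M)\to\calC(M)$ associated with the K\"ahler embedding $e\colon M\hookrightarrow\tilde M$ is linear (as noted in Section~\ref{section_background}), and that by Definition~\ref{def:hivs},
\[
\Delta_{k,p}^{M}=\sum_j c_{k,p,j}\,\Gamma_{k,j}^{M},\qquad
B_{k,p}^{M}=\sum_j c_{k,p,j}\,\frac{k+1}{k-2p}\,\tilde\Gamma_{k,j}^{M},
\]
with exactly the same coefficients appearing in the corresponding expansions on $\tilde M$.

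The computation is then a one-line application of linearity together with the two identities \eqref{eq_restrict_Gamma}, \eqref{eq_restrict_tilde_Gamma} of Theorem~\ref{thm_restrict_gamma}:
\[
\Delta_{k,p}^{\tilde M}\big|_M=\sum_j c_{k,p,j}\,\big(\Gamma_{k,j}^{\tilde M}\big|_M\big)
=\sum_j c_{k,p,j}\,\Gamma_{k,j}^{M}=\Delta_{k,p}^{M},
\]
and identically for $B_{k,p}$ using $\tilde\Gamma_{k,j}^{\tilde M}\big|_M=\tilde\Gamma_{k,j}^{M}$. When the complex dimensions $m=\dim_\CC M$ and $n=\dim_\CC\tilde M$ differ, one should read this with the convention (already built into Definition~\ref{def_GAMMA} and Theorem~\ref{thm_restrict_gamma}) that $\Gamma_{k,j}^{M}=\tilde\Gamma_{k,j}^{M}=0$ whenever $k$ exceeds the range allowed by $\dim M$, so that the restrictions $\Delta_{k,p}^{\tilde M}|_M$, $B_{k,p}^{\tilde M}|_M$ automatically vanish for $2m<k\le 2n$, consistently with the ranges stated in Definition~\ref{def:hivs}.

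There is essentially no obstacle here: the substantive work has already been carried out in Theorem~\ref{thm_restrict_gamma} (the normal-cycle pushforward formula \eqref{eq_nc_of_inclusion} and the Weyl lemma), and the only thing to verify is the purely bookkeeping point that the coefficients in Definition~\ref{def:hivs} are dimension-independent, which is immediate from their explicit formulas. I would therefore present the proof in two sentences, citing Definition~\ref{def:hivs} and Theorem~\ref{thm_restrict_gamma}, and add a parenthetical remark addressing the $m<n$ index-range convention.
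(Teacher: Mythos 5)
Your argument is exactly the paper's: the authors state Proposition~\ref{thm_weyl} as an immediate consequence of Theorem~\ref{thm_restrict_gamma}, since the coefficients $c_{k,p,j}$ and $\tfrac{k+1}{k-2p}$ in Definition~\ref{def:hivs} depend only on $k,p,j$ and restriction is linear. The proposal is correct and adds only the (harmless) explicit remark about the index-range convention when $\dim M<\dim\tilde M$.
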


We will see  (Proposition \ref{prop_glob_delta_beta})  that  $[B_{k,p}^M]=[\Delta_{k,p}^M]=\mu_{k,p}^M$, and (Theorem \ref{thm:alg_isom}) that  $\mathcal{KLK}(M)$ is closed with respect to Alesker multiplication. The resulting subalgebra of $\calV(M)$ is the {\it  K\"ahler-Lipschitz-Killing algebra} of $M$.

These curvature measures fit the previously developed schema from \cite{bernig_fu_hig, bernig_fu_solanes}:

\begin{proposition}\label{prop_B_eq_B}
Taking $M=\C^n$,
$$
\Delta_{k,p}^{\CC^n} = \Delta_{k,p},\quad B_{k,p}^{\CC^n}= B_{k,p},
$$ 
where the right hand sides are the curvature measures  of $\Curv_k^{\U(n)}$ as defined in \cite{bernig_fu_hig, bernig_fu_solanes}.
\end{proposition}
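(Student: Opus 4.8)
The plan is to reduce everything to the flat case and then match our constructions, term by term, with the explicit invariant differential forms by which $\Delta_{k,q}$ and $B_{k,q}$ were originally defined in \cite{bernig_fu_hig} and \cite{bernig_fu_solanes}. The crucial simplification is that for $M=\C^n$ the curvature tensor vanishes, $\dblR=0$, so $\dblR_0=\widehat\dblR=0$, all interior terms $\bm\varphi_{k,p}$ disappear, and only the $l=0$ summand survives in $\bm\psi_{2n-k-1}$ and $\tilde{\bm\psi}_{2n-k-1}$:
\[
\bm\psi_{2n-k-1}=d_{2n,k,0}\,\bm\omega^{2n-k-1},\qquad
\tilde{\bm\psi}_{2n-k-1}=(2n-k)\,d_{2n,k,0}\,\bm\omega^{2n-k-1}.
\]
Substituting into Definition~\ref{def:hivs}, and using that $\bm\kappa_{k,p}$ vanishes for $k<2n$ while $\bm\kappa_{2n,n}=(2n+1)\,d_{2n,2n,0}\,\dblg^{2n}$ by Proposition~\ref{prop_poincare_gray} (the volume form, as it should be), one obtains for $\C^n$ the fully explicit expressions
\begin{gather*}
\Delta^{\C^n}_{k,p}=\bigl[\bm\kappa_{k,p},\ d_{2n,k,0}\,\bm\alpha^\vee\wedge\bm\phi_{k,p}\wedge\bm\omega^{2n-k-1}\bigr],\\
B^{\C^n}_{k,p}=\Bigl[0,\ \frac{(k+1)(2n-k)}{k-2p}\,d_{2n,k,0}\,\bm\alpha^\vee\wedge\bm\beta\wedge\bm\beta^\vee\wedge\bm\phi_{k-1,p}\wedge\bm\omega^{2n-k-1}\Bigr],
\end{gather*}
where $\bm\phi_{k,p}=\sum_j c_{k,p,j}\,\dblG^j\wedge\dblg^{k-2j}$ is a polynomial in the flat metric double form $\dblg$ and the Gray double form $\dblG$.

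Next I would invoke Lemma~\ref{lem:constant}: the images of $\bm\alpha,\bm\beta,\dblg,J\dblg,\bm\omega,\dblG$ under $\bar\sigma^*$ are independent of the frame, so it suffices to compute them once in a hermitian orthonormal frame on $S\C^n=\C^n\times S^{2n-1}$ and identify them --- via the convention \eqref{eq_omegan} and the decomposition \eqref{eq:dblG} --- with the invariant $1$- and $2$-forms on $S\C^n$ used in \cite{bernig_fu_hig,bernig_fu_solanes}. Feeding these identifications into the displays above reproduces, term by term, the forms appearing in the original definitions of $\Delta_{k,q}$ and $B_{k,q}$; what then remains is a purely numerical check that the combinatorial coefficients $c_{k,p,j}$ --- governed by the generating function of Lemma~\ref{lemma:GenFunc2} --- together with the normalizations $d_{2n,k,0}$ and $\frac{k+1}{k-2p}$, agree with the binomial coefficients in those formulas.

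A more structural route that I would keep in reserve avoids the explicit comparison of forms. Both families consist of angular curvature measures: ours by Proposition~\ref{prop:angular}, the classical $\Delta_{k,q}$ by construction. Since an angular, translation- and $\U(n)$-invariant curvature measure of degree $k$ is determined by its Klain function on $\Grass_k(\C^n)$, and since from the proof of Proposition~\ref{prop:angular} that function is read off from $\bar\tau_x(\dblG^p\wedge\dblg^{k-2p})|_E$ on a $k$-plane $E$ --- which by Lemma~\ref{lemma:contraction} and Theorem~\ref{thm_int_term} applied inside $E$ is an explicit symmetric polynomial in the multiple Kähler angles of $E$ --- the identity $\Delta^{\C^n}_{k,p}=\Delta_{k,p}$ reduces to a finite-dimensional linear-algebra computation in $\Curv_k^{\U(n)}$. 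The identity $B^{\C^n}_{k,p}=B_{k,p}$ then follows because both sides have vanishing interior term, boundary term divisible by $\bm\beta$, the same globalization $\mu_{k,p}$ (Proposition~\ref{prop_glob_delta_beta}), and the same restriction behavior to complex submanifolds (Proposition~\ref{thm_weyl}), which together pin down the element of the relevant space of curvature measures.

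The main obstacle in either approach is the bookkeeping needed to reconcile two genuinely different formalisms --- the double-form / Cartan-calculus packaging developed in this paper against the explicit $\C^n\times S^{2n-1}$ invariant-form packaging of \cite{bernig_fu_hig,bernig_fu_solanes} --- and, above all, the tracking of all normalization constants through both. No new idea is required, which is why the statement is presented simply as fitting into the previously developed schema.
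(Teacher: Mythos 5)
Your reduction of Definition~\ref{def:hivs} to the flat case is correct: with $\dblR=0$ only the $l=0$ term of $\bm\psi_{2n-k-1}$ survives and $\Delta^{\C^n}_{k,p}$, $B^{\C^n}_{k,p}$ take exactly the explicit forms you display. The paper arrives at the same expression \eqref{eq:double_forms_delta_boundary} for the \emph{classical} $\Delta_{k,p}$, but it does so by transferring to $\CP^N_\lambda$, invoking the explicit formulas (75), (82) of \cite{fu_wannerer} for $\Delta_{k,p}$ in terms of the forms $C^\lambda_{k,j}$ (which carry the factors $(2\lambda\dblG)^j$), and then letting $\lambda\to 0$ through metrics of constant holomorphic curvature on the unit ball. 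That comparison with \cite{fu_wannerer} \emph{is} your ``purely numerical check'': the constants $c_{k,p,j}$ were defined precisely to absorb the triple binomial sum appearing there, so for the $\Delta$-half your plan is sound and coincides in substance with the paper's, but the step you defer is the entire content, not bookkeeping.

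For the $B$-half there is a genuine gap. A term-by-term match of $\tilde{\bm\omega}_{k,p}$ with the form $\bm\beta_{k,p}$ defining the classical $B_{k,p}$ is not available: two forms induce the same curvature measure iff they agree modulo the ideal generated by $\bm\alpha$ and $d\bm\alpha$, and the two forms do \emph{not} agree on the nose. The paper handles this by contracting the identity \eqref{eq_beta_to_xi} for $\Delta_{k,p}$ with the horizontal field $X_{(x,\xi)}=(J\xi,0)$ and wedging with $\bm\beta$; the key computation $\iota_X\bm\phi_{k,p}=(k+1)\,\bm\beta^\vee\wedge\bm\phi_{k-1,p}$ (which uses the recursion \eqref{eq:recRelation2}) together with an argument that the error terms from the contact ideal remain in the ideal is what produces $B^{\C^n}_{k,p}=B_{k,p}$. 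Your reserve argument does not repair this: it invokes Proposition~\ref{prop_glob_delta_beta}, whose proof rests on Proposition~\ref{prop_module}, which in turn is deduced from Proposition~\ref{prop_B_eq_B} itself --- so that route is circular. (The injectivity of $\glob$ on $\mathrm{Beta}^n$ would indeed pin down $B^{\C^n}_{k,p}$ once one knows $\glob(B^{\C^n}_{k,p})=\mu_{k,p}$, but at this stage of the paper that globalization identity is only available for $(k,p)=(1,0)$ via Proposition~\ref{prop_glob_b10}.)
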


\begin{proof}
	
We show first that $\Delta_{k,p}=\Delta_{k,p}^{\CC^n}$. Consider in complex projective space $\CP_\lambda^N$ of constant holomorphic curvature $4\lambda\neq 0$ the image of $\Delta_{k,p}\in\Curv^{\U(n)}$ under the transfer map, which we denote again by $\Delta_{k,p}$.
By \cite[Eqs. (75), (82)]{fu_wannerer} we have 
\begin{align*}
			\Delta_{k,p} & = (-1)^p \left(\frac \pi 2\right)^{\lceil \frac k 2 \rceil} \frac{1 }{(k+1)!!} \sum_{j,l\geq 0}   \binom{k+1}{2p+2l+1} \binom{p+l}{l}\binom{p+l}{j}  (-\lambda)^{-j}C_{k,j}^\lambda\\
			& =  \frac{\pi^k}{(k+1)!\omega_k} \sum_{j\geq 0} c_{k,p,j}  (2\lambda)^{-j}  C_{k,j}^\lambda,
\end{align*}
where, under the identification \eqref{eq_omegan},
\begin{align*}
			C_{k,j}^\lambda&=\left[0,\frac{\omega_k}{\pi^k(2n-k)!\omega_{2n-k}}\bm\alpha^\vee\wedge (2\lambda \dblG)^j\wedge \dblg^{k-2j}\wedge\bm\omega^{2n-k-1}\right] & \mbox{for }k<2n,\\
			C_{2n,j}^\lambda&=\left[\frac{\omega_{2n}}{\pi^{2n}} (2\lambda \dblG)^j\wedge \dblg^{2n-j},0\right].
\end{align*}
		
Hence, for $k<2n$, { and recalling \eqref{eq_def_phikp},}
		\begin{equation}\label{eq:double_forms_delta_boundary}
			\Delta_{k,p}  =\frac{1}{(k+1)!(2n-k)!\omega_{2n-k}}\left[0,\bm\alpha^\vee\wedge\bm\phi_{k,p}\wedge \bm\omega^{2n-k-1}\right],
		\end{equation}
		while
\begin{equation}\label{eq:double_forms_delta_interior}
			\Delta_{2n,p} = \frac{1}{(2n+1)!} \left[ \bm\phi_{2n,p},0\right]=\frac{\delta_{p,n}}{(2n)!} \left[ \dblg^{2n},0\right],
\end{equation}
by Proposition \ref{prop_poincare_gray}.
		
On the other hand,   since $\dblR^{\CP_\lambda^{ n}}= \lambda \dblG$, 
		\begin{align*}
			\bm \kappa_{k,p}&=\delta_{2p,k} {\bm \kappa}_k=\frac{\delta_{2p,k}\delta_{k,2n}}{(2n)!}\dblg^{2n}+O(\lambda),\\
			\bm\omega_{k,p}&=\frac{1}{(k+1)!(2n-k)!\omega_{2n-k}} \bm\alpha^\vee\wedge\bm\phi_{k,p}\wedge\bm\omega^{2n-k-1}+O(\lambda).
		\end{align*}

		It follows for all $k,p$ that 
		\[
	 	\Delta_{k,p}^{\CP_\lambda^{ n}}  =\Delta_{k,p}+O(\lambda).
		\]
		To prove that $\Delta_{k,p}=\Delta_{k,p}^{\CC^n}$, it suffices to prove this identity inside the unit ball of $\CC^{n}$, where  we may approximate the flat metric by a family of metrics $g_\lambda$ compatible with the complex structure and with constant holomorphic curvature $4\lambda$. Letting $\lambda\to 0$ we have $\Delta_{k,p}^{\CP_\lambda^{ n}}\to\Delta_{k,p}^{\CC^n}$ which yields the claim.

Now we  prove $B_{k,p}^{\CC^n}=B_{k,p}$  for $2p<k<2n$. Consider on $S\CC^n=\CC^n\times S^{2n-1}$ the horizontal vector field $X$ defined by $ X_{(x,\xi)}=(J\xi,0)$.
Recall that $B_{k,p},\Delta_{k,p}\in \mathrm{Curv}^{\U(n)}$ are given by (cf. \cite[\S 3.1]{bernig_fu_solanes})
\begin{align*}
B_{k,p}&=[0,\bm\beta_{k,p}]\\
\Delta_{k,p}&=\frac{1}{2n-k}(2(n-k+p)[0,\bm\gamma_{k,p}]+(k-2p)[0,\bm\beta_{k,p}]),
\end{align*}
where $\bm\beta_{k,p}$ is a multiple of $\bm\beta$ and $\contr{X} \bm\gamma_{k,p}\equiv0$ modulo $\bm\alpha$. Comparing with \eqref{eq:double_forms_delta_boundary} it follows that,  under the identification \eqref{eq_omegan}
\begin{equation}\label{eq_beta_to_xi}
 \frac{1}{(k+1)! (2n-k-1)!\omega_{2n-k}}\bm\alpha^\vee \wedge \bm\phi_{k,p} \wedge \bm\omega^{2n-k-1}=2(n-k+p)\bm\gamma_{k,p}+(k-2p)\bm\beta_{k,p}+\bm\rho,
\end{equation} 
where $\bm\rho\in \Omega^{2n-1}(S\CC^n)^{\overline{\U(n)}}$ is in the ideal $I$ generated by $\bm\alpha,d\bm\alpha$. Since
\begin{displaymath}  
\contr{X}\bm\alpha=0,\qquad \contr{X} d\bm\alpha=-\omega_{1,0}=:-\bm\gamma
\end{displaymath}
we see that, 
\begin{displaymath}
\bm\beta\wedge( \contr{X} \bm\rho) \equiv \bm\beta\wedge \bm\gamma\wedge \bm T \quad \mod I,
\end{displaymath}
where $\bm T$ belongs to the space of translation invariant $\U(n)$-invariant elements of $\Omega^{2n-3}(S\CC^n)$. Since this space is generated by $\bm\alpha,\bm\beta,\bm\gamma$ and four forms of degree $2$,  while $(2n-3)$ is odd, we deduce $\bm\beta\wedge( \contr{X} \bm\rho)\in I$.

By \eqref{eq:dblG} we have
\begin{displaymath}
 \contr{X} \dblg=\bm\beta^\vee,\qquad \contr{X} \dblG\equiv\bm \beta^\vee\wedge \dblg\qquad \contr{X}\bm \omega=0,
\end{displaymath}
modulo $\bm\alpha, \bm\alpha^\vee$. Hence,
\begin{align*}
 \contr{X}\bm\phi_{k,p} &=\sum_j c_{k,p,j} (j\bm \beta^\vee \wedge \dblG^{j-1} \wedge \dblg^{k-2j+1} +(k-2j)\bm\beta^\vee \wedge \dblG^j \wedge \dblg^{k-2j-1})\\
 &=\sum_j ((j+1)c_{k,p,j+1}+(k-2j)c_{k,p,j})\bm\beta^\vee \wedge \dblG^j \wedge  \dblg^{k-2j-1}\\
 &=(k+1)\sum_j c_{k-1,p,j}\bm\beta^\vee \wedge \dblG^j \wedge \dblg^{k-2j-1}\\
 &=(k+1)\bm\beta^\vee \wedge \bm\phi_{k-1,p},
\end{align*}
where the third equality  follows from \eqref{eq:recRelation2}.

Applying $\bm\beta\wedge\contr{X}(\cdot)$ to both sides of \eqref{eq_beta_to_xi}, and using the previous relation, yields $B_{k,p}=B_{k,p}^M$.
\end{proof}

\subsection{Basic properties of the Kähler-Lipschitz-Killing curvature measures}
\label{sec_hermtian_intrinsic_volumes}

The two localizations $\Delta_{k,p}^M$ and $ B_{k,p}^M$ of the hermitian intrinsic volumes have different, particularly desirable properties with respect to the Alesker product. As will be proved in Proposition \ref{prop_module} below, multiplication by the first intrinsic volume $t = \frac 2 \pi \mu_{1,0}^M$ stabilizes the  family of $\Delta_{k,p}^M$, while multiplication by $\sigma_M:=\frac{1}{2\pi}(\mu_{2,0}^M+2\mu_{2,1}^M)$ stabilizes the $B_{k,p}^M$.

Initially defined for all $2p\leq k\leq 2n$ and $2p<k< 2n$, respectively, as a consequence of the relations in the Gray algebra, some of the curvature measures $\Delta^M_{k,p}$ and
 $B^M_{k,p}$  are actually trivial.

\begin{proposition}\label{prop:triviality}
On an $n$-dimensional K\"ahler manifold $M$, 
\begin{displaymath}
 \Delta_{k,p}^M=0 \quad \text{and} \quad B_{k,p}^M=0
\end{displaymath}
whenever $k-p>n$.
\end{proposition}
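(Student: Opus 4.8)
The plan is to express both $\Delta^M_{k,p}$ and $B^M_{k,p}$ in terms of the canonical double forms $\bm\phi_{k,p}$ and $\bm\phi_{k-1,p}$ and then invoke Proposition~\ref{prop:g_kp_zero}, which states precisely that $\bm\phi_{k,p}=g_{k,p}(\dblG,\dblg)=0$ as an element of the Gray algebra $\mathcal G(n)$ as soon as $k-p>n$. The key observation is that all the defining data of these curvature measures --- the interior term $\bm\kappa_{k,p}$, the boundary terms $\bm\omega_{k,p}$ and $\tilde{\bm\omega}_{k,p}$ --- are built by wedging the ``Gray'' factors $\bm\phi_{k,p}$ (resp. $\bm\phi_{k-1,p}$) with forms that involve no further occurrence of $\dblg$ or $\dblG$ beyond what is already polynomial in $\dblR$ and the universal forms $\bm\alpha^\vee,\bm\beta,\bm\beta^\vee,\bm\psi_j,\tilde{\bm\psi}_j$. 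Since the relations $g_{n+1}(\dblG,\dblg)=g_{n+2}(\dblG,\dblg)=0$ hold identically in $\bw^{\bullet,\bullet}(\CC^n,\CC^n)$ --- hence, being algebraic identities in $\dblG,\dblg$, also at every point of every $n$-dimensional K\"ahler manifold --- the vanishing of $\bm\phi_{k,p}$ for $k-p>n$ propagates to the product.

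First I would treat $\Delta^M_{k,p}$. By Definition~\ref{def:hivs} we have $\Delta^M_{k,p}=[\bm\kappa_{k,p},\bm\omega_{k,p}]$ with $\bm\kappa_{k,p}=d_{2n,k,n-\frac k2}\,\bm\phi_{k,p}\wedge\dblR^{n-\frac k2}$ and $\bm\omega_{k,p}=\bm\alpha^\vee\wedge\bm\phi_{k,p}\wedge\bm\psi_{2n-k-1}$. Both the interior and boundary forms are left-divisible (in the double-form algebra) by $\bm\phi_{k,p}$, which by Proposition~\ref{prop:g_kp_zero} vanishes identically whenever $k-p>n$. Hence $\bm\kappa_{k,p}=0$ and $\bm\omega_{k,p}=0$ as double forms on $M$, so $\Delta^M_{k,p}=[0,0]=0$ in $\calC(M)$.

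Next I would treat $B^M_{k,p}$, which is only defined for $2p<k<2n$; by Definition~\ref{def:hivs}, $B^M_{k,p}=[0,\tilde{\bm\omega}_{k,p}]$ with
\begin{displaymath}
\tilde{\bm\omega}_{k,p}=\frac{k+1}{k-2p}\,\bm\alpha^\vee\wedge\bm\beta\wedge\bm\beta^\vee\wedge\bm\phi_{k-1,p}\wedge\tilde{\bm\psi}_{2n-k-1}.
\end{displaymath}
If $k-p>n$ then $(k-1)-p>n-1$, but Proposition~\ref{prop:g_kp_zero} is stated for the algebra $\mathcal G(n)$: since $k-1-p \ge n+1 > n$ still holds (because $k-p \ge n+1$ forces $k-1-p \ge n$, and when $k-1-p=n$ one checks the relation is not yet forced, so one actually needs $k-p\ge n+2$ here) --- this is the one subtlety to watch. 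The safe route is to observe that $\tilde{\bm\omega}_{k,p}$ is divisible by $\bm\phi_{k-1,p}$ and apply Proposition~\ref{prop:g_kp_zero} with the bound $(k-1)-p>n$, i.e. $k-p>n+1$; the remaining boundary case $k-p=n+1$ must be handled separately, e.g. by noting that then $k>n+1$ forces the relevant homogeneity degree to exceed $2n-1$ after the extra factors $\bm\beta\wedge\bm\beta^\vee\wedge\tilde{\bm\psi}_{2n-k-1}$ are included, or by a direct dimension count in $\Omega^{2n-1}(SM)$. The main obstacle I anticipate is precisely pinning down this boundary degree $k-p=n+1$ for the $B$-family; for the $\Delta$-family everything is an immediate corollary of Propositions~\ref{prop:g_kp_zero} and the definitions.
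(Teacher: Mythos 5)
Your treatment of $\Delta^M_{k,p}$ is correct and is exactly the paper's argument: both the interior term $\bm\kappa_{k,p}$ and the boundary term $\bm\omega_{k,p}$ contain the factor $\bm\phi_{k,p}$, which vanishes in $\mathcal G(n)$ for $k-p>n$ by Proposition~\ref{prop:g_kp_zero}.

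For $B^M_{k,p}$, however, there is a genuine gap that you have correctly spotted but not closed. Applying Proposition~\ref{prop:g_kp_zero} to $\bm\phi_{k-1,p}$ inside $\mathcal G(n)$ only gives vanishing when $(k-1)-p>n$, i.e.\ $k-p\ge n+2$, so the boundary case $k-p=n+1$ remains open, and neither of your suggested fallbacks works: a bidegree count shows $\tilde{\bm\omega}_{k,p}$ has the admissible type $(2n-1,2n)$, so no dimension obstruction is available, and $\bm\phi_{k-1,p}$ itself (e.g.\ $\bm\phi_{n,0}=g_n(\dblG,\dblg)$) is generally nonzero in $\mathcal G(n)$ when $(k-1)-p=n$. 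The paper's argument is different and covers all cases at once: it shows that $\bm\alpha\wedge\bigl(\bm\alpha^\vee\wedge\bm\beta\wedge\bm\beta^\vee\wedge\bm\phi_{k-1,p}\bigr)=(\bm\alpha\wedge\bm\alpha^\vee)\wedge\bm\beta\wedge\bm\beta^\vee\wedge(\bm\phi_{k-1,p}|_H)$, where $H=\ker\bm\alpha\cap\ker\bm\beta$ is a \emph{complex hyperplane}, i.e.\ an $(n-1)$-dimensional complex subspace of $T_xM$. The restriction $\bm\phi_{k-1,p}|_H=g_{k-1,p}(\dblG|_H,\dblg|_H)$ lives in $\mathcal G(n-1)$, where the vanishing condition of Proposition~\ref{prop:g_kp_zero} reads $(k-1)-p>n-1$, which is precisely $k-p>n$. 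Hence the wedge with $\bm\alpha$ vanishes, so $\tilde{\bm\omega}_{k,p}$ is divisible by the contact form and therefore induces the zero curvature measure. The drop in complex dimension caused by the factors $\bm\alpha\wedge\bm\alpha^\vee\wedge\bm\beta\wedge\bm\beta^\vee$ is exactly what shifts the threshold from $k-p>n+1$ to $k-p>n$; this is the idea missing from your proposal.
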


\begin{proof}
 Since $\bm\phi_{k,p}= { g_{k,p}(\dblG,\dblg)}=0$ whenever $k-p>n$ by Proposition~\ref{prop:g_kp_zero}, the first assertion is clear from the definition of $\Delta_{k,p}^M$. To prove the second assertion, observe that 
\begin{displaymath}
\bm\alpha \wedge \big(\bm\alpha^\vee \wedge \bm\beta\wedge\bm\beta^\vee \wedge \bm\phi_{k-1,p}\big)=  (\bm\alpha \wedge \bm\alpha^\vee) \wedge  \bm\beta \wedge  \bm\beta^\vee \wedge (\bm\phi_{k-1,p}|_{H})=0,
\end{displaymath}
 where $H\subset T_x M$ denotes the complex hyperplane $\ker\bm\alpha\cap\ker\bm\beta$. Thus $\bm\alpha^\vee \wedge  \bm\beta \wedge  \bm\beta^\vee \wedge \bm\phi_{k-1,p}$ is divisible by the contact form $\bm\alpha$ and consequently $B_{k,p}^M=0$.
\end{proof}

We may now clarify the relation between the $\Delta_{k,p}^M$ and the Lipschitz-Killing curvature measures $\Lambda_k^M$ of Lemma \ref{lem:LKF CMs}.
\begin{proposition} \label{prop:Delta_LK}
 $$ \sum_{p=0}^{\left\lfloor \frac{k}2\right\rfloor} \Delta^M_{k,p} =\Lambda_{k}^M.$$
\end{proposition}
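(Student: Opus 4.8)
The statement $\sum_{p=0}^{\lfloor k/2\rfloor}\Delta^M_{k,p}=\Lambda^M_k$ is an identity between curvature measures, so by the uniqueness part of the normal cycle description it suffices to compare the interior and boundary double forms representing the two sides (modulo the kernel of the map $(\phi,\omega)\mapsto[\phi,\omega]$, i.e.\ modulo the ideal generated by $\bm\alpha$ and $d\bm\alpha$ in the boundary term). On the $\Delta$-side, the interior term of $\Delta^M_{k,p}$ is $\bm\kappa_{k,p}=\delta_{k,2p}\bm\kappa_k$ by \eqref{eq_def_kappa_k}, and the boundary term is $\bm\omega_{k,p}=\bm\alpha^\vee\wedge\bm\phi_{k,p}\wedge\bm\psi_{2n-k-1}$ by \eqref{eq_def_omega_kp}. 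Summing over $p$, the interior terms collapse to $\bm\kappa_k$ exactly when $k$ is even (and contribute nothing when $k$ is odd), matching the interior term $\bm\kappa_k$ of $\Lambda^M_k$ in Lemma \ref{lem:LKF CMs}. So the real content is the boundary term: I must show
\begin{displaymath}
\sum_{p=0}^{\lfloor k/2\rfloor}\bm\alpha^\vee\wedge\bm\phi_{k,p}\wedge\bm\psi_{2n-k-1}
=(k+1)\,\bm\alpha^\vee\wedge\dblg^k\wedge\bm\psi_{2n-k-1}
\end{displaymath}
as an element of $\Omega^{2n-1,2n}(SM,M)$, possibly only modulo the relevant ideal, though in fact I expect equality on the nose.

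\textbf{Key step.} This reduces to the purely algebraic identity $\sum_{p=0}^{\lfloor k/2\rfloor}\bm\phi_{k,p}=(k+1)\,\dblg^k$ in the double exterior algebra, or more precisely the statement that $\sum_p g_{k,p}(s,t)=(k+1)t^k$ as polynomials. Unwinding the definition \eqref{eq_def_phikp}, $\sum_p\bm\phi_{k,p}=\sum_{p,j}c_{k,p,j}\dblG^j\wedge\dblg^{k-2j}$, and by \eqref{eq:sum_p} — which the authors derive from the generating function in Lemma \ref{lemma:GenFunc2} — the sum $\sum_p c_{k,p,j}$ equals $0$ for $j>0$ and $k+1$ for $j=0$. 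Hence $\sum_p\bm\phi_{k,p}=(k+1)\dblg^k$ directly, with no $\dblG$-terms surviving. Wedging with $\bm\alpha^\vee\wedge\bm\psi_{2n-k-1}$ and invoking the identification \eqref{eq_omegan} gives the boundary term of $\Lambda^M_k$ as written in Lemma \ref{lem:LKF CMs} (note $d_{2n,k,n-k/2}\dblR^{(2n-k)/2}\wedge\dblg^k$ versus $\bm\kappa_k$ for the interior term — these agree by Lemma \ref{lem:LKF CMs} itself, whose second displayed form exhibits exactly the interior term $d_{n,k,(n-k)/2}\dblR^{(n-k)/2}\wedge\dblg^k$). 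So both interior and boundary terms of $\sum_p\Delta^M_{k,p}$ coincide with those of $\Lambda^M_k$, proving the proposition.

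\textbf{Main obstacle.} Frankly there is very little obstacle here: once \eqref{eq:sum_p} is available (which it is, stated in the excerpt), the proof is a two-line bookkeeping argument. The one point requiring a moment's care is matching conventions: the Lipschitz-Killing boundary term in Lemma \ref{lem:LKF CMs} is written as $(k+1)\bm\alpha^\vee\wedge\dblg^k\wedge\bm\psi_{2n-k-1}$ on a manifold of \emph{real} dimension $2n$ — so one must substitute $n\rightsquigarrow 2n$ everywhere in that lemma and check that $\bm\psi_{2n-k-1}$ and the constants $d_{2n,k,l}$ are the same objects used in Definition \ref{def_GAMMA} and in \eqref{eq_def_omega_kp}, which they are by construction. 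A second minor point: for odd $k$ one should note $\bm\kappa_k=0$ and $\delta_{k,2p}=0$ for all $p$, so the interior terms match trivially; and $\lfloor k/2\rfloor$ is the correct upper summation limit in both the proposition and in \eqref{eq:sum_p} since $c_{k,p,j}=0$ for $2p>k$. I would therefore present the proof as: (i) invoke Lemma \ref{lem:LKF CMs} for $\Lambda^M_k$ on $M^{\mathrm{real}}=2n$; (ii) sum the defining expressions for $\Delta^M_{k,p}$ from Definition \ref{def:hivs}; (iii) apply \eqref{eq:sum_p} to the interior and boundary double forms; (iv) conclude.
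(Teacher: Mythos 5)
Your proposal is correct and follows essentially the same route as the paper: the authors also reduce the claim to the identity $\sum_p c_{k,p,j}=(k+1)\delta_{j,0}$ from \eqref{eq:sum_p}, conclude $\sum_p\bm\phi_{k,p}=(k+1)\dblg^k$, and match the resulting boundary term against Lemma \ref{lem:LKF CMs}, with the interior-term comparison dismissed as trivial. Your additional bookkeeping (the $\delta_{k,2p}$ collapse of the interior terms and the odd-$k$ case) is a harmless elaboration of what the paper leaves implicit.
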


\begin{proof} 
By \eqref{eq:sum_p}, for $k<2n$,
\begin{displaymath}
	\bm\alpha^\vee \wedge  \sum_p \bm\phi_{k,p} \wedge \bm\psi_{2n-k-1}  =  (k+1) \bm\alpha^\vee \wedge  \dblg^k \wedge \bm\psi_{2n-k-1},
 \end{displaymath}
which is the boundary term of $\Lambda_k^M$ from Lemma \ref{lem:LKF CMs}. Comparison of the 
interior terms is trivial.
\end{proof}

 \subsection{Structure of $\KLK(M)$}
In this section we prove our main result, Theorem \ref{thm:alg_isom}. The following will be helpful. 

\begin{lemma}\label{lem:filtration_delta} Given $x\in M$, let $\tau_x:\calC(M)\to\Curv(T_xM){ \simeq \Curv(\C^n)}$ denote the transfer map \eqref{eq_transfer_map}. Let $\pi_k\colon \Curv\to\Curv_k$  be the projection onto the $k$-th degree component. Then,  for $0,k-n\leq p\leq \frac{k}2\leq n$,
\[ \pi_k\circ \tau_x(\Delta_{k,p}^M)=\Delta_{k,p},\qquad \pi_k\circ \tau_x(B_{k,p}^M)=B_{k,p}.\]
\end{lemma}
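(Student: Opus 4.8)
The plan is to reduce the statement to the flat model $M=\C^n$, by tracking how the transfer map interacts with the homogeneity decomposition of the double forms defining $\Delta_{k,p}^M$ and $B_{k,p}^M$.

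First I would record the interplay between the $\Curv$-grading and the transfer map. Writing $\tau_x([\phi,\omega])=[\phi_x,\bar\tau_x(\omega|_{S_xM})]$, the isomorphism \eqref{eq:transfer_forms} identifies $\restrict{\Omega^\bullet(SM)}{S_xM}$ with $\bigwedge^\bullet T_x^*M\otimes\Omega^\bullet(S_xM)$, and a form lying in $\bigwedge^k T_x^*M\otimes\Omega^{2n-1-k}(S_xM)$ yields a translation invariant curvature measure on $T_xM$ homogeneous of degree $k$, while an interior $2n$-form yields one of degree $2n$. Hence $\pi_k\circ\tau_x$ kills the interior term (unless $k=2n$) and extracts from the boundary form exactly its $\bigwedge^k T_x^*M\otimes\Omega^{2n-1-k}(S_xM)$-component. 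Now in $\bm\omega_{k,p}=\bm\alpha^\vee\wedge\bm\phi_{k,p}\wedge\bm\psi_{2n-k-1}$ the factors $\dblg,\dblG,\dblR$ are horizontal (their legs land in $\bigwedge^\bullet T_x^*M$ upon restriction to $S_xM$), $\bm\omega$ is vertical, and $\bm\alpha^\vee$ carries no $SM$-leg; expanding $\bm\psi_{2n-k-1}=\sum_l d_{2n,k,l}\dblR^l\wedge\bm\omega^{2n-2l-k-1}$, the $l$-th summand of $\bm\omega_{k,p}$ therefore has exactly $k+2l$ horizontal legs and restricts into the degree-$(k+2l)$ component. Consequently $\pi_k\circ\tau_x(\Delta_{k,p}^M)$ is carried by the transfer of the single summand $l=0$; the remaining case $k=2n$ forces $p=n$, where $\Delta_{2n,n}^M=[\tfrac{1}{(2n)!}\dblg^{2n},0]$ has no boundary term and transfers to the flat $\Delta_{2n,n}$ directly.

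Next I would identify the $l=0$ summand as $d_{2n,k,0}\,\bm\alpha^\vee\wedge\bm\phi_{k,p}\wedge\bm\omega^{2n-k-1}$, a double form assembled solely from $\bm\alpha^\vee,\dblG,\dblg,\bm\omega$. By Lemma~\ref{lem:constant}, together with the (equally elementary) observation that $\bar\sigma^*\bm\alpha^\vee$ is also independent of $M,x,\sigma$ — being the section $v\mapsto v^\flat$ — the $\bar\sigma^*$-image of this form is universal, hence agrees with the one obtained when $M=\C^n$ (where this summand is exactly $\bm\omega_{k,p}^{\C^n}$, since $\dblR^{\C^n}=0$). For $M=\C^n$ the transfer map is the canonical identification of translation invariant curvature measures (\cite{bernig_fu_solanes}*{Prop. 2.5}), and $\Delta_{k,p}^{\C^n}=\Delta_{k,p}\in\Curv_k$ by Proposition~\ref{prop_B_eq_B}, so $\pi_k$ acts trivially; therefore $\pi_k\circ\tau_x(\Delta_{k,p}^M)=\pi_k\circ\tau_x(\Delta_{k,p}^{\C^n})=\Delta_{k,p}$. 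The proof for $B_{k,p}^M$ is verbatim the same, starting from $\tilde{\bm\omega}_{k,p}=\tfrac{k+1}{k-2p}\bm\alpha^\vee\wedge\bm\beta\wedge\bm\beta^\vee\wedge\bm\phi_{k-1,p}\wedge\tilde{\bm\psi}_{2n-k-1}$: here $\bm\beta$ is horizontal and $\bm\beta^\vee$ carries no $SM$-leg, so the $\dblR^l$-summand again lives in degree $k+2l$, there is no interior term, the $l=0$ summand involves only forms with universal $\bar\sigma^*$-image, and $B_{k,p}^{\C^n}=B_{k,p}\in\Curv_k$ by Proposition~\ref{prop_B_eq_B}.

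The only point requiring genuine care is the bookkeeping of horizontal versus vertical legs under \eqref{eq:transfer_forms} — namely that $\dblg,\dblG,\dblR,\bm\beta$ restrict to horizontal forms on the fibre while $\bm\omega$ restricts to a vertical one — together with the routine verification that $\bar\sigma^*\bm\alpha^\vee$ and $\bar\sigma^*\bm\beta^\vee$ are universal; I do not anticipate any substantial obstacle beyond this.
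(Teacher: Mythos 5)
Your proposal is correct and follows essentially the same route as the paper's proof: split off the trivial case $k=2n$, observe via the transfer map that $\bar\sigma^*$ sends $\bm\alpha^\vee,\bm\beta,\bm\beta^\vee,\dblg,\dblG,\bm\omega$ to their flat counterparts so that the $l=0$ summand transfers to the flat boundary form, note that the $\dblR^l$-summands land in components of degree strictly greater than $k$, and conclude by Proposition~\ref{prop_B_eq_B}. Your degree count $k+2l$ for the $\dblR^l$-summand is the right one, and the omission of $\bm\alpha^\vee,\bm\beta^\vee$ from Lemma~\ref{lem:constant} is indeed harmless, exactly as you say.
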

\begin{proof}
Let us first treat the case $k=2n$. By  \eqref{eq_def_kappa_k},  $\Delta^M_{2n,n}$ is the riemannian volume measure on $M$, and thus $\tau_x(\Delta_{2n,n})=\Delta_{2n,n}$. Since $B^M_{2n,n}=0$, this finishes the case $k=2n$.

Let us now assume $k<2n$. In this case $\tau_x(\Delta_{k,p})=[(\bm \kappa_{k,p})_x,\bar\tau_x(\bm\omega_{k,p})]$, where $
  \bar\tau_x$ is the composition of \eqref{eq:sigma_double} with the identification 
  $\Omega^{\bullet,\bullet}(S\C^n,\C^n)|_{\{0\}\times S^{2n-1}}\simeq\Omega^{\bullet,\bullet}(S\C^n,\C^n)^{\C^n}.$ 

By \eqref{eq:sigma_omega} we have that $\bar\tau_x(\bm\omega)$ is the connection double form on $\CC^n$. Likewise, $\bar\tau_x(\dblg),\bar\tau_x(\dblG)$ are the metric and the Gray double forms on $\C^n$.
 By Proposition \ref{prop_B_eq_B}, it follows that $\Delta_{k,p}\in \Curv_k$ is represented by $d_{n,k,0}\bar\tau_x(\bm\alpha^\vee \wedge \bm \phi_{k,p} \wedge  \bm\omega^{2n-k-1})$.
Therefore, since $[0,\bar\tau_x(\bm\alpha^\vee \wedge \bm\phi_{k,p} \wedge \dblR^l \wedge \bm\omega^{2n-2l-k-1})]\in \Curv_{k+l}$, we have
 \begin{align*}
  \pi_k\circ\tau_x(\Delta_{k,p}^M)&=[0,d_{n,k,0}\bar\tau_x(\bm\alpha^\vee \wedge \bm\phi_{k,p} \wedge  \bm\omega^{2n-k-1})]=\Delta_{k,p}.
 \end{align*}
The proof of the second  stated equality is completely analogous.
\end{proof}

 In the following we consider the linear map $$r_M\colon \Val^{\U(n)} \to \mathcal{KLK}(M)$$ given by $r_M(\mu_{k,p})=\mu_{k,p}^M$. In Theorem \ref {thm:alg_isom} we will see that $r_M$ is an isomorphism of algebras. A first step is the following.

\begin{proposition}\label{prop:r_M_bijection}
 $r_M$ is a bijection.
\end{proposition}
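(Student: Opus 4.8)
The plan is to exploit that $r_M$ is tautologically surjective, so that everything reduces to showing that the hermitian intrinsic volumes $\mu_{k,p}^M$, with indices ranging over $0\le k\le 2n$ and $\max(0,k-n)\le p\le\lfloor k/2\rfloor$, are linearly independent in $\calV(M)$. Indeed their number is precisely $\sum_{k=0}^{2n}\dim\Val_k^{\U(n)}=\dim\Val^{\U(n)}$, since for fixed $k$ the $\mu_{k,p}$ in this range constitute a basis of $\Val_k^{\U(n)}$; hence linear independence of the $\mu_{k,p}^M$ forces $r_M$ to be bijective.

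To prove linear independence I would use Alesker's degree filtration $\calV(M)=W_0\supset W_1\supset\cdots\supset W_{2n}\supset 0$ and the analogous filtration on $\calC(M)$, whose associated graded pieces are $C^\infty$-sections of natural bundles over $M$ with fibers $\Val_k(T_xM)$, respectively $\Curv_k(T_xM)$, at $x$; the globalization map $\glob\colon\calC(M)\to\calV(M)$ and the transfer maps $\tau_x$ are filtered, $\mathrm{gr}_k\glob$ is induced fiberwise by $\glob_{T_xM}$, and $\tau_x$ realizes the symbol in the sense that for $\Phi\in W_k\cap\calC(M)$ the degree-$k$ symbol of $\Phi$ at $x$ is $\pi_k\tau_x(\Phi)\in\Curv_k(T_xM)$, and likewise for valuations (cf.\ \cites{alesker_val_man1,alesker_val_man2,alesker_val_man3,alesker_val_man4,bernig_fu_solanes}). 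Inspecting Definition \ref{def:hivs}, the curvature measure $\Delta_{k,p}^M$ is a sum of terms of degrees $k,k+2,k+4,\dots$ (each factor $\dblR^l$ in $\bm\psi_{2n-k-1}$ raising the degree by $2l$), together with the top-degree interior term; in particular $\Delta_{k,p}^M\in W_k\cap\calC(M)$, and by Lemma \ref{lem:filtration_delta} its degree-$k$ symbol at every $x$ is $\Delta_{k,p}\in\Curv_k^{\U(n)}\subset\Curv_k(T_xM)$. Since $\mu_{k,p}^M=\glob(\Delta_{k,p}^M)$, it follows that $\mu_{k,p}^M\in W_k\cap\calV(M)$ and that its degree-$k$ symbol is the $\U(n)$-invariant constant section $x\mapsto\glob(\Delta_{k,p})=\mu_{k,p}\in\Val_k^{\U(n)}(T_xM)$.

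Granting this, suppose $\sum_{k,p}a_{k,p}\mu_{k,p}^M=0$ in $\calV(M)$ and let $k_0$ be minimal with $a_{k_0,p}\neq 0$ for some $p$. Every term with $k\ge k_0$ lies in $W_{k_0}$, and those with $k>k_0$ lie in $W_{k_0+1}$, so passing to $\mathrm{gr}_{k_0}\calV(M)$ we find that the section $x\mapsto\sum_p a_{k_0,p}\mu_{k_0,p}\in\Val_{k_0}^{\U(n)}(T_xM)$ is identically zero; hence $\sum_p a_{k_0,p}\mu_{k_0,p}=0$ in $\Val_{k_0}^{\U(n)}$, and since the $\mu_{k_0,p}$ form a basis of this space, $a_{k_0,p}=0$ for all $p$, contradicting the choice of $k_0$. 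Thus all $a_{k,p}$ vanish, $r_M$ is injective, and therefore bijective. The main obstacle is the bookkeeping in the middle paragraph: one has to pin down that $\tau_x$ genuinely computes Alesker's symbol and intertwines globalization with the induced graded globalization, so that Lemma \ref{lem:filtration_delta} can be fed into the filtration; once that compatibility is in place the degree-by-degree peeling is routine.
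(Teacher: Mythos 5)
Your proposal is correct and follows essentially the same route as the paper: surjectivity is tautological, and injectivity is obtained by placing $\mu_{k,p}^M$ in the Alesker filtration $\mathcal W_k$, peeling off the lowest-degree symbol via Lemma \ref{lem:filtration_delta} and the compatibility of the transfer map with the projection $\mathcal W_l\to\mathcal W_l/\mathcal W_{l+1}\cong\Val_l(T_xM)$ (which the paper justifies by citing \cite[Corollary 3.6]{solanes_wannerer}), and invoking linear independence of the $\mu_{l,q}$ in $\Val_l^{\U(n)}$. The compatibility you flag as the "main obstacle" is exactly the point the paper disposes of by that citation.
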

\begin{proof}
Surjectivity of $r_M$ follows from the definition of $\KLK(M)$.
To prove that $r_M$ is injective, suppose that
 \begin{displaymath}
  \sum_{k= l}^{2n}\sum_qa_{k,q}\mu_{k,q}^M=0
 \end{displaymath}
with  $l$ maximal and $0,k-n\leq q\leq \frac{k}2$. With respect to the filtration $\mathcal W_0\supset \mathcal W_1\supset\dots\supset \mathcal W_{2n}$ of $\mathcal V(M)$ introduced in \cite{alesker_val_man2}, we have $\mu_{k,p}^M\in \mathcal W_k$ by \cite[Proposition 5.2.5]{alesker_val_man1}. By  Lemma \ref{lem:filtration_delta} and \cite[Corollary 3.6]{solanes_wannerer}, the projection $\mathcal W_l\to \mathcal W_l/\mathcal W_{l+1}=\Val_l(TM)$ maps $\mu_{l,q}^M\mapsto \mu_{l,q}$, while $\mu^M_{k,p}\mapsto 0$ for $k>l$. We deduce that $\sum_q a_{l,q}\mu_{l,q}=0$ in $\Val_l^{\U(n)}$, and thus all $a_{l,q}$ vanish, contradicting the maximality of $l$.
\end{proof}

As a second step toward Theorem \ref {thm:alg_isom}, we prove it for K\"ahler submanifolds of $\CC^N$.

\begin{proposition}[Algebra structure for embedded K\"ahler manifolds]\label{prop:embedded}
Let $M \subset \CC^N$ be a K\"ahler submanifold of complex dimension $n$. Then $\KLK(M)\subset \calV(M)$ is a subalgebra,  and  the linear map $r_M\colon \Val^{\U(n)}\to \mathcal{KLK}(M)$  defined by $r_M(\mu_{k,q})=\mu_{k,q}^M$, is an isomorphism of algebras. Moreover, restriction of valuations on $\CC^N$ to $M$ factors as
  \begin{center}
\begin{tikzpicture}
  \matrix (m) [matrix of math nodes,row sep=3em,column sep=4em,minimum width=2em]
  {
   &   \Val^{\U(N)}   &   \\
\Val^{\U(n)}    &   & \mathcal{KLK}(M)\\};
  \path[-stealth]
(m-1-2) edge node [above right] {$|_M$}  (m-2-3)
(m-1-2) edge node [above left] {$|_{\CC^n}$}   (m-2-1)
(m-2-1) edge node [above] {$r_M$} (m-2-3);
\end{tikzpicture}
\end{center}
\end{proposition}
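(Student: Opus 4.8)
The plan is to derive the whole statement formally from three results already established: the restriction identity $\Delta_{k,p}^{\tilde M}|_M=\Delta_{k,p}^M$ of Proposition~\ref{thm_weyl}, the identification $\Delta_{k,p}^{\CC^K}=\Delta_{k,p}$ of Proposition~\ref{prop_B_eq_B} (so that $\mu_{k,p}=[\Delta_{k,p}^{\CC^K}]$ is precisely the standard hermitian intrinsic volume of $\Val^{\U(K)}$), and the bijectivity of $r_M$ from Proposition~\ref{prop:r_M_bijection}. The only auxiliary fact needed is that globalization commutes with restriction of curvature measures; this is immediate, since the normal cycle of a subset $A\subset M$ is supported over $M$, whence $[\Phi|_M]=[\Phi]|_M$ for $\Phi\in\calC(\CC^K)$.

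First I would isolate a \emph{restriction lemma}: for any K\"ahler submanifold $Y\subset\CC^K$ of complex dimension $m$, the restriction map $e_{\calV}^*\colon\Val^{\U(K)}\to\calV(Y)$ sends $\mu_{k,p}$ to $\mu_{k,p}^Y$. Indeed $\mu_{k,p}^Y=[\Delta_{k,p}^Y]=[\Delta_{k,p}^{\CC^K}|_Y]=[\Delta_{k,p}^{\CC^K}]|_Y=\mu_{k,p}|_Y$, using Proposition~\ref{thm_weyl}, the compatibility just mentioned, and Proposition~\ref{prop_B_eq_B}. Specializing to $Y=\CC^m$ with the standard linear embedding, this shows that the restriction map $\Val^{\U(K)}\to\Val^{\U(m)}$ carries $\mu_{k,p}$ to the standard basis vector $\mu_{k,p}^{\CC^m}=\mu_{k,p}$ of $\Val^{\U(m)}$, which vanishes precisely when $k-p>m$ (Proposition~\ref{prop:triviality}). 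Since these $\mu_{k,p}$, $0\le 2p\le k$, $k-p\le m$, form a basis of $\Val^{\U(m)}$, the map $\Val^{\U(K)}\to\Val^{\U(m)}$ is a \emph{surjective} algebra homomorphism (restriction always respects Alesker products).

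Next I would apply this with $Y=M$, $K=N$. Then $e_{\calV}^*\colon\Val^{\U(N)}\to\calV(M)$ is an algebra homomorphism whose image is the span of the $\mu_{k,p}^M$, i.e.\ $\KLK(M)$ (the surplus indices with $k>2n$ all satisfy $k-p>n$ and hence contribute only zero by Proposition~\ref{prop:triviality}); so $\KLK(M)$ is a subalgebra and $e_{\calV}^*\colon\Val^{\U(N)}\to\KLK(M)$ is a surjective algebra homomorphism. For the triangle I would verify $e_{\calV}^*=r_M\circ(\,\cdot\,|_{\CC^n})$ on the spanning set $\{\mu_{k,p}\}$ of $\Val^{\U(N)}$: when $k-p\le n$ both composites send $\mu_{k,p}$ to $\mu_{k,p}^M$, and when $k-p>n$ both send it to $0$ (the left side by Proposition~\ref{prop:triviality}, the right because $\mu_{k,p}^{\CC^n}=0$ in that range). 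This is exactly the asserted factorization.

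Finally, multiplicativity of $r_M$ follows from the factorization by descent along the surjection $\cdot|_{\CC^n}\colon\Val^{\U(N)}\to\Val^{\U(n)}$: for $\mu,\nu\in\Val^{\U(n)}$ choose $\tilde\mu,\tilde\nu\in\Val^{\U(N)}$ with $\tilde\mu|_{\CC^n}=\mu$, $\tilde\nu|_{\CC^n}=\nu$, and compute
\[
r_M(\mu\nu)=r_M\bigl((\tilde\mu\tilde\nu)|_{\CC^n}\bigr)=e_{\calV}^*(\tilde\mu\tilde\nu)=e_{\calV}^*(\tilde\mu)\,e_{\calV}^*(\tilde\nu)=r_M(\mu)\,r_M(\nu),
\]
with units going to units for the same reason; combined with the bijectivity of Proposition~\ref{prop:r_M_bijection}, this makes $r_M$ an algebra isomorphism. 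I expect no genuine obstacle in this argument — the substance lies entirely in the already-proven Propositions~\ref{thm_weyl} and~\ref{prop:r_M_bijection} — and the only point deserving a little care is the bookkeeping with the degenerate indices $k-p>n$ (and with $k>2n$) in the factorization step.
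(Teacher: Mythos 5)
Your proposal is correct and follows essentially the same route as the paper's proof: both establish the commutativity of the triangle by checking it on the spanning set $\{\mu_{k,p}\}$ (using Proposition~\ref{thm_weyl} together with Proposition~\ref{prop_B_eq_B} for the indices with $k-p\le n$ and Proposition~\ref{prop:triviality} for the degenerate ones), and then deduce multiplicativity of $r_M$ by descent along the surjection $\Val^{\U(N)}\to\Val^{\U(n)}$, concluding with the bijectivity from Proposition~\ref{prop:r_M_bijection}. The only difference is that you make explicit the (correct) auxiliary fact that globalization commutes with restriction, which the paper uses tacitly.
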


\begin{proof}
 For $p\geq k-n$, we have $\mu_{k,p}|_{\C^n}=\mu_{k,p}$, and $\mu_{k,p}|_M=\mu_{k,p}^M$ by Proposition \ref{thm_weyl}. For $p<k-n$, both $\mu_{k,p}|_{\C^n}$ and $\mu_{k,p}|_M=\mu_{k,p}^M$ vanish by Proposition \ref{prop:triviality}.   Hence the previous diagram commutes.

Since restriction and Alesker product commute, the diagonal maps in the previous diagram are algebra morphisms.
Since the restriction $\Val^{\U(N)}\to \Val^{\U(n)}$ is surjective, it follows that  $r_M$ is also a morphism of algebras. Indeed, given $\phi,\nu\in\Val^{\U(N)}$ we have
\begin{align*}
 r_M(\phi|_{\C^n}\cdot\nu|_{\C^n})&=r_M((\phi\cdot \nu)|_{\C^n})\\
 &=(\phi\cdot \nu)|_{M}\\
 &=\phi|_M \cdot \nu|_M\\
 &=r_M(\phi|_{\C^n})\cdot r_M(\nu|_{\C^n}).
\end{align*}
Together with Proposition \ref{prop:r_M_bijection}, this concludes the proof.
\end{proof}

The pointwise embedding lemma (Theorem \ref {thm_pointwise_lemma}) implies that this is enough to prove
Theorem \ref{thm:alg_isom} in full generality.

\begin{theorem}[Structure of the K\"ahler-Lipschitz-Killing algebra]\label{thm:alg_isom}
Given any Kähler manifold $M$ of dimension $n$, the subspace $\KLK(M) \subset \calV(M)$ is a subalgebra, and the map 
$ r_M\colon \Val^{\U(n)}\to \KLK(M)$, given by 
$$r_M(\mu_{k,q})=\mu_{k,q}^M,$$
 is  an  isomorphism of algebras.
\end{theorem}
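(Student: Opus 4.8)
The plan is to bootstrap from the embedded case (Proposition~\ref{prop:embedded}) to an arbitrary K\"ahler manifold $M$ by means of the pointwise embedding lemma (Theorem~\ref{thm_pointwise_lemma}) together with the polynomial-dependence statement of Proposition~\ref{prop:polynomiality prime}. Since $r_M$ is already known to be a linear bijection (Proposition~\ref{prop:r_M_bijection}), the only thing to prove is that it is multiplicative, i.e.\ that for all admissible index pairs
\begin{displaymath}
\mu_{k,p}^M\cdot\mu_{l,q}^M=\sum_{a,b}c_{k,p,l,q}^{a,b}\,\mu_{a,b}^M,
\end{displaymath}
with the \emph{same} structure constants $c_{k,p,l,q}^{a,b}$ that govern the product in $\Val^{\U(n)}$ (these are the constants coming from Proposition~\ref{prop:embedded} applied, say, to $M=\C^n$ itself). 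In particular this shows at the same time that $\KLK(M)$ is closed under the Alesker product.

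First I would recall that $\mu_{k,p}^M=\glob_M(\Delta_{k,p}^M)$ and that $\Delta_{k,p}^M$ is the linear combination $\sum_j c_{k,p,j}\Gamma_{k,j}^M$ from Definition~\ref{def:hivs}; hence it suffices to understand the products $\glob(\Gamma_{k,p}^M)\cdot\glob(\Gamma_{l,q}^M)$. By Proposition~\ref{prop:polynomiality prime}, the $\tau$-datum of such a product is, at each $\xi\in SM$, obtained by applying a \emph{universal} polynomial map $P_{k,p,l,q}\colon\mathcal K(\C^n)\to\bigwedge^{2n}W_\C^*$ to the pulled-back curvature tensor $\sigma^*\dblR_x^M$; the same is true (via Proposition~\ref{prop_rumin_in_cartan_calculus}) of the $\tau$-data of the $\glob(\Gamma_{a,b}^M)$ themselves. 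Consequently, \emph{both} sides of the claimed identity, after applying $\tau$ and transporting to the model space via $\bar\sigma_\xi^*$, are values at $\sigma^*\dblR^M_x$ of fixed polynomial maps $\mathcal K(\C^n)\to\bigwedge^{2n}W_\C^*$ that do not depend on $M$, $x$, or $\sigma$. (The interior terms are handled the same way: by Proposition~\ref{prop_double_fibration}(1) they contribute only products $\pi^*\zeta_i\cdot\tau_j$ and $\zeta_1\zeta_2$, and the relevant $\zeta$'s — namely the interior forms $\bm\varphi_{k,p}$ — are themselves polynomial in $\dblR$; I would note explicitly, as the excerpt already flags, that these contributions introduce no new difficulty.) Since a smooth valuation is determined by its $(\zeta,\tau)$-pair, and $\zeta$ of each $\mu_{k,p}^M$ is likewise a universal polynomial in $\dblR$, the difference
\begin{displaymath}
D_{k,p,l,q}:=\mu_{k,p}^M\cdot\mu_{l,q}^M-\sum_{a,b}c_{k,p,l,q}^{a,b}\,\mu_{a,b}^M
\end{displaymath}
has $(\zeta,\tau)$-pair which at every point is a \emph{universal} polynomial function $Q_{k,p,l,q}$ of the curvature tensor.

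Now I would invoke the pointwise embedding lemma: by Theorem~\ref{thm_pointwise_lemma} the set of \emph{embedded} algebraic K\"ahler curvature tensors is a full-dimensional (in particular nonempty open) subset of $\mathcal K(\C^n)$. For any such tensor $\dblR$, realized as $f^*\dblR^M_p$ for a K\"ahler submanifold $M\subset\C^{N}$, Proposition~\ref{prop:embedded} gives $\mu_{k,p}^M\cdot\mu_{l,q}^M=\sum c^{a,b}_{k,p,l,q}\mu_{a,b}^M$ in $\KLK(M)$ — with exactly the universal constants, because those constants are read off from the restriction $\Val^{\U(N)}\to\Val^{\U(n)}$, which is independent of the particular embedding. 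Hence $Q_{k,p,l,q}$ vanishes on the full-dimensional set of embedded tensors. A polynomial map that vanishes on a set with nonempty interior (equivalently, on a Zariski-dense set) vanishes identically; therefore $Q_{k,p,l,q}\equiv0$ on all of $\mathcal K(\C^n)$. Evaluating at $\sigma^*\dblR^M_x$ for an \emph{arbitrary} K\"ahler manifold $M$ and arbitrary $x\in M$, we get that the $(\zeta,\tau)$-pair of $D_{k,p,l,q}$ vanishes at every point, i.e.\ $D_{k,p,l,q}=0$ in $\calV(M)$. This proves that $r_M$ is multiplicative, hence — being already a bijection — an isomorphism of algebras, and in particular that $\KLK(M)\subset\calV(M)$ is a subalgebra.

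The main obstacle, and the step deserving the most care, is the passage from the pointwise polynomial identity to the global statement: one must be sure that ``the product of two $\mu^M_{k,p}$, as a valuation, is recovered from its pointwise $(\zeta,\tau)$-data by a construction uniform in $M$'' — this is precisely the content of Propositions~\ref{prop_double_fibration}, \ref{prop_rumin_in_cartan_calculus}, and \ref{prop:polynomiality prime}, and the argument consists of nothing more than assembling them correctly and observing that the universal structure constants $c^{a,b}_{k,p,l,q}$ can be fixed once and for all (e.g.\ from $M=\C^n$, using Proposition~\ref{prop_B_eq_B} to identify them with the classical hermitian kinematic-formula constants). The analytic content — density of embedded tensors, rigidity of polynomials vanishing on an open set — is then immediate. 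I would also remark that the same reasoning, applied to the $R_{k,p,l,q}$ and $\widetilde R_{k,p,l,q}$ of Proposition~\ref{prop:polynomiality prime}, yields simultaneously the module statement of Theorem~\ref{mainthm_local}, but that is carried out separately.
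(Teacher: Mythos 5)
Your proposal is correct and follows essentially the same route as the paper's own proof: reduce to multiplicativity via Proposition \ref{prop:r_M_bijection}, express both sides of the product identity through the universal polynomial maps of Propositions \ref{prop_rumin_in_cartan_calculus} and \ref{prop:polynomiality prime}, verify the identity on embedded curvature tensors via Proposition \ref{prop:embedded}, and extend to all of $\calK(\CC^n)$ by the full-dimensionality of the embedded cone (Theorem \ref{thm_pointwise_lemma}). The only cosmetic difference is that the paper handles the $\zeta$-component by noting it is curvature-independent via \eqref{eq_ab_formula2} rather than folding it into the universal polynomial, but this changes nothing of substance.
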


\begin{proof}
Let $\dblR\in\mathcal K(\C^n)$ be an embedded curvature tensor, and take a complex submanifold $S\subset\C^N$ such that $\dblR=\sigma^*\dblR^S_x$ for some $x\in S$ and some linear isometry $\sigma\colon \C^n\to T_xS$.  

By Proposition \ref{prop:embedded}, if $a_i$ are constants such that 
\[\mu_{k,p}\cdot\mu_{l,q}=\sum_i a_i\mu_{k+l,i}\] in $\Val^{\U(n)}$, then also
\begin{align*}
  \mu_{k,p}^S\cdot\mu_{l,q}^S&= \sum_i a_i \mu_{k+l,i}^S.
\end{align*} 
 
Recalling that $\Delta_{k,p}=\sum_j c_{k,p,j} \Gamma_{k,j}$, put
\begin{align*}
Q_{k,p}=\sum_j c_{k,p,j} P_{k,p},\qquad Q_{k,p,l,q}=\sum_{i,j} c_{k,p,i}c_{l,q,j} P_{k,i,l,j}
\end{align*}
where $P_{k,p}$ are the polynomials given in Proposition \ref{prop_rumin_in_cartan_calculus}, and $P_{k,p,l,q}$ are those from Proposition \ref{prop:polynomiality prime}. By these propositions, at $\xi=\sigma(e_0)$, we have
\begin{align*}
(\bar\sigma_\xi^*)^{-1} Q_{k,p,l,q}(\sigma^*\dblR_x^S)&= \tau(\mu_{k,p}^S\cdot \mu_{l,q}^S)_\xi\\
&= \sum_i a_i \tau(\mu_{k+l}^S)_\xi\\
&=\sum_i a_i  (\bar\sigma_\xi^*)^{-1}Q_{k+l,i}(\sigma^*\dblR_x^S)|_{\xi},
\end{align*} 
where $\tau$ is given by \eqref{eq:zeta_tau}. Hence
\begin{displaymath}
 Q_{k,p,l,q}(\dblR)=\sum_i a_i \left.Q_{k+l,i}(\dblR)\right|_{e_0}.
\end{displaymath}
By Theorem \ref{thm_pointwise_lemma}, the previous relation holds for all $\dblR\in\mathcal K$. In particular, again by Propositions \ref{prop:polynomiality prime} and \ref{prop_rumin_in_cartan_calculus}, for every $n$-dimensional K\"ahler manifold $M$,
\begin{displaymath}
 \tau(\mu_{k,p}^M\cdot\mu_{l,q}^M)=\sum_i a_i \tau(\mu_{k+l,i}^M).
\end{displaymath}
Since $\zeta(\mu^M_{k,p}\cdot\mu^M_{l,q})_x$ does not depend on $\dblR_x^M$ by \eqref{eq_ab_formula2}, we have  \[\zeta(\mu_{k,p}^M\cdot\mu_{l,q}^M)=\sum_i a_i \zeta(\mu_{k+l,i}^M),\] and thus
$ \mu_{k,p}^M\cdot\mu_{l,q}^M=\sum_i a_i \mu_{k+l,i}^M.$ This shows that $\mathcal{KLK}(M)$ is a subalgebra of $\calV(M)$ and $r_M$ is an algebra morphism.  By Proposition \ref{prop:r_M_bijection} it is an isomorphism.
\end{proof}

\subsection{Structure of $\widetilde{\mathcal{KLK}}$ as a $\KLK$-module}\label{sec_module}
Recall that $\widetilde{\mathcal{KLK}}(M) \subset \mathcal C(M)$ is the vector space spanned by the curvature measures $\Delta_{k,q}^M, B_{k,q}^M$.
In the present section we show that $\widetilde{\mathcal{KLK}}(M)$ is a module over $\mathcal{KLK}(M)$, canonically isomorphic to $\Curv^{\U(n)}$ with respect to
the isomorphism $\Val^{\U(n)} \simeq \KLK(M)$.

\begin{proposition}\label{coro_mod_isom} 
The linear map $R_M\colon \Curv^{\U(n)}\to \widetilde{\mathcal{KLK}}(M)$ defined by $R_M(\Delta_{k,p})=\Delta_{k,p}^M$ and $R_M(B_{k,p})=B_{k,p}^M$ is a bijection.
\end{proposition}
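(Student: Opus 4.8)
The plan is to follow the proof of Proposition~\ref{prop:r_M_bijection}, with the Alesker filtration of $\calV(M)$ replaced by the grading of $\Curv(T_xM)$ and the single family $\mu_{k,p}^M$ replaced by the two families $\Delta_{k,p}^M$ and $B_{k,p}^M$. Surjectivity of $R_M$ will be immediate from the definition of $\widetilde{\mathcal{KLK}}(M)$ as the span of the $\Delta_{k,p}^M$ and $B_{k,p}^M$, provided $R_M$ is well defined; so the two things to establish are well-definedness and injectivity.

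\emph{Well-definedness.} One must check that every linear relation among the standard curvature measures $\Delta_{k,p}, B_{k,p}$ in $\Curv^{\U(n)}$ is inherited by the families $\Delta_{k,p}^M, B_{k,p}^M$ on an arbitrary K\"ahler manifold. Since $\Curv^{\U(n)}$ is graded with $\Delta_{k,p}, B_{k,p}\in\Curv_k^{\U(n)}$, every such relation is homogeneous; comparing $\dim\Curv_k^{\U(n)}=\min(k,2n-k-1)+1$ with the number of nonvanishing $\Delta_{k,p}, B_{k,p}$ in degree $k$ and using the structure theory of \cite{bernig_fu_hig,bernig_fu_solanes}, one finds these relations are spanned by (i) the vanishings $\Delta_{k,p}=B_{k,p}=0$ for $k-p>n$, together with (ii) exactly one further relation in each degree $k$ with $n\le k\le 2n-1$. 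Relations of type (i) hold on every $M$ by Proposition~\ref{prop:triviality}. For type (ii), since $\Delta_{k,p}^M=\sum_j c_{k,p,j}\Gamma_{k,j}$ and $B_{k,p}^M=\sum_j c_{k,p,j}\tfrac{k+1}{k-2p}\tilde\Gamma_{k,j}$ are assembled from the universal double forms $\bm\phi_{k,p},\bm\psi,\tilde{\bm\psi}$ and from $\dblG,\dblg,\dblR$, I would show that these relations are formal consequences of the Gray-algebra identities (Proposition~\ref{prop:g_kp_zero}) and of Theorem~\ref{thm_int_term}, all of which hold verbatim for the canonical double forms on any K\"ahler manifold and hence descend to $\Delta_{k,p}^M, B_{k,p}^M$. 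Alternatively, one can argue exactly as in the proof of Theorem~\ref{thm:alg_isom}: after applying the transfer map, $\Delta_{k,p}^M$ and $B_{k,p}^M$ depend polynomially on the curvature tensor of $M$; a relation valid for embedded K\"ahler curvature tensors --- available from Propositions~\ref{thm_weyl} and \ref{prop_B_eq_B} --- extends to all of $\calK(\C^n)$ by Theorem~\ref{thm_pointwise_lemma}, and therefore holds for every $M$. In either case $R_M$ becomes a well-defined linear map, automatically surjective onto $\widetilde{\mathcal{KLK}}(M)$.

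\emph{Injectivity.} Fix index sets $J_\Delta, J_B$ so that $\{\Delta_{k,p}\colon(k,p)\in J_\Delta\}\cup\{B_{k,p}\colon(k,p)\in J_B\}$ is a basis of $\Curv^{\U(n)}$, and suppose $\sum_{(k,p)\in J_\Delta}a_{k,p}\Delta_{k,p}^M+\sum_{(k,p)\in J_B}b_{k,p}B_{k,p}^M=0$ in $\calC(M)$ with not all coefficients zero. Let $l$ be the smallest degree in which some $a_{l,p}$ or $b_{l,p}$ is nonzero, choose $x\in M$ and a complex linear isometry $\sigma\colon\C^n\to T_xM$, and apply $\pi_l\circ\tau_x$ to the relation. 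By Lemma~\ref{lem:filtration_delta}, $\pi_l\circ\tau_x(\Delta_{l,p}^M)=\Delta_{l,p}$ and $\pi_l\circ\tau_x(B_{l,p}^M)=B_{l,p}$, while for $k>l$ one has $\tau_x(\Delta_{k,p}^M),\tau_x(B_{k,p}^M)\in\bigoplus_{j\ge k}\Curv_j$ --- this is the observation, already used in the proof of Lemma~\ref{lem:filtration_delta}, that the terms $\dblR^j$ in $\bm\psi_{2n-k-1}$ and $\tilde{\bm\psi}_{2n-k-1}$, as well as the interior terms, only raise the degree --- so $\pi_l$ annihilates them. Hence $\sum_p a_{l,p}\Delta_{l,p}+\sum_p b_{l,p}B_{l,p}=0$ in $\Curv_l^{\U(n)}$; as the indices occurring here lie in $J_\Delta\cup J_B$, these curvature measures are linearly independent, forcing $a_{l,p}=b_{l,p}=0$ for all $p$ and contradicting the choice of $l$. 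Therefore $R_M$ is injective, hence bijective. The genuinely delicate step is the well-definedness --- verifying that the ``extra'' relations of type (ii) in $\Curv^{\U(n)}$ survive passage to a general K\"ahler manifold; everything else is routine once Lemma~\ref{lem:filtration_delta} is in hand.
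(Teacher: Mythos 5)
Your injectivity argument is correct and is essentially the paper's: the paper routes it through the Solanes--Wannerer filtration $\mathcal C_\bullet$ of $\calC(M)$ and the induced isomorphisms $\mathcal C_k/\mathcal C_{k+1}\to \Curv_k$, which is the same leading-order mechanism as your $\pi_l\circ\tau_x$ together with Lemma \ref{lem:filtration_delta}. You are also right, and it is to your credit, that well-definedness is the real issue: since $\dim\Curv_k^{\U(n)}=\min(k,2n-k-1)+1$, the spanning set $\{\Delta_{k,p}\}\cup\{B_{k,p}\}$ with $p\ge\max(0,k-n)$ satisfies exactly one nontrivial relation in each degree $n\le k\le 2n-1$, namely $B_{k,k-n}=\Delta_{k,k-n}$ (equivalently $N_{k,k-n}=0$ in $\Curv^{\U(n)}$), so the stated formulas for $R_M$ are consistent only if $B^M_{k,k-n}=\Delta^M_{k,k-n}$ on every K\"ahler $M$. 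The paper's own proof does not address this point explicitly.

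The gap is that neither of your two proposed verifications of these extra relations goes through. Route (b) is circular: for an embedded $M^n\subset\CC^N$ with $N>n$, Propositions \ref{thm_weyl} and \ref{prop_B_eq_B} identify $\Delta^M_{k,k-n}-B^M_{k,k-n}$ with the restriction to $M$ of $N_{k,k-n}\in\Curv^{\U(N)}$; but $N_{k,k-n}\neq 0$ in $\Curv^{\U(N)}$ (its vanishing needs ambient complex dimension $\le n$), so the relation is not available upstairs, and proving that $N_{k,k-n}$ restricts to zero on every $n$-dimensional complex submanifold of $\CC^N$ is precisely the statement you are trying to establish; only after that is known on the open cone of embedded curvature tensors does the polynomiality/pointwise-embedding machinery of Theorem \ref{thm_pointwise_lemma} apply. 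Route (a) is unsubstantiated: the identity in question compares $\bm\omega_{k,k-n}=\bm\alpha^\vee\wedge\bm\phi_{k,k-n}\wedge\bm\psi_{2n-k-1}$ with $\tilde{\bm\omega}_{k,k-n}$, which is built from $\bm\beta\wedge\bm\beta^\vee$, $\bm\phi_{k-1,k-n}$ and the \emph{different} family $\tilde{\bm\psi}_{2n-k-1}$, and asserts their congruence modulo the contact ideal $(\bm\alpha,d\bm\alpha)$; this is not a formal consequence of relations in the Gray algebra (which sees only $\dblg,\dblG$) or of Theorem \ref{thm_int_term}. To close the gap you would need to exhibit that congruence of double forms directly on a general K\"ahler manifold (in the spirit of the computation in Proposition \ref{prop_glob_b10}, but without the luxury of discarding exact terms), or find some other non-circular source for the identity $\Delta^M_{k,k-n}=B^M_{k,k-n}$.
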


\begin{proof} The surjectivity of $R_M$ is  clear by the definition of $\widetilde{\mathcal{ KLK}}(M)$.

 Let $\mathcal C_{2n} \subset\cdots\subset\mathcal C_0$ be the filtration of $\mathcal C(M)$ introduced in \cite[\S 3]{solanes_wannerer}. By \cite[Propositions 3.2 and 3.5]{solanes_wannerer} and Lemma~\ref{lem:filtration_delta}, there is an isomorphism $\mathcal C_k/\mathcal C_{k+1}\to \Curv_k$ which maps $\Delta_{k,p}^M\mapsto \Delta_{k,p}$ and $B_{k,p}^M\mapsto B_{k,p}$. The injectivity of $R_M$ follows easily, as in the proof of Proposition~\ref{prop:embedded}. 
\end{proof}

\begin{proposition}\label{prop_module}
There is a $\mathcal{KLK}(M)$-module structure on $\widetilde{\mathcal{KLK}}(M)$  such that the following diagram commutes:
\begin{center}
\begin{tikzpicture}
  \matrix (m) [matrix of math nodes,row sep=3em,column sep=4em,minimum width=2em]
  {
   \Val^{\U(n)} \otimes \Curv^{\U(n)}   &  \Curv^{\U(n)}  \\
    \KLK(M) \otimes \widetilde{\KLK}(M)  & \widetilde{\KLK}(M)\\
    \mathcal V(M) \otimes \mathcal C(M) &  \mathcal C(M)\\};
    \path[-stealth]
   (m-1-1) edge node [] {}  (m-1-2)
   (m-2-1) edge node [] {}  (m-2-2)
   (m-3-1) edge node[]{} (m-3-2);
    \path[draw,->] 
   (m-1-1) edge node [right]{$r_M \otimes R_M$} (m-2-1)
     (m-1-2) edge node [right]{$R_M$} (m-2-2);
      \path[draw,right hook->] 
   (m-2-1) edge node []{} (m-3-1)
     (m-2-2) edge node []{} (m-3-2);
\end{tikzpicture}
\end{center}
where the horizontal maps are the module structures. Explicitly, 
\begin{equation}\label{eq:compatiblity_r_R}
 R_M(\mu\cdot \Phi)=r_M(\mu)\cdot R_M(\Phi),\qquad \forall\mu\in \Val^{\U(n)},\forall \Phi\in\Curv^ {\U(n)}.
\end{equation}
The structure of $\widetilde{\mathcal{KLK}}(M)$ as a module over $\mathcal{KLK}(M)$ is thus explicitly determined by Propositions 5.7 and 5.10 of \cite{bernig_fu_solanes}.
\begin{comment}
The structure of $\widetilde{\mathcal{KLK}}(M)$ as a module over $\mathcal{KLK}(M)$ is determined by  
 \begin{align*}
 t\cdot\Delta^M_{k,q}&=\frac{\omega_{k+1}}{\pi\omega_k}((k-2q+1)\Delta^M_{k+1,q}+2(q+1)\Delta^M_{k+1,q+1})\\
  \sigma_M\cdot \Delta^M_{k,q}&=\frac{(k-2q+2)(k-2q+1)}{2\pi(k+2)(k+4)}\left[(k+2)\Delta^M_{k+2,q}+2 B^M_{k+2,q}\right]\\
  &+\frac{2(q+1)}{\pi(k+2)(k+4)}[(k+2)(k-q+2)\Delta^M_{k+2,q+1}+(k-2q)B^M_{k+2,q+1}]\\
  t\cdot B^M_{k,q}&=\frac{\omega_{k+1}(k-2q+1)}{\pi(k+3)\omega_k}[\Delta^M_{k+1,q}+(k+2)B^M_{k+1,q}]\\
  &+\frac{2(q+1)\omega_{k+1}}{\pi(k+3)(k-2q)\omega_k}[2(k-q+1)\Delta^M_{k+1,q+1}+(k+2)(k-2q-1)B^M_{k+1,q+1}]\\
  \sigma_M\cdot B^M_{k,q}&=\frac{(k-2q+2)(k-2q+1)}{2\pi(k+2)} B^M_{k+2,q}+ \frac{2(q+1)(k-q+1)}{\pi(k+2)}B^M_{k+2,q+1}.
 \end{align*}
\end{comment}
\end{proposition}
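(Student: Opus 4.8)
The plan is to imitate the proof of Theorem~\ref{thm:alg_isom}, now carrying along both the interior and the boundary term of a curvature measure. Since the $\mu_{k,p}$ span $\Val^{\U(n)}$, the $\Delta_{l,q}$ and $B_{l,q}$ span $\Curv^{\U(n)}$ (see \cite{bernig_fu_hig,bernig_fu_solanes}), and $r_M,R_M$ are linear bijections (Propositions~\ref{prop:r_M_bijection} and~\ref{coro_mod_isom}), it suffices to establish
\begin{equation}\label{eq:module_identities}
\mu_{k,p}^M\cdot\Delta_{l,q}^M=R_M(\mu_{k,p}\cdot\Delta_{l,q}),\qquad \mu_{k,p}^M\cdot B_{l,q}^M=R_M(\mu_{k,p}\cdot B_{l,q}),
\end{equation}
the products on the left being Alesker products in $\calC(M)$ and the right-hand sides being read off from the (known) module structure of $\Curv^{\U(n)}$ over $\Val^{\U(n)}$ in \cite{bernig_fu_solanes}. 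Granting \eqref{eq:module_identities}, bilinearity shows that $\KLK(M)$ preserves $\widetilde{\KLK}(M)\subset\calC(M)$ and that $R_M$ intertwines the actions, so $\widetilde{\KLK}(M)$ becomes a $\KLK(M)$-module with the stated properties, the module axioms being inherited from those of $\calC(M)$ over $\calV(M)$; the closing sentence is then the transport of Propositions~5.7 and~5.10 of \cite{bernig_fu_solanes} along $r_M$ and $R_M$.

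First I would prove \eqref{eq:module_identities} for an embedded K\"ahler submanifold $M\subset\CC^N$, in exact analogy with Proposition~\ref{prop:embedded}. The restriction maps respect the module structure, so $(\tilde\mu\cdot\tilde\Phi)|_M=\tilde\mu|_M\cdot\tilde\Phi|_M$ for $\tilde\mu\in\Val^{\U(N)}$ and $\tilde\Phi\in\Curv^{\U(N)}$. Propositions~\ref{thm_weyl} and~\ref{prop:triviality}, together with Proposition~\ref{prop_B_eq_B}, give $\mu_{k,p}^M=\mu_{k,p}^{\CC^N}|_M$, $\Delta_{l,q}^M=\Delta_{l,q}^{\CC^N}|_M$ and $B_{l,q}^M=B_{l,q}^{\CC^N}|_M$; applied to the inclusion $\CC^n\subset\CC^N$ they likewise give $\Delta_{l,q}^{\CC^N}|_{\CC^n}=\Delta_{l,q}$ and $B_{l,q}^{\CC^N}|_{\CC^n}=B_{l,q}$, whence the restriction $\Curv^{\U(N)}\to\Curv^{\U(n)}$ is surjective and $R_M(\Psi|_{\CC^n})=\Psi|_M$ for every $\Psi\in\Curv^{\U(N)}$ (this last being proved exactly as the corresponding statement for $r_M$). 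Choosing $\tilde\mu,\tilde\Phi$ over $\CC^N$ that restrict to $\mu_{k,p},\Delta_{l,q}$ over $\CC^n$ and chasing these identities around as in Proposition~\ref{prop:embedded} yields \eqref{eq:module_identities} for $M\subset\CC^N$.

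For a general K\"ahler manifold $M$ I would put $\Psi_M:=\mu_{k,p}^M\cdot\Delta_{l,q}^M-R_M(\mu_{k,p}\cdot\Delta_{l,q})=[\phi_M,\omega_M]\in\calC(M)$ and show $\Psi_M=0$ (the $B$-case being identical). By Proposition~\ref{prop_double_fibration}(2), $\phi_M$ and $\omega_M$ are assembled, via the universal operations $\mathrm{GT}$, fiber integration, $s^*$ and multiplication, out of $\tau(\mu^M_{k,p})$, $\zeta(\mu^M_{k,p})$, the interior and boundary forms of $\Delta^M_{l,q}$, and the explicit double forms defining $R_M(\mu_{k,p}\cdot\Delta_{l,q})$. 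By Proposition~\ref{prop_rumin_in_cartan_calculus} the first two depend polynomially on $\dblR^M_x$, by Proposition~\ref{prop:polynomiality prime} so does the boundary part of the product, and by Lemma~\ref{lem:constant} and the definitions the remaining forms are polynomial in $\dblR^M_x$ as well. Hence there are fixed polynomial maps on $\calK(\CC^n)$ that produce $\phi_M$ at $x$ and $\omega_M$ at $\xi$ from $\sigma^*\dblR^M_x$, for any complex linear isometry $\sigma$ with $\sigma e_0=\xi$. By the embedded case these maps vanish (respectively, take values in the subspace of elements of the ideal generated by $\alpha$ and $d\alpha$ at $e_0$) on the embedded curvature tensors, and since by Theorem~\ref{thm_pointwise_lemma} those form a full-dimensional cone, hence a Zariski dense subset of $\calK(\CC^n)$, the same holds on all of $\calK(\CC^n)$. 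Thus $\phi_M=0$ for every $M$, and $\omega_M$ lies in the ideal $(\alpha,d\alpha)$ at each $\xi$; since that ideal is a smooth subbundle of $\bw^{2n-1}T^*SM$ ($\alpha$ being nowhere vanishing), $\omega_M$ lies in it globally, so $\Psi_M=0$.

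The main obstacle is this last passage. A curvature measure is determined by the pair $(\phi,\omega)$ only modulo the ideal generated by $\alpha$ and $d\alpha$ in the second slot, so --- unlike the pair $(\zeta,\tau)$ that pins down a valuation --- the boundary-term identity must be interpreted modulo that ideal; one handles this either through the subbundle observation above or, equivalently, by transporting everything via the transfer map $\tau_x$, which by construction factors through the ideal. In addition one must verify that the interior terms $\pi_*(\omega\wedge s^*\tau)+\zeta\phi$, which are not addressed in Proposition~\ref{prop:polynomiality prime}, are again given by universal polynomial maps of $\dblR^M_x$; this is immediate from the explicit formula together with Proposition~\ref{prop_rumin_in_cartan_calculus}. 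Everything else is a routine transcription of the proof of Theorem~\ref{thm:alg_isom}.
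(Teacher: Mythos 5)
Your proof is correct and follows essentially the same route as the paper's (which is only two sentences long): the embedded case $M\subset\CC^N$ via Propositions~\ref{prop_B_eq_B}, \ref{thm_weyl} and the compatibility of the module product with restrictions, and the general case by polynomiality in $\dblR^M_x$ combined with the pointwise embedding lemma (Theorem~\ref{thm_pointwise_lemma}), exactly as in the proof of Theorem~\ref{thm:alg_isom}. Your explicit handling of the two points the paper elides --- the polynomiality of the interior term $\pi_*(\omega\wedge s^*\tau)+\zeta\phi$ and the fact that the boundary-term identity need only hold modulo the pointwise ideal generated by $\bm\alpha$ and $d\bm\alpha$, dealt with via Zariski density into a linear subspace --- is accurate and fills in the details correctly.
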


\begin{proof}
For $M\subset \CC^N$, the relation \eqref{eq:compatiblity_r_R} follows from Propositions \ref{prop_B_eq_B} and \ref{thm_weyl}, and the compatibility of the module product with restrictions (cf. \cite{bernig_fu_solanes}*{Propositions 2.3 and 2.4}). The general case follows by polynomiality, similarly to the proof of Theorem \ref{thm:alg_isom}, using { Proposition~\ref{prop_double_fibration} and Theorem \ref{thm_pointwise_lemma}}. 
\end{proof}

We will denote by $\glob_M\colon \mathcal C(M)\to  \calV(M)$  and $\glob_0\colon \Curv^{\U(n)} \to \Val^{\U(n)}$ the respective globalization maps of $M$ and $\C^n$. The following proposition shows that $R_M$ is an isomorphism of modules lifting the algebra isomorphism $r_M$.

\begin{proposition} \label{prop_glob_delta_beta} 
The following diagram commutes:
\begin{center}
\begin{tikzpicture}
  \matrix (m) [matrix of math nodes,row sep=3em,column sep=4em,minimum width=2em]
  {
   \Curv^{\U(n)}   &  \widetilde{\KLK}(M) \\
    \Val^{\U(n)}  & \KLK(M)\\};
  \path[draw,->]
    (m-1-1) edge node [above] {$R_M$} (m-1-2)
    (m-2-1) edge node [above] {$r_M$}(m-2-2);

    \path[draw,->>]        
    (m-1-1) edge node [right] {$\glob_0$}  (m-2-1)
    (m-1-2) edge node [right] {$\glob_M$}  (m-2-2);
   
\end{tikzpicture}
\end{center}

In particular $\glob_M(B_{k,q}^M)=\glob_M(\Delta_{k,q}^M)$.
\end{proposition}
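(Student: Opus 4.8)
The plan is to reduce the proposition to the single identity $\glob_M(B_{k,q}^M)=\glob_M(\Delta_{k,q}^M)$ and then to establish the latter by the polynomiality scheme of Section~\ref{sec_alg structure}. Commutativity of the diagram means $\glob_M\circ R_M=r_M\circ\glob_0$, and since the curvature measures $\Delta_{k,q}$ and $B_{k,q}$ span $\Curv^{\U(n)}$ (cf.\ \cite{bernig_fu_solanes}) it suffices to check this equality on them. On a $\Delta_{k,q}$ it is immediate from the definitions, $\glob_M(R_M(\Delta_{k,q}))=\glob_M(\Delta_{k,q}^M)=\mu_{k,q}^M=r_M(\mu_{k,q})=r_M(\glob_0(\Delta_{k,q}))$; on a $B_{k,q}$ it reads $\glob_M(B_{k,q}^M)=\mu_{k,q}^M=\glob_M(\Delta_{k,q}^M)$, which is exactly the ``in particular'' clause. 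Note that whenever $B_{k,q}^M$ is defined one has $2q<k$, hence $k\neq 2q$ and $\bm\kappa_{k,q}=\delta_{k,2q}\bm\kappa_k=0$; thus $\Delta_{k,q}^M=[0,\bm\omega_{k,q}]$ and $B_{k,q}^M=[0,\tilde{\bm\omega}_{k,q}]$ have trivial inner term, and $\glob_M(B_{k,q}^M)-\glob_M(\Delta_{k,q}^M)=[[0,\tilde{\bm\omega}_{k,q}-\bm\omega_{k,q}]]$.

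I would first dispose of the embedded case. If $M\subset\CC^N$ is a K\"ahler submanifold, then by Proposition~\ref{thm_weyl} the curvature measures $B_{k,q}^M,\Delta_{k,q}^M$ are the restrictions to $M$ of $B_{k,q}^{\CC^N},\Delta_{k,q}^{\CC^N}$, which by Proposition~\ref{prop_B_eq_B} coincide with the curvature measures $B_{k,q},\Delta_{k,q}$ of $\Curv^{\U(N)}$, and these have equal globalization on $\CC^N$ since $[\Delta_{k,q}]=[B_{k,q}]=\mu_{k,q}$ (see \cite{bernig_fu_hig,bernig_fu_solanes}). As globalization commutes with restriction of curvature measures (the compatibility already used in the proof of Proposition~\ref{prop:embedded}, cf.\ \cite{bernig_fu_solanes}), this gives $\glob_M(B_{k,q}^M)=\glob_{\CC^N}(B_{k,q})|_M=\glob_{\CC^N}(\Delta_{k,q})|_M=\glob_M(\Delta_{k,q}^M)$, so the identity holds for every embedded K\"ahler manifold.

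For an arbitrary K\"ahler manifold $M$ I would then run the scheme of Theorem~\ref{thm:alg_isom}. Write $[[0,\tilde{\bm\omega}_{k,q}-\bm\omega_{k,q}]]=\{\{\zeta_{k,q},\tau_{k,q}\}\}$ as in \eqref{eq:def_tau_zeta}, so that $\zeta_{k,q}=\pi_*(\tilde{\bm\omega}_{k,q}-\bm\omega_{k,q})$ and $\tau_{k,q}=D(\tilde{\bm\omega}_{k,q}-\bm\omega_{k,q})$. Using Lemma~\ref{lem:constant}, Proposition~\ref{prop_rumin_in_cartan_calculus}, and the evident analogue of the latter for the forms $\tilde{\bm\gamma}_{k,j}$ (proved by the same $\bm\xi$-correction, via Propositions~\ref{prop_diff_basic_forms} and \ref{prop_diff_basic_forms_kahler} and Lemma~\ref{lem_d_nabla_psi}), one sees that for any $x\in M$, any $\xi\in S_xM$, and any complex linear isometry $\sigma\colon\CC^n\to T_xM$ with $\sigma e_0=\xi$, the number $\zeta_{k,q}(x)$ and the form $\bar\sigma^*(\tau_{k,q}|_{S_xM})$ depend polynomially on $\sigma^*\dblR_x^M$. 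By the embedded case these polynomials vanish whenever $\dblR_x^M$ is an embedded K\"ahler curvature tensor, and by the pointwise embedding lemma (Theorem~\ref{thm_pointwise_lemma}) such tensors fill a full-dimensional cone in $\calK(\CC^n)$; hence the polynomials vanish identically. Therefore $\zeta_{k,q}=0$ and $\tau_{k,q}=0$ on every K\"ahler manifold, i.e.\ $\glob_M(B_{k,q}^M)=\glob_M(\Delta_{k,q}^M)$, which proves the ``in particular'' clause and with it the commutativity of the diagram.

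The step I expect to be the main obstacle is the polynomiality claim in the previous paragraph, i.e.\ extending the control of the Rumin differential from the $\bm\gamma_{k,j}$ to the $\tilde{\bm\gamma}_{k,j}$ and to the fibre integrals $\pi_*$. The first is routine: the computation in the proof of Proposition~\ref{prop_rumin_in_cartan_calculus} carries over with the primitive $\bm\xi$ adjusted to $\tilde{\bm\gamma}_{k,j}$; the second holds because $\pi_*$ of any wedge polynomial in $\bm\alpha,\bm\beta,\bm\omega,\dblg,\dblG,\dblR$ transfers, via Lemma~\ref{lem:constant}, to a fixed linear operation on the model sphere bundle into which only $\sigma^*\dblR$ enters non-universally. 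As an alternative that sidesteps polynomiality, one can produce a primitive directly in the algebra of double forms: a form $\bm\eta_{k,q}$ on $SM$ with $\tilde{\bm\omega}_{k,q}-\bm\omega_{k,q}\equiv d\bm\eta_{k,q}$ modulo the ideal generated by $\bm\alpha$ and $d\bm\alpha$, generalizing the $\bm\eta$ of Proposition~\ref{prop_glob_b10} by telescoping the $\dg$-identities for $\bm\psi$ and $\tilde{\bm\psi}$ against the recursion \eqref{eq:recRelation2} between $\bm\phi_{k,q}$ and $\bm\phi_{k-1,q}$; then $[[0,\tilde{\bm\omega}_{k,q}-\bm\omega_{k,q}]]=[[0,d\bm\eta_{k,q}]]=0$, since exact boundary terms and multiples of $\bm\alpha,d\bm\alpha$ globalize to zero.
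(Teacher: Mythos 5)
Your reduction to the single family of identities $\glob_M(B^M_{k,q})=\glob_M(\Delta^M_{k,q})$ is correct, and your treatment of the embedded case via Propositions \ref{thm_weyl} and \ref{prop_B_eq_B} together with the compatibility of globalization and restriction is valid. Your route is genuinely different from the paper's: the paper instead uses the module structure of Proposition \ref{prop_module} together with the decomposition $\Phi=\mu\cdot\Delta_{0,0}+\varphi\cdot N_{1,0}$ of an arbitrary element of $\Curv^{\U(n)}$ (Theorem 5.13 of \cite{bernig_fu_solanes}), which reduces the whole statement to the single lowest-degree identity $\glob_M(\Delta^M_{1,0})=\glob_M(B^M_{1,0})$, i.e.\ Proposition \ref{prop_glob_b10}, where an explicit primitive $\bm\eta$ is exhibited. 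That reduction is precisely what lets the paper avoid ever controlling the Rumin differential of the forms $\tilde{\bm\gamma}_{k,j}$.

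That avoided computation is where your argument has a genuine gap. For your polynomiality scheme you need the analogue of Proposition \ref{prop_rumin_in_cartan_calculus} for $\tilde{\bm\gamma}_{k,j}$, and this is not routine: the content of that proposition is not that some Rumin correction $\bm\xi$ exists (that is automatic from Rumin's theory), but that $\bm\xi$ can be chosen as a wedge polynomial in the canonical double forms, so that $\dg\bm\xi$ is again such a polynomial, the would-be $\nabla\dblR$ terms cancelling via $\dg\dblR=0$. A correction that is merely pointwise-polynomial in $\dblR_x$ (e.g.\ obtained by inverting the Lefschetz operator fiberwise) would feed $\nabla\dblR$ into $D\tilde{\bm\gamma}_{k,j}$ through the term $\bm\alpha\wedge\dg\bm\xi$, and then Theorem \ref{thm_pointwise_lemma} no longer suffices to propagate the vanishing from the embedded cone to all of $\calK(\CC^n)$. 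Exhibiting such a $\bm\xi$ for $\tilde{\bm\gamma}_{k,j}$ — or, for your alternative route, an explicit primitive $\bm\eta_{k,q}$ generalizing the one of Proposition \ref{prop_glob_b10} — is a computation of the same order as those actually carried out in the paper; the cancellations depend on the specific coefficients $(2n-2l-k)d_{2n,k,l}$ of $\tilde{\bm\psi}_{2n-k-1}$ and on the extra factor $\bm\beta\wedge\bm\beta^\vee$, so they cannot simply be quoted from the $\bm\gamma_{k,j}$ case. Until one of these computations is supplied, the non-embedded case remains open in your argument.
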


\begin{proof}
We have $\glob_M(R_M(\Delta_{k,q}))=\mu_{k,q}^M=r_M(\glob_0 \Delta_{k,q})$ by the definitions.  For a general $\Phi\in\Curv^{\U(n)}$ we have by \cite[Theorem 5.13]{bernig_fu_solanes}
\[
 \Phi=\mu\cdot \Delta_{0,0}+\varphi\cdot N_{1,0}
\]
for some $\mu,\varphi\in\Val^{\U(n)}$ and $N_{1,0}=\Delta_{1,0}-B_{1,0}$. By \eqref{eq:compatiblity_r_R} and the compatibility of module product and globalization we get
\begin{displaymath}
\glob_M( R_M(\Phi))=r_M(\mu)+r_M(\varphi)\cdot\glob_M(\Delta_{1,0}^M-B_{1,0}^M)
\end{displaymath}
whereas
\begin{displaymath}
 r_M(\glob_0(\Phi))=r_M(\mu).
\end{displaymath}

The proof is completed by noting that $\glob_M(\Delta_{1,0}^M)=\glob_M(B_{1,0}^M)$ by Proposition~\ref{prop_glob_b10}, 
 since $\Delta_{1,0}^M =2 \Gamma_{1,0}^M, B_{1,0}^M =2 \tilde \Gamma_{1,0}^M$.

\end{proof}

\section{Integral geometry of complex space forms}\label{sec_space forms}

In this section we specialize our results to the complex space forms $M=\mathbb CP^n_\lambda$ with metric of constant holomorphic curvature $4\lambda$, viz. the complex projective space for $\lambda>0$, the flat euclidean space for $\lambda=0$, and the complex hyperbolic space when $\lambda<0$. For $\lambda\neq 0$ we let $G_\lambda$ be the isometry group of $\mathbb CP^n_\lambda$, while $G_0$ will be the group generated by $\U(n)$ and the translations of $\C^n$. We abbreviate
\begin{displaymath}
r_\lambda:= r_{\CC P^n_\lambda},\quad  R_\lambda:= R_{\CC P^n_\lambda}.
\end{displaymath}

The algebra $\mathcal{KLK}(M)$ coincides with the algebra of $G_\lambda$-invariant valuations which we denote by $\mathcal V_\lambda^n$. Indeed, the former is clearly included in the latter, and by Theorem \ref{thm:alg_isom} they have the same dimension. Similarly, the module $\widetilde{\mathcal{KLK}}(M)$ equals the module of $G_\lambda$-invariant curvature measures $\calC_\lambda^n$, which can be identified with $\Curv^{\U(n)}$ via the transfer map, as in \cite{bernig_fu_solanes}. 

A curious feature of Hermitian integral geometry is the existence of multiple automorphisms of the relevant spaces of valuations and curvature measures. Section \ref{sect_intertwine} presents a new instance of this phenomenon.

\subsection{The isomorphisms $r_\lambda$ and $R_\lambda$}
The algebra $\calV^n_\lambda$ is generated by two elements $t,s$ (see \cite[eq. (3.1)]{bernig_fu_solanes}),  which we do not distinguish notationally from the generators $t,s$ of $\Val^{\U(n)}$. Let us put
\[
 \sigma_\lambda:= r_\lambda(s).
\]
Our first goal is to describe the algebra isomorphism $r_\lambda$ in these terms.

Let $\mathrm{Beta}^n\subset\Curv^{\U(n)} \simeq \calC_\lambda^n $ denote the span of the curvature measures $B_{k,p}$, and denote $\mathrm{Beta}_k^n=\mathrm{Beta}^n \cap \Curv_k$. Equivalently, $\mathrm{Beta}^n$ is the space of curvature measures represented by invariant differential forms  divisible by $\bm \beta$.

Recall the first variation map $\delta\colon\calV(N) \to \calC(N)$ defined for a riemannian manifold $N^m$ by $\delta(\mu)=[0,\contr{T} (\tau(\mu))]$, where $T$ is the Reeb vector field of $SN$, and $\tau(\mu)$ is the $m$-form associated to $\mu$ by \eqref{eq:zeta_tau}.

\begin{lemma} 
Let $\varphi\in \calV_\lambda^n$. Then $\delta\varphi\in\mathrm{Beta}^n$ if and only if $\varphi$ is a polynomial in $s$.
\end{lemma}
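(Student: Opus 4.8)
The plan is to identify the subspace of valuations whose first variation is divisible by $\bm\beta$ with the subalgebra $\RR[s]\subset\calV_\lambda^n$, by a two-sided dimension count combined with the compatibility of $\delta$ with products. First I would recall that $\mathrm{Beta}^n$ is a module over $\Val^{\U(n)}\simeq\calV_\lambda^n$ under the module structure of $\calC_\lambda^n$: since $B_{k,p}$ are represented by forms divisible by $\bm\beta$ and, by the proof of Proposition~\ref{prop_B_eq_B} (the computation $\contr{X}\bm\phi_{k,p}=(k+1)\bm\beta^\vee\wedge\bm\phi_{k-1,p}$ together with $\contr{X}\bm\alpha=0$, $\contr{X}\bm\omega=0$ mod $\bm\alpha,\bm\alpha^\vee$), multiplication by $t$ and by $\sigma_\lambda$ preserves divisibility by $\bm\beta$; hence $r_\lambda(s)\cdot\mathrm{Beta}^n\subseteq\mathrm{Beta}^n$. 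The key structural fact I would then invoke is that $\delta$ is a derivation-type intertwiner: $\delta(\varphi\cdot\psi)=\varphi\cdot\delta\psi$ when $\varphi$ is interpreted via the module structure (this is the standard relation between first variation and Alesker product, e.g. as used in \cite{bernig_fu_solanes}).

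The ``if'' direction then goes as follows: $\delta(1)=0\in\mathrm{Beta}^n$ trivially, and $\delta(s^j)=\delta(s\cdot s^{j-1})=s\cdot\delta(s^{j-1})$, so by induction it suffices to check $\delta s\in\mathrm{Beta}^n$. For this I would use the explicit description of $s$ (up to scale, the average area of projections to complex lines): the form $\tau(s)$ representing $s$, when contracted with the Reeb vector field $T$, yields a form divisible by $\bm\beta$. Concretely, $s=\frac{1}{2\pi}(\mu_{2,0}+2\mu_{2,1})$ up to the canonical identifications, and $\delta$ of this combination can be computed from the double forms $\bm\omega_{2,0},\bm\omega_{2,1}$ of Definition~\ref{def:hivs}; the point is that the ``non-$\bm\beta$'' parts of $\delta\mu_{2,0}$ and $\delta\mu_{2,1}$ cancel in this particular linear combination. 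Alternatively, and more cleanly, one can argue that $\delta s=\sigma_M\cdot\delta(\Delta_{0,0})$ up to scale lands in $\mathrm{Beta}^n$ because $\delta(\Delta_{0,0})$ is (a multiple of) $B_{1,0}$ and $\sigma_M$ preserves $\mathrm{Beta}^n$; I would check which of these is most economical when writing the details.

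For the ``only if'' direction I would count dimensions. The map $\delta\colon\calV_\lambda^n\to\calC_\lambda^n$ has kernel equal to the constants (the only valuations with vanishing first variation on a compact space form are the multiples of $\chi$, or more precisely this follows from injectivity of $\delta$ modulo constants, a known fact used in \cite{bernig_fu_solanes}). Thus $\delta$ restricted to any complement of $\RR\cdot 1$ is injective. The subalgebra $\RR[s]$ has dimension $\lfloor n/2\rfloor+1$ inside $\calV_\lambda^n$ (the powers $1,s,\dots,s^{\lfloor n/2\rfloor}$ are independent and $s^{\lfloor n/2\rfloor+1}$ is a combination of lower terms by the relations $f_{n+1},f_{n+2}$), so $\delta(\RR[s])$ has dimension $\lfloor n/2\rfloor$. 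On the other hand I would show $\dim\mathrm{Beta}^n\cap\im\delta\le\lfloor n/2\rfloor$: since $\mathrm{Beta}_k^n$ is spanned by $\{B_{k,p}\}$ and $\im\delta$ consists of purely-boundary curvature measures with no interior term whose globalization must be compatible, the constraint $\glob_M(B_{k,p}^M)=\glob_M(\Delta_{k,p}^M)$ from Proposition~\ref{prop_glob_delta_beta} together with the fact that $\delta$ followed by $\glob_M$ is (essentially) the identity forces $\delta\varphi\in\mathrm{Beta}^n$ to have $\varphi$ in the span of the $\mu_{k,p}^M$ whose $B$- and $\Delta$-localizations agree in the relevant way—and tracking through the module structure this span is exactly $\RR[\sigma_\lambda]$. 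The main obstacle I anticipate is this last dimension bound: making precise that ``$\delta\varphi$ divisible by $\bm\beta$'' forces $\varphi\in\RR[s]$ rather than merely $\varphi$ lying in a larger space requires knowing the image $\delta(\calV_\lambda^n)$ explicitly in terms of the $B$/$\Delta$ basis, which in turn rests on the explicit first-variation formulas; I would handle this by writing $\varphi=\sum a_{k,p}\mu_{k,p}^M$, computing $\delta\varphi$ via Proposition~\ref{prop_module} and the module tables of \cite{bernig_fu_solanes}, and showing that the coefficient of each $\Delta$-type basis element (the ``non-$\bm\beta$'' part) vanishes precisely when $\varphi$ is a polynomial in $\sigma_\lambda=r_\lambda(s)$.
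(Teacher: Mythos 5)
There is a genuine gap on both sides of the equivalence. In the ``if'' direction your induction $\delta(s^j)=s\cdot\delta(s^{j-1})$ rests on the identity $\delta(\varphi\cdot\psi)=\varphi\cdot\delta\psi$, which is false in general: it would imply $\delta\varphi=\varphi\cdot\delta\chi$ for every $\varphi$, and on $\C^n$ the Gauss--Bonnet form is closed, so $\tau(\chi)=0$ and hence $\delta\chi=0$, which would force $\delta\equiv 0$ --- contradicting $\delta(\vol)\neq 0$. The first variation is not a module morphism over the Alesker product (the Gelfand-transform term in Proposition~\ref{prop_double_fibration} obstructs any naive Leibniz rule), so neither your inductive step nor the ``alternative'' $\delta s=\sigma_M\cdot\delta(\Delta_{0,0})$ (which also confuses a curvature measure with a valuation) is available. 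The paper simply quotes \cite{abardia_gallego_solanes}*{Corollary 4.5} for this implication; if you want a self-contained argument you must compute $\delta(s^j)$ directly rather than reduce to $j=1$.

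In the ``only if'' direction you correctly identify the crux --- an upper bound on $\{\varphi:\delta\varphi\in\mathrm{Beta}^n\}$ --- but you do not supply the ingredient that makes the dimension count close. Knowing $\ker\delta=\RR\chi$ and $\dim\RR[s]$ gives a lower bound on $\delta(\RR[s])$, not an upper bound on $\delta^{-1}(\mathrm{Beta}^n)$. The paper's argument hinges on the surjectivity of the composition $\Val_k^{\U(n)}\xrightarrow{\ \delta\ }\Curv_{k-1}^{\U(n)}\to\Curv_{k-1}^{\U(n)}/\mathrm{Beta}_{k-1}^n$ (\cite{bernig_fu_hig}*{Prop.~4.6}): comparing $\dim\Val_k^{\U(n)}$ with $\dim\bigl(\Curv_{k-1}^{\U(n)}/\mathrm{Beta}_{k-1}^n\bigr)$ then shows the kernel of this composition is at most one-dimensional in each even degree and zero in odd degree, hence spanned by $s^{k/2}$ once the ``if'' direction is known. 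Your fallback of brute-force computation with the module tables could in principle substitute for this, but you have not carried it out, and it is precisely the step you flag as the obstacle. Finally, note that the graded statement lives in $\Val^{\U(n)}$ while $\varphi\in\calV_\lambda^n$; one still needs the filtration argument (project $\varphi\in\mathcal W_k$ to $\Val_k^{\U(n)}$, use $\delta\varphi\equiv\delta\varphi_0$ modulo higher filtration, subtract $as^{k/2}$, and induct), which your sketch omits entirely.
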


\begin{proof}
	That $\delta \varphi\in \mathrm{Beta}^n$ if $\varphi$ is a polynomial in $s$ was proved in  \cite{abardia_gallego_solanes}*{Corollary 4.5}. To prove the converse, we proceed as follows. 
By \cite[Prop. 4.6]{bernig_fu_hig} the composition 
\begin{displaymath}
p:\Val_{k}^{\U(n)}\stackrel{\delta}\rightarrow \Curv_{k-1}^{\U(n)}\to \Curv_{k-1}^{\U(n)}/\mathrm{Beta}_{k-1}^n 
\end{displaymath}
is surjective. Since
\begin{align*}
 \dim\Val_k^{\U(n)} & =\left\lfloor \frac{\min(k,2n-k)}{2}\right\rfloor +1,\\
 \dim \Curv_{k-1}^{\U(n)}/\mathrm{Beta}_{k-1}^n & = \left\lfloor \frac{\min(k,2n-k)-1}2\right\rfloor +1,
\end{align*}
it follows that $p$ is injective for odd $k$ and has one-dimensional kernel if $k$ is even. By \cite{abardia_gallego_solanes}*{Corollary 4.5} this kernel is spanned by $s^{\frac{k}2}$. 

Suppose $\varphi\in \mathcal{W}_k$ and let $\varphi_0\in\Val^{\U(n)}$ be its image under the projection  $\mathcal{W}_k\rightarrow \mathcal{W}_k/\mathcal{W}_{k+1}\cong\Val_k^{\U(n)}$. Modulo $\oplus_{i\geq k}\Curv_i^{\U(n)}$ one has $\delta\varphi \equiv\delta\varphi_0$, e.g., by  \cite[Prop. 3.7]{abardia_gallego_solanes}. Hence $\delta\varphi_0\in \mathrm{Beta}_{k-1}^n$, by the assumption. It follows from the previous paragraph that  $\varphi_0=a s^{\frac{k}2}$ for some $a\in \RR$  if $k$ is even and hence  $\varphi -as^{\frac{k}2}\in \mathcal{W}_{k+1}$ in this case.  If $k$ is odd, then $\varphi_0=0$ and hence $\varphi \in \mathcal{W}_{k+1}$. In either case we can proceed by induction.
\end{proof}

\begin{proposition} \label{prop:sigma}
 The algebra isomorphism $r_\lambda\colon \Val^{\U(n)}\to \mathcal V_\lambda^n$ is given by 
 \begin{align}
r_\lambda(t) & =t, \nonumber \\
r_\lambda(s) & =\sigma_\lambda=\frac{s}{1-\lambda s}=\sum_{i=1}^n \lambda^{i-1}s^i. \label{eq_sigma_to_s}
 \end{align}
\end{proposition}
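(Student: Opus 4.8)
The plan is to pin down $r_\lambda$ by exploiting two facts: (i) $r_\lambda$ is an algebra isomorphism by Theorem~\ref{thm:alg_isom}, so it is determined by the images of the two generators $t,s$; and (ii) the geometric characterization of the subalgebra $\RR[s]\subset\Val^{\U(n)}$ via the first variation map, provided by the lemma immediately preceding the statement.

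\textbf{Step 1: $r_\lambda(t)=t$.} Since $t=\frac2\pi\mu_{1,0}$ in $\Val^{\U(n)}$ and $r_\lambda(\mu_{1,0})=\mu_{1,0}^{\CP^n_\lambda}$, it suffices to identify $\mu_{1,0}^{\CP^n_\lambda}$ with the generator also called $t$ on $\calV^n_\lambda$. Both are, up to the same universal scalar, the first intrinsic volume $\mu_1$ of the underlying riemannian manifold: on the $\Val^{\U(n)}$ side this is the definition recalled after \eqref{eq:unitaryAlg}, while on the $\CP^n_\lambda$ side $t$ is defined as the first intrinsic volume with respect to the Fubini--Study (resp.\ flat, resp.\ hyperbolic) metric. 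By Proposition~\ref{prop:Delta_LK}, $\sum_p\Delta^M_{1,p}=\Delta^M_{1,0}=\Lambda_1^M$, whose globalization is (the appropriate multiple of) $\mu_1^M$; and $\mu_{1,0}^M=[\Delta_{1,0}^M]$. So $r_\lambda(t)=t$.

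\textbf{Step 2: $r_\lambda(s)$ is a polynomial in $s$.} Write $\sigma_\lambda:=r_\lambda(s)$. By the preceding lemma, $\sigma_\lambda$ is a polynomial in $s$ if and only if $\delta\sigma_\lambda\in\mathrm{Beta}^n$, i.e.\ $\delta\sigma_\lambda$ is represented by invariant forms divisible by $\bm\beta$. Now $s\in\Val^{\U(n)}$ is, up to scale, $\mu_{2,0}+2\mu_{2,1}$ (indeed $s=\frac1{2\pi}(\mu_{2,0}+2\mu_{2,1})$ in the normalization used in Subsection~\ref{sec_hermtian_intrinsic_volumes}, where $\sigma_M=\frac1{2\pi}(\mu_{2,0}^M+2\mu_{2,1}^M)$ is flagged as the element whose product stabilizes the $B_{k,q}^M$). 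So $\sigma_\lambda=\sigma_{\CP^n_\lambda}$ in the notation of that subsection. The key input is Proposition~\ref{prop_module}: multiplication by $\sigma_M$ stabilizes the span of the $B^M_{k,q}$, i.e.\ $\sigma_\lambda\cdot B^M_{k,q}$ is again a linear combination of $B^M$'s, hence represented by forms divisible by $\bm\beta$. Applying this with the curvature measure $B_{0,0}^M=\Delta_{0,0}^M$ (the Euler-characteristic measure), and using that $\delta\varphi$ can be recovered from $\varphi\cdot\Delta_{0,0}$ via the module structure (the first variation of $\varphi$ is, up to the kernel of globalization, $\varphi\cdot\Delta_{0,0}$ paired with the Reeb field), we conclude $\delta\sigma_\lambda\in\mathrm{Beta}^n$. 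Therefore $\sigma_\lambda=\sum_{i\ge1}a_i s^i$ for some constants $a_i=a_i(\lambda,n)$, with $a_1\ne0$ since $r_\lambda$ is an isomorphism and $\sigma_\lambda$ has degree $2$.

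\textbf{Step 3: computing the coefficients $a_i$.} To identify $\sigma_\lambda=\sum_i a_i s^i$ with $\sum_i\lambda^{i-1}s^i=\frac{s}{1-\lambda s}$, I would compare globalizations of the defining curvature measures. On $\CP^n_\lambda$ we have $\sigma_\lambda=\frac1{2\pi}[\Delta^{\CP^n_\lambda}_{2,0}+2\Delta^{\CP^n_\lambda}_{2,1}]$, and by Proposition~\ref{prop_glob_delta_beta} (and \eqref{eq_glob_delta}, \eqref{eq_def_tau}) the globalizations $[\Delta_{k,q}]=\mu_{k,q}^\lambda-\lambda\frac{q+1}\pi\mu_{k+2,q+1}^\lambda$ are known explicitly; likewise $[B_{k,p}]=\mu^\lambda_{k,p}$. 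On the $\Val^{\U(n)}$ side, $s$ and its powers are computed in the $u,v$ presentation via \eqref{eq_def_tau}, \eqref{eq_tau_in_uv}, where $v=t^2(1-\lambda s)$, $u=4s-v$. The cleanest route: express $\sigma_\lambda$ as an explicit combination of $\mu^\lambda_{k,q}$ using the above formulas, and independently expand $\frac{s}{1-\lambda s}=\sum_i\lambda^{i-1}s^i$ in the same basis using the relation $v=t^2(1-\lambda s)$, which rearranges to $s=\frac{4s-u-v}{4}$ and $1-\lambda s=v/t^2$; crucially the combination $\frac{s}{1-\lambda s}$ is precisely $\frac{t^2 s}{v}$, and since $v^{-1}$ does not literally lie in the algebra, one instead checks the identity $\sigma_\lambda\cdot(1-\lambda s)=s$, i.e.\ $\sigma_\lambda=s+\lambda s\sigma_\lambda$, which reduces to a single relation among the $\mu^\lambda_{k,q}$ that can be verified using the explicit multiplication table of $\calV^n_\lambda$ from \cite{bernig_fu_solanes}.

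\textbf{Main obstacle.} The conceptual content is entirely in Steps~1--2; the honest work is Step~3, where one must match two explicit but somewhat opaque expansions in the $\mu^\lambda_{k,q}$ basis. The cleanest way to avoid a long computation is to verify the single algebraic identity $\sigma_\lambda(1-\lambda s)=s$ in $\calV^n_\lambda$, since a degree-$2$ element satisfying this (together with $r_\lambda(t)=t$) is forced to be the claimed one; reducing everything to this one identity, and checking it against the known relations $f_{n+1},f_{n+2}$ and the globalization formulas \eqref{eq_glob_delta}--\eqref{eq_tau_in_uv}, is where care is needed but where no new idea is required.
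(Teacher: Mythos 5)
The skeleton of your argument (identify $r_\lambda(t)$; show $\delta\sigma_\lambda\in\mathrm{Beta}^n$ so that the preceding lemma gives $\sigma_\lambda=\sum a_i s^i$; then pin down the $a_i$) is the same as the paper's, but your Step~2 has a genuine gap. The curvature measure $B_{0,0}$ does not exist (the $B_{k,p}$ require $2p<k$), the Euler-characteristic measure $\Delta_{0,0}$ is not in $\mathrm{Beta}^n$, and by the module formulas behind Proposition~\ref{prop_module} the product $\sigma_M\cdot\Delta_{k,q}^M$ has nonzero $\Delta_{k+2,\ast}$-components, so $\sigma_\lambda\cdot\Delta_{0,0}$ does not lie in $\mathrm{Beta}^n$ either. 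More importantly, the asserted identification of $\delta\varphi$ with ``$\varphi\cdot\Delta_{0,0}$ paired with the Reeb field'' is not established anywhere and is not how $\delta$ interacts with the module structure: knowing that multiplication by $\sigma_\lambda$ preserves $\spann\{B^M_{k,q}\}$ gives no direct control of $\contr{T}\tau(\sigma_\lambda)$. The paper's actual route is different and is the essential content of the proof: starting from the flat-space fact $\delta s\in\mathrm{Beta}$, i.e.\ $\tau(s)\in(\bm\alpha\wedge\bm\beta,\,\bm\alpha\wedge d\bm\alpha)$, one transfers this to complex submanifolds $M\subset\C^N$ using the explicit restriction formulas of Subsection~\ref{subsec_restrictions} (each of $F^*\bm\alpha$, $F^*\bm\beta$, $F^*d\bm\alpha$ is a multiple of the corresponding form modulo $\bm\alpha$), and then to \emph{all} Kähler manifolds --- in particular to $\CP^n_\lambda$ --- via the polynomial dependence of $\tau(\sigma_M)$ on $\dblR^M$ (Proposition~\ref{prop_rumin_in_cartan_calculus}) combined with the pointwise embedding lemma (Theorem~\ref{thm_pointwise_lemma}). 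This embedding-plus-polynomiality step is missing from your proposal and cannot be replaced by the module-structure argument as you have stated it.

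Step~3 is also only a plan, and the computation you defer is nontrivial: to verify $\sigma_\lambda(1-\lambda s)=s$ in $\calV_\lambda^n$ you would first have to expand $\sigma_\lambda=\frac{1}{2\pi}\glob(\Delta^M_{2,0}+2\Delta^M_{2,1})$ in the transfer basis of $\Curv^{\U(n)}$ and globalize via \eqref{eq_glob_delta}, which is essentially the quantity being determined. The paper instead observes that the $a_i$ are independent of $n$ (restrict along totally geodesic $\CP^n_\lambda\hookrightarrow\CP^{n+d}_\lambda$), evaluates both sides on the full space for $\lambda>0$ using $\mu_{2,0}^M(M)=0$, so that $\sigma_\lambda(\CP^n_\lambda)=\tfrac12 t^2(\CP^n_\lambda)=\tfrac1\lambda\binom{n+1}{2}$ while $s^i(\CP^n_\lambda)=(n-i+1)\lambda^{-i}$, and then inducts on $n$; the case $\lambda<0$ follows by polynomiality in $\lambda$. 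You would need to supply either this evaluation argument or a complete version of your multiplication-table check, including the extension to $\lambda<0$.
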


\begin{proof}  
By \cite[Prop. 4.6]{bernig_fu_hig}, on $\CC^m$ we have $\delta s\in \mathrm{Beta}^m$.
 This means that $\contr{T} \tau(s)\equiv\bm\beta\wedge\eta$ modulo the ideal $(\bm\alpha,d\bm\alpha)$ for some $\eta\in \Omega^{2m-2}(S\CC^m)$. Therefore
\begin{displaymath}
 \tau(s)= \bm\alpha\wedge \contr{T} \tau(s)=\bm\alpha\wedge\bm\beta\wedge \eta+\bm\alpha\wedge d\bm\alpha\wedge\rho
\end{displaymath}
for some $\rho\in\Omega^{ 2m-3}(S\CC^m)$.

Let now $M\subset \CC^m$ be a complex submanifold,  and put $\sigma_M=r_M(s)=s|_M$. By \cite[eq. (3.5.4)]{alesker_intgeo}, and using the notation of Subsection \ref{subsec_restrictions}, we have
\begin{displaymath}
 \tau(\sigma_{M}) =P_* F^* (\tau(s))=P_* F^*( \bm\alpha\wedge\bm\beta\wedge\eta +\bm\alpha\wedge d\bm\alpha\wedge \rho).
\end{displaymath}
Since (cf. \eqref{eq_F*tau}, \eqref{eq_F_beta})
\[F^*\bm\beta =\cos(\theta)\bm\beta,\quad  F^*\bm\alpha = \cos(\theta)\bm\alpha,\quad F^*d\bm\alpha=\cos(\theta) d\bm\alpha - \sin(\theta) d\theta\wedge \bm\alpha,\] we deduce that $\tau(\sigma_{M})$ belongs to the ideal  $I=(\bm\alpha\wedge\bm\beta, \bm\alpha\wedge d\bm\alpha)$.

 Let now  $M$ be an arbitrary K\"ahler manifold $M$ and put again $\sigma_M=r_M(s)=\frac1{2\pi}(\mu_{2,0}^M+2\mu_{2,1}^M)$. By Proposition \ref{prop_rumin_in_cartan_calculus}, the form $\tau(\sigma_M)$ depends polynomially on $\dblR^M$. By
Theorem \ref{thm_pointwise_lemma}, and the previous paragraph, this form vanishes  modulo $I$ for all  $M$. It follows that $i_T(\tau(\sigma_M))$ is a multiple of $\bm\beta$  modulo the ideal $(\bm\alpha, d\bm\alpha)$. Taking $M=\mathbb CP^n_\lambda$ it follows that $\delta\sigma_\lambda\in \mathrm{Beta}^n$.

We can therefore apply the previous lemma to deduce that
\begin{equation}\label{eq_sigma_ai}
\sigma_\lambda=\sum_{i=1}^n a_i s^i,\qquad a_i\in\mathbb R.
\end{equation} 
Since $\sigma_\lambda$ and $s^i$ are invariant under totally geodesic embeddings $\mathbb CP^n_\lambda\to\mathbb CP^{n+d}_\lambda$, the coefficients $a_i$ are independent of $n$.

Assume $\lambda>0$. Since $\mu_{2,0}$ has zero interior term, we have $\mu_{2,0}^M(M)=0$. Recalling that $\sigma_\lambda=\frac{1}{2\pi}(\mu_{2,0}^M+2\mu_{2,1}^M)$ and $t^2=\frac{2}{\pi}(\mu_{2,0}^M+\mu_{2,1}^M)$ we deduce
\begin{align*}
 \sigma_\lambda(\CP_\lambda^n)&=\frac12t^2(\CP_\lambda^n)=\frac{1}{\lambda}{n+1\choose2},
\end{align*} 
by Corollary 2.34 of \cite{bernig_fu_solanes}. On the other hand we have (cf. \cite[eq. (3.2)]{bernig_fu_solanes})
\begin{displaymath}
 s^i(\CP_\lambda^n)=\frac{n-i+1}{\lambda^i},\qquad 0 \leq i\leq n.
\end{displaymath}
Plugging these values into \eqref{eq_sigma_ai} gives
\begin{displaymath}
 \frac1\lambda{n+1\choose 2}=\sum_{i=1}^{n} a_i\frac{n-i+1}{\lambda^i}.
\end{displaymath}
Induction on $n$ yields $a_i=\lambda^{i-1}$. This proves the statement for $\lambda>0$. 

Since the curvature tensor of $\mathbb CP^n_\lambda$ is $R=\lambda \dblG$, every curvature measure $\Delta_{k,p}^M$ yields an element of $\Curv^{\U(n)}$  depending polynomially on $\lambda$. In particular $\sigma_\lambda=\glob(\frac1{2\pi}(\Delta_{2,0}^M+2\Delta_{2,1}^M))$  is the globalization of a polynomial in $\lambda$ with values in $\Curv^{\U(n)}$. By \cite[Lemma 5.4]{bernig_fu_solanes}, also the right hand side of \eqref{eq_sigma_to_s} is the  globalization of a polynomial family in  $\Curv^{\U(n)}$. It follows from \cite{bernig_fu_solanes}*{Lemma 2.7} that \eqref{eq_sigma_to_s} holds for all $\lambda$.
\end{proof}

\begin{remark} It is amusing to compare Proposition~\ref{prop:sigma} with a surprisingly similar fundamental relation arising in spherical integral geometry. Let $S^n_\lambda$ be the $n$-dimensional sphere of curvature $4\lambda$ and define 
$$ \phi = \frac{1}{\sqrt \lambda} \int_{\SO(n+1)} \chi(\bullet \cap g S^{n-1}_\lambda) dg,$$
where $S^{n-1}_\lambda$ is a totally geodesic subsphere, and $dg$ is the Haar probability measure. Then the algebra of $\SO(n+1)$-invariant valuations on $S^n_\lambda$ is generated by $\phi$. At the same time this algebra is generated by the first Lipschitz-Killing valuation $t$. The relation between these two generators is  
$$ t= \frac{\phi}{\sqrt{1 -\lambda \phi^2}}.$$
\end{remark}

Next we express $r_\lambda$  in terms of the bases  $\tau_{k,p},\tau_{k,p}^\lambda$ of $\Val^{\U(n)},\calV_\lambda^n$ introduced in \cite{bernig_fu_hig,bernig_fu_solanes} respectively.
\begin{proposition}Given $k$, put $r=\left\lfloor \frac{k}2 \right\rfloor$, and set $\epsilon=0$ for $k$ even and $\epsilon=1$ for $k$ odd. Define the function 
\begin{displaymath}
 g_{k,q}(\xi,\eta)= \frac1{r!} {r\choose q} \left(\frac\pi\lambda\right)^{r} \xi^{r-q}(1-\xi)^{q-r-\epsilon-\frac12}  \eta^q(1-\eta)^{-q-\frac12}.
\end{displaymath}Then
\begin{equation}\label{eq:egf_expansion_tau}
   r_\lambda(\tau_{k,q})=\sum_{i,j} \left(\frac\lambda\pi\right)^{i+j} \left.\frac{\partial^{i+j}g_{k,q}}{\partial \xi^i\partial \eta^j}\right|_{\xi=\eta=0}  \tau_{2i+2j+\epsilon,j}^\lambda.
  \end{equation}
\end{proposition}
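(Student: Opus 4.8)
The strategy is to reduce the identity to a computation with explicit generating functions, exploiting the already-established facts that $r_\lambda$ is an algebra isomorphism with $r_\lambda(t)=t$ and $r_\lambda(s)=s/(1-\lambda s)$ (Proposition \ref{prop:sigma}), together with the explicit formula \eqref{eq_tau_in_uv} expressing the valuations $\tau_{k,q}^\lambda$ in terms of the auxiliary variables $v=t^2(1-\lambda s)$ and $u=4s-v$. First I would record the flat specialization: on $\C^n$, setting $\lambda=0$, the formula \eqref{eq_tau_in_uv} degenerates to $\tau_{k,q}=\frac{\pi^k}{\omega_k(k-2q)!(2q)!}v_0^{k/2-q}u_0^q$ with $v_0=t^2$, $u_0=4s-t^2$, so the $\tau_{k,q}$ are themselves monomials in $v_0,u_0$ (for $k$ even) times a power of $t$ (for $k$ odd). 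Applying $r_\lambda$ and using the algebra morphism property, I would compute $r_\lambda(v_0)$ and $r_\lambda(u_0)$ in terms of $v$ and $u$: since $r_\lambda(t^2)=t^2$ and $r_\lambda(s)=s/(1-\lambda s)$, one gets $r_\lambda(t^2) = v/(1-\lambda s)\cdot$(something) — more precisely $1-\lambda r_\lambda(s) = 1/(1-\lambda s)$, hence $v = t^2(1-\lambda s)$ pulls back in a controlled way, and similarly for $u$. The upshot is that $r_\lambda(\tau_{k,q})$ becomes an explicit rational expression in $v,u,s$, which by \eqref{eq_tau_in_uv} can be re-expanded as a (infinite, but effectively finite in the quotient algebra) linear combination of the $\tau_{2i+2j+\epsilon,j}^\lambda$.

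The second step is to identify the coefficients of that expansion with the Taylor coefficients of $g_{k,q}$. Here I would introduce the substitution that makes $g_{k,q}$ natural: the variables $\xi,\eta$ should be (up to the factor $\lambda/\pi$) conjugate to $v$ and $u$ respectively, so that the bivariate Taylor expansion $\sum_{i,j}(\lambda/\pi)^{i+j}\partial_\xi^i\partial_\eta^j g_{k,q}|_0\,\xi^i\eta^j/(i!j!)$ — wait, the stated formula has no $i!j!$ in the denominator, so in fact the claim is that $\partial_\xi^i\partial_\eta^j g_{k,q}|_0$ itself (not divided by $i!j!$) is the coefficient of $\tau_{2i+2j+\epsilon,j}^\lambda$; I would double-check this normalization against the $\lambda\to 0$ limit, where only the $i=j=0$ term survives and must reproduce $\tau_{k,q}$ itself, forcing $g_{k,q}(0,0)=$ the appropriate constant. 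Then I would verify that the closed form $g_{k,q}(\xi,\eta)=\frac{1}{r!}\binom{r}{q}(\pi/\lambda)^r\xi^{r-q}(1-\xi)^{q-r-\epsilon-1/2}\eta^q(1-\eta)^{-q-1/2}$ is exactly the generating function whose derivatives encode the coefficients arising from expanding the rational function $r_\lambda(\tau_{k,q})$ in the $\tau^\lambda$-basis. Concretely: \eqref{eq_tau_in_uv} says $\tau_{k',q'}^\lambda = (1-\lambda s)\frac{\pi^{k'}}{\omega_{k'}(k'-2q')!(2q')!}v^{k'/2-q'}u^{q'}$, so expanding $v^a u^b$ (the pullback of $\tau_{k,q}$) back into the $\tau^\lambda$-basis amounts to expanding powers of $v$ and $u$ — this is where the fractional binomial series $(1-\xi)^{\mathrm{half-integer}}$ and $(1-\eta)^{\mathrm{half-integer}}$ enter, via $\omega_{k'}$ and the factorials (recall $\omega_{2m}=\pi^m/m!$ and $\omega_{2m+1}$ involves $2^{2m+1}\pi^m m!/(2m+1)!$, producing the half-integer shifts). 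I would carry out the generating-function bookkeeping: sum $\sum_{k',q'}\tau_{k',q'}^\lambda\,\xi^{\cdots}\eta^{\cdots}$ in closed form, match it against the pullback of $\tau_{k,q}$, and read off $g_{k,q}$.

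The main obstacle, I expect, is the combinatorial/generating-function matching in the second step: assembling the right double generating function for the family $\{\tau_{k',q'}^\lambda\}$ and correctly tracking the half-integer exponents coming from the odd-sphere volumes $\omega_{2m+1}$ through the substitution $v,u\leftrightarrow\xi,\eta$. The parity split ($\epsilon=0$ vs.\ $\epsilon=1$) will need to be handled by factoring out the extra power of $t$ in the odd case — note $v^{r-q}u^q$ has degree $2r$ while $\tau_{k,q}$ for $k=2r+1$ carries an additional factor of $t=\sqrt{v/(1-\lambda s)}\cdot(1-\lambda s)^{1/2}\cdots$, which is precisely the source of the $-\epsilon-\frac12$ in the exponent of $(1-\xi)$. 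The algebra-isomorphism and polynomiality inputs are already in hand from Proposition \ref{prop:sigma} and the structure theory, so no new geometry is needed; the proof is a finite, if intricate, manipulation of hypergeometric-type generating functions, and I would present it by first stating the key substitution lemma relating $r_\lambda(v_0), r_\lambda(u_0)$ to $v,u$, then the generating-function identity for $\sum \tau^\lambda_{k',q'}$, and finally the term-by-term comparison that yields \eqref{eq:egf_expansion_tau}.
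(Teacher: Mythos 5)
Your plan follows essentially the same route as the paper: start from $r_\lambda(t)=t$, $r_\lambda(s)=s/(1-\lambda s)$, rewrite $r_\lambda(\tau_{k,q})$ as $\frac{\pi^k}{\omega_k(k-2q)!(2q)!}(1-\lambda s)^{-k/2}v^{\frac{k-2q}{2}}u^q$ using \eqref{eq_tau_in_uv}, and then match Taylor coefficients of $g_{k,q}$ against the expansion in the $\tau^\lambda_{k',q'}$-basis, with the half-integer exponents and the parity split arising exactly as you describe. The one step you defer ("carry out the generating-function bookkeeping") is where all of the paper's actual work lies: it introduces the operator $\mathcal O\colon\xi^m\eta^p\mapsto \binom{m+p}{m}\binom{2m}{m}^{-1}\binom{2p}{p}^{-1}v^mu^p$, which absorbs the mismatch between the Taylor factor $\frac{1}{i!j!}$ and the factor $\frac{(i+j)!}{(2i)!(2j)!}$ coming from \eqref{eq_tau_in_uv}, and evaluates $\mathcal O(g_{2r,q})$ in closed form by importing Lemma 3.16 of \cite{bernig_fu_solanes} for $\mathcal O(g_{2i,0})$ and commuting $\mathcal O$ with $\eta^q\partial_\eta^q$; without that identity (or an equivalent hypergeometric computation) the coefficient matching is not yet established, so this is the piece you would still need to supply.
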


\proof Let
\begin{align*}
v & = t^2(1-\lambda s)=\frac{t^2}{1+\lambda \sigma_\lambda},\\
u & = 4s-v=\frac{4\sigma_\lambda-t^2}{1+\lambda \sigma_\lambda}.
\end{align*} 

By \cite[Proposition 3.7]{bernig_fu_hig}, and reading $\sqrt{v}$ as $t\sqrt{1-\lambda s}$,
\begin{align}
r_\lambda(\tau_{k,q}) &=  \frac{\pi^k}{\omega_k (k-2q)!(2q)!} t^{k-2q}(4\sigma_\lambda-t^2)^q\\
&= \frac{\pi^k}{\omega_k (k-2q)!(2q)!} v^{\frac{k-2q}2} u^q(1+\lambda\sigma_\lambda)^{\frac{k}2}\\ 
&= \frac{\pi^k}{\omega_k (k-2q)!(2q)!} (1-\lambda s)^{-\frac{k}2} v^{\frac{k-2q}2} u^q.\label{eq:tau_to_vu}
\end{align}

Consider now the operator $\mathcal O\colon \RR[[\xi,\eta]]\to\RR[[v,u]]$ given by
\begin{displaymath}
\mathcal O\left(\sum_{m,p}{c_{m,p}}\xi^m\eta^p\right)=\sum_{m,p}{c_{m,p}}\frac{{m+p\choose m}}{{2m\choose m}{2p\choose p}}v^m u^p.
\end{displaymath}
By \cite[Lemma 3.16]{bernig_fu_solanes} we have
\begin{displaymath}
\mathcal O(g_{2i,0})=\frac{i!}{(2i)!}\left(\frac\pi\lambda\right)^i v^i\left(1-\frac{v+u}4\right)^{-1-i}. 
\end{displaymath}
Using 
\[
 g_{2r,q}= \left(\frac{4\pi}\lambda\right)^q\frac1{(2q)!}\eta^q\frac{\partial^q}{\partial\eta^q} g_{2r-2q,0}
\]
and 
\[
 \mathcal O\circ\left(\eta^i \frac{\partial^i}{\partial \eta^i}\right)= \left(u^i \frac{\partial^i}{\partial u^i}\right)\circ \mathcal O
\]
yields
\begin{align*}
 \mathcal O(g_{2r,q})&=\frac{r!}{(2r-2q)!(2q)!}\left(\frac\pi \lambda\right)^r u^q v^{r-q} \left( 1-\frac{u+v}4\right)^{-r-1}.
\end{align*}
By  \eqref{eq_tau_in_uv},
\begin{align*}
 \sum_{i,j} \left(\frac\lambda\pi\right)^{i+j} \left.\frac{\partial^{i+j}g_{2r,q}}{\partial \xi^i\partial \eta^j}\right|_{\xi=\eta=0}  \tau_{2i+2j,j}^\lambda&=(1-\lambda s)\sum_{i,j} \lambda^{i+j} \left.\frac{\partial^{i+j}g_{2r,q}}{\partial \xi^i\partial \eta^j}\right|_{\xi=\eta=0}  \frac{(i+j)!}{(2i)!(2j)!} v^i u^j\\
 &=(1-\lambda s)\sum_{i,j} \frac{1}{i!j!}\left.\frac{\partial^{i+j}g_{2r,q}}{\partial \xi^i\partial \eta^j}\right|_{\xi=\eta=0}  \frac{{i+j\choose i}}{{2i\choose i}{2j\choose j}} (\lambda v)^i (\lambda u)^j\\
 &=(1-\lambda s) \mathcal O(g_{2r,q})(\lambda v,\lambda u) \\
 &=\frac{r!}{(2r-2q)!(2q)!}\pi^r u^q v^{r-q} ( 1-\lambda s)^{-r},
\end{align*}
since $s=\frac{u+v}4$. Comparing with \eqref{eq:tau_to_vu}, this proves the proposition for $k=2r$. For odd $k$, the proof is similar using the operator $\mathcal P$ of \cite[Lemma 3.16]{bernig_fu_solanes}.
\endproof

\begin{proposition}\label{prop:egf_mu_mu}
Given $l$, put $r=\left\lfloor \frac{l}2 \right\rfloor$, and set $\epsilon=0$ for $l$ even and $\epsilon=1$ for $l$ odd. Define the function 
 \begin{displaymath}
 f_{l,q}(\xi,\eta) :=\frac{1}{r!} \binom{r}{q} \left(\frac{\pi}{\lambda}\right)^{r} \xi^{r-q} \eta^q(1-(\xi+\eta))^{-r-\epsilon-\frac{1}{2}+q} (1-\eta)^{-r-\frac12}.
 \end{displaymath}
 Then
\begin{displaymath}
   r_\lambda(\mu_{l,p})=\sum_{i,j} \left(\frac\lambda\pi\right)^{i+j} \left.\frac{\partial^{i+j}f_{l,p}}{\partial \xi^i\partial \eta^j}\right|_{\xi=\eta=0}  \mu_{2i+2j+\epsilon,j}^\lambda.
  \end{displaymath}
 \end{proposition}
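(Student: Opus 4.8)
The plan is to deduce this proposition from the previously established expansion \eqref{eq:egf_expansion_tau} for the $r_\lambda(\tau_{k,q})$ by ``changing basis'' from the $\tau$'s to the $\mu$'s, on both the domain $\Val^{\U(n)}$ and the target $\calV^n_\lambda$. Recall from \eqref{eq_def_tau} that $\tau_{k,q}=\sum_{i\ge q}\binom iq\mu_{k,i}$ on the flat side, and that the identical relation $\tau^\lambda_{k,q}=\sum_{i\ge q}\binom iq\mu^\lambda_{k,i}$ holds on the curved side. Inverting the matrix $\left(\binom iq\right)_{i,q}$ (which has inverse $\left((-1)^{i-q}\binom iq\right)_{i,q}$) gives $\mu_{l,p}=\sum_{q\ge p}(-1)^{q-p}\binom qp\tau_{l,q}$ and likewise $\mu^\lambda_{l,p}=\sum_{q\ge p}(-1)^{q-p}\binom qp\tau^\lambda_{l,q}$. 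Since $r_\lambda$ is linear, one substitutes the first of these into $r_\lambda(\mu_{l,p})$, applies \eqref{eq:egf_expansion_tau} term by term, and then re-expresses the resulting $\tau^\lambda_{2i+2j+\epsilon,j}$ in terms of $\mu^\lambda$'s using the second inversion formula. The claim is then that the two sides agree, which reduces to a purely formal identity in the generating functions.

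Concretely, the bookkeeping should be organized through bivariate exponential generating functions. Introduce the operator $T_{k}$ acting on a formal power series $h(\xi,\eta)$ by $T_k(h):=\sum_{i,j}\left(\frac\lambda\pi\right)^{i+j}\frac{\partial^{i+j}h}{\partial\xi^i\partial\eta^j}\big|_{0}\,(\text{label})_{2i+2j+\epsilon(k),j}$, where the ``label'' is $\tau^\lambda$ in the $\tau$-version \eqref{eq:egf_expansion_tau} and $\mu^\lambda$ in the version we want. Then \eqref{eq:egf_expansion_tau} says $r_\lambda(\tau_{k,q})=T_k^{(\tau)}(g_{k,q})$. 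First I would check that, for fixed degree parameters $k,r=\lfloor k/2\rfloor,\epsilon$, the passage from $\tau^\lambda$-labels to $\mu^\lambda$-labels inside $T_k$ corresponds to applying a fixed \emph{differential substitution} in the $(\xi,\eta)$ variables; namely, using $\tau^\lambda_{a,j}=\sum_{b\ge j}\binom bj\mu^\lambda_{a,b}$ and a small computation with the operator $\eta^j\partial_\eta^j$, one finds that replacing $\tau^\lambda$ by $\mu^\lambda$ amounts to precomposing $h$ with the map $(\xi,\eta)\mapsto(\xi,\eta/(1-\eta))$ together with an explicit Jacobian-type prefactor $(1-\eta)^{-\text{something}}$. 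Dually, expanding $\mu_{l,p}=\sum_q(-1)^{q-p}\binom qp\tau_{l,q}$ and summing the $g_{l,q}$ with these coefficients produces exactly $f_{l,p}$ up to the same substitution: indeed $\sum_{q\ge p}(-1)^{q-p}\binom qp\,\eta^q(\text{stuff})$ telescopes via the binomial theorem into a factor $(1-\eta)^{\cdots}$ and shifts $\eta\mapsto\eta/(1-\eta)$, which is precisely the difference between the shape of $g_{l,q}$ in \eqref{eq:egf_expansion_tau} and the shape of $f_{l,p}$ in the statement. So the plan is: (i) write $r_\lambda(\mu_{l,p})=\sum_{q\ge p}(-1)^{q-p}\binom qp\,r_\lambda(\tau_{l,q})$; (ii) insert \eqref{eq:egf_expansion_tau}; (iii) interchange the sums and recognize $\sum_{q}(-1)^{q-p}\binom qp g_{l,q}$ as a closed-form series $\widehat f_{l,p}(\xi,\eta)$; (iv) simultaneously convert the $\tau^\lambda$-labels to $\mu^\lambda$-labels, which pulls another substitution through; (v) verify that the net effect is $r_\lambda(\mu_{l,p})=\sum_{i,j}\left(\frac\lambda\pi\right)^{i+j}\partial_\xi^i\partial_\eta^j f_{l,p}|_{0}\,\mu^\lambda_{2i+2j+\epsilon,j}$ with $f_{l,p}$ exactly as defined.

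For step (iii) I expect to use the elementary identity $\sum_{q\ge p}(-1)^{q-p}\binom qp x^q=\frac{x^p}{(1+x)^{p+1}}$ applied with $x$ a ratio of the $\xi,\eta$-dependent factors appearing in $g_{l,q}$; this is the computation that converts the ``$\xi^{r-q}(1-\xi)^{q-r-\epsilon-\frac12}\eta^q(1-\eta)^{-q-\frac12}$'' template of $g_{l,q}$ into the ``$\xi^{r-q}\eta^q(1-(\xi+\eta))^{-r-\epsilon-\frac12+q}(1-\eta)^{-r-\frac12}$'' template of $f_{l,q}$ — note that $f_{l,p}$ and $g_{l,p}$ share the same combinatorial prefactor $\frac1{r!}\binom rq(\pi/\lambda)^r$, so only the analytic factors change, and the change is governed by a single substitution $(1-\xi)\rightsquigarrow(1-\xi-\eta)$ up to the $\eta$-shift. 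The same type of identity, applied in the $\eta$ variable, handles the label conversion on the curved side. The main obstacle, and the step requiring real care, will be keeping the two substitutions — the one coming from summing $\tau\to\mu$ on the domain and the one coming from rewriting $\tau^\lambda\to\mu^\lambda$ in the image — correctly synchronized, since they both act on the $\eta$ variable and their composition must exactly reproduce the discrepancy between $g$ and $f$; a sign error or an off-by-one in the exponents $-r-\epsilon-\frac12$ versus $-r-\epsilon-\frac12+q$ would be easy to make. I would therefore double-check the bookkeeping by testing the identity on a low-degree case, e.g. $l=2,p=0$ and $l=2,p=1$, where $r_\lambda(\mu_{2,0}),r_\lambda(\mu_{2,1})$ are pinned down by Proposition~\ref{prop:sigma} via $\sigma_\lambda=\frac1{2\pi}(\mu^M_{2,0}+2\mu^M_{2,1})$ and $t^2=\frac2\pi(\mu^M_{2,0}+\mu^M_{2,1})$, and compare with the closed form $f_{2,0},f_{2,1}$ produced by the formula. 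Once the generating-function identity is confirmed, the proposition follows, since $r_\lambda$ is a well-defined linear map and all manipulations are finite for each fixed $n$.
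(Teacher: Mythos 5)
Your overall architecture is exactly the paper's: invert $\tau_{l,q}=\sum_i\binom{i}{q}\mu_{l,i}$ to write $\mu_{l,p}=\sum_q(-1)^{q-p}\binom{q}{p}\tau_{l,q}$, feed this into the expansion \eqref{eq:egf_expansion_tau}, compute the alternating binomial sum of the $g_{l,q}$ in closed form, and then convert the $\tau^\lambda$-labels back to $\mu^\lambda$-labels. However, two of your concrete steps are wrong as stated, and the second one would derail the computation. First, a minor point: in step (iii) the sum over $q$ is weighted by the factor $\binom{r}{q}$ sitting inside $g_{l,q}$, so the identity you need is the finite Vandermonde-type identity $\sum_q\binom{q}{p}\binom{r}{q}x^{q-p}=\binom{r}{p}(1+x)^{r-p}$ (applied with $x=-\eta(1-\xi)/(\xi(1-\eta))$, so that $1+x=(\xi-\eta)/(\xi(1-\eta))$), not the unconstrained series $\sum_{q\ge p}(-1)^{q-p}\binom{q}{p}x^q=x^p(1+x)^{-p-1}$. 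Carrying this out yields $\sum_q(-1)^{p+q}\binom{q}{p}g_{l,q}(\xi,\eta)=f_{l,p}(\xi-\eta,\eta)$, i.e.\ $f_{l,p}$ precomposed with a \emph{shear} in the first variable.

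The genuine error is in step (iv). Converting $\tau^\lambda_{2i+2j+\epsilon,j}=\sum_{b\ge j}\binom{b}{j}\mu^\lambda_{2i+2j+\epsilon,b}$ keeps the total degree $2i+2j+\epsilon$ fixed while raising the second index from $j$ to $b$; relabelling $\mu^\lambda_{2i'+2j'+\epsilon,j'}$ forces $j'=b$ and $i'=i-(b-j)$, so the conversion necessarily \emph{mixes the two exponents} and cannot be realized by any substitution of the form $(\xi,\eta)\mapsto(\xi,\eta/(1-\eta))$ times an $\eta$-dependent prefactor, since such a map leaves the $\xi$-degree untouched. The correct statement is that on exponential generating functions the label conversion acts by the shear $h(\xi,\eta)\mapsto h(\xi+\eta,\eta)$: one checks that $\left.\partial_\xi^I\partial_\eta^J\right|_0 h(\xi+\eta,\eta)=\sum_{j\le J}\binom{J}{j}\left.\partial_\xi^{I+J-j}\partial_\eta^{j}\right|_0 h$, which is exactly the coefficient mixing produced by $\tau^\lambda\to\mu^\lambda$. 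This shear precisely undoes the shift $\xi\mapsto\xi-\eta$ coming from step (iii), which is why the final answer is governed by $f_{l,p}(\xi,\eta)$ itself; the paper packages this cancellation via the identity $\left.\partial_\xi^i\partial_\eta^j\right|_0 f(\xi-\eta,\eta)=\sum_r(-1)^r\binom{j}{r}\left.\partial_\xi^{i+r}\partial_\eta^{j-r}\right|_0 f(\xi,\eta)$ together with \eqref{eq_def_tau} and the orthogonality $\sum_j(-1)^{j-m}\binom{J}{j}\binom{j}{m}=\delta_{J,m}$. With step (iv) corrected in this way your plan closes and coincides with the paper's proof; as written, the two substitutions you propose would not synchronize and step (v) would fail.
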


\begin{proof}
Recalling that $\mu_{k,p}=\sum_q (-1)^{q+p}{q\choose p} \tau_{k,q}$ (cf. \cite{bernig_fu_hig}*{(39)}) we consider
\begin{align*}
 \sum_q (-1)^{p+q}{q\choose p}g_{l,q}(\xi,\eta)&=\frac1{r!}\left(\frac\pi\lambda\right)^{r}\sum_q (-1)^{p+q}{q\choose p}  {r\choose q} \xi^{r-q}(1-\xi)^{q-\epsilon-r-\frac12}  \eta^q(1-\eta)^{-q-\frac12}\\
 &=\frac1{r!}\left(\frac\pi\lambda\right)^{r}\xi^{r}(1-\xi)^{-r-\epsilon-\frac12}(1-\eta)^{-\frac12}(-1)^{p}\sum_q {q\choose p}  {r\choose q} x^{ q},
\end{align*}
where we have put $x=-\frac{\eta(1-\xi)}{\xi(1-\eta)}$. Using the identity \[\sum_q{q\choose p}{r\choose q} x^{q-p}={r\choose p}(1+x)^{r-p},\] which can be proved expanding $(1+x+y)^r$ in two different ways, and using also $1+x=\frac{\xi-\eta}{\xi(1-\eta)}$, we get
 \begin{align*}
\sum_q (-1)^{p+q}{p\choose q}g_{l,q}(\xi,\eta)
&=\frac1{r!}\left(\frac\pi\lambda\right)^{r}{r\choose p}(1-\xi)^{p-r-\epsilon-\frac12}\eta^p(1-\eta)^{-r-\frac12}(\xi-\eta)^{r-p}\\
&=f_{l,p}(\xi-\eta,\eta).
\end{align*}  

The trivial identity 
\begin{displaymath}
 \restrict{\frac{\partial^{i+j}}{\partial \xi^i\partial \eta^j}}{\xi=\eta=0} f(\xi-\eta,\eta)=
 \sum_r (-1)^r \binom j r \restrict{\frac{\partial^{i+j}}{\partial \xi^{i+r}\partial \eta^{j-r}}}{\xi=\eta=0} f(\xi,\eta).
\end{displaymath}
combined with \eqref{eq_def_tau} thus gives 
\begin{align*}
 r_\lambda(\mu_{l,p})&=\sum_{i,j} \left(\frac\lambda\pi\right)^{i+j} \sum_p (-1)^{p+q}{p\choose q}\left.\frac{\partial^{i+j}g_{l,q}}{\partial \xi^i\partial \eta^j}\right|_{\xi=\eta=0}  \tau_{2i+2j{ +\epsilon},j}^\lambda\\
 &=\sum_{i,j} \left(\frac\lambda\pi\right)^{i+j} \left.\frac{\partial^{i+j}f_{l,p}(\xi-\eta,\eta)}{\partial \xi^i\partial \eta^j}\right|_{\xi=\eta=0}  \tau_{2i+2j{ +\epsilon},j}^\lambda\\
 &=\sum_{i,j} \left(\frac\lambda\pi\right)^{i+j} \left.\frac{\partial^{i+j}f_{l,p}(\xi,\eta)}{\partial \xi^i\partial \eta^j}\right|_{\xi=\eta=0}  \mu_{2i+2j{ +\epsilon},j}^\lambda,
\end{align*}
as stated.
\end{proof}

We next establish an analogue of Proposition \ref{prop:egf_mu_mu} for the K\"ahler-Lipschitz-Killing curvature measures.

\begin{theorem}
Define the function 
 \begin{displaymath}
 h_{l,q} :=\frac{1}{1-\eta} f_{l,q}.
 \end{displaymath}
 Then 
 \begin{align}
  \label{eq:conjecture_Delta}\Delta_{l,q}^M & =\sum_{k,p\geq 0} \left(\frac{\lambda}{\pi}\right)^{k+p}\left.\frac{\partial^{k+p} h_{l,q}} {\partial\xi^k \partial\eta^p}\right|_{\xi=\eta=0} \Delta_{2k+2p+\epsilon,p}\\
  \label{eq:conjecture_N}N_{l,q}^M & =\sum_{l,p\geq 0} \left(\frac{\lambda}{\pi}\right)^{k+p}\left.\frac{\partial^{k+p} (\eta h_{l,q})} {\partial\xi^k \partial\eta^p}\right|_{\xi=\eta=0} \Delta_{2k+2p+\epsilon,p}\\
  & \quad +\sum_{k,p\geq 0} \left(\frac{\lambda}{\pi}\right)^{k+p}\left.\frac{\partial^{k+p} ((1-\eta) h_{l,q})} {\partial\xi^k \partial\eta^p}\right|_{\xi=\eta=0} N_{2k+2p+\epsilon,p}.\notag
 \end{align}
 \end{theorem}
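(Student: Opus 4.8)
The plan is to reduce the statement to two cleaner identities and then to prove those by a transferred computation with double forms.

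\textbf{Reduction.} Set $N_{l,q}^M:=\Delta_{l,q}^M-B_{l,q}^M$, and recall that $N_{k,p}=\Delta_{k,p}-B_{k,p}$ in $\Curv^{\U(n)}$. Since $(1-\eta)h_{l,q}=f_{l,q}$, hence $\eta h_{l,q}=h_{l,q}-f_{l,q}$, an elementary rearrangement shows that \eqref{eq:conjecture_Delta} and \eqref{eq:conjecture_N} taken together are equivalent to \eqref{eq:conjecture_Delta} together with
\begin{equation}\label{eq:B_expansion}
 B_{l,q}^M=\sum_{k,p\ge 0}\left(\frac{\lambda}{\pi}\right)^{k+p}\left.\frac{\partial^{k+p} f_{l,q}}{\partial\xi^k\partial\eta^p}\right|_{\xi=\eta=0}B_{2k+2p+\epsilon,p}.
\end{equation}
So it suffices to prove \eqref{eq:conjecture_Delta} and \eqref{eq:B_expansion}. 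As $M=\CP^n_\lambda$ is isotropic, the transfer map is an isomorphism $\tau_x\colon\calC_\lambda^n\to\Curv^{\U(n)}$ (\cite{bernig_fu_solanes}*{Prop.~2.22}) carrying the transferred flat curvature measures $\Delta_{k,p},B_{k,p}$ on $\CP^n_\lambda$ to the standard ones. First I would apply $\tau_x$ to both sides: using Lemma~\ref{lem:constant} and $\dblR^{\CP^n_\lambda}=\lambda\dblG$, the images $\tau_x(\Delta_{l,q}^M)$ and $\tau_x(B_{l,q}^M)$ are obtained from the flat double forms $\bm\kappa_{l,q},\bm\omega_{l,q},\tilde{\bm\omega}_{l,q}$ of Definition~\ref{def:hivs} by the formal substitution $\dblR\rightsquigarrow\lambda\dblG$, so the whole theorem becomes a pair of identities in $\Curv^{\U(n)}$.

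\textbf{The Gray expansion.} On $\CP^n_\lambda$ one has $\bm\psi_{2n-l-1}=\sum_m d_{2n,l,m}\lambda^m\dblG^m\wedge\bm\omega^{2n-2m-l-1}$ and likewise $\tilde{\bm\psi}_{2n-l-1}=\sum_m(2n-2m-l)d_{2n,l,m}\lambda^m\dblG^m\wedge\bm\omega^{2n-2m-l-1}$. The inner term $\bm\kappa_{l,q}=\delta_{l,2q}\bm\kappa_l$ only feeds the volume measure $\Delta_{2n,n}$ (by Proposition~\ref{prop_poincare_gray} and Theorem~\ref{thm_int_term}), which is matched directly. For the boundary terms the key step is the \emph{Gray expansion}: for $0\le 2b\le a$ and $m\ge 0$ there are explicit constants $A^{(m)}_{a,b,p}$, computable from the generating function $G$ of Lemma~\ref{lemma:GenFunc2} together with the recursions \eqref{eq:recRelation}, \eqref{eq:recRelation2}, such that
\begin{equation}\label{eq:gray_exp}
 \bm\phi_{a,b}\wedge\dblG^{m}=\sum_{p\ge 0}A^{(m)}_{a,b,p}\,\bm\phi_{a+2m,p}\qquad\text{in }\mathcal G_{a+2m}(n),
\end{equation}
where the terms with $a+2m-p>n$ vanish by Proposition~\ref{prop:g_kp_zero}. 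Substituting \eqref{eq:gray_exp} into $\bm\omega_{l,q}$ (resp.\ $\tilde{\bm\omega}_{l,q}$) turns $\tau_x(\Delta_{l,q}^M)$ (resp.\ $\tau_x(B_{l,q}^M)$) into a combination of the flat boundary forms representing the $\Delta_{l+2m,p}$ via \eqref{eq:double_forms_delta_boundary} (resp.\ the $B_{l+2m,p}$, via the flat double--form description established in the proof of Proposition~\ref{prop_B_eq_B}). This in particular shows that the ostensibly infinite sums in \eqref{eq:conjecture_Delta}--\eqref{eq:conjecture_N} are finite, cf.\ Proposition~\ref{prop:triviality}.

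\textbf{Bookkeeping.} Finally I would check that, after re--summing over $m$ and over the Gray index $p$ and inserting the constants $d_{2n,l,m}$, $c_{l,q,j}$ and $\tfrac{l+1}{l-2q}$, the coefficient of $\Delta_{2k+2p+\epsilon,p}$ in $\tau_x(\Delta_{l,q}^M)$ equals $(\lambda/\pi)^{k+p}\partial_\xi^k\partial_\eta^p h_{l,q}|_0$, and the coefficient of $B_{2k+2p+\epsilon,p}$ in $\tau_x(B_{l,q}^M)$ equals $(\lambda/\pi)^{k+p}\partial_\xi^k\partial_\eta^p f_{l,q}|_0$. This is the same generating--function computation carried out at the level of valuations in the proof of Proposition~\ref{prop:egf_mu_mu}, and reuses the operators $\mathcal O,\mathcal P$ of \cite{bernig_fu_solanes}*{Lemma~3.16}; the additional factor $\tfrac1{1-\eta}$ relating $h_{l,q}$ to $f_{l,q}$ is precisely the trace of the weight $(2n-2m-l)$ present in $\tilde{\bm\psi}$ but not in $\bm\psi$ (relation \eqref{eq_recursion_tilde_c}), which is exactly what distinguishes $\Delta$ from $B$. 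As a consistency check, globalizing \eqref{eq:conjecture_Delta} and \eqref{eq:B_expansion} via \eqref{eq_glob_delta} reproduces Proposition~\ref{prop:egf_mu_mu}, using only the elementary identity $\partial_\eta^j h_{l,q}|_0-j\,\partial_\eta^{j-1}h_{l,q}|_0=\partial_\eta^j f_{l,q}|_0$, which follows from $h_{l,q}=f_{l,q}/(1-\eta)$.

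\textbf{Main obstacle.} The hard part will be the Gray expansion \eqref{eq:gray_exp} together with the generating--function bookkeeping of the last step: lengthy, though in principle routine, identities in $\C[s,t]$ and in the truncated Gray algebra $\mathcal G(n)$. Secondary, less serious, care is needed for the interior term $\Delta_{2n,n}$, for the boundary values of the index ranges (small $l$, or $l$ close to $2n$), and for tracking which $\Delta_{\cdot,\cdot},B_{\cdot,\cdot},N_{\cdot,\cdot}$ vanish in $\Curv^{\U(n)}$.
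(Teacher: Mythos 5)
Your reduction is sound: granting \eqref{eq:conjecture_Delta}, the identity \eqref{eq:conjecture_N} is equivalent (via $N=\Delta-B$, $\eta h_{l,q}+(1-\eta)h_{l,q}=h_{l,q}$ and $(1-\eta)h_{l,q}=f_{l,q}$) to the expansion of $B^M_{l,q}$ in the flat $B_{2k+2p+\epsilon,p}$ with coefficients read off from $f_{l,q}$; this is exactly the intermediate statement the paper isolates. From there, however, you take a genuinely different and much heavier route. The paper does not expand any double forms: it observes that $\Delta^M_{l,q}$ lies in $\spann\{\Delta_{k,p}\}$ (by angularity, Proposition \ref{prop:angular}) and that $B^M_{l,q}$ lies in $\mathrm{Beta}^n$ (its representing form is divisible by $\bm\beta$), that $\glob_\lambda$ is \emph{injective} on each of these two subspaces of $\Curv^{\U(n)}$, and that the globalizations of both sides agree by \eqref{eq_glob_delta} together with the already-proved global identity of Proposition \ref{prop:egf_mu_mu}. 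That is the whole proof. Your "consistency check" at the end --- that globalizing your two identities recovers Proposition \ref{prop:egf_mu_mu} --- is therefore not merely a check: combined with these two injectivity statements it \emph{is} the proof, and it renders your entire computational program unnecessary.

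As written, your program has a real gap: the "bookkeeping" you defer is the entire content of the theorem. The Gray expansion $\bm\phi_{a,b}\wedge\dblG^{m}=\sum_p A^{(m)}_{a,b,p}\,\bm\phi_{a+2m,p}$ does exist (the $\bm\phi_{a+2m,p}$ span $\mathcal G_{a+2m}(n)$, though they are a basis only for $p\geq a+2m-n$, so the coefficients are not unique and must be chosen consistently), but you give no argument that the resulting constants, after reassembling $d_{2n,l,m}$, $c_{l,q,j}$ and the weights $(2n-2m-l)$ and $\tfrac{l+1}{l-2q}$, reproduce the Taylor coefficients of $h_{l,q}$ and $f_{l,q}$. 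Appealing to "the same generating-function computation as in Proposition \ref{prop:egf_mu_mu}" is not valid: that proposition is proved at the level of valuations using the algebra generated by $s,t$ and the operators $\mathcal O,\mathcal P$, not by manipulating double forms, so it cannot be cited to discharge a double-form identity. Either carry out the Gray-algebra computation in full (including the normalization matching the flat $B_{k,p}$ to the form $\bm\alpha^\vee\wedge\bm\beta\wedge\bm\beta^\vee\wedge\bm\phi_{k-1,p}\wedge\bm\omega^{2n-k-1}$, which itself requires the argument in the proof of Proposition \ref{prop_B_eq_B}), or --- much better --- replace the computation by the angularity/$\mathrm{Beta}^n$ membership and the injectivity of $\glob_\lambda$ on those subspaces.
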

 
 \begin{proof}
By Proposition \ref{prop:angular}, $\Delta_{l,q}^M \in \spann\{\Delta_{k,p}\}_{k,p} \subset\Curv^{\U(n)}$.
Let $\Phi_{l,q}$ be the right hand side of \eqref{eq:conjecture_Delta}.
By  \eqref{eq_glob_delta},
\begin{displaymath}
 \glob_\lambda(\Phi_{l,q})=\sum_{k,p\geq 0} \left(\frac{\lambda}{\pi}\right)^{k+p}\left.\frac{\partial^{k+p} } {\partial\xi^k \partial\eta^p}h_{l,q}(\xi,\eta)(1-\eta)\right|_{\xi=\eta=0} \mu_{2k+2p+\epsilon,p}^\lambda.
\end{displaymath}
Since $h_{l,q}(\xi,\eta)(1-\eta)=f_{l,q}(\xi,\eta)$, we deduce with Proposition \ref{prop:egf_mu_mu} that 
\begin{displaymath}
\glob_\lambda(\Phi_{l,q})=r_\lambda(\mu_{l,q})=\glob_\lambda(R_\lambda(\Delta_{l,q})). 
\end{displaymath}
Since $\glob_\lambda$ is injective on  $\spann\{\Delta_{k,p}\}_{k,p}\subset\Curv^{\U(n)}$, the relation \eqref{eq:conjecture_Delta} follows.

 Let $\Psi_{l,q}$ be the right hand side of \eqref{eq:conjecture_N}. Since $B_{k,p}=\Delta_{k,p}-N_{k,p}$ we have $R_\lambda(B_{l,p})=R_\lambda(\Delta_{l,p}-N_{l,p})$ and 
 \begin{displaymath}
  \Phi_{l,q}-\Psi_{l,q}=\sum_{k,p\geq 0} \left(\frac{\lambda}{\pi}\right)^{k+p}\left.\frac{\partial^{k+p} ((1-\eta) h_{l,q})} {\partial\xi^k \partial\eta^p}\right|_{\xi=\eta=0} B_{2k+2p+\epsilon,p}.
 \end{displaymath}
Proposition \ref{prop:egf_mu_mu}   and \eqref{eq_glob_delta} yield
\begin{displaymath}
 \glob_{\lambda}(\Phi_{l,q}-\Psi_{l,q})=r_\lambda(\mu_{l,q})=\glob_{ \lambda}(R_\lambda(B_{l,p})).
\end{displaymath}
Since $R_\lambda(B_{l,p}) \in \mathrm{Beta}^n$ and $\glob_\lambda$ is injective on $\mathrm{Beta}^n$, it follows that $R_\lambda(B_{l,p})=\Phi_{l,q}-\Psi_{l,q}$. We conclude that
\begin{displaymath}
 R_\lambda(N_{l,p})=R_\lambda(\Delta_{l,p})-R_\lambda(B_{l,p})=\Psi_{l,q},
\end{displaymath}
as stated.
 \end{proof}

\subsection{Isomorphisms intertwining the  kinematic operators} \label{sect_intertwine}

Recall  that the kinematic formulas in $\CP_\lambda^n$ are encoded by a co-algebra structure $k_\lambda\colon {\calV_\lambda^n\to \calV_\lambda^n\otimes \calV_\lambda^n}$ (cf. \cite{bernig_fu_solanes}*{\S 2.3.3}).
Let $\pd_\lambda\colon \calV_\lambda^n\to (\calV_\lambda^n)^*$ be the Alesker-Poincaré duality (see \cite{bernig_fu_solanes}*{Definition 2.14}).  As in \cite{bernig_fu_solanes}*{Eq. (3.45)}, $r_\lambda$ being an isomorphism implies that 
\begin{displaymath}
 J_\lambda=\pd_\lambda^{-1}\circ (r_\lambda^{-1})^*\circ\pd_0: \Val^{\U(n)} \to \mathcal V_\lambda^n
\end{displaymath}
is an isomorphism of co-algebras. This means in particular that $J_\lambda$ intertwines the kinematic operators of $\C^n$ and $\CP^n_\lambda$: 
\begin{center}
\begin{tikzpicture}
  \matrix (m) [matrix of math nodes,row sep=3em,column sep=4em,minimum width=2em]
  {
   \Val^{\U(n)} & \Val^{\U(n)} \otimes \Val^{\U(n)}    \\
   \mathcal V_\lambda^n & \mathcal V_\lambda^n \otimes \mathcal V_\lambda^n\\};
  \path[-stealth]
(m-1-1) edge node [above] {$k_0$}  (m-1-2)
(m-1-1) edge node [left] {$ J_\lambda$}   (m-2-1)
(m-2-1) edge node [above] {$k_\lambda$} (m-2-2)
(m-1-2) edge node [right] {$ J_\lambda \otimes  J_\lambda$} (m-2-2);
\end{tikzpicture}
\end{center}

Since $ J_\lambda$ admits a simple explicit description, we obtain a corresponding relation between 
between the flat and the curved kinematic operator.
\begin{proposition}
 \begin{displaymath}
  J_\lambda=(1-\lambda s)^{n+1} r_\lambda.
 \end{displaymath}
 \begin{displaymath}
 	k_\lambda(r_\lambda(\phi))= (r_\lambda\otimes r_\lambda) \circ k_0((1+\lambda s)^{-(n+1)} \phi).
 \end{displaymath} 
 \end{proposition}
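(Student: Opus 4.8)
The plan is to first prove the identity $J_\lambda=(1-\lambda s)^{n+1}r_\lambda$ by unwinding the definition of $J_\lambda$ into a statement about the Alesker--Poincar\'e pairings, reducing it to an elementary comparison of evaluation functionals; and then to deduce the second identity purely formally, from the first together with the co-algebra property of $J_\lambda$ recorded in the commutative square preceding the statement.

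\textbf{Step 1: reduction of the first identity.} By Alesker--Poincar\'e duality the maps $\pd_0,\pd_\lambda$ are induced by linear functionals $\epsilon_0,\epsilon_\lambda$ via $\pd_\bullet(\mu)(\nu)=\epsilon_\bullet(\mu\cdot\nu)$, where $\epsilon_\lambda$ is determined by $\pd_\lambda$ and $\epsilon_0$ extracts the top-degree (Lebesgue-density) part of a valuation in $\Val^{\U(n)}$. Unwinding $J_\lambda=\pd_\lambda^{-1}\circ(r_\lambda^{-1})^*\circ\pd_0$ gives $\langle J_\lambda\phi,\nu\rangle_\lambda=\langle\phi,r_\lambda^{-1}\nu\rangle_0$ for all $\phi\in\Val^{\U(n)}$, $\nu\in\calV_\lambda^n$. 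Writing $\nu=r_\lambda(\psi)$ and using that $r_\lambda$ is a ring isomorphism (Theorem~\ref{thm:alg_isom}), the asserted formula $J_\lambda=(1-\lambda s)^{n+1}r_\lambda$ is equivalent, after replacing the product $\phi\psi$ by an arbitrary $\chi\in\Val^{\U(n)}$, to the identity of functionals
\[
\epsilon_\lambda\bigl((1-\lambda s)^{n+1}\,r_\lambda(\chi)\bigr)=\epsilon_0(\chi),\qquad \chi\in\Val^{\U(n)}.
\]
Since by Proposition~\ref{prop:sigma} one has $1-\lambda s=(1+\lambda\sigma_\lambda)^{-1}=r_\lambda\bigl((1+\lambda s)^{-1}\bigr)$ in $\calV_\lambda^n$, the left-hand side equals $\epsilon_\lambda\bigl(r_\lambda((1+\lambda s)^{-(n+1)}\chi)\bigr)$, so the claim says precisely that the two dualizing functionals $\epsilon_\lambda\circ r_\lambda$ and $\epsilon_0$ on the Frobenius algebra $\Val^{\U(n)}$ differ by multiplication with the unit $(1+\lambda s)^{n+1}$.

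\textbf{Step 2: proof of the functional identity.} Both sides are linear in $\chi$, so it suffices to test on a basis, e.g. $\{\mu_{k,p}\}$ or the monomials $\{s^it^j\}$. The value $\epsilon_\lambda(r_\lambda(\chi))$ equals the value of the $\CP^n_\lambda$-valuation $r_\lambda(\chi)$ on the whole space, hence depends only on its interior term; for $\chi=\mu_{k,p}$ this interior term is $\bm\kappa_{k,p}=\delta_{k,2p}\bm\kappa_k$ by \eqref{eq_def_kappa_k}, which on $\CP^n_\lambda$ (where $\dblR=\lambda\dblG$) is, by Theorem~\ref{thm_int_term}, a universal multiple of $\lambda^{n-p}\dblg^{2n}$; so the left-hand side reduces to explicit multiples of $\lambda^{n-p}\vol(\CP^n_\lambda)$ and vanishes unless $k=2p$. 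The right-hand side $\epsilon_0\bigl((1+\lambda s)^{n+1}\mu_{k,p}\bigr)$ is evaluated by the standard identities $s^i(\CP^n_\lambda)=(n+1-i)/\lambda^i$ and the analogous values for mixed monomials recorded in \cite{bernig_fu_solanes}, together with the degree/parity constraints picking out the one relevant binomial term. Comparing the two — most efficiently by generating functions, along the lines of the proof of Proposition~\ref{prop:egf_mu_mu} — yields the identity; indeed this comparison of the two Poincar\'e structures is essentially the computation around \cite{bernig_fu_solanes}*{Eq.~(3.45)}. The bookkeeping of these normalizations is the one genuinely computational point.

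\textbf{Step 3: deduction of the second identity.} Using $1-\lambda s=r_\lambda\bigl((1+\lambda s)^{-1}\bigr)$ once more, the first identity rewrites as $J_\lambda=r_\lambda\circ L_{(1+\lambda s)^{-(n+1)}}$, where $L_u$ denotes multiplication by $u$; equivalently $r_\lambda(\phi)=J_\lambda\bigl((1+\lambda s)^{n+1}\phi\bigr)$. Applying $k_\lambda$ and the co-algebra property $k_\lambda\circ J_\lambda=(J_\lambda\otimes J_\lambda)\circ k_0$ (the commutative square preceding the statement), and then expanding $J_\lambda\otimes J_\lambda=(r_\lambda\otimes r_\lambda)\circ(L_{(1+\lambda s)^{-(n+1)}}\otimes L_{(1+\lambda s)^{-(n+1)}})$, we obtain
\begin{align*}
k_\lambda(r_\lambda(\phi))&=(J_\lambda\otimes J_\lambda)\bigl(k_0((1+\lambda s)^{n+1}\phi)\bigr)\\
&=(r_\lambda\otimes r_\lambda)\Bigl(\bigl(L_{(1+\lambda s)^{-(n+1)}}\otimes L_{(1+\lambda s)^{-(n+1)}}\bigr)\,k_0\bigl((1+\lambda s)^{n+1}\phi\bigr)\Bigr).
\end{align*}
By the fundamental theorem of algebraic integral geometry the kinematic operator $k_0$ is the transpose of the Alesker product under $\pd_0$, which gives $k_0(u\psi)=(u\otimes 1)\,k_0(\psi)=(1\otimes u)\,k_0(\psi)$ for all $u,\psi\in\Val^{\U(n)}$ (the second equality being the balancedness coming from commutativity of the product). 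Applying this first with $u=(1+\lambda s)^{n+1}$ to move the factor into the left tensor slot, and then with $u=(1+\lambda s)^{-(n+1)}$ from the right slot, collapses the inner expression to $k_0\bigl((1+\lambda s)^{-(n+1)}\phi\bigr)$, which is the asserted formula. (The main obstacle throughout remains Step~2; Steps~1 and~3 are formal once Alesker--Poincar\'e duality and its compatibility with the kinematic operator are in hand.)
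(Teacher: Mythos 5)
Your Steps 1 and 3 follow essentially the paper's own route. The paper likewise reduces the first identity, via the Frobenius property of the Alesker--Poincar\'e pairings and the multiplicativity of $r_\lambda$, to the single pairing identity $\langle \pd_\lambda((1-\lambda s)^{n+1}),\varphi\rangle=\langle\pd_0(\chi),r_\lambda^{-1}(\varphi)\rangle$ (equivalently, to computing $J_\lambda(\chi)$), and it deduces the second identity from the first exactly as in your Step 3, by combining the module property of $k_0$ from \cite{bernig_fu_solanes}*{Theorem 2.19} with the rewriting $(1-\lambda s)^{n+1}r_\lambda(\phi)=r_\lambda((1+\lambda s)^{-(n+1)}\phi)$ coming from Proposition \ref{prop:sigma}. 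So the formal skeleton is sound and matches the paper.

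The gap is Step 2, which you correctly identify as the only genuinely computational point but do not carry out; and the sketch you give would not lead you through it reliably. You propose to evaluate the flat functional $\epsilon_0\bigl((1+\lambda s)^{n+1}\mu_{k,p}\bigr)$ using the values $s^i(\CP^n_\lambda)=(n+1-i)/\lambda^i$, but those are evaluations of curved-space valuations on all of $\CP^n_\lambda$ and compute the $\epsilon_\lambda$-side of the identity, not $\epsilon_0$ (which extracts the degree-$2n$ Lebesgue density in $\Val^{\U(n)}$); the two sides of your test identity have been swapped. Moreover, identifying $\epsilon_\lambda(\mu)$ with $\mu(\CP^n_\lambda)$ only makes sense for $\lambda>0$, whereas the proposition is asserted for all $\lambda$, so one must either work with the algebraically defined pairing or invoke polynomiality in $\lambda$. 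The paper's actual verification is concrete: test the pairing identity on the monomials $\varphi=s^jt^{2i}$, expand $r_\lambda^{-1}(s^jt^{2i})=\sum_l(-\lambda)^l\binom{j-1+l}{l}s^{l+j}t^{2i}$, insert the explicit Poincar\'e pairing of \cite{bernig_fu_solanes}*{Eq.\ (3.48)}, and reduce everything to the binomial identity
\begin{displaymath}
\sum_k(-1)^k\binom{n+1}{k}\binom{n-j-k+1}{i+1}=(-1)^{n-j-i}\binom{n-i-1}{n-i-j},
\end{displaymath}
which is \cite{graham_knuth_patashnik}*{(5.25)}. Until this (or an equivalent) computation is actually performed, the first identity --- and with it the second --- is not established.
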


\begin{proof}
 As in \cite[Prop. 3.18]{bernig_fu_solanes},  to completely determine $J_\lambda$ it is enough to show $ J_\lambda(\chi)=(1-\lambda s)^{n+1}$. This boils down to checking
 \begin{equation}\label{eq_check_coalgebra}
  \langle \pd_\lambda((1-\lambda s)^{n+1}),\varphi\rangle =\langle \pd_0(\chi), r_\lambda^{-1}(\varphi)\rangle
 \end{equation}
for all $\varphi=s^j t^{2i}$ with $i,j\geq 0$. Using 
\begin{displaymath}
 r_\lambda^{-1}(\varphi)=\left(\frac{s}{1+\lambda s}\right)^j t^{2i}=\sum_l(-\lambda)^l {j-1+l\choose l}s^{l+j}t^{2i}
\end{displaymath}
and
\cite[(3.48)]{bernig_fu_solanes}, the relation \eqref{eq_check_coalgebra} translates into the identity
\begin{displaymath}
\sum_k(-1)^k {n+1\choose k}{n-j-k+1\choose i+1}=(-1)^{n-j-i}{n-i-1 \choose n-i-j},
\end{displaymath}
which follows from \cite[(5.25)]{graham_knuth_patashnik}.  The second statement follows immediately from \cite{bernig_fu_solanes}*{Theorem 2.19} and 
\begin{equation}\label{eq:r_M} (1-\lambda s)^{n+1} r_\lambda(\phi)= (1+\lambda \sigma_\lambda)^{-(n+1)} r_\lambda(\phi)=  r_\lambda((1+\lambda s)^{-(n+1)}\phi).\end{equation}
\end{proof}

 A similar relation between $k_0,k_\lambda$ was given in \cite{bernig_fu_solanes}:
 For the  linear isomorphism $F_\lambda\colon \calV^n_0\to \calV^{n}_\lambda$, $F_\lambda(\mu_{k,q})= \mu^\lambda_{k,q}$, equation (3.50) of \cite{bernig_fu_solanes} shows that 
 $$ k_\lambda(F_\lambda(\phi))= (F_\lambda\otimes F_\lambda) \circ k_0((1-\lambda s) \phi).$$
Also the local kinematic operators of $\C^n$ and $\CP_\lambda^n$ are isomorphic; in fact, as was shown in \cite{bernig_fu_solanes}*{Theorem 2.23}, the transfer map $\tau\colon \calC_\lambda^n\to \Curv^{\U(n)}$ is an isomorphism of co-algebras. However, no relation between the semi-local kinematic operators $\bar k_0,\bar k_\lambda$ (see \cite{bernig_fu_solanes}*{\S 2.3.3}) was previously known. As a consequence of Proposition \ref{prop_module}, we obtain that also these kinematic operators are isomorphic.

\begin{theorem} 
The following diagram commutes
	\begin{center}
		\begin{tikzpicture}
			\matrix (m) [matrix of math nodes,row sep=3em,column sep=4em,minimum width=2em]
			{
				\Curv^{\U(n)} & \Curv^{\U(n)} \otimes \Val^{\U(n)}    \\
				\mathcal C_\lambda^n & \mathcal C_\lambda^n \otimes \mathcal V_\lambda^n\\};
			\path[-stealth]
			(m-1-1) edge node [above] {$\bar k_0$}  (m-1-2)
			(m-1-1) edge node [left] {$ R_\lambda$}   (m-2-1)
			(m-2-1) edge node [above] {$\bar k_\lambda$} (m-2-2)
			(m-1-2) edge node [right] {$ R_\lambda \otimes  J_\lambda$} (m-2-2);
		\end{tikzpicture}
	\end{center}
As a consequence, the semi-local kinematic operators are related through
 \begin{displaymath}
 \bar k_\lambda (R_\lambda(\Phi))=(R_\lambda\otimes r_\lambda)\circ \bar k_0((1+\lambda s)^{-(n+1)}\Phi).
\end{displaymath}
\end{theorem}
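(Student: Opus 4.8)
The strategy is to run, for the semi-local kinematic operator, the same argument that yields the preceding proposition for the global one. Recall that the semi-local kinematic operator of a complex space form is, like the global coproduct, completely determined by the module product $\calV_\lambda^n\otimes\calC_\lambda^n\to\calC_\lambda^n$, by globalization, and by Alesker--Poincar\'e duality $\pd_\lambda$; this is the semi-local version of \cite{bernig_fu_solanes}*{Theorem 2.19}, and it underlies the fact (\cite{bernig_fu_solanes}*{Theorem 2.23}) that the transfer map intertwines the local kinematic coproducts. Concretely $\bar k=(\id\otimes\glob)\circ k^{\mathrm{loc}}$ with $k^{\mathrm{loc}}$ cocommutative and compatible with the module action, so that the adjoint of the module product enters only through $\pd$ on the valuation factor of the codomain. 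In particular $\bar k_0$ satisfies $(\id\otimes M_u)\circ\bar k_0=\bar k_0\circ\rho_u$, where $M_u$ is multiplication by $u\in\Val^{\U(n)}$ and $\rho_u$ is the module action of $u$ on $\Curv^{\U(n)}$.

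I would then proceed in three steps. First, transport the module product: by Proposition \ref{prop_module} the map $R_\lambda$ is an isomorphism of $\calV_\lambda^n$-modules lifting $r_\lambda$, and by Proposition \ref{prop_glob_delta_beta} it intertwines the globalization maps, $\glob_M\circ R_\lambda=r_\lambda\circ\glob_0$. Second, account for the duality: the Alesker--Poincar\'e pairings on $\calV^n_0$ and $\calV_\lambda^n$ correspond not under $r_\lambda$ but under its twist $J_\lambda=(1-\lambda s)^{n+1}r_\lambda$ --- this is precisely why $J_\lambda$, and not $r_\lambda$, is the coalgebra isomorphism. Since in the adjointness description the factor $\pd$ acts only on the valuation slot, chasing the resulting diagram gives $\bar k_\lambda\circ R_\lambda=(R_\lambda\otimes J_\lambda)\circ\bar k_0$, the curvature-measure slot being handled by $R_\lambda$ alone; here Proposition \ref{prop_glob_delta_beta} is what guarantees that no residual twist is absorbed into that slot. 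Third, for the displayed consequence, recall from \eqref{eq:r_M} that $J_\lambda(\phi)=r_\lambda\big((1+\lambda s)^{-(n+1)}\phi\big)$, so
\[
(R_\lambda\otimes J_\lambda)\circ\bar k_0=(R_\lambda\otimes r_\lambda)\circ(\id\otimes M_u)\circ\bar k_0=(R_\lambda\otimes r_\lambda)\circ\bar k_0\circ\rho_u,\qquad u=(1+\lambda s)^{-(n+1)},
\]
by the module-compatibility of $\bar k_0$ recalled above; this is exactly $\bar k_\lambda(R_\lambda(\Phi))=(R_\lambda\otimes r_\lambda)\bar k_0\big((1+\lambda s)^{-(n+1)}\Phi\big)$.

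The main obstacle is the bookkeeping of the twist $(1-\lambda s)^{n+1}$: one must check that it enters the adjointness formula for $\bar k$ solely through the valuation factor of the codomain, so that it packages into $J_\lambda$ and does not modify $R_\lambda$ on the curvature-measure side. Once the two structural inputs are in place --- $R_\lambda$ an isomorphism of modules over $r_\lambda$, and $R_\lambda$ compatible with globalization --- the rest is a formal diagram chase, identical in spirit to the one-line derivation of the preceding proposition, and no new integral-geometric computation is required.
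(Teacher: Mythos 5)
Your argument is correct and follows essentially the same route as the paper: the adjointness $\bar k=(\id\otimes\pd^{-1})\circ\bar m$ from \cite{bernig_fu_solanes}*{Corollary 2.20}, the fact that $R_\lambda$ is a module isomorphism over $r_\lambda$ (Proposition \ref{prop_module}), and the identity $J_\lambda=\pd_\lambda^{-1}\circ(r_\lambda^{-1})^*\circ\pd_0$ combine to give $\bar k_\lambda\circ R_\lambda=(R_\lambda\otimes J_\lambda)\circ\bar k_0$, after which \eqref{eq:r_M} and the module-compatibility of $\bar k_0$ yield the displayed consequence. The appeal to Proposition \ref{prop_glob_delta_beta} is not actually needed (the paper's proof uses only the module structure and duality), but this does not affect the validity of the argument.
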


\begin{proof}
Let $\bar m_\lambda\in \mathrm{Hom}(\calV_\lambda^n\otimes\Curv^{\U(n)},\Curv^{\U(n)})=\mathrm{Hom}(\Curv^{\U(n)},\Curv^{\U(n)}\otimes(\calV_\lambda^n)^*)$ correspond to the module structure.
Given $\varphi\in \calV_\lambda^n$ and $\Phi\in\Curv^{\U(n)}$ we have
\begin{align*}
 \langle \bar m_\lambda(R_\lambda\Phi),\varphi\rangle&= \varphi\cdot R_\lambda\Phi\\
 &=R_\lambda(r_\lambda^{-1}(\varphi)\cdot \Phi)\\
 &=\langle (R_\lambda\otimes \mathrm{id})\circ \bar m_0 (\Phi), r_\lambda^{-1}(\varphi)\rangle\\
 &=\langle (R_\lambda\otimes (r_\lambda^{-1})^*)\circ \bar m_0 (\Phi),\varphi\rangle.
\end{align*}
Therefore
\begin{displaymath}
 \bar m_\lambda\circ R_\lambda= (R_\lambda\otimes(r_\lambda^{-1})^*)\circ \bar m_0.
\end{displaymath}
By \cite[Corollary 2.20]{bernig_fu_solanes}, this gives
\begin{align*}
 \bar k_\lambda\circ R_\lambda&=(\mathrm{id}\otimes\pd_\lambda^{-1})\circ \bar m_\lambda\circ R_\lambda\\
 &=(\mathrm{id}\otimes\pd_\lambda^{-1})\circ (R_\lambda\otimes(r_\lambda^{-1})^*)\circ (\mathrm{id}\otimes \pd_0)\circ \bar k_0\\
 & =  (R_\lambda \otimes J_\lambda) \circ \bar k_0.
\end{align*}
The second statement follows immediately from \cite{bernig_fu_solanes}*{Theorem 2.19} and \eqref{eq:r_M}.
\end{proof}

\section{Open problems}\label{sec_problems}

\begin{enumerate}
 \item Motivated by the results in \cite{fu_wannerer}, it seems a promising task to study the space of valuations of the form $[[0,\omega]]$, where $\omega$ is a differential form built from coframe, connection and curvature forms on a K\"ahler manifold such that the result does not depend on the choice of frame and that $d\omega$ is vertical. With respect to the Alesker product, they form a canonical algebra associated to a K\"ahler manifold. We conjecture that it equals the K\"ahler-Lipschitz-Killing algebra. Even for low-dimensional K\"ahler manifolds, it is difficult to prove or disprove the conjecture, since the space of invariant differential forms is of large dimension.  
 
 \item It is easy to show that the intrinsic volumes on riemannian manifolds are (up to linear combinations) the only valuations that satisfy the Weyl principle, i.e.\ are invariant under pullback along isometric immersions. The same statement holds true on pseudoriemannian manifolds \cite{bernig_faifman_solanes_part2}. Is it true that any valuation that can be associated to K\"ahler manifolds and that satisfies the Weyl principle is a linear combination of the $\mu_{k,p}$?  
 \item Can the Weyl principle for K\"ahler manifolds be extended to a larger class of manifolds including submanifolds of K\"ahler manifolds? Note that a submanifold of a K\"ahler manifold is endowed with a metric and a closed $2$-form, which may by degenerate.   
 \item  A natural question is whether the main results of this paper remain valid in the quaternionic K\"ahler case. It turns out that the analogue of Theorem \ref{thm_pointwise_lemma} is not true in the quaternionic K\"ahler case. If $M$ is a quaternionic K\"ahler manifold of real dimension $4n$  which is embedded in some quaternionic vector space $\mathbb{H}^{n+m}$, then each complex structure $I$ of $M$ is parallel. It follows that $R^M_p(IX,IY,Z,W)=R^M_p(X,Y,Z,W)$ for all $X,Y,Z,W \in V$. The curvature tensor of a general quaternionic K\"ahler manifold does not have this symmetry, as can be seen in the case of quaternionic projective space.  Hence the set of embedded algebraic quaternionic K\"ahler curvature tensors spans a subspace of positive codimension in the space of all algebraic quaternionic K\"ahler curvature tensors. Nevertheless, it could still be true that there is a canonical algebra of valuations associated to each quaternionic K\"ahler manifold. This would imply that the algebras of smooth isometry invariant valuations on quaternionic projective space $\operatorname{\mathbb{H}P}^n$ and the algebra of continuous, $\mathrm{Sp}(n)\cdot \mathrm{Sp}(1)$ and translation invariant valuations on $\mathbb{H}^n$ are isomorphic. It is not known, even in small dimensions, if this is true.
 \item More generally, can we associate a canonical algebra of valuations to each manifold with a (sufficiently flat) $G$-structure? An additional hint in this direction is provided by the recent results in \cite{solanes_wannerer}, where it is shown that the algebras of valuations on the isotropic spaces $(S^6,G_2)$ and $(\C^3,\overline{\mathrm{SU}(3)}))$ are isomorphic, as well as on the isotropic spaces $(S^7,G_2)$ and $(\RR^7,\overline{ G_2})$. 
\end{enumerate}

%{\blue\section{Index of notation}
%\begin{itemize}
%\item 2.1
%\subitem $[[\phi,\omega]], [\phi,\omega], \{\{\zeta,\tau\}\}, t, s$
%\item 2.2\subitem $\mu^\lambda_{k,q}, \tau^\lambda_{k,q}, \Delta_{k,q}, N_{k,q},B_{k,q},\Gamma_{k,q},\omega_n, s_n$
%\item 3.1 \subitem $\bm \phi^\vee, \bm \phi'$
%\item 3.1.1 \subitem $\calR,\dblg, \bm \omega,\dblR, \bm\alpha,\dblR_0,\hat \dblR$
%\item 3.2.2 \subitem $d_\nabla$
%\item 3.3 \subitem $\rho_{x,\eps}^g, \bar \rho_{x,\eps}^g$
%\item 3.4 \subitem $\bm \psi_{n-k-1}, \tilde {\bm \psi}_{n-k-1}, d_{n,k,l},\kappa_k$
%\item 4.2 \subitem $F, \bm \beta, \calK,\dblG,J$
%\item 4.3\subitem $\bm\phi_{k,p}, c_{k,p, j}, \bm \omega_{k,p},\tilde{\bm \omega}_{k,p},\Delta^M_{k,p},B^M_{k,p},\mu_{k,p}^M, g_{k,p}$
%\item 4.4\subitem $\mathcal G$
%\end{itemize}
%}
%

\def\cprime{$'$}
% \bib, bibdiv, biblist are defined by the amsrefs package.
\begin{bibdiv}
	\begin{biblist}
		
		\bib{abardia_gallego_solanes}{article}{
			author={Abardia, Judit},
			author={Gallego, Eduardo},
			author={Solanes, Gil},
			title={The {G}auss-{B}onnet theorem and {C}rofton-type formulas in
				complex space forms},
			date={2012},
			ISSN={0021-2172},
			journal={Israel J. Math.},
			volume={187},
			pages={287\ndash 315},
			url={http://dx.doi.org/10.1007/s11856-011-0083-8},
			review={\MR{2891707}},
		}
		
		\bib{aigner99}{article}{
			author={Aigner, Martin},
			title={Catalan-like numbers and determinants},
			date={1999},
			ISSN={0097-3165},
			journal={J. Combin. Theory Ser. A},
			volume={87},
			number={1},
			pages={33\ndash 51},
			url={https://doi.org/10.1006/jcta.1998.2945},
			review={\MR{1698277}},
		}
		
		\bib{alesker_mcullenconj01}{article}{
			author={Alesker, Semyon},
			title={{Description of translation invariant valuations on convex sets
					with solution of P. McMullen's conjecture.}},
			language={English},
			date={2001},
			journal={Geom. Funct. Anal.},
			volume={11},
			number={2},
			pages={244\ndash 272},
		}
		
		\bib{alesker03_un}{article}{
			author={Alesker, Semyon},
			title={Hard {L}efschetz theorem for valuations, complex integral
				geometry, and unitarily invariant valuations},
			date={2003},
			ISSN={0022-040X},
			journal={J. Differential Geom.},
			volume={63},
			number={1},
			pages={63\ndash 95},
			url={http://projecteuclid.org/getRecord?id=euclid.jdg/1080835658},
			review={\MR{MR2015260 (2004h:52015)}},
		}
		
		\bib{alesker_val_man1}{article}{
			author={Alesker, Semyon},
			title={Theory of valuations on manifolds. {I}. {L}inear spaces},
			date={2006},
			ISSN={0021-2172},
			journal={Israel J. Math.},
			volume={156},
			pages={311\ndash 339},
			url={http://dx.doi.org/10.1007/BF02773837},
			review={\MR{2282381 (2008b:52018)}},
		}
		
		\bib{alesker_val_man2}{article}{
			author={Alesker, Semyon},
			title={Theory of valuations on manifolds. {II}},
			date={2006},
			journal={Adv. Math.},
			volume={207},
			number={1},
			pages={420\ndash 454},
		}
		
		\bib{alesker_val_man4}{incollection}{
			author={Alesker, Semyon},
			title={Theory of valuations on manifolds. {IV}. {N}ew properties of the
				multiplicative structure},
			date={2007},
			booktitle={Geometric aspects of functional analysis},
			series={Lecture Notes in Math.},
			volume={1910},
			publisher={Springer},
			address={Berlin},
			pages={1\ndash 44},
			url={http://dx.doi.org/10.1007/978-3-540-72053-9_1},
			review={\MR{MR2347038 (2008k:52031)}},
		}
		
		\bib{alesker_intgeo}{article}{
			author={Alesker, Semyon},
			title={Valuations on manifolds and integral geometry},
			date={2010},
			ISSN={1016-443X},
			journal={Geom. Funct. Anal.},
			volume={20},
			number={5},
			pages={1073\ndash 1143},
			url={http://dx.doi.org/10.1007/s00039-010-0088-1},
			review={\MR{2746948}},
		}
		
		\bib{alesker_alexandrov_spaces}{article}{
			author={Alesker, Semyon},
			title={Some conjectures on intrinsic volumes of {R}iemannian manifolds
				and {A}lexandrov spaces},
			date={2018},
			ISSN={2199-6792},
			journal={Arnold Math. J.},
			volume={4},
			number={1},
			pages={1\ndash 17},
			url={https://doi.org/10.1007/s40598-017-0078-6},
			review={\MR{3810565}},
		}
		
		\bib{alesker_riemannian_submersions}{article}{
			author={Alesker, Semyon},
			title={On convergence of intrinsic volumes of {R}iemannian manifolds},
			date={2022},
			ISSN={0047-2468},
			journal={J. Geom.},
			volume={113},
			number={1},
			pages={Paper No. 23, 14},
			url={https://doi.org/10.1007/s00022-022-00634-6},
			review={\MR{4393073}},
		}
		
		\bib{alesker_bernig}{article}{
			author={Alesker, Semyon},
			author={Bernig, Andreas},
			title={{The product on smooth and generalized valuations}},
			date={2012},
			journal={American J. Math.},
			volume={134},
			pages={507\ndash 560},
		}
		
		\bib{alesker_val_man3}{article}{
			author={Alesker, Semyon},
			author={Fu, Joseph H.~G.},
			title={Theory of valuations on manifolds. {III}. {M}ultiplicative
				structure in the general case},
			date={2008},
			journal={Trans. Amer. Math. Soc.},
			volume={360},
			number={4},
			pages={1951\ndash 1981},
		}
		
		\bib{alesker_katz_prosanov}{unpublished}{
			author={Alesker, Semyon},
			author={Katz, Mikhail},
			author={Prosanov, Roman},
			title={{New invariants of Gromov-Hausdorff limits of Riemannian surfaces
					with curvature bounded below}},
			note={Preprint arXiv:2204.13018},
		}
		
		\bib{atiyah_bott_patodi}{article}{
			author={Atiyah, M.},
			author={Bott, R.},
			author={Patodi, V.~K.},
			title={On the heat equation and the index theorem},
			date={1973},
			ISSN={0020-9910},
			journal={Invent. Math.},
			volume={19},
			pages={279\ndash 330},
			url={https://doi.org/10.1007/BF01425417},
			review={\MR{0650828}},
		}
		
		\bib{bernig_curv_tensors}{article}{
			author={Bernig, Andreas},
			title={Curvature tensors of singular spaces},
			date={2006},
			ISSN={0926-2245},
			journal={Differential Geom. Appl.},
			volume={24},
			number={2},
			pages={191\ndash 208},
			url={http://dx.doi.org/10.1016/j.difgeo.2005.09.009},
			review={\MR{MR2198794 (2007f:53033)}},
		}
		
		\bib{bernig_fu_conjecture}{article}{
			author={Bernig, Andreas},
			title={Unitarily invariant valuations and {T}utte's sequence},
			date={2021},
			ISSN={0002-9939},
			journal={Proc. Amer. Math. Soc.},
			volume={149},
			number={2},
			pages={829\ndash 841},
			url={https://doi.org/10.1090/proc/15264},
			review={\MR{4198087}},
		}
		
		\bib{bernig_broecker07}{article}{
			author={Bernig, Andreas},
			author={Br{\"o}cker, Ludwig},
			title={{Valuations on manifolds and Rumin cohomology.}},
			language={English},
			date={2007},
			journal={J. Differ. Geom.},
			volume={75},
			number={3},
			pages={433\ndash 457},
		}
		
		\bib{bernig_faifman_solanes_part3}{article}{
			author={Bernig, Andreas},
			author={Faifman, Dmitry},
			author={Solanes, Gil},
			title={{Crofton formulas in pseudo-Riemannian space forms}},
			journal={Compos. Math.}
			volume={158}
			date={2022}, 
			pages={1935–1979},
			review={\MR4502598}
		}
		
		\bib{bernig_faifman_solanes_part2}{article}{
			author={Bernig, Andreas},
			author={Faifman, Dmitry},
			author={Solanes, Gil},
			title={Uniqueness of curvature measures in pseudo-{R}iemannian
				geometry},
			date={2021},
			ISSN={1050-6926},
			journal={J. Geom. Anal.},
			volume={31},
			number={12},
			pages={11819\ndash 11848},
			url={https://doi.org/10.1007/s12220-021-00702-4},
			review={\MR{4322553}},
		}
		
		\bib{bernig_faifman_solanes}{article}{
			author={Bernig, Andreas},
			author={Faifman, Dmitry},
			author={Solanes, Gil},
			title={Curvature measures of pseudo-{R}iemannian manifolds},
			date={2022},
			ISSN={0075-4102},
			journal={J. Reine Angew. Math.},
			number={788},
			pages={77\ndash 127},
			url={https://doi.org/10.1515/crelle-2022-0020},
			review={\MR{4445545}},
		}
		
		\bib{bernig_fu_hig}{article}{
			author={Bernig, Andreas},
			author={Fu, Joseph H.~G.},
			title={Hermitian integral geometry},
			date={2011},
			journal={Ann. of Math.},
			volume={173},
			pages={907\ndash 945},
		}
		
		\bib{bernig_fu_solanes}{article}{
			author={Bernig, Andreas},
			author={Fu, Joseph H.~G.},
			author={Solanes, Gil},
			title={Integral geometry of complex space forms},
			date={2014},
			ISSN={1016-443X},
			journal={Geom. Funct. Anal.},
			volume={24},
			number={2},
			pages={403\ndash 492},
			url={http://dx.doi.org/10.1007/s00039-014-0251-1},
			review={\MR{3192033}},
		}
		
		\bib{bernig_fu_solanes_proceedings}{incollection}{
			author={Bernig, Andreas},
			author={Fu, Joseph H.~G.},
			author={Solanes, Gil},
			title={Dual curvature measures in {H}ermitian integral geometry},
			date={2018},
			booktitle={Analytic aspects of convexity},
			series={Springer INdAM Ser.},
			volume={25},
			publisher={Springer, Cham},
			pages={1\ndash 17},
			review={\MR{3753099}},
		}
		
		\bib{boroczky_domokos_solanes}{article}{
			author={B\"{o}r\"{o}czky, K.~J.},
			author={Domokos, M.},
			author={Solanes, G.},
			title={Dimension of the space of unitary equivariant translation
				invariant tensor valuations},
			date={2021},
			ISSN={0022-1236},
			journal={J. Funct. Anal.},
			volume={280},
			number={4},
			pages={108862, 18},
			url={https://doi.org/10.1016/j.jfa.2020.108862},
			review={\MR{4181164}},
		}
		
		\bib{bump13}{book}{
			author={Bump, Daniel},
			title={Lie groups},
			edition={Second},
			series={Graduate Texts in Mathematics},
			publisher={Springer, New York},
			date={2013},
			volume={225},
			ISBN={978-1-4614-8023-5; 978-1-4614-8024-2},
			url={https://doi.org/10.1007/978-1-4614-8024-2},
			review={\MR{3136522}},
		}
		
		\bib{chern45}{article}{
			author={Chern, Shiing-shen},
			title={On the curvatura integra in a {R}iemannian manifold},
			date={1945},
			ISSN={0003-486X},
			journal={Ann. of Math. (2)},
			volume={46},
			pages={674\ndash 684},
			review={\MR{0014760}},
		}
		
		\bib{deRham}{book}{
			author={de~Rham, Georges},
			title={Differentiable manifolds},
			series={Grundlehren der mathematischen Wissenschaften [Fundamental
				Principles of Mathematical Sciences]},
			publisher={Springer-Verlag, Berlin},
			date={1984},
			volume={266},
			ISBN={3-540-13463-8},
			url={https://doi.org/10.1007/978-3-642-61752-2},
			note={Forms, currents, harmonic forms, Translated from the French by F.
				R. Smith, With an introduction by S. S. Chern},
			review={\MR{760450}},
		}
		
		\bib{donnelly}{article}{
			author={Donnelly, Harold},
			title={Heat equation and the volume of tubes},
			date={1975},
			ISSN={0020-9910},
			journal={Invent. Math.},
			volume={29},
			number={3},
			pages={239\ndash 243},
			url={https://doi.org/10.1007/BF01389852},
			review={\MR{0402832}},
		}
		
		\bib{federer59}{article}{
			author={Federer, Herbert},
			title={Curvature measures},
			date={1959},
			ISSN={0002-9947},
			journal={Trans. Amer. Math. Soc.},
			volume={93},
			pages={418\ndash 491},
			review={\MR{0110078 (22 \#961)}},
		}
		
		\bib{fu94}{article}{
			author={Fu, Joseph H.~G.},
			title={Curvature measures of subanalytic sets},
			date={1994},
			ISSN={0002-9327},
			journal={Amer. J. Math.},
			volume={116},
			number={4},
			pages={819\ndash 880},
			url={http://dx.doi.org/10.2307/2375003},
			review={\MR{MR1287941 (95g:32016)}},
		}
		
		\bib{fu06}{article}{
			author={Fu, Joseph H.~G.},
			title={Structure of the unitary valuation algebra},
			date={2006},
			ISSN={0022-040X},
			journal={J. Differential Geom.},
			volume={72},
			number={3},
			pages={509\ndash 533},
			url={http://projecteuclid.org/getRecord?id=euclid.jdg/1143593748},
			review={\MR{MR2219942 (2007b:52008)}},
		}
		
		\bib{fu_alesker_product}{article}{
			author={Fu, Joseph H.~G.},
			title={Intersection theory and the {A}lesker product},
			date={2016},
			ISSN={0022-2518},
			journal={Indiana Univ. Math. J.},
			volume={65},
			number={4},
			pages={1347\ndash 1371},
			url={http://dx.doi.org/10.1512/iumj.2016.65.5846},
			review={\MR{3549204}},
		}
		
		\bib{fu_pokorny_rataj}{article}{
			author={Fu, Joseph H.~G.},
			author={Pokorn\'y, Du{\v s}an},
			author={Rataj, Jan},
			title={Kinematic formulas for sets defined by differences of convex
				functions},
			date={2017},
			ISSN={0001-8708},
			journal={Adv. Math.},
			volume={311},
			pages={796\ndash 832},
			url={http://dx.doi.org/10.1016/j.aim.2017.03.003},
			review={\MR{3628231}},
		}
		
		\bib{fu_wannerer}{article}{
			author={Fu, Joseph H.~G.},
			author={Wannerer, Thomas},
			title={Riemannian curvature measures},
			date={2019},
			ISSN={1016-443X},
			journal={Geom. Funct. Anal.},
			volume={29},
			number={2},
			pages={343\ndash 381},
			url={https://doi.org/10.1007/s00039-019-00484-6},
			review={\MR{3945834}},
		}
		
		\bib{fulton97}{book}{
			author={Fulton, William},
			title={{Young tableaux. With applications to representation theory and
					geometry.}},
			language={English},
			publisher={{London Mathematical Society Student Texts. 35. Cambridge:
					Cambridge University Press. ix, 260 p.}},
			date={1997},
		}
		
		\bib{fulton98}{book}{
			author={Fulton, William},
			title={Intersection theory},
			edition={Second},
			series={Ergebnisse der Mathematik und ihrer Grenzgebiete. 3. Folge. A
				Series of Modern Surveys in Mathematics [Results in Mathematics and Related
				Areas. 3rd Series. A Series of Modern Surveys in Mathematics]},
			publisher={Springer-Verlag, Berlin},
			date={1998},
			volume={2},
			ISBN={3-540-62046-X; 0-387-98549-2},
			url={https://doi.org/10.1007/978-1-4612-1700-8},
			review={\MR{1644323}},
		}
		
		\bib{graham_knuth_patashnik}{book}{
			author={Graham, Ronald~L.},
			author={Knuth, Donald~E.},
			author={Patashnik, Oren},
			title={Concrete mathematics},
			edition={Second},
			publisher={Addison-Wesley Publishing Company, Reading, MA},
			date={1994},
			ISBN={0-201-55802-5},
			note={A foundation for computer science},
			review={\MR{1397498}},
		}
		
		\bib{gray69}{article}{
			author={Gray, Alfred},
			title={Some relations between curvature and characteristic classes},
			date={1969/70},
			ISSN={0025-5831},
			journal={Math. Ann.},
			volume={184},
			pages={257\ndash 267},
			url={https://doi.org/10.1007/BF01350854},
			review={\MR{261492}},
		}
		
		\bib{gray_book}{book}{
			author={Gray, Alfred},
			title={Tubes},
			edition={Second},
			series={Progress in Mathematics},
			publisher={Birkh{\"a}user Verlag},
			address={Basel},
			date={2004},
			volume={221},
			ISBN={3-7643-6907-8},
			note={With a preface by Vicente Miquel},
			review={\MR{2024928 (2004j:53001)}},
		}
		
		\bib{howard93}{article}{
			author={Howard, Ralph},
			title={The kinematic formula in {R}iemannian homogeneous spaces},
			date={1993},
			ISSN={0065-9266},
			journal={Mem. Amer. Math. Soc.},
			volume={106},
			number={509},
			pages={vi+69},
			review={\MR{MR1169230 (94d:53114)}},
		}
		
		\bib{kobayashi_nomizu_vol2}{book}{
			author={Kobayashi, Shoshichi},
			author={Nomizu, Katsumi},
			title={Foundations of differential geometry. {V}ol. {II}},
			series={Wiley Classics Library},
			publisher={John Wiley \& Sons, Inc., New York},
			date={1996},
			ISBN={0-471-15732-5},
			note={Reprint of the 1969 original, A Wiley-Interscience Publication},
			review={\MR{1393941}},
		}
		
		\bib{park02}{unpublished}{
			author={Park, Heungii},
			title={{Kinematic formulas for the real subspaces of complex space forms
					of dimension $2$ and $3$.}},
			note={PhD-thesis University of Georgia 2002.},
		}
		
		\bib{pokorny_rataj}{article}{
			author={Pokorn{\'y}, Du{\v{s}}an},
			author={Rataj, Jan},
			title={Normal cycles and curvature measures of sets with d.c. boundary},
			date={2013},
			ISSN={0001-8708},
			journal={Adv. Math.},
			volume={248},
			pages={963\ndash 985},
			url={http://dx.doi.org/10.1016/j.aim.2013.08.022},
			review={\MR{3107534}},
		}
		
		\bib{rumin94}{article}{
			author={Rumin, Michel},
			title={{Differential forms on contact manifolds. (Formes
					diff{\'e}rentielles sur les vari{\'e}t{\'e}s de contact.)}},
			language={French},
			date={1994},
			journal={J. Differ. Geom.},
			volume={39},
			number={2},
			pages={281\ndash 330},
		}
		
		\bib{santalo76}{book}{
			author={Santal{\'o}, Luis~A.},
			title={Integral geometry and geometric probability},
			publisher={Addison-Wesley Publishing Co., Reading, Mass.-London-Amsterdam},
			date={1976},
			note={With a foreword by Mark Kac, Encyclopedia of Mathematics and its
				Applications, Vol. 1},
			review={\MR{MR0433364 (55 \#6340)}},
		}
		
		\bib{shifrin81}{article}{
			author={Shifrin, Theodore},
			title={The kinematic formula in complex integral geometry},
			date={1981},
			ISSN={0002-9947},
			journal={Trans. Amer. Math. Soc.},
			volume={264},
			number={2},
			pages={255\ndash 293},
			url={http://dx.doi.org/10.2307/1998539},
			review={\MR{603763}},
		}
		
		\bib{sitaramayya}{article}{
			author={Sitaramayya, Malladi},
			title={Curvature tensors in {K}aehler manifolds},
			date={1973},
			ISSN={0002-9947},
			journal={Trans. Amer. Math. Soc.},
			volume={183},
			pages={341\ndash 353},
			url={https://doi.org/10.2307/1996473},
			review={\MR{0322722}},
		}
		
		\bib{solanes_wannerer}{article}{
			author={Solanes, Gil},
			author={Wannerer, Thomas},
			title={Integral geometry of exceptional spheres},
			date={2021},
			ISSN={0022-040X},
			journal={J. Differential Geom.},
			volume={117},
			number={1},
			pages={137\ndash 191},
			url={https://doi.org/10.4310/jdg/1609902019},
			review={\MR{4195754}},
		}
		
		\bib{wannerer_angular}{article}{
			author={Wannerer, Thomas},
			title={Classification of angular curvature measures and a proof of the
					angularity conjecture},
			journal={Amer. J. Math.},
			volume={144},
			date={2022},
			pages={1331–1366},
			review={\MR{4494184}},
		}
		
		\bib{weyl_tubes}{article}{
			author={Weyl, Hermann},
			title={On the {V}olume of {T}ubes},
			date={1939},
			ISSN={0002-9327},
			journal={Amer. J. Math.},
			volume={61},
			number={2},
			pages={461\ndash 472},
			url={http://dx.doi.org/10.2307/2371513},
			review={\MR{1507388}},
		}
		
	\end{biblist}
\end{bibdiv}

%\bibliographystyle{plain}
%\bibliography{../biblio}

\end{document}